\theoremstyle{plain}
\newtheorem{theorem}{Theorem}[section]
\newtheorem{lemma}[theorem]{Lemma}
\newtheorem{proposition}[theorem]{Proposition}
\newtheorem{corollary}[theorem]{Corollary}
\theoremstyle{definition}
\newtheorem{definition}[theorem]{Definition}
\newtheorem{remark}[theorem]{Remark}
\newcommand{\field}{k}
\title{Plane curves which are quantum homogeneous spaces}
\author{Ken Brown  \and Angela Ankomaah Tabiri}
\thanks{University of Glasgow, School of Mathematics
and Statistics, University Place, Glasgow G12 8QQ,
UK}
\thanks{E-mail address: Ken.Brown@glasgow.ac.uk}
\thanks{E-mail address: a.tabiri.1@research.gla.ac.uk}
\address{University of Glasgow, School of Mathematics
and Statistics, University Place, Glasgow G12 8QQ,
UK}
\email{Ken.Brown@glasgow.ac.uk}
\email{a.tabiri.1@research.gla.ac.uk}
\date{} 
\begin{document}
\nocite{*} 
\begin{abstract}
Let $\mathcal{C}$ be a decomposable plane curve over an algebraically closed field $k$ of characteristic 0. That is, $\mathcal{C}$ is defined in $k^2$ by an equation of the form $g(x) = f(y)$, where $g$ and $f$ are polynomials of degree at least 2. We use this data to construct 3 pointed Hopf algebras, $A(x,a,g)$, $A(y,b,f)$ and $A(g,f)$, in the first two of which $g$ [resp. $f$] are skew primitive central elements, and the third being a factor of the tensor product of the first two. We conjecture that $A(g,f)$ contains the coordinate ring $\mathcal{O}(\mathcal{C})$ of $\mathcal{C}$ as a quantum homogeneous space, and prove this when each of $g$ and $f$ has degree at most 5 or is a power of the variable. We obtain many properties of these Hopf algebras, and show that, for small degrees, they are related to previously known algebras. For example, when $g$ has degree 3 $A(x,a,g)$ is a PBW deformation of the localisation at powers of a generator of the downup algebra $A(-1,-1,0)$.   
\end{abstract}
\maketitle

\section*{$0$. Introduction}

\subsection{}\label{xx0.1} Throughout this paper, let $k$ be an algebraically closed field of characteristic 0. By a \emph{quantum homogeneous space} of a Hopf $k$-algebra $H$ we mean a right or left coideal subalgebra $C$ of $H$ such that $H$ is a faithfully flat right and left $C$-module. The study of quantum homogeneous spaces has been a pervasive theme of research in Hopf algebras over the past 25 years, from both the algebraic and analytic perspectives. For the algebraic point of view which concerns us here see, for example,  \cite{Ma},\cite{MaW},\cite{MR1710737},\cite{Brown},\cite{KT},\cite{KM}. 

\subsection{}\label{xx0.2} Reasons for the ubiquity of quantum homogeneous spaces are manifest. For example, when $C$ is a right coideal subalgebra of an affine commutative Hopf $k$-algebra $H$, so that $H = \mathcal{O}(G)$ is the coordinate algebra of the affine algebraic group $G$, $H$ is always a flat $C$-module, but not always faithfully flat. As explained for example in \cite[$\S$3]{MaW}, \cite[Theorem 3 and Introduction]{Tak}, faithful flatness in this case encodes the algebraic counterpart of results of Demazure and Gabriel on quotient schemes of affine group schemes, hence intimately involved here as elsewhere with the analysis of symmetry. And in the wider, not necessarily commutative context, whenever one considers how to decompose a given Hopf algebra $H$ as an ``extension'' $1\longrightarrow C \longrightarrow H \longrightarrow \overline{H} \longrightarrow 1$ with $\overline{H}$ a Hopf algebra quotient of $H$, one is forced to allow $C$ to be a quantum homogeneous space rather than simply a Hopf subalgebra.

\noindent A question which arises almost immediately is: which classical (algebraic) spaces can occur as quantum homogeneous spaces? That is, translated into algebraic language:
\begin{eqnarray*} &\textit{Which commutative affine } k\textit{-algebras can occur as }\\
&\textit{faithfully flat right coideal subalgebras of a Hopf algebra }H?
\end{eqnarray*} 

\subsection{}\label{xx0.3}The purpose of this paper is to address this question in the simplest possible case, when $C = \mathcal{O}(\mathcal{C})$ is the coordinate ring of a curve. Homological considerations lead one to suspect that an affine commutative algebra $C$ can be a quantum homogeneous space only when $C$ is Gorenstein. Since plane algebraic curves are Gorenstein, we focus on plane curves, specifically on the case of a \emph{decomposable} plane curve $\mathcal{C}$, by which is meant a curve $\mathcal{C}$ of the form 
\begin{equation}\label{decomp} \mathcal{C} = \{  (x_0,y_0) \in \field^{2} \; | \; f(y_0)=g(x_0) \},
\end{equation}
where $f \in k[y], \, g \in k[x]$ and $ \deg(f),\deg(g) \geq 2$. Our main theorem is

\begin{theorem} \label{main}(= {\rm Theorem \ref{coideal}(i)}) Let $\mathcal{C}$ be the decomposable plane curve defined by (\ref{decomp}). Suppose hypothesis {\bf (H)}: each of $f(y)$ and $g(x)$ either has degree at most 5 or is a power of the variable. Then $\mathcal{C}$ is a quantum homogeneous space in a pointed affine Hopf algebra, denoted $A(g,f)$.
\end{theorem}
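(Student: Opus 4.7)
The strategy is to realise $\mathcal{O}(\mathcal{C}) = k[x,y]/(g(x)-f(y))$ as a right coideal subalgebra of the Hopf algebra $A(g,f)$ flagged in the abstract, and then prove faithful flatness. I would proceed in three stages.

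\emph{Stage 1: the building blocks.} For each polynomial $h$ of the type allowed by hypothesis {\bf (H)}, construct a pointed Hopf algebra $A(z,c,h)$ generated by a group-like $c$ and a skew primitive $z$ with $\Delta(z) = z \otimes 1 + c \otimes z$, in which $h(z)$ is central and satisfies $\Delta(h(z)) = h(z) \otimes 1 + c^{\deg h} \otimes h(z)$. If $h = z^n$ is a pure power, the familiar quantum plane relation $cz = qzc$ with $q$ a primitive $n$-th root of unity does the job, via the Taft construction. If $3 \le \deg h \le 5$ and $h$ is not a pure power, I would posit a commutation $cz = qzc + (\text{corrections of lower bidegree in } c, z)$ and solve, case by case, the linear system imposed by requiring both centrality and $(1, c^{\deg h})$-skew primitivity of $h(z)$. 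Hypothesis {\bf (H)} is precisely what guarantees this system is consistent; for generic $h$ of degree $\ge 6$ the constraints are over-determined, explaining the restriction.

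\emph{Stage 2: assembly.} With $A(x,a,g)$ and $A(y,b,f)$ in hand, form their tensor product Hopf algebra and quotient by the Hopf ideal generated by $a^{\deg g} \otimes 1 - 1 \otimes b^{\deg f}$ together with $g(x) \otimes 1 - 1 \otimes f(y)$; the first relation is what turns the second into a difference of skew primitives with a common coinvariant, so that the ideal is genuinely a Hopf ideal. Call the quotient $A(g,f)$. Inside it, the subalgebra $C$ generated by the images of $x$ and $y$ is commutative (they come from distinct tensor factors), and is presented by the single relation $g(x) = f(y)$, hence $C \cong \mathcal{O}(\mathcal{C})$. The identity $\Delta(x) = x \otimes 1 + a \otimes x$ exhibits $\Delta(x) \in A(g,f) \otimes C$, and similarly for $y$, so $C$ is a right coideal subalgebra.

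\emph{Stage 3: faithful flatness, and the main obstacle.} For faithful flatness I would produce a PBW-type basis: each $A(z,c,h)$ should be free over its central subring $k[h(z)]$ on an explicit monomial set in $z$ and $c$, and tensoring these bases and then reducing modulo the (hoped-for non-zerodivisor) $g(x) - f(y)$ should display $A(g,f)$ as a free, hence faithfully flat, $C$-module. The principal obstacle is Stage 1: carrying through the construction of $A(z,c,h)$ for each non-monomial $h$ of degree $3$, $4$, or $5$ is what genuinely needs {\bf (H)}, and verifying that the resulting defining relations are compatible with a Hopf algebra structure — associativity, coassociativity, existence of an antipode, and centrality of $h(z)$ — is where most of the bookkeeping will lie. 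A secondary challenge is confirming that $g(x) - f(y)$ is a non-zerodivisor on the tensor product, a fact that should fall out of an honest PBW basis but requires explicit checking.
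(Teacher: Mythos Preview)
Your Stage~2 assembly is correct in outline, and for the pure-power case $h=z^n$ your quantum-plane construction does contain $k[z]$ as a coideal subalgebra (though the paper's $A(z,c,z^n)$ is a larger algebra, of which the quantum plane is a proper Hopf quotient). The problem is Stage~1 for non-monomial $h$.

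A single deformed commutation $cz = qzc + (\text{lower order})$ yields an Ore-extension-type algebra of GK-dimension~$2$; but for $\deg h = n \geq 3$ the paper's $A(z,c,h)$ is presented by $n-1$ relations $\sigma_1,\ldots,\sigma_{n-1}$, each of total degree $n$ in $z$ and $c$, and for $n=3$ the result is (a PBW deformation of) the down-up algebra $A(-1,-1,0)$, of GK-dimension~$3$. These relations are not found by solving for free parameters: they are forced. In the free product $k[z]\ast k[c^{\pm 1}]$ with $z$ skew-primitive, the condition that $h(z)$ be $(1,c^n)$-primitive unpacks into $n-1$ independent identities---one for each power $z^j$, $1\leq j\leq n-1$, in the coproduct expansion---and the defining ideal of $A(z,c,h)$ is exactly the ideal they generate. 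So there is no freedom, and no single quadratic relation suffices once $n\geq 3$.

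You have also misidentified the role of {\bf (H)}. The Hopf algebra $A(z,c,h)$ exists, and the Hopf ideal in Stage~2 is well-defined, for \emph{every} $h$; what {\bf (H)} buys is a PBW theorem (via the Diamond Lemma for $\deg h\leq 5$, or a direct argument for $h=z^n$) sufficient to certify that $k[z]$---and hence ultimately $\mathcal{O}(\mathcal{C})$---genuinely embeds rather than collapsing in the quotient. It is not that the relations become inconsistent for $\deg h\geq 6$; it is that resolving the Diamond-Lemma ambiguities has only been carried out through degree~$5$. For Stage~3 the paper then sidesteps your explicit free-basis computation: since the tensor product $T$ is pointed, Masuoka's theorem guarantees faithful flatness of $T$ over any right coideal subalgebra containing the inverses of its grouplikes. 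One applies this with the commutative coideal subalgebra $k[x,y,a^{\pm n},b^{\pm m}]\subset T$, and faithful flatness then descends to $A(g,f)$ over $\mathcal{O}(\mathcal{C})$ through the two central quotients $a^n-b^m$ and $g-f$ by elementary properties of faithful flatness.
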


\noindent There are some precursors of this result in the literature. Goodearl and Zhang showed \cite[Construction 1.2]{MR2732991} that every cusp $y^m = x^n,$ where $m$ and $n$ are coprime integers with $m > n \geq 2$, is a quantum homogeneous space in a noetherian Hopf domain of Gel'fand-Kirillov dimension 2. In \cite{KT}, Kraehmer and the second author showed that the nodal cubic $y^2 = x^3 + x^2$ is a quantum homogeneous space, and Kraehmer and Martins refined this result in \cite{KM} to show that the containing Hopf algebra $H$ can in this case be taken to be affine noetherian of Gel'fand-Kirillov dimension 1. From a slightly different perspective, some further embeddings of this sort appear in Liu's massive endeavour \cite{Liu} aimed at extending the classification of prime affine Hopf $k$-algebras of Gel'fand-Kirillov dimension 1 to the non-regular case. 

\subsection{Proof of Theorem \ref{main}}\label{xx0.4} There are two steps to the proof of Theorem \ref{main}. In the first, given a monic polynomial $g(x)$ of degree $n$, $(n \geq 2)$, with constant term 0, one constructs a Hopf $k$-algebra $A(x,a,g)$ with generators $\{x, a^{\pm 1} \}$, in which $a$ is group-like, $x$ is $(1,a)$-primitive, $a^n$ and $g(x)$ are central and $g(x)$ is $(1,a^n)$-primitive. Following the same recipe, construct a corresponding Hopf algebra $A(y,b, f)$ on generators $\{y, b^{\pm 1}\}$. Then define the desired Hopf algebra as
$$ A(g,f) \quad := \quad A(x,a,g) \otimes_k A(y,b,f) / \langle a^n - b^m, g-f \rangle.$$ 
Here, in the relations on the right, we are writing $a^n$ for $a^n \otimes 1$, $b^m$ for $1 \otimes b^m$, $g$ for $g(x) \otimes 1$ and $f$ for $1 \otimes f(y)$. Finally, we use the Diamond Lemma \cite{MR506890} to show that $\mathcal{O}(\mathcal{C}) = k[x,y]/\langle g-f \rangle$ is a faithfully flat right coideal subalgebra of $A(g,f)$.

We believe that Theorem \ref{main} should remain valid in the absence of Hypothesis {\bf (H)}: it is required at present to permit an application of the Diamond Lemma to ensure that the constructed Hopf algebras $A(x,a,g)$ and $A(y,b,f)$ do not collapse to $k$.

\subsection{Structure of $A(g,f)$ and $A(x,a,g)$}\label{xx0.5} The Hopf algebras $A(x,a,g)$ and $A(g,f)$ have many interesting properties, some of which we summarise here. For unexplained terminology, see the relevant parts of the paper. 

\begin{theorem}\label{halfprop} Let $g = g(x)$ be a monic polynomial of degree $n$, with $n \geq 2$ and $g(0) = 0$, satisfying hypothesis {\bf (H)} of Theorem \ref{main}.
\begin{enumerate}
\item[(i)] The Hopf algebra $A(x,a,g)$ is pointed, generated by the group-like $a$ and the $(1,a)$-primitive element $x$.
\item[(ii)] If $n \leq 5$ then $A(x,a,g)$ has an explicit PBW basis. 
\item[(iii)] Suppose $n \leq 3$. Then $A(x,a,g)$ is noetherian, a finite module over its affine centre, and has Gel'fand-Kirillov dimension $n$.
\item[(iv)] Suppose $n \geq 4$. Then $A(x,a,g)$ contains a noncommutative free subalgebra.
\item[(v)] When $n = 2$, $A(x,a,g)$ is isomorphic as a Hopf algebra to the Borel in $U_{-1}(\mathfrak{sl}(2))$, or equivalently to the quantum plane at paramter $q = -1$, localised at the powers of one of its two generators.
\item[(vi)] Suppose $g(x) = x^3$. Then $A(x,a, g)$ is isomorphic as an algebra to the localisation of the down-up algebra $A(-1,-1, 0)$ at the powers of a generator. It is a noetherian domain of Gel'fand-Kirillov dimension 3, finite over its centre, a maximal order, AS-regular and GK-Cohen Macaulay. 
\item[(vii)] Suppose $n = 3$. Then $A(x,a,g)$ is a PBW deformation of $A(x,a,x^3)$, (and so of the localised down-up algebra), having the same properties as listed above for $A(x,a,x^3)$.
\end{enumerate}
\end{theorem}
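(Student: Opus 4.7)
Part (i) is essentially built into the construction: $A(x,a,g)$ is generated as an algebra by the grouplike $a^{\pm 1}$ and the $(1,a)$-primitive $x$, and any Hopf algebra generated by group-likes and skew primitives is pointed, its coradical being the group algebra $k[a^{\pm 1}]$. For (ii), the plan is to apply the Diamond Lemma \cite{MR506890} to the presentation of $A(x,a,g)$, whose defining relations are the commutation rule between $a$ and $x$ forced by requiring $g(x)$ to be $(1,a^n)$-primitive, together with the centrality of $g(x)$. For each $n\le 5$ (and also for $g=x^n$) the overlap ambiguities can be resolved by direct calculation, yielding the normal form $\{x^i a^j : i\ge 0,\, j\in\mathbb{Z}\}$; hypothesis \textbf{(H)} is exactly what is needed to keep these checks tractable.

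For (iii), once the PBW basis is in hand, the subalgebra generated by the known central elements $a^{\pm n}$ and $g(x)$ can be enlarged by at most one further explicit central element to an affine commutative polynomial subring over which $A(x,a,g)$ becomes a finitely generated module, counted via PBW monomials. Noetherianity, finiteness over the affine centre, and the computation $\mathrm{GKdim}\,A(x,a,g) = n$ then follow by standard arguments for finite extensions of affine commutative algebras. For (iv), when $n\ge 4$ the commutation rule is too mild to force polynomial growth, and I would exhibit two elements --- natural candidates being $x$ and a carefully chosen commutator with $a$ --- whose leading terms in the PBW basis are algebraically independent words in the free monoid, so that freeness follows by a leading-term argument.

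Parts (v) and (vi) are concrete identifications. For $n=2$, the relation forced by $(1,a^2)$-primitivity of $g(x)$ is a deformation of $ax+xa=0$ by an element of $k[a^{\pm 1}]$, which after absorbing the linear part of $g$ into $x$ and localising becomes the quantum plane relation at $q=-1$; this is the Borel in $U_{-1}(\mathfrak{sl}(2))$. For $g=x^3$, I would rewrite the relations of $A(x,a,x^3)$ and match them with the standard presentation of the down-up algebra $A(-1,-1,0)$ localised at powers of the generator playing the role of $a$; the listed properties (noetherian domain, GK-dimension $3$, module-finite over centre, maximal order, AS-regular, GK-Cohen--Macaulay) then transfer from the known properties of $A(-1,-1,0)$, since each is preserved under Ore localisation at a regular normal element. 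Part (vii) is a PBW-deformation argument: a general monic cubic $g$ differs from $x^3$ only by lower-order terms in the filtration in which $x$ and $a-1$ have positive degree, and the PBW basis of (ii) identifies $\mathrm{gr}\,A(x,a,g)$ with $A(x,a,x^3)$, so all of the listed properties lift from the associated graded by standard lifting results.

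The main obstacle is the confluence check in (ii): one has to verify, for every monic polynomial $g$ of degree $\le 5$ (and for $g=x^n$), that every overlap between the commutation rule and the centrality rule reduces to a common normal form, and it is exactly the combinatorial complexity of these ambiguities that forces hypothesis \textbf{(H)}. A secondary obstacle lies in (vi), where the isomorphism with the localised down-up algebra must be exhibited by a deliberate change of variables rather than being read off directly from the defining relations.
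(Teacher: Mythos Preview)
Your proposal contains a fundamental error in part (ii) that cascades through the rest of the argument. You claim the Diamond Lemma yields the normal form $\{x^i a^j : i\ge 0,\, j\in\mathbb{Z}\}$, but this is incompatible with part (iv): an algebra with that PBW basis has polynomial growth of Gel'fand--Kirillov dimension $2$ and cannot contain a noncommutative free subalgebra. In fact the defining ideal of $A(x,a,g)$ is not generated by a single ``commutation rule'' together with centrality of $g$; it is generated by $n-1$ relations $\sigma_1,\dots,\sigma_{n-1}$ (one for each $(j,n-j)$ bidegree), and the correct PBW basis is
\[
\{\,x^{\ell}\,\omega\,a^{m}\ :\ \ell\ge 0,\ m\in\mathbb{Z},\ \omega\in\langle\mathcal{L}_n\rangle\,\},
\]
where $\langle\mathcal{L}_n\rangle$ is the \emph{free} subsemigroup generated by $\mathcal{L}_n=\{a^ix^j:0<i,j,\ i+j<n\}$, which has $\tfrac{1}{2}(n-1)(n-2)$ generators. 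For $n\ge 4$ this is $\ge 3$, and the free subalgebra in (iv) is then read off directly from the basis --- no leading-term argument with two commutator-like elements is needed or available.

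Your plan for (vii) also needs adjustment: the filtration by positive degree of $x$ and $a-1$ is not the right one (and is awkward once $a$ is inverted). The paper instead assigns $\deg x=2$, $\deg a=1$, so that the lower-order terms of $\sigma_1,\sigma_2$ (those involving $r_1,r_2$ and the extra $a^3$) drop below the top degree and the associated graded recovers the homogeneous relations of $A_0(x,a,x^3)$, i.e.\ the down-up algebra $A(-1,-1,0)$. Your outlines for (i), (v), and (vi) are on the right track and match the paper's approach; for (iii) the paper does not simply add ``one further central element'' but, for $n=3$, goes via a maximal-order lifting argument (Chamarie's theorem) from the down-up algebra to conclude module-finiteness over an affine normal centre.
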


Part (i) of Theorem \ref{halfprop} is Proposition \ref{cent}(ii), part (ii) is Corollary \ref{basis}, part (iii) comes from Propositions \ref{degtwo} and \ref{cube}, part (iv) from Corollary \ref{basis} and Proposition \ref{xnprop}(iv), part (v) from Proposition \ref{degtwo}, and parts (vi) and (vii) from Proposition \ref{three}. 

Our current knowledge of the containing Hopf algebras $A(g,f)$ is summarised in

\begin{theorem}\label{wholeprop} Let $g(x), f(y)$ be as in Theorem \ref{main}.
\begin{enumerate}
\item[(i)] $A(x,a,g)$ and $A(y,b,f)$ embed in $A(g,f)$, and these sub-Hopf algebras generate $A(g,f)$.
\item[(ii)] Suppose $\mathrm{max}\{n,m\} >  3$. Then $A(g,f)$ contains a noncommutative free subalgebra.
\item[(iii)] Suppose $\mathrm{max}\{n,m\} \leq 3$. Then $A(g,f)$ is a finite module over its affine centre. It is AS-Gorenstein and GK-Cohen Macaulay, with 
$$ \mathrm{GKdim} (A(g,f) )= n + m - 2. $$
\item[(iv)] If the curve $\mathcal{C}$ is singular, then $A(g,f)$ has infinite global dimension. 
\end{enumerate}
\end{theorem}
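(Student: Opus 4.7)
The proof divides into four parts, matching items (i)--(iv). For part (i), generation is immediate from the defining presentation
\[
A(g,f) \;=\; A(x,a,g) \otimes_k A(y,b,f) \big/ \langle a^n - b^m,\, g - f\rangle.
\]
For the embedding, I would use the PBW bases of $A(x,a,g)$ and $A(y,b,f)$ supplied by Theorem \ref{halfprop}(ii) to put a tensor-product PBW basis on $A(x,a,g) \otimes A(y,b,f)$. Since $a^n$ and $g(x)$ are central in $A(x,a,g)$ and $A(x,a,g)$ is free over the polynomial subalgebra $k[a^n,g(x)]$ (readable from the PBW basis), one should be able to rewrite the quotient as a relative tensor product
\[
A(g,f) \;\cong\; A(x,a,g) \otimes_{k[z_1,z_2]} A(y,b,f),
\]
with $z_1 = a^n = b^m$ and $z_2 = g(x) = f(y)$. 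Faithful flatness of both factors over $k[z_1,z_2]$ then yields the embeddings $A(x,a,g),\,A(y,b,f) \hookrightarrow A(g,f)$.

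Part (ii) is an immediate corollary of (i) combined with Theorem \ref{halfprop}(iv): if $\max\{n,m\} > 3$, then one of the two sub-Hopf algebras contains a noncommutative free subalgebra, which survives in $A(g,f)$ thanks to the embedding just established.

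For part (iii), assume $\max\{n,m\} \leq 3$. By Theorem \ref{halfprop}(iii),(v)--(vii) each of $A(x,a,g)$ and $A(y,b,f)$ is a finite module over its affine centre, AS-regular, and GK-Cohen--Macaulay, with GK-dimensions $n$ and $m$. Hence the tensor product $A(x,a,g)\otimes A(y,b,f)$ inherits all these properties, has GK-dimension $n+m$, and is finite over its centre. The core step is to verify that $a^n - b^m$ and $g(x) - f(y)$ form a \emph{regular normal sequence of central elements} in the tensor product; this can be checked on the associated graded algebra using the PBW basis, where the two defining polynomials involve the algebraically independent central variables $a^n,\,b^m,\,g(x),\,f(y)$. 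Once regularity is in hand, the standard facts that (a) quotienting an AS-regular algebra by a central regular element yields an AS-Gorenstein algebra of one lower GK-dimension, and (b) GK-Cohen--Macaulayness and the property of being finite over the centre descend through such quotients, deliver the three claims.

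Part (iv) uses the quantum homogeneous space structure of Theorem \ref{main}: $\mathcal{O}(\mathcal{C})$ is a right coideal subalgebra of $A(g,f)$ over which $A(g,f)$ is faithfully flat. A singular plane curve has $\mathcal{O}(\mathcal{C})$ of infinite global dimension at the singular point, and faithfully flat descent of projectivity (as in \cite{Brown},\cite{KM}) shows that finite global dimension of $A(g,f)$ would force finite global dimension of $\mathcal{O}(\mathcal{C})$, giving the desired contradiction. The main obstacle throughout is part (iii): establishing the regular-normal-sequence property of the two defining relations in the tensor product and then transferring AS-regularity/AS-Gorensteinness and GK-Cohen--Macaulayness cleanly to the quotient; the remaining parts reduce, with appropriate care, to the structural results already proved for the building blocks $A(x,a,g)$ and $A(y,b,f)$.
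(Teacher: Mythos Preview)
Your overall strategy matches the paper's, but there are gaps in parts (i) and (iv), and your route in (iii) is harder than necessary.

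\textbf{Part (i).} Your PBW argument only covers $\deg g,\deg f\le 5$: Theorem~\ref{halfprop}(ii) gives no PBW basis for $A(x,a,x^n)$ when $n>5$, so under hypothesis~\textbf{(H)} your relative-tensor-product description is not available in that case. The paper avoids PBW entirely here: it shows $k[f,b^{\pm m}]$ is a right coideal subalgebra of $A(y,b,f)$, invokes Masuoka's theorem to get faithful flatness of $A(y,b,f)$ over it, and hence of $T$ over $D:=A(x,a,g)\otimes k[f,b^{\pm m}]$. Since $D/J\cong A(x,a,g)$ with $J=(f-g,b^m-a^n)D$ and $T/JT=A(g,f)$, faithful flatness descends modulo $J$ and forces the embedding. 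This works uniformly across \textbf{(H)}.

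\textbf{Part (iii).} Your plan---pass AS-regularity to $T$ and then descend AS-Gorenstein and GK-Cohen--Macaulay through the regular central sequence---can be made to work, but each step (AS-regularity of the tensor product, Rees-type descent of the Gorenstein condition) needs justification. The paper shortcuts this: once $A(g,f)$ is shown to be an affine noetherian Hopf algebra finite over its centre (inherited from $T$), the Wu--Zhang theorem gives AS-Gorenstein and GK-Cohen--Macaulay directly. The regular sequence is then used only to compute $\mathrm{GKdim}(A(g,f))=\mathrm{GKdim}(T)-2=n+m-2$.

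\textbf{Part (iv).} Your descent claim is too strong: faithful flatness of $A(g,f)$ over $\mathcal{O}(\mathcal{C})$ does \emph{not} force $\mathrm{gl.dim}\,\mathcal{O}(\mathcal{C})<\infty$ when $\mathrm{gl.dim}\,A(g,f)<\infty$. What the paper actually uses is much more specific: since $\varepsilon(x)=\varepsilon(y)=0$, the trivial $A(g,f)$-module $k$ restricts to $\mathcal{O}(\mathcal{C})/\langle x,y\rangle$. If $A(g,f)$ had finite global dimension, a finite projective $A(g,f)$-resolution of $k$ would restrict (by flatness over $\mathcal{O}(\mathcal{C})$) to a finite flat, hence projective, $\mathcal{O}(\mathcal{C})$-resolution of $\mathcal{O}(\mathcal{C})/\langle x,y\rangle$. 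This contradicts infinite projective dimension at the singular point---but note the argument as written only detects a singularity \emph{at the origin}; the paper explicitly flags the general singular case as open.

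Part (ii) is fine and agrees with the paper.
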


Part (i) of the above is Theorem \ref{coideal}(ii), part (ii) is Theorem \ref{coideal}(iii)(a), and parts (iii) and (iv) come from Theorem \ref{theworks}.

\subsection{Layout and notation}\label{hypoth} The structure of the paper is as follows. In $\S$1 the Hopf algebra $A(x,a,g)$ is defined, with a PBW theorem proved for $A(x,a,g)$ in 
$\S$2 when the polynomial $g(x)$ has degree at most 5. (However the tedious and very long calculations needed are relegated to an Appendix, available separately.) The special case $g(x) = x^n,$ $n$ arbitrary, is studied in $\S$3. The subject of $\S$4 is the detailed structure of $A(x,a,g)$ when $g(x)$ has degree 2 or 3. The algebras $A(g,f)$ are defined and studied in $\S$5 and Theorems \ref{main} and \ref{wholeprop} are proved. In the final $\S$6 a substantial number of examples are explicitly presented by generators and relations, and their properties discussed. 

All unadorned tensor products are assumed to be over $k$. We shall use the standard notation $\Delta, \varepsilon, S$ for the coproduct, counit and antipode of a Hopf algebra, as defined in \cite{montgomery}, for example. The centre of a $k$-algebra $R$ will be denoted by $Z(R)$.


\section{The Hopf algebra $A(x,a,g)$ }\label{1}
\subsection{Generators and relations}\label{nota}
Let $F= \field[x]*\field[a^{\pm 1}]$ be the free product of the polynomial algebra $\field[x]$ with the Laurent polynomial algebra $\field[a^{\pm 1}]$. It is easy to check that $F$ is a Hopf algebra with $x$ $(1,a)$-primitive and $a$ group-like. More precisely, we have the following easy lemma whose verification is left to the reader.
\begin{lemma}
\label{hopf}
The algebra $F$  admits a unique Hopf algebra structure whose coproduct $ \Delta $, counit $ \varepsilon $ and antipode $S$ satisfy:
\begin{gather*}
	\Delta(x) 
= 1 \otimes  x  + x \otimes
a,\qquad
	\Delta (a)=a \otimes a,\\
	\varepsilon(x) = 0, \quad
	\varepsilon(a)=1, \qquad
           S(x) =  - x a^{-1},\quad
	S(a) = a^{-1}.
\end{gather*}
\end{lemma}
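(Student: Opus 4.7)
The plan is to exploit the universal property of the free product $F = k[x] * k[a^{\pm 1}]$: for any $k$-algebra $B$, an algebra homomorphism $F \to B$ is uniquely determined by a pair of algebra homomorphisms $k[x] \to B$ and $k[a^{\pm 1}] \to B$, and any such pair extends uniquely. This immediately gives uniqueness of $\Delta$, $\varepsilon$ and $S$ (viewing $S$ as a homomorphism from $F$ to $F^{\mathrm{op}}$) once their values on the generators $x$ and $a^{\pm 1}$ are prescribed, since $F$ is generated as an algebra by $\{x,a^{\pm 1}\}$.

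For existence, I would first define $\Delta : F \to F \otimes F$ by specifying the algebra maps $k[x] \to F \otimes F$, $x \mapsto 1 \otimes x + x \otimes a$, and $k[a^{\pm 1}] \to F \otimes F$, $a \mapsto a \otimes a$. The latter is legitimate because $a \otimes a$ is invertible in $F \otimes F$ with inverse $a^{-1} \otimes a^{-1}$. Gluing via the universal property yields the desired algebra homomorphism $\Delta$. Similarly, $\varepsilon$ arises from $x \mapsto 0$, $a \mapsto 1$, and $S : F \to F^{\mathrm{op}}$ from $x \mapsto -x a^{-1}$, $a \mapsto a^{-1}$ (again $a^{-1}$ is invertible in $F^{\mathrm{op}}$).

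It then remains to check the Hopf algebra axioms: coassociativity of $\Delta$, the counit property, and the antipode property. The key observation is that each of these is an identity between algebra homomorphisms out of $F$ (for instance $(\Delta \otimes \mathrm{id})\Delta$ and $(\mathrm{id} \otimes \Delta)\Delta$ are both algebra maps $F \to F^{\otimes 3}$, and $m(S \otimes \mathrm{id})\Delta$ and $\eta \varepsilon$ are algebra maps $F \to F$). By the universal property, it suffices to check equality on the generators $x$ and $a^{\pm 1}$. These verifications are straightforward: for instance $(\Delta \otimes \mathrm{id})\Delta(x) = 1 \otimes 1 \otimes x + 1 \otimes x \otimes a + x \otimes a \otimes a = (\mathrm{id} \otimes \Delta)\Delta(x)$, and $m(S \otimes \mathrm{id})\Delta(x) = -x a^{-1} a + x = 0 = \eta \varepsilon(x)$.

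I do not anticipate a serious obstacle here; the only subtlety is to recognise that the target of the antipode must be the opposite algebra, so that the assignment on generators extends to an algebra map, and that one should check the defining identities as identities of algebra (or coalgebra) maps out of $F$ in order to reduce to the generators. Everything else is a routine verification on $x$ and $a$, which is why the paper leaves it to the reader.
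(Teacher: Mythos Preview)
Your approach is correct and is exactly the standard one; the paper itself gives no proof, leaving the verification to the reader, so your write-up is already more detailed than what appears there.

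There is one small slip worth flagging. You assert that $m(S \otimes \mathrm{id})\Delta$ is an algebra homomorphism $F \to F$, and use this to reduce the antipode identity to a check on generators. But $m(S \otimes \mathrm{id})\Delta$ is \emph{not} an algebra map in general: multiplication $m : F \otimes F \to F$ is not a ring map when $F$ is noncommutative, so the composite has no reason to be multiplicative a priori. The correct argument is that the set
\[
\{\,h \in F : \textstyle\sum S(h_{(1)})h_{(2)} = \varepsilon(h)1 = \sum h_{(1)}S(h_{(2)})\,\}
\]
is a subalgebra of $F$ once one knows that $\Delta$ is an algebra map and $S$ is an anti-algebra map: for $g,h$ in this set, $\sum S(g_{(1)}h_{(1)})g_{(2)}h_{(2)} = \sum S(h_{(1)})S(g_{(1)})g_{(2)}h_{(2)} = \varepsilon(g)\sum S(h_{(1)})h_{(2)} = \varepsilon(gh)1$, and similarly on the other side. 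Hence checking the antipode axiom on $x$ and $a^{\pm 1}$ suffices. Everything else in your sketch (coassociativity, counit, and the construction of $\Delta,\varepsilon,S$ via the universal property) is fine as stated.
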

Define a $(\mathbb{Z}_{\geq 0},\mathbb{Z}_{\geq 0})$-grading on $F$ by giving $a$ and $x$ the degrees $(1,0)$ and $(0,1)$ respectively.  For $i, j \in \mathbb{Z}_{\geq 0}$, let $P(j,i)_{(a,x)}$ denote the sum of all monomials in $F$ of degree $(j,i)$; in particular we set $P(0,0)_{(a,x)} = 1$. It is convenient in the proof of Lemma \ref{xa} to extend the argument of $P(j, i)_{(a,x)}$ to $\mathbb{Z} \times \mathbb{Z}$, by setting $P(j,i)_{(a,x)} = 0$ if $i$ or $j$ is less than 0. We'll omit the suffix $(a,x)$ whenever the variables involved are clear from the context.

Let $n \geq 2$  and $ g := g(x)=\sum_{i=1}^{n}r_{i}x^{i} \in \field[x]$ with $r_n \neq 0$. Define $A(x,a,g)$ to be the quotient of $F$ by the ideal $I$ generated by the elements
\begin{equation}
\label{ij}
        \sigma_j = \sigma_j (x,a,g) := \sum_{i=j}^{n} r_{i} P(j,i-j)_{(a,x)} - r_{j}a^{n}, \; \; \quad j = 1, \ldots , n-1.
\end{equation}
As indicated, we'll omit the variables and simply write $\sigma_j$ whenever possible. It will be useful also for us sometimes to view the elements $\sigma_j$ of $F$ as members of the free subalgebra $E := k \langle x,a \rangle$ of $F$. We define $I_g := \langle \sigma_j : 1 \leq j \leq n-1 \rangle$, an ideal of $E$, and 
\begin{equation}\label{bialg}A_0(x,a,g) :=  E / I_g.
\end{equation}
Throughout the paper, we shall slightly abuse notation by using the same symbols for the elements $x$ and $a$ of $E$ and $F$ and for their cosets in $A_0(x,a,g)$ and $A(x,a,g)$. We shall on some occasions find it convenient to assume that $g(x)$ is normalised so that its highest coefficient  $r_n$ equals $1$. The form of the relations (\ref{ij}) ensures that this does not affect the definition of $A_0(x,a,g)$ or $A(x,a,g)$.
\subsection{$A(x,a,g)$ is a Hopf algebra }\label{A}
 To prove that the Hopf algebra structure on $F$ descends to $A(x,a,g)$  we need the following lemma.

\begin{lemma}\label{xa} Retain the notation of $\S$1.1.
\begin{enumerate}
\item[(i)] In $F$, for $\ell \in \mathbb{Z}_{\geq 0}$, 
$$ \Delta (x^{\ell}) = \sum_{s=0}^{\ell} x^s \otimes P(s, \ell - s). $$

\item[(ii)] In $F$, for all $j, t \in \mathbb{Z}_{\geq 0}$
$$ \Delta (P(j,t)) = \sum_{\ell = 0}^t P(j, \ell) \otimes P(j+ \ell, t - \ell). $$

\item[(iii)] Modulo $I \otimes F + F \otimes I$,
$$ \Delta(g) \equiv 1 \otimes g  +  g \otimes a^{n} \label{x} . $$

\item[(iv)]  Let $j \in \mathbb{Z}$ with $1 \leq j \leq n-1$. Modulo $I \otimes F + F \otimes I$,
$$ \Delta( \sum_{\ell =j}^{n} r_{\ell} P(j,\ell -j)  ) \equiv  r_{j}a^{n} \otimes a^{n}. $$
\end{enumerate}
\end{lemma}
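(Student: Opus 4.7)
The plan is to treat (i) and (ii) as purely combinatorial identities in $F$, independent of the relations defining $I$, and then to deduce (iii) and (iv) from them by invoking the fact that each $\sigma_j$ lies in $I$.

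For (i), induct on $\ell$. The key combinatorial input is the recursion
\[
P(s, \ell + 1 - s) \;=\; P(s, \ell - s)\cdot x \;+\; P(s-1, \ell - s + 1)\cdot a,
\]
which simply records that every monomial of bidegree $(s, \ell+1-s)$ in $F$ ends in either $x$ or $a$. Multiplying the inductive hypothesis on the right by $\Delta(x) = 1 \otimes x + x \otimes a$ and collecting terms by the exponent of $x$ in the left tensor factor yields the claim.

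For (ii), induct on the total degree $j + t$, using the analogous right-recursion $P(j, t) = P(j, t-1)\cdot x + P(j-1, t)\cdot a$. The boundary cases $j = 0$ (which reduces to (i)) and $t = 0$ (where $\Delta(a^j) = a^j \otimes a^j$) are immediate. In the inductive step, applying $\Delta$ to both terms on the right of the recursion and inserting the inductive hypothesis for $\Delta(P(j, t-1))$ and $\Delta(P(j-1, t))$ (both of strictly smaller total degree) yields three sums. Collapsing them back into the target $\sum_{m=0}^t P(j, m)\otimes P(j+m, t-m)$ requires re-indexing and the substitution $P(j, m)\cdot x = P(j, m+1) - P(j-1, m+1)\cdot a$ on left tensor factors; the various $P(j-1, \bullet)\,a \otimes \cdots$ contributions then cancel in pairs. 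I expect this bookkeeping to be the main obstacle, but nothing deeper than telescoping is involved.

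Parts (iii) and (iv) then follow mechanically from (i), (ii) and the relations $\sigma_j \in I$. For (iii), apply (i) to $g = \sum_{i=1}^{n} r_i x^i$ and separate the $s=0$ and $s=n$ pieces to obtain
\[
\Delta(g) \;=\; 1 \otimes g \;+\; r_n x^n \otimes a^n \;+\; \sum_{s=1}^{n-1} x^s \otimes \Bigl(\sum_{i=s}^{n} r_i\,P(s, i-s)\Bigr).
\]
For each $1 \leq s \leq n-1$ the inner sum is by definition $\sigma_s + r_s a^n \equiv r_s a^n \pmod{I}$, so the expression reduces modulo $F \otimes I$ to $1\otimes g + \sum_{s=1}^{n} r_s x^s \otimes a^n = 1 \otimes g + g \otimes a^n$. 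For (iv), apply (ii) to each $P(j, \ell-j)$ and swap the order of summation with $k := j + m$ to get
\[
\Delta\Bigl(\sum_{\ell=j}^{n} r_\ell\,P(j, \ell-j)\Bigr) \;=\; \sum_{k=j}^{n} P(j, k-j) \otimes \Bigl(\sum_{\ell=k}^{n} r_\ell\,P(k, \ell-k)\Bigr).
\]
The inner right-hand sum equals $\sigma_k + r_k a^n \equiv r_k a^n \pmod{I}$ for $k \leq n-1$, and is exactly $r_n a^n$ for $k = n$; so modulo $F \otimes I$ the expression collapses to $\bigl(\sum_{k=j}^{n} r_k\,P(j, k-j)\bigr) \otimes a^n$. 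The left tensor factor is now $\sigma_j + r_j a^n \equiv r_j a^n \pmod{I}$, giving $r_j a^n \otimes a^n$ modulo $I \otimes F + F \otimes I$, as required.
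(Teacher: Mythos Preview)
Your proposal is correct and follows essentially the same route as the paper: direct expansion for (i), induction on total degree $j+t$ using the recursion $P(j,t)=P(j,t-1)x+P(j-1,t)a$ for (ii), and then (iii) and (iv) follow by applying (i) and (ii) respectively, swapping summations, and twice invoking $\sigma_k\in I$. The only cosmetic difference is that the paper states (i) as a one-line combinatorial observation about $(1\otimes x+x\otimes a)^\ell$ rather than an explicit induction, and carries out the telescoping in (ii) in full detail where you sketch it.
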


\begin{proof}(i) Let $\ell \in \mathbb{Z}_{\geq 0}$ Then
$$
\Delta (x^{\ell}) = \Delta(x)^{\ell}\\
= (1 \otimes x + x \otimes a)^{\ell}\\
= \sum_{s=0}^{\ell} x^s \otimes P(s, \ell - s).
$$
\noindent (ii) We prove this by induction on $j+t$. For $n \in \mathbb{Z}_{> 0}$, let $\mathcal{P}(n)$ be the statement that, in $F$, for all $j,t \in \mathbb{Z}_{\geq 0}$ with $n=j+t$,
$$
  \Delta (P(j,t)) = \sum_{\ell = 0}^t P(j, \ell) \otimes P(j+ \ell, t - \ell).
$$
Then $\mathcal{P}(1)$ is true since $\Delta(P(1,0))= a \otimes a$ and $\Delta(P(0,1))= 1 \otimes x + x \otimes a.$ 

Let $m \in \mathbb{Z}_{> 0}$ and suppose that $\mathcal{P}(\ell)$  is true for all $\ell = 1, \ldots , m$. Consider $P(j,t)$ where $j,t \in \mathbb{Z}_{\geq 0}$ with $j + t = m + 1$. If $t = 0$ then $j = m + 1$, and 
$$ \Delta (P(m + 1, 0)) = a^{m+1} \otimes a^{m+1} = P(m + 1, 0) \otimes P(m + 1, 0). $$
If $j= 0$ then $t = m+1$ and, by part (1),
$$ \Delta (P( 0, m+1)) = \Delta (x^{m+1}) 
= \sum_{s=0}^{m+1} x^s \otimes P(s, m+1 - s)
= \sum_{s=0}^{m+1} P(0,s) \otimes P(s, m+1 - s).
$$
Thus $\mathcal{P}(m+1)$ holds when either $j$ or $t$ is $0$.

In what follows we shall frequently use the trivial identities
\begin{equation}
\label{heart}
 P(r,s) \quad = \quad P(r, s-1)x + P(r-1,s)a,
\end{equation}
which hold for all $r,s \geq 0$.

Suppose now that $j + t= m +1$ with $j,t \in \mathbb{Z}_{>0}$. By (\ref{heart}) and $\mathcal{P}(\ell)$ for $\ell = 1, \ldots , m$,
\begin{eqnarray*} \Delta(P(j,t)) &=&\Delta(P(j,t-1))\Delta(x) + \Delta(P(j-1,t))\Delta(a)\\
 &=&\left( \sum_{\ell=0}^{t-1}P(j,\ell) \otimes P(j+\ell, t-1-\ell) \right) \left(1 \otimes x + x \otimes a \right) \\
&& \qquad \qquad \qquad + \left( \sum_{\ell=0}^{t}P(j-1,\ell) \otimes P(j-1+\ell, t-\ell) \right) (a \otimes a)\\
&=& \sum_{\ell=0}^{t-1}P(j,\ell) \otimes P(j+\ell, t-1-\ell)x + \sum_{\ell=0}^{t-1}P(j,\ell)x \otimes P(j+\ell, t-1-\ell)a\\
&& \qquad \qquad \qquad + \sum_{\ell=0}^{t}P(j-1,\ell)a \otimes P(j-1+\ell, t-\ell)a\\
&=& P(j,0) \otimes P(j,t-1)x +  \sum_{\ell=1}^{t-1}P(j,\ell) \otimes P(j+\ell, t-1-\ell)x\\
&& \qquad + \sum_{\ell=0}^{t-2}P(j,\ell)x \otimes P(j+\ell, t-1-\ell)a + P(j,t-1)x \otimes P(j+t-1,0)a\\
&& \qquad  + P(j-1,0)a \otimes P(j-1,t)a + \sum_{\ell=1}^{t-1}P(j-1,\ell)a \otimes P(j-1+\ell, t-\ell)a\\ 
&& \qquad \qquad \qquad + P(j-1,t)a \otimes P(j+t-1,0)a. 
\end{eqnarray*} 
\vspace{-.01cm}
Replacing $P(j-1,0)a$ and $P(j+t-1,0)a$ in this last expression by, respectively, $P(j,0)$ and $P(j+t,0)$ yields the first equality below, and the second follows by two applications of (\ref{heart}) to gather terms:
\vspace{-.5cm}
\begin{eqnarray*}\Delta(P(j,t)) &=& P(j,0) \otimes \big( P(j,t-1)x + P(j-1,t)a \big) + \sum_{\ell=1}^{t-1}P(j,\ell) \otimes P(j+\ell, t-1-\ell)x\\
&& \qquad + \sum_{\ell=0}^{t-2}P(j,\ell)x \otimes P(j+\ell, t-1-\ell)a \\
&&+ \sum_{\ell=1}^{t-1}P(j-1,\ell)a \otimes P(j-1+\ell, t-\ell)a\\
&&  \qquad +  (P(j,t-1)x + P(j-1,t)a )\otimes P(j+t,0)\\
&=& P(j,0) \otimes P(j,t) +\sum_{\ell=1}^{t-1}P(j,\ell) \otimes P(j+\ell, t-1-\ell)x\\
&&  \qquad \qquad + \sum_{\ell=0}^{t-2}P(j,\ell)x \otimes P(j+\ell, t-1-\ell)a \qquad \qquad (*)\\
&& \qquad \qquad + \sum_{\ell=1}^{t-1}P(j-1,\ell)a \otimes P(j-1+\ell, t-\ell)a\\
&& \qquad \qquad + P(j,t) \otimes P(j+t, 0).
\end{eqnarray*}
\vspace{-.1cm}
We analyse further the second summation term (*) in the final expression above, using the identities (\ref{heart}) for the first equality, and identities (\ref{heart}) along with renumbering of the summations for the third equality.
\begin{eqnarray*}
 &&  (**) \qquad \qquad \sum_{\ell = 0}^{t-2} P(j,\ell)x \otimes P(j+\ell, t-1-\ell)a \\
&& \qquad  = \sum_{\ell=0}^{t-2}\left(P(j,\ell +1) - P(j-1, \ell + 1)a\right) \otimes \left(P(j + \ell + 1, t - \ell -1)) - P(j+ \ell + 1, t - \ell -2)x\right)\\
&& \qquad  = \sum_{\ell=0}^{t-2}P(j,\ell + 1) \otimes P(j + \ell + 1, t - \ell - 1) -   \sum_{\ell=0}^{t-2}P(j,\ell + 1) \otimes P(j + \ell + 1, t - \ell - 2)x\\
&& \qquad  -  \sum_{\ell=0}^{t-2}P(j-1, \ell + 1)a \otimes P(j + \ell + 1, t - \ell - 1) +  \sum_{\ell=0}^{t-2} P(j-1, \ell + 1)a \otimes P(j + \ell + 1, t - \ell - 2) x\\
&& \qquad = \sum_{\ell=1}^{t-1}P(j,\ell) \otimes P(j + \ell , t - \ell ) -   \sum_{\ell=1}^{t-1}P(j,\ell + 1) \otimes P(j + \ell, t - \ell - 1)x\\
&& \qquad \qquad - \sum_{\ell=1}^{t-1} P(j-1,\ell )a \otimes P(j -1 + \ell, t-\ell )a. \end{eqnarray*}
Finally, substituting the right side of (**) for the summation (*) in the above expression for $\Delta(P(j,t))$, we obtain
\vspace{-.1cm}
\begin{eqnarray*} \Delta(P(j,t)) &=& P(j,0) \otimes P(j,t) + P(j,t) \otimes P(j+t,0)\\
&& \qquad \sum_{\ell = 1}^{t-1}P(j,\ell) \otimes P(j+\ell, t- \ell)\\
&=& \sum_{\ell = 0}^t P(j,\ell) \otimes P(j+\ell, t- \ell),\end{eqnarray*}
proving $\mathcal{P}(m)$, as required for the induction step.


\noindent (iii) It is convenient to define $r_0 =0$. By part (i), in $F$,
\vspace{-.1cm}
\begin{eqnarray*}
\Delta (g) &=& \sum_{\ell = 0}^n r_{\ell}\Delta( x^{\ell})\\
&=& \sum_{\ell = 0}^n r_{\ell}\left(\sum_{s=0}^{\ell} (x^s \otimes P(s, \ell - s)\right)\\
&=& \sum_{s=0}^n x^s \otimes\left (\sum_{\ell = s}^n r_{\ell} P(s, \ell - s)\right).
\end{eqnarray*}
Thus, recalling the generators (\ref{ij}) of $I$ and noting that $r_0 = 0$, the above identity in $F$ implies that, \emph{modulo}$(I \otimes F + F \otimes I),$
\begin{eqnarray*}
\Delta (g) &\equiv& 1 \otimes \left(\sum_{\ell = 1}^n r_{\ell}P(0, \ell)\right) + \sum_{s=1}^n x^s \otimes r_s a^n\\
&\equiv& 1 \otimes \left(\sum_{\ell=1}^n r_{\ell}x^{\ell}\right) + \left(\sum_{s=1}^n r_s x^s\right) \otimes  a^n,\\
\end{eqnarray*}
proving (iii).


\noindent (iv) Let $j \in \mathbb{Z}_{\geq 0}$, with $1 \leq j \leq n-1$. We calculate, using (ii) for the second equality, regrouping terms for the third, and then by two applications of the relations (\ref{ij}), that
\begin{eqnarray*} 
 \Delta \left( \sum_{i =j}^{n} r_{i} P(j,i -j)  \right) &=&   \sum_{i =j}^{n} r_{i} \Delta (P(j,i -j)  )\\
&=&   r_{j}P(j,0) \otimes P(j,0) + r_{j+1}\left(\sum_{i = 0}^{1}P(j,i) \otimes P(j+ i , 1 - i)\right) +\\ 
&& \qquad \cdots +  r_{n}\sum_{i = 0}^{n-j}\left(P(j,i) \otimes P(j+ i, n-j - i)\right)\\
 &=&  P(j,0) \otimes \sum_{s= j}^{n} r_{s} P(j,s- j) + P(j,1) \otimes \sum_{s = j+1}^{n} r_{s}P(j+1,s- j-1) \\
 &+ \cdots + &P(j,n-1-j) \otimes \left(\sum_{s = n-1}^{n} r_{\ell}P(n-1,s- n+1)\right)  \\
&+&  P(j,n-j) \otimes r_{n}P(n,0)\\
&\equiv&   \left(P(j,0) \otimes  r_{j} P(n,0)\right) + \left(P(j,1) \otimes  r_{j+1} P(n,0)\right)  \\
&& \qquad + \cdots + \left(P(j,n-j) \otimes r_{n}P(n,0)\right) \qquad (\textit{mod}I).
\end{eqnarray*}
We may thus conclude that, for $j = 1, \ldots , n-1$,
\begin{eqnarray*}
 \Delta( \sum_{i =j}^{n} r_{i} P(j,i -j)  ) &\equiv& \left(\sum_{i =j}^{n} r_{i} P(j,i-j)\right) \otimes   P(n,0) \\
&\equiv& r_j a^n \otimes   a^n,\\
\end{eqnarray*} 
as required.

\end{proof}

It is now a simple matter to deduce the following theorem. Notice in particular that the existence of the counit includes the implication that $A(x,a,g)$ is not $\{0\}$, a conclusion to be strengthened in Proposition \ref{cent}(iv).

\begin{theorem}\label{gee} Assume the notation and hypotheses from $\S$\ref{nota}. Then the $\field$-algebra $A:=A(x,a,g)$ is a Hopf algebra with the coproduct, counit and antipode defined in Lemma \ref{hopf}.
\end{theorem}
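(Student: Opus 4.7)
My plan is to prove that the two-sided ideal $I = \langle \sigma_1, \ldots, \sigma_{n-1} \rangle$ of $F$ is a \emph{Hopf ideal}, i.e.\ that $\Delta(I) \subseteq I \otimes F + F \otimes I$, $\varepsilon(I) = 0$, and $S(I) \subseteq I$. Given these three conditions, the Hopf algebra structure on $F$ descends to $A = F/I$ with the $\Delta$, $\varepsilon$, $S$ described in Lemma \ref{hopf}. Since $I$ is generated as an ideal by the $\sigma_j$, it suffices to check each condition on these generators.

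The coideal condition is essentially Lemma \ref{xa}(iv): writing $\sigma_j = \sum_{i=j}^n r_i P(j, i-j) - r_j a^n$, that lemma gives $\Delta\bigl(\sum_{i=j}^n r_i P(j, i-j)\bigr) \equiv r_j a^n \otimes a^n$ modulo $I \otimes F + F \otimes I$; meanwhile $a$ is group-like, so $\Delta(r_j a^n) = r_j a^n \otimes a^n$ identically in $F$. Subtracting gives $\Delta(\sigma_j) \in I \otimes F + F \otimes I$. The counit condition is a direct computation: every monomial appearing in $P(j, i-j)$ contains exactly $i-j$ factors of $x$, so $\varepsilon(P(j, i-j)) = \delta_{ij}$, and hence $\varepsilon(\sigma_j) = r_j - r_j \varepsilon(a^n) = 0$.

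The main obstacle is verifying $S(I) \subseteq I$, as Lemma \ref{xa} addresses only $\Delta$. The approach I would take is direct: expand $S(\sigma_j) = \sum_{i=j}^n r_i S(P(j, i-j)) - r_j a^{-n}$ by antimultiplicativity of $S$, then clear the negative powers of $a$ by conjugating $S(\sigma_j)$ by an appropriate monomial in $a^{\pm 1}$, aiming to produce an element that is (up to sign) a $\field$-combination of the $\sigma_j$. The $n = 2$ case is indicative: a short computation yields $a \cdot S(\sigma_1) \cdot a^2 = -\sigma_1$, so $S(\sigma_1) = -a^{-1} \sigma_1 a^{-2} \in I$. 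I expect the analogous pattern—conjugation of $S(\sigma_j)$ by a suitable power of $a$ returns $\pm \sigma_j$—to hold in general, reducing the verification to a combinatorial identity among the monomials making up the $P(j, i-j)$. A more conceptual alternative becomes available once the first two conditions make $A$ a bialgebra: a bialgebra generated by an invertible group-like $a$ and a $(1,a)$-skew-primitive $x$ admits a unique antipode extending $a \mapsto a^{-1}$, $x \mapsto -xa^{-1}$, which shifts the task to well-definedness of the anti-algebra map $\bar{S}$ on $A$—the same check in disguise.

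Finally, the descended counit $\varepsilon: A \to \field$ is a nonzero $\field$-algebra homomorphism, so $A \neq 0$, and the proof is complete.
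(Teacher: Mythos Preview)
Your treatment of the coideal and counit conditions is correct and matches the paper's proof exactly.

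The gap is in the antipode step. Your direct approach---conjugating $S(\sigma_j)$ by powers of $a$ and hoping for a combinatorial identity---is only verified for $n=2$, and ``I expect the analogous pattern to hold in general'' is not a proof. More importantly, you then mention the conceptual alternative but dismiss it as ``the same check in disguise.'' It is not. The paper invokes \cite[Proposition 7.6.3]{radford}: a bialgebra generated as an algebra by grouplike elements forming a group and by skew-primitive elements is automatically a pointed Hopf algebra. Once your first two checks establish that $A$ is a bialgebra, and since $a$ is invertible grouplike and $x$ is $(1,a)$-primitive in $A$, Radford's result supplies the antipode for free---there is no residual well-definedness check to perform. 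The antipode is then necessarily given by the formulae in Lemma~\ref{hopf} because those are forced on the generators. So the paper's route genuinely bypasses the computation you were bracing for, and your proposal would be complete if you simply committed to that alternative rather than treating it as equivalent to the direct check.
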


\begin{proof}
 By Lemma \ref{hopf}, it suffices to show that, for each generator $\sigma_{j}$ of $I_g$,
$$
\Delta(\sigma_{j}) \in I_g F\otimes F + F \otimes I_g F; \quad S(\sigma_{j}) \in I_g F; \quad \textit{and} \; \varepsilon(\sigma_{j})=0.
$$
First, for $j= 1, \ldots n-1$, $\Delta(\sigma_{j}) \in I_g F \otimes F + F \otimes I_g F$ by Lemma \ref{xa}(iv), and it is easy to check that $\varepsilon(\sigma_{j})=0$.  Thus $A(x,a,g)$  is a bialgebra, which by $\S$\ref{nota} is generated by the invertible grouplike element $a$, and the $\{1,a\}$-primitive element $x$. Hence, by \cite[Proposition 7.6.3]{radford}, it is a Hopf algebra, with antipode given by the formulae in Lemma \ref{hopf}. 
\end{proof}

\subsection{Scaling isomorphisms}\label{leading}

\begin{lemma}\label{coef}  Assume the notation and hypotheses from $\S$\ref{nota}, and let $\lambda \in k \setminus \{0\}$. Define  a $k$-algebra automorphism $\theta_{\lambda}$ of  $F = k\langle x,a \rangle$ by $\theta_{\lambda} (a) = a, \, \theta_{\lambda} (x) = \lambda x$. Set $$g^{\lambda}  := g(\lambda x) = \sum_{i=1}^n r_i (\lambda x)^i .$$
\begin{enumerate}
\item[(i)] $\theta_{\lambda} (I_g) = I_{g^{\lambda}}.$
\item[(ii)] $\theta_{\lambda}$ induces an algebra isomorphism from $A_0(x,a,g)$ to $A_0(x,a, g^{\lambda})$, and a Hopf algebra isomorphism from $A(x,a,g)$ to $A(x,a, g^{\lambda})$.
\end{enumerate} 
\end{lemma}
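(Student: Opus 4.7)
The plan is to exploit the fact that $\theta_{\lambda}$ is already a Hopf algebra automorphism of the ambient algebra $F$, and then to verify that it descends to the quotients by tracking how it transforms the defining relations $\sigma_j$.

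First I would check that $\theta_{\lambda}$ extends to a Hopf algebra automorphism of $F$: its algebra inverse is evidently $\theta_{\lambda^{-1}}$, and since $\theta_{\lambda}(a) = a$ remains group-like while $\theta_{\lambda}(x) = \lambda x$ remains $(1,a)$-primitive, Lemma \ref{hopf} guarantees that $\theta_\lambda$ commutes with $\Delta$, $\varepsilon$ and $S$. In particular $\theta_\lambda$ restricts to an algebra automorphism of the free subalgebra $E = k\langle x,a\rangle$ of $F$, so part (ii) will follow routinely once (i) is established.

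The core of the argument is part (i), which reduces to a direct computation. Since $\theta_\lambda$ multiplies each monomial of $x$-degree $t$ by $\lambda^t$ without touching its $a$-degree, I would first record the key scaling identity $\theta_\lambda(P(j,t)_{(a,x)}) = \lambda^{t}\, P(j,t)_{(a,x)}$. Substituting this into the definition (\ref{ij}) of $\sigma_j(x,a,g)$, and noting that the coefficients of $g^{\lambda}$ are $r_i \lambda^i$, a short bookkeeping exercise yields
$$
\theta_\lambda\bigl(\sigma_j(x,a,g)\bigr) \;=\; \sum_{i=j}^n r_i \lambda^{i-j} P(j,i-j)_{(a,x)} - r_j a^n \;=\; \lambda^{-j}\, \sigma_j(x,a,g^{\lambda}).
$$
Thus $\theta_\lambda$ carries the generating set $\{\sigma_j(x,a,g)\}$ of $I_g$ to nonzero scalar multiples of the generators of $I_{g^{\lambda}}$, giving $\theta_\lambda(I_g) \subseteq I_{g^{\lambda}}$. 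The reverse inclusion follows by running the same argument with $\theta_{\lambda^{-1}}$, using $(g^{\lambda})^{\lambda^{-1}} = g$.

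Finally, for part (ii), the restriction of $\theta_\lambda$ to $E$ sends $I_g$ onto $I_{g^{\lambda}}$ by part (i), so it descends to an algebra isomorphism $A_0(x,a,g) \to A_0(x,a,g^{\lambda})$. Extending back to $F$, one has $\theta_\lambda(I_g F) = I_{g^{\lambda}} F$, and combining this with the Hopf automorphism property established above yields the required Hopf algebra isomorphism $A(x,a,g) \to A(x,a,g^{\lambda})$. I do not anticipate any real obstacle: the only step requiring mild care is the verification of the scaling identity on $P(j,t)_{(a,x)}$ together with tracking the scalar $\lambda^{-j}$ that appears in $\theta_\lambda(\sigma_j)$.
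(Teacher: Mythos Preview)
Your proposal is correct and follows essentially the same route as the paper: both hinge on the identity $\theta_\lambda(\sigma_j(x,a,g)) = \lambda^{-j}\sigma_j(x,a,g^{\lambda})$, from which (i) and then (ii) follow immediately. You have simply made explicit the scaling $\theta_\lambda(P(j,t)) = \lambda^t P(j,t)$ and the Hopf compatibility of $\theta_\lambda$ that the paper leaves as routine checks.
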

\begin{proof} (i) One checks easily that, for $j = 1, \ldots , n-1,$ 
$$ \theta_{\lambda} (\sigma_j (x,a,g) ) \; = \; \lambda^{-j} \sigma_j (x,a,g^{\lambda}). $$

\noindent (ii) It is immediate from (i) that $\theta_{\lambda}$ induces an algebra isomorphism from $A_0(x,a,g)$ to $A_0(x,a, g^{\lambda})$, which clearly extends to $A(x,a,g)$ since $\theta_{\lambda}$ extends to an automorphism of $F$. It is routine to check that the map respects the Hopf operations.
\end{proof}

\begin{remark}\label{scale} Lemma \ref{coef} permits us to adjust the defining polynomial $g$, for example by ensuring that the polynomial is monic, as we shall frequently do in the sequel, to ease calculations. 
\end{remark}

\subsection{First  properties of  the algebras $A(x,a,g)$ and $A_0(x,a,g)$}\label{indep}
Here are some elementary consequences of the defining relations for $A(x,a,g)$ and $A_0(x,a,g)$, valid for all choices of $g = g(x)$.

\begin{proposition} 
\label{cent}
Let $A := A(x,a,g)$ be as defined in $\S$\ref{nota}, so $g$ has degree $n$. Then the following hold.
\begin{enumerate}
\item[(i)] The element $g$ of $A$ is $(1,a^{n})$-primitive.
\item[(ii)] $A$ is a pointed Hopf algebra.
\item[(iii)] $k\langle g, a^{\pm n} \rangle$ of $A$ is a central Hopf subalgebra of $A$.
\item[(iv)] The polynomial algebra $k[x]$  embeds in $A_0(x,a,g)$.
\end{enumerate}
\end{proposition}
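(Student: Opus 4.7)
Parts (i) and (ii) are quick consequences of what precedes. For (i), Lemma \ref{xa}(iii) gives $\Delta(g) \equiv 1 \otimes g + g \otimes a^n$ modulo $I \otimes F + F \otimes I$, and passing to the quotient $A = F/I$ this becomes a genuine identity in $A \otimes A$. For (ii), the algebra $A$ is generated by the group-like $a^{\pm 1}$ and the $(1,a)$-primitive element $x$, so pointedness follows from the standard fact that any Hopf algebra generated as an algebra by group-likes and skew-primitives is pointed (e.g., Montgomery, Lemma 5.5.1, or Radford, Corollary 5.1.14).

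For (iii) I would first establish centrality of $a^n$ and $g$; after that, closure of $k\langle g, a^{\pm n}\rangle$ under the Hopf operations is immediate from (i) together with the identity $S(g) = -ga^{-n}$, obtained by applying the antipode axiom to $\Delta(g) = 1 \otimes g + g \otimes a^n$. Centrality of $a^n$ follows by computing $a\sigma_{n-1} - \sigma_{n-1}a$ in $A$: the $r_{n-1}$-contributions cancel, leaving $r_n\bigl(aP(n-1,1) - P(n-1,1)a\bigr) = 0$, which telescopes to $r_n(a^n x - xa^n) = 0$ because $P(n-1,1) = \sum_{k=0}^{n-1} a^{n-1-k}xa^k$. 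Centrality of $g$ then comes from computing $x\sigma_1 - \sigma_1 x$, using the analogous telescoping identity $xP(1,k) - P(1,k)x = x^{k+1}a - ax^{k+1}$ to reduce the equation to $g(x)a - ag(x) - r_1(xa^n - a^n x) = 0$; the last bracket vanishes by the centrality of $a^n$ just proved, so $ag = ga$, and commutation of $g$ with $x$ is automatic.

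The most subtle part, in my view, is (iv), and the key idea is to grade the free algebra $E = k\langle x, a\rangle$ by $a$-degree, giving $x$ degree $0$ and $a$ degree $1$. Each defining relation $\sigma_j$ is visibly supported in degree $\geq 1$: every monomial of $P(j, i-j)$ contains exactly $j \geq 1$ occurrences of $a$, and the extra summand $-r_j a^n$ contains $n \geq 1$. Hence the two-sided ideal $I_g = \langle \sigma_j : 1 \leq j \leq n-1 \rangle$ lies in the graded ideal $E_{\geq 1} = \bigoplus_{k \geq 1} E_k$, so $k[x] \cap I_g \subseteq E_0 \cap E_{\geq 1} = 0$, making the composition $k[x] \hookrightarrow E \twoheadrightarrow E/I_g = A_0(x,a,g)$ injective. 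The main obstacle is really spotting this $a$-grading and verifying that it is respected by every $\sigma_j$; once that is in hand, the remaining arguments are short manipulations of the defining relations together with standard Hopf-algebraic facts.
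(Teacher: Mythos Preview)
Your proof is correct and follows essentially the same route as the paper. For (iii) you package the telescoping computations as $a\sigma_{n-1}-\sigma_{n-1}a$ and $x\sigma_1-\sigma_1 x$, which is a slightly cleaner way of stating the paper's premultiply-and-substitute argument; for (iv) your $a$-degree grading is exactly the observation the paper phrases as $\sigma_j \in EaE$, since $E_{\geq 1} = EaE$.
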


\begin{proof}(i) This is immediate from Lemma \ref{xa}(iii).

\noindent (ii) The Hopf algebra $A$ is pointed by \cite[Corollary 5.1.14(a)]{radford}, since it is generated by the grouplike element $a$ and the skew-primitive element $x$.

\noindent (iii) In view of (i) and the fact that $a^n$ is grouplike, it suffices to show that $a^{n}$ commutes with $x$ and that $g$ commutes with $a$. We can assume without loss of generality that $g$ is a monic polynomial.

From the defining relations (\ref{ij}) in $\S$1.1, the elements $a,x$ of $A$ satisfy $\sigma_{n-1}$, namely
$$ r_{n-1} a^{n-1} + P(n-1,1) = r_{n-1}a^n. $$ 
Thus
\begin{equation}\label{bingo} a^{n-1}x = -( xa^{n-1} + axa^{n-2}  + \cdots + a^{n-2}xa ) - r_{n-1}a^{n-1} +   r_{n-1} a^{n},
\end{equation}
and so, pre-multiplying (\ref{bingo}) by $a$, yields
$$
a^{n}x = - ( axa^{n-1} + a^{2}xa^{n-2} + \cdots + a^{n-1}xa ) - r_{n-1}a^{n} +  r_{n-1}a^{n+1}.
$$
Using (\ref{bingo}) to replace the term $ a^{n-1}xa = (a^{n-1}x)a$ in this identity yields 
\begin{eqnarray*}
a^{n}x \quad &=& \quad  - ( axa^{n-1} + a^{2}xa^{n-2}  + \cdots + a^{n-2}xa^{2} )  - r_{n-1}a^{n} +   r_{n-1}a^{n+1}\\ 
&& \qquad + xa^{n} +  axa^{n-1} + a^{2}xa^{n-2}  + \cdots + a^{n-2}xa^{2}  + r_{n-1}a^{n} -   r_{n-1}a^{n+1}.
\end{eqnarray*}
That is, $a^{n}x =  xa^{n} $ as required. 

To show that $g$ commutes with $a$, begin with relation $\sigma_{1}$ from (\ref{ij}), namely
$$
\sum_{i=1}^{n}r_{i}P(1, i-1) = r_{1}a^{n}.
$$
That is, in $A$,
\begin{equation}\label{ludo}
ax^{n-1} = - ( x^{n-1}a + x^{n-2}ax + \cdots + xax^{n-2} ) -  \sum_{i=1}^{n-1} r_{i}P(1,i-1) + r_{1}a^{n}.
\end{equation}
Post-multiplying (\ref{ludo}) by $x$ and then using (\ref{ludo}) to replace the term $xax^{n-1}$ in the resulting identity, we obtain
\begin{eqnarray*}
ax^{n} &=& - ( x^{n-1}ax + x^{n-2}ax^{2} + \cdots + x^{2}ax^{n-2} ) -  \sum_{i=1}^{n-1} r_{i}P(1,i-1)x + r_{1} a^{n}x\\
&& \quad + ( x^{n}a + x^{n-1}ax + \cdots + x^{2}ax^{n-2} ) +  \sum_{i=1}^{n-1} r_{i} x P(1,i-1) -  r_{1} xa^{n}.
\end{eqnarray*}
The monomials in the above identity which begin and end with $x$ cancel, so that, in $A$,
$$
ax^{n} = x^{n}a -  a \left( \sum_{i=1}^{n-1} r_{i}x^{i}\right) +  \left( \sum_{i=1}^{n-1} r_{i}x^{i} \right) a + r_1(a^nx - xa^n).  
$$
Since $a^n$ is central as proved above, it follows that $ag = ga$, so $g \in Z(A)$. Therefore, by (i), the proof of (iii) is complete.

\noindent (iv) Recall that $E$ denotes the free algebra $k\langle x,a \rangle$. Since $\sigma_j \in EaE$ for all $j = 1, \ldots , n-1$, it follows that
$$ A_0(x,a,g)/A_0(x,a,g)a A_0(x,a,g) \; \cong \; E/EaE \; \cong \; k[x]. $$
Hence, $k[x]$ is a subalgebra of $A_0(x,a,g)$. 

\end{proof}

\begin{remark}\label{prob} \rm{The restriction of (iv) of the above proposition to $A_0(x,a,g)$ rather than $A(x,a,g)$ constitutes a gap in our analysis: we would like to be able to say that no equation of the form 
\begin{equation}\label{torsion} a^m h(x) = 0, 
\end{equation}
for $m > 0$ and $h(x) \in k[x] \setminus \{0\}$, is valid in $A_0(x,a,g)$. This would then imply that $k[x]$ embeds as a right coideal subalgebra of the localisation $A(x,a,g)$ of $A_0(x,a,g)$, and hence, by \cite[Theorem 1.3]{Ma}, that $A(x,a,g)$ is a faithfully flat $k[x]$-module. While we shall prove below that (\ref{torsion}) cannot occur in many important cases, the general statement remains open.}
\end{remark}


\section{PBW theorem for $n \leq 5$}

This section is devoted to the proof of the PBW theorem for $A(x,a,g)$, when the degree of $g(x)$ is at most 5. This is Corollary \ref{basis}.

\subsection{The Diamond Lemma }\label{diamond}

We recall the set-up needed  to apply  Bergman's diamond lemma \cite{MR506890}. For more details, see for example \cite[pp. 97-101]{bg}. Let $\langle X \rangle $ denote the free semigroup on a set $X$ and $\field \langle X \rangle$ the free $\field$-algebra with generators $X$. Let  $R$ be the quotient of $\field \langle X \rangle$ by a set of relations $\Sigma$. Suppose that every relation $\sigma \in \Sigma$ can be written in the form $W_{\sigma} = f_{\sigma}$ with $W_{\sigma} \in \langle X \rangle$ and $f_{\sigma} \in \field \langle X \rangle$. We write $\Sigma$ as the set of pairs of the form $\sigma = ( W_{\sigma}, f_{\sigma} )$. 

For each $\sigma \in \Sigma$ and $A,B \in \langle X \rangle$, let $r_{A\sigma B}$ denote the linear endomorphism of $ \field \langle X \rangle$ which fixes all the elements of  $\langle X \rangle$ other than $AW_{\sigma}B$, and which sends this basis element of $k \langle X \rangle$ to $Af_{\sigma}B$. We call $\Sigma$ a \emph{reduction system}, with the maps $r_{A\sigma B} : \field \langle X \rangle \rightarrow \field \langle X \rangle$ called \emph{elementary reductions}, and a composition of elementary reductions called a \emph{reduction}. An elementary reduction $r_{A\sigma B}$ acts \emph{trivially} on an element $ \alpha \in \field \langle X \rangle  $ if the coefficient of $AW_{\sigma}B$ in $\alpha$ is zero, and we call $\alpha$ \emph{irreducible} (under $\Sigma$) if every elementary reduction is trivial on $\alpha$. The $\field$-vector space of irreducible elements of $ \field \langle X \rangle$ is denoted by $ \field \langle X \rangle_{irr}$.

A \emph{semigroup ordering} on $\langle X \rangle$ is a partial order $\leq$  such that  if $a,b,c,d \in \langle X \rangle$ and $a < b$, then $cad < cbd$. A semigroup ordering on $\langle X \rangle$ is \emph{compatible with $\Sigma$} if, for all $\sigma \in \Sigma$, $f_{\sigma}$ is a linear combination of words $W$ with $W < W_{\sigma}$.  

Define an \emph{overlap ambiguity} of $\Sigma$ to be a 5-tuple $(\sigma, \tau, A, B, C)$ with $\sigma, \tau \in \Sigma$ and $A,B,C \in  \langle X \rangle $, such that $W_{\sigma} = AB$, $W_{\tau} = BC$. An overlap ambiguity $(\sigma, \tau, A, B, C)$ is \emph{resolvable} if there exist reductions $r$ and $r'$ such that 
$$r \circ r_{1\sigma C}(ABC) = r' \circ r_{A\tau 1}(ABC);$$
that is, $r(f_{\sigma}C)=r'(Af_{\tau})$. An \emph{inclusion ambiguity} is a 5-tuple $(\sigma, \tau, A, B, C)$ with $\sigma, \tau \in \Sigma$ and $A,B,C \in  \langle X \rangle $, such that $ABC = W_{\sigma}$ and $B= W_{\tau}$. An inclusion ambiguity $(\sigma, \tau, A,B,C)$ is \emph{resolvable} if there are reductions $r, r'$ such that $r\circ r_{1\sigma 1}(ABC) = r' \circ r_{A\tau C}(ABC)$. Observe that if the $W_{\sigma}$ for $\sigma \in \Sigma$ are distinct words of the same length, then there are no non-trivial inclusion ambiguities. 

Bergman's theorem \cite[Theorem 1.2]{MR506890} can now be stated, as follows.

\begin{theorem}\label{berg}  With the above notation and terminology, suppose that $\leq$ is a semigroup ordering on $\langle X \rangle$ which is compatible with $\Sigma$ and satisfies the descending chain condition. Suppose that all overlap and inclusion ambiguities are resolvable. Let $I$ be the ideal $\langle W_{\sigma} - f_{\sigma} : \sigma \in \Sigma \rangle$ of the free $k$-algebra $k\langle X \rangle$. Then the map $\omega \mapsto \omega + I$ gives a vector space isomorphism from $k\langle X \rangle_{irr}$ to $k\langle X \rangle/I$ ; that is, the irreducible words in $\langle X \rangle$ map bijectively to a $k$-basis of  $k\langle X \rangle / I$.
\end{theorem}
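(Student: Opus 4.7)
The plan is to establish this classical statement by producing a canonical $k$-linear reduction map $\rho : k\langle X \rangle \to k\langle X \rangle_{irr}$ and showing that its kernel equals $I$. The whole argument rests on the rewriting-theoretic principle that termination plus confluence yields unique normal forms, with the ambiguity hypotheses being exactly the minimal data needed to verify confluence.

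First, I would establish \emph{termination}. Given $\omega \in k\langle X \rangle$, write $\omega = \sum c_W W$ and consider the finite set of words $W$ appearing with nonzero coefficient, ordered via the multiset extension of $\leq$. Each elementary reduction $r_{A\sigma B}$ replaces a single monomial $AW_{\sigma} B$ by a $k$-linear combination $A f_{\sigma} B$, all of whose constituent words are strictly smaller than $AW_{\sigma} B$ by compatibility with $\leq$ and the semigroup property. Hence every reduction strictly decreases the multiset. The descending chain condition then guarantees that every sequence of reductions terminates in an irreducible element.

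Second, and this is the main obstacle, I would prove \emph{confluence}: the irreducible element reached is independent of the chosen sequence of reductions. I would use Newman's Lemma, which reduces global confluence (in the presence of termination) to local confluence: whenever two elementary reductions $r_{A\sigma B}$ and $r_{C\tau D}$ act nontrivially on the same monomial $W$, their images must admit reductions to a common element. There are three cases for the relative position of the two distinguished subwords $AW_{\sigma}B$ and $CW_{\tau}D$ inside $W$. In the \emph{disjoint} case one subword lies entirely to the left or right of the other, and the two reductions visibly commute. In the \emph{overlap} case $W_\sigma$ and $W_\tau$ share a nontrivial common factor, and after stripping the outer context (using the semigroup compatibility of $\leq$ to propagate reductions through the surrounding letters) the situation reduces to the resolvability of the corresponding overlap ambiguity. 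The \emph{inclusion} case is handled analogously. Thus the hypothesis that all overlap and inclusion ambiguities are resolvable is precisely what is needed for local confluence.

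Third, I would assemble the conclusion. Termination plus confluence produces a well-defined map $\rho : k\langle X \rangle \to k\langle X \rangle_{irr}$; it is $k$-linear because reductions act linearly and normal forms do not depend on the path taken (here one should be careful, since elementary reductions individually are not $k$-linear — linearity of $\rho$ itself is extracted from confluence applied to pairs $\omega, \omega'$ and their sum). By construction $\rho$ fixes $k\langle X \rangle_{irr}$ pointwise. For each $\sigma \in \Sigma$ and $A,B \in \langle X \rangle$, the element $A(W_\sigma - f_\sigma)B$ reduces to $0$ under $r_{A\sigma B}$ followed by further reductions on $Af_\sigma B$, so $I \subseteq \ker \rho$. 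Conversely, for any $\omega$ the difference $\omega - \rho(\omega)$ is a telescoping sum of differences $\alpha - r_{A\sigma B}(\alpha)$, each of which is a scalar multiple of some $A(W_\sigma - f_\sigma)B \in I$, so $\ker \rho \subseteq I$. Therefore the composite $k\langle X \rangle_{irr} \hookrightarrow k\langle X \rangle \twoheadrightarrow k\langle X \rangle / I$ is a $k$-linear bijection, as claimed.
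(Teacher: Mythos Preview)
Your outline is a correct and standard account of how the Diamond Lemma is proved: termination via the compatible well-founded order, local confluence from the three positional cases (disjoint, overlap, inclusion) with the latter two handled precisely by the resolvability hypotheses, and then Newman's Lemma to obtain a well-defined linear normal-form projection whose kernel is identified with $I$.

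However, the paper does not actually prove this theorem. It is stated as Bergman's theorem and attributed to \cite[Theorem 1.2]{MR506890}; the paper merely quotes the result in order to apply it later (in Theorem~\ref{gotcha} via Proposition~\ref{amb}). So there is no ``paper's own proof'' to compare against --- you have supplied a proof where the authors simply cite the literature. Your argument is essentially the modern rewriting-theoretic presentation of Bergman's original proof (he works more directly with reductions rather than invoking Newman's Lemma by name, but the content is the same).
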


\subsection{The PBW theorem for $A(x,a,g), \; n \leq 5$}\label{PBW} In this subsection we first obtain a PBW theorem, when $g(x)$ has degree at most 5, for the bi-algebras $A_0(x,a,g)$, as defined in $\S$\ref{nota}, and then use localisation to obtain a similar result for the corresponding Hopf algebras $A(x,a,g)$. 

Fix an integer $n > 1$, and consider the setup of $\S$\ref{nota}, with $ g(x)$ normalised so that its top coefficient $r_n = 1$, as permitted by Lemma \ref{coef}. Thus $X = \{x,a\}$ and $E = k\langle x, a \rangle = k \langle X \rangle$ is the free algebra, with $E \subset F = k[x]\ast k[a^{\pm 1}]$. For non-negative integers $m$ and $q$, define
\begin{equation}\label{qdef} Q(m,q) \quad := \quad P(m,q) - a^mx^q \in k\langle X \rangle;
\end{equation}
in particular, $Q(0,q)=Q(m,0) = 0$ for all $q,m \geq 0$. The generators (\ref{ij}) in $F$ of the defining ideal $I$ of $A(x,a,g)$ can thus be regarded as a set $\Sigma_g = \{\sigma_1, \ldots , \sigma_{n-1}\}$ of relations for the free algebra $k \langle X \rangle$; namely, we view them as
\begin{equation}\label{graded}  \sigma_j : \quad \quad a^jx^{n-j} = -Q(j, n-j) - \sum_{i=j}^{n-1}r_i P(j,i-j) + r_j a^n, \qquad 1 \leq j \leq n-1.
\end{equation}
For $j=1, \cdots, n-1$, denote the left side of the relation $\sigma_{j}$ by $\omega_{j}$. We shall view $\Sigma_g$ as a reduction system on $k\langle X \rangle$, as in $\S$\ref{diamond}.

A convenient semigroup ordering on $\langle X \rangle$ to use with $\Sigma_g$ is a \emph{weighted graded lexicographic order}, which we denote by $>_{grlex+}$, and define as follows. 

\begin{definition} Let $w =x_{1}\cdots x_{t} \in \langle X \rangle$, where $x_{j} \in \{a,x\}$ for $j = 1, \ldots , t$. 
\begin{enumerate}
\item[(i)] Define
\begin{enumerate}
\item[$\bullet$] the \emph{length} $|w|$ to be $t$;
\item[$\bullet$] the \emph{$x$-weight} $wt_{x}(w) := |\{i : x_i = x \}|$;
\item[$\bullet$] the \emph{lexicographic order} on $\langle X \rangle$ to be given by declaring $a >_{lex} x$.
\end{enumerate}
\item[(ii)] For $u,v \in \langle X \rangle$, set
$$
	u >_{grlex+} v \; \Leftrightarrow 
	\; (|u|>|v|) \; \vee \;
	(|u|=|v| \,\wedge \,
	wt_x (u) > wt_x ( v))
\; \vee \; (|u|=|v| \,\wedge \,
	wt_x (u) = wt_x ( v) \, \wedge \, u >_{lex} v).
$$
\end{enumerate}
\end{definition}

\begin{lemma}\label{reduce2} Retain the notation and hypotheses as above.
\begin{enumerate}
\item[(i)] $>_{grlex+}$ is a semigroup ordering on $\langle X \rangle$.
\item[(ii)] $>_{grlex+}$ satisfies the descending chain condition.
\item[(iii)] $>_{grlex+}$ is compatible with $\Sigma_g$.
\item[(iv)] $\Sigma_g$ has no nontrivial inclusion ambiguities.
\item[(v)] The only overlap ambiguities in $\Sigma_g$ are $(\sigma_j, \sigma_{j+t}, a^t, a^j x^{n-j-t}, x^t)$, for $1 \leq j < j + t \leq n - 1$.
\end{enumerate}
\end{lemma}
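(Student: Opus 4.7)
The plan is to verify each of the five claims in turn; none is conceptually hard, and the crux of the argument is a short case analysis of how words of the form $a^p x^{n-p}$ can overlap as a suffix of one and a prefix of another.

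For (i) and (ii), I would first observe that each of the three comparison criteria used to define $>_{grlex+}$ -- length, $x$-weight, and the lex order with $a >_{lex} x$ -- is stable under concatenation: length and $x$-weight are additive on words, and $>_{lex}$ on equal-length words is preserved by left and right multiplication. Combining them lexicographically yields a semigroup ordering, giving (i). For (ii), any strictly descending chain must eventually have constant length (since lengths lie in $\mathbb{Z}_{\geq 0}$), then constant $x$-weight (bounded by the length), and would then reduce to an infinite descending chain in $>_{lex}$ on a finite set of equal-length words over $\{a,x\}$, which is impossible.

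For (iii), the key observation is that $\omega_j = a^j x^{n-j}$ is the \emph{unique} lex-greatest word of length $n$ and $x$-weight $n-j$, since $a >_{lex} x$ forces all $a$'s to precede all $x$'s in the maximiser. I would then inspect the three types of summands appearing on the right-hand side of $\sigma_j$ in (\ref{graded}): each monomial in $Q(j, n-j) = P(j,n-j) - a^j x^{n-j}$ has length $n$ and $x$-weight $n-j$ but differs from $\omega_j$, hence is strictly lex-smaller; each monomial in $P(j, i-j)$ with $j \leq i \leq n-1$ has length $i < n$; and $a^n$ has length $n$ but $x$-weight $0 < n-j$. In every case the monomial is strictly below $\omega_j$ in $>_{grlex+}$, giving compatibility.

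For (iv) and (v), the common starting point is that every $W_{\sigma_j} = a^j x^{n-j}$ has length exactly $n$. Part (iv) is then immediate from the remark in \S\ref{diamond}: any inclusion $ABC = W_\sigma$ with $B = W_\tau$ forces $|B| = n = |ABC|$, so $|A| = |C| = 0$ and $\sigma = \tau$. For part (v), I would enumerate the proper nonempty suffixes of $a^p x^{n-p}$, namely $x^r$ for $1 \leq r \leq n-p$ and $a^\ell x^{n-p}$ for $1 \leq \ell \leq p-1$, together with the proper nonempty prefixes of $a^q x^{n-q}$, namely $a^k$ for $1 \leq k \leq q$ and $a^q x^s$ for $1 \leq s \leq n-q-1$, and check pairwise that the only possible coincidence is a suffix of type $a^\ell x^{n-p}$ equal to a prefix of type $a^q x^s$, forcing $\ell = q$, $s = n-p$, and hence $q < p$. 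Reparametrising by $j = q$ and $t = p - q \geq 1$ then recovers the family in the statement. The only real obstacle is being disciplined about these index bounds so that $A$, $B$, and $C$ remain nonempty and consistent, but this is bookkeeping rather than a conceptual difficulty.
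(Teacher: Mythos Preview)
Your proposal is correct and follows essentially the same approach as the paper. The paper dismisses (i)--(iv) as ``easy to check'' and for (v) simply posits that any overlap word must have the shape $a^{r'}(a^r x^s)x^{s'}$ before equating lengths; your explicit enumeration of the proper suffixes of $\omega_p$ and proper prefixes of $\omega_q$, and the pairwise elimination of the three impossible cases, is exactly what justifies that posited shape, so you are filling in detail the paper omits rather than taking a different route.
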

\begin{proof} (i)-(iv) are easy to check. For (v), the listed cases are indeed clearly overlap ambiguities. For the converse, note that $\omega_j$ has total degree $n$, for $j=1, \ldots , n-1$. Now every overlap ambiguity has the form
$$
ABC= \mathrlap{\overbrace{\phantom{a^{r'}(a^{r}x^{s})}}^{\omega_{j'}}}
      a^{r'} 
      \mathrlap{\underbrace{\phantom{(a^{r}x^{s}) x^{s'}}}_{\omega_{j}}}
      (a^{r}x^{s}) x^{s'} 
$$
for some $j' > j$, since $\{\sigma_1, \ldots, \sigma_{n-1}\}$ are the only relations. Comparing degrees,

$$
n = r' + r + s = r +s + s'.$$
Therefore, $r' = s' := t,$ say, and then $j = r = r + r' - t = j' - t$. Thus $j' = j + t$ as claimed.
\end{proof}

Retain the integer $n$, $n > 1$, and continue with $X = \{x,a\}$. Define a subset of $\langle X \rangle$,
\begin{equation} \label{free} \mathcal{L}_{n} := \{ a^{i}x^{j} \; | \; 0 < i,j < n, \; i+j<n \}. \end{equation}

\begin{lemma}\label{irred} Keep the above notation.
\begin{enumerate}
\item[(i)] $|\mathcal{L}_n | = \frac{1}{2}(n-1)(n-2).$
\item[(ii)] The subsemigroup $\langle \mathcal{L}_n \rangle$ of $\langle X \rangle$ generated by $\mathcal{L}_n$ is free of rank $\frac{1}{2}(n-1)(n-2).$ 
\item[(iii)] The set of irreducible words in $\langle X \rangle$ with respect to the reduction system $\Sigma_g$ is
$$ \langle x \rangle \langle \mathcal{L}_n \rangle \langle a \rangle := \{x^i \omega a^{\ell} : i, \ell \in \mathbb{Z}_{\geq 0}, \, \omega \in \langle \mathcal{L}_n \rangle \}. $$
\end{enumerate}
\end{lemma}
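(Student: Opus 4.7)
Parts (i) and (ii) are essentially combinatorial bookkeeping, while part (iii) requires translating the reduction system $\Sigma_g$ into a clean combinatorial condition on the run-length decomposition of a word.

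For (i), I would simply count: $\mathcal{L}_n = \{a^i x^j : i,j \ge 1,\ i+j \le n-1\}$, so fixing $i \in \{1,\ldots,n-2\}$ there are $n-1-i$ admissible values of $j$, giving $\sum_{i=1}^{n-2}(n-1-i) = \binom{n-1}{2}$.

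For (ii), I would observe that each generator $a^i x^j \in \mathcal{L}_n$ begins with the letter $a$ and ends with the letter $x$. Consequently, in any product $\omega = (a^{i_1}x^{j_1})(a^{i_2}x^{j_2})\cdots(a^{i_k}x^{j_k})$ of generators, the maximal runs of $a$'s and $x$'s in $\omega$ (reading left to right) have lengths $i_1, j_1, i_2, j_2, \ldots, i_k, j_k$. Hence both $k$ and the individual exponents are recoverable from $\omega$ as a word in $\langle X\rangle$, so $\langle\mathcal{L}_n\rangle$ is free on $\mathcal{L}_n$.

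For (iii), the defining feature of the reduction system $\Sigma_g$ is that the leading monomials are exactly $\omega_j = a^j x^{n-j}$ for $j = 1,\ldots,n-1$. A word $w \in \langle X\rangle$ is therefore reducible if and only if it contains some $a^j x^{n-j}$ as a contiguous subword, equivalently if and only if somewhere in $w$ a maximal $a$-run of length $r$ is immediately followed by a maximal $x$-run of length $s$ with $r+s \ge n$. I would then write any nonempty word uniquely in the form $w = x^{e_0}\,a^{r_1}x^{s_1}\,a^{r_2}x^{s_2}\cdots a^{r_k}x^{s_k}\,a^{e_{k+1}}$ with $e_0,e_{k+1}\ge 0$, $k\ge 0$, and all interior exponents $r_\ell,s_\ell\ge 1$ (here $k=0$ covers words of the form $x^{e_0}a^{e_{k+1}}$). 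In this normal form, the only places where an $a$-run is directly followed by an $x$-run are the blocks $a^{r_\ell}x^{s_\ell}$ for $\ell=1,\ldots,k$; the initial $x^{e_0}$ has no preceding $a$'s and the trailing $a^{e_{k+1}}$ has no succeeding $x$'s. Therefore $w$ is irreducible if and only if $r_\ell + s_\ell < n$ for every $\ell$, i.e.\ $a^{r_\ell}x^{s_\ell} \in \mathcal{L}_n$ for all $\ell$. By (ii) the middle factor $a^{r_1}x^{s_1}\cdots a^{r_k}x^{s_k}$ is precisely an element of $\langle\mathcal{L}_n\rangle$, so the set of irreducible words is exactly $\langle x\rangle\langle\mathcal{L}_n\rangle\langle a\rangle$, as claimed.

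The only point requiring any care is the translation in part (iii) between ``contains some $\omega_j$ as a subword'' and ``some maximal $ax$-transition has total run length $\ge n$''; once that equivalence is in hand, everything follows from the uniqueness of the run-length decomposition, so I do not expect a genuine obstacle.
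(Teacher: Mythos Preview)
Your proposal is correct and follows essentially the same approach as the paper: part (ii) uses the observation that each $a^ix^j\in\mathcal{L}_n$ begins with $a$ and ends with $x$ so the factorisation is recoverable, and part (iii) rests on the equivalence between ``contains some $\omega_j=a^jx^{n-j}$ as a subword'' and ``some adjacent $a$-run/$x$-run pair has total length $\ge n$''. Your run-length normal form $x^{e_0}(a^{r_1}x^{s_1})\cdots(a^{r_k}x^{s_k})a^{e_{k+1}}$ makes the converse direction slightly cleaner than the paper's phrasing (the paper argues that any length-$n$ substring of a word in $\langle\mathcal{L}_n\rangle$ must contain the subword $xa$), but the content is the same.
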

\begin{proof} 
(i) This is an easy induction.

\noindent (ii) Write $u_{ij}$ for the element $a^ix^j$ of $\mathcal{L}_n$, and consider $w = u_{i_1 j_1} \cdots u_{i_m j_m} \in \langle \mathcal{L}_n \rangle$. Since each element of $\mathcal{L}_n$ begins with $a$ and ends with $x$, and does not involve the subword $xa$, the given expression for $w$ as an element of $\langle \mathcal{L}_n \rangle$ is unique. Thus $\langle \mathcal{L}_n \rangle$ is free with basis $\mathcal{L}_n$.

\noindent (iii) A word in $\langle X \rangle $ is $\Sigma_g$-reducible if and only if it contains $\omega_j$  as a subword for some $j = 1, \ldots , n-1.$ Thus $ \langle x \rangle \langle \mathcal{L}_n \rangle \langle a \rangle$ consists of irreducible words. For, $\omega_j$ starts with $a$ and ends with $x$, so if $\omega_j$ is a subword of $u = x^{\ell} u_0 a^m \in  \langle x \rangle \langle \mathcal{L}_n \rangle \langle a \rangle$, where $u_0 \in \langle \mathcal{L}_n \rangle$, then $\omega_j $ is a subword of $u_0$. But this is impossible since every word in $\mathcal{L}_n$ has length less than $n$, starts with $a$ and ends with $x$, so that every string of $n$ letters in $u_0$ involves the subword $xa$.

Conversely, if $\omega \in \langle X \rangle$ and $\omega \notin \langle x \rangle \langle \mathcal{L}_n \rangle \langle a \rangle$, then it must contain a subword of the form $a^i x^j$ for $i,j > 0$ and $i + j > n$. Thus $\omega $ is reducible.
\end{proof}

\subsection{Resolving ambiguities.}\label{resolve} The aim in this subsection is to prove the following result.

\begin{theorem}\label{gotcha} Retain the notation and definitions of $\S\S$\ref{nota}, \ref{diamond} and \ref{PBW}. Let $n \in \mathbb{Z}$, $2 \leq n \leq 5$. Then $A_0(x,a,g)$ has PBW basis the set of monomials $ \langle x \rangle \langle \mathcal{L}_n \rangle \langle a \rangle$ defined in Lemma \ref{irred}(iii).
\end{theorem}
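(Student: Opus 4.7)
The strategy is to apply Bergman's Diamond Lemma (Theorem \ref{berg}) to the reduction system $\Sigma_g = \{\sigma_1, \ldots, \sigma_{n-1}\}$ of \eqref{graded}, with the weighted graded lexicographic order $>_{grlex+}$. By Lemma \ref{reduce2}, $>_{grlex+}$ is a semigroup ordering compatible with $\Sigma_g$, satisfies the descending chain condition, and gives rise to no nontrivial inclusion ambiguities; moreover, the only overlap ambiguities are $(\sigma_{j+t}, \sigma_j, a^t, a^j x^{n-j-t}, x^t)$ for $1 \leq j < j+t \leq n-1$. By Lemma \ref{irred}(iii), the irreducible words in $\langle X \rangle$ coincide with $\langle x \rangle \langle \mathcal{L}_n \rangle \langle a \rangle$. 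Consequently, once every overlap ambiguity is shown to be resolvable, Theorem \ref{berg} immediately yields that $\langle x \rangle \langle \mathcal{L}_n \rangle \langle a \rangle$ descends to a $k$-basis of $A_0(x,a,g) = k\langle X \rangle / I_g$, as claimed.

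The remaining task is therefore purely computational: I would verify, case by case, that each overlap ambiguity is resolvable. For $n = 2, 3, 4, 5$ the counts are $0, 1, 3, 6$ respectively, giving ten ambiguities in total. For each pair $(j, j+t)$ one takes the common word $ABC = a^{j+t} x^{n-j}$ and computes two reductions: first, apply $\sigma_{j+t}$ to the prefix $AB = a^{j+t} x^{n-j-t}$ and then post-multiply by $x^t$; second, apply $\sigma_j$ to the suffix $BC = a^j x^{n-j}$ and then pre-multiply by $a^t$. One then exhaustively reduces the two resulting expressions by repeatedly rewriting any remaining subword of the form $a^i x^{n-i}$ (for $1 \leq i \leq n-1$) via the right-hand side of $\sigma_i$, and rearranging using the centrality of $a^n$ (which can be deduced from $\sigma_{n-1}$ as in the proof of Proposition \ref{cent}(iii)). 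The target is to show that both reduction paths terminate at the same element of $k\langle X \rangle_{\mathrm{irr}}$.

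The main obstacle is the sheer combinatorial weight of the verification at $n = 4$ and especially $n = 5$. For mid-range $j$, each relation $\sigma_j$ spawns a sum $\sum_{i \geq j} r_i P(j, i-j)$ in which every $P(j, i-j)$ itself expands into $\binom{i}{j}$ monomials, and after the first reduction many of these contain further reducible subwords $a^p x^q$ with $p + q \geq n$, triggering a cascade of secondary reductions. Since the coefficients $r_1, \ldots, r_n$ are generic, there is no shortcut via specialisation or symmetry: the check must be carried out coefficient by coefficient for each index $(j,t)$. This is precisely the bookkeeping the authors note as being relegated to the separate appendix, and it is the sole reason for the degree restriction $n \leq 5$ in the statement.
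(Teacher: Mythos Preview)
Your proposal is correct and follows essentially the same route as the paper: invoke Bergman's Diamond Lemma via Lemmas \ref{reduce2} and \ref{irred}(iii), reducing the theorem to the resolvability of the finitely many overlap ambiguities, which is precisely the content of Proposition \ref{amb} (with the heavy $t=2,3$ cases deferred to the Appendix). The only cosmetic difference is that the paper organises the verification by the overlap parameter $t$ (proving each case uniformly for all $n$ at once) rather than by $n$ as you suggest, but the underlying computation is the same.
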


Note that it follows from the discussion in $\S$\ref{diamond} and $\S$\ref{PBW}, in particular from Lemma \ref{irred}(iii), that, for \emph{every} $n \geq 2$, $ \langle x \rangle \langle \mathcal{L}_n \rangle \langle a \rangle$ is a spanning set for the vector space $A(x,a,g)$. By Bergman's Theorem \ref{berg} and by Lemma \ref{reduce2}(iv), to prove that this set is linearly independent and hence a $k$-basis it remains only to show that the overlap ambiguities in $\Sigma_{g}$ listed in Lemma \ref{reduce2}(v) are resolvable. We shall achieve this for $n \leq 5$ in Proposition \ref{amb}, for which a couple of preliminary lemmas are needed.

\begin{lemma}
\label{reduce}
Let $n \geq 4$ and let $r,t \in \{ 1,2, \cdots, n-3 \}$ with $r+t < n$. Let $w$ and $v$ be words of length $t$ in $a$ and $x$. Then $wP(r,n-t-r)v$ is reducible $\Leftrightarrow$ there exist $i,j \in \mathbb{Z}_{\geq 0}$ with $i+j=t$ such that $w$ ends with $a^{i}$ and $v$ starts with $x^{j}$.
\end{lemma}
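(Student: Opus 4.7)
The plan is to use the reduction system $\Sigma_g$ from $\S$\ref{PBW}, under which $W_{\sigma_{j'}} = \omega_{j'} = a^{j'}x^{n-j'}$ for $1 \le j' \le n-1$, and an element of $k\langle X\rangle$ is reducible precisely when some word in its support contains some $\omega_{j'}$ as a subword. Expanding $wP(r,n-t-r)v$ gives a sum of distinct words of length $n+t$, one for each monomial $M$ of $P(r,n-t-r)$ (each such $M$ being a word of length $n-t$ with exactly $r$ letters $a$ and $n-t-r$ letters $x$). Thus the lemma reduces to asking for which $w,v$ some such $wMv$ contains a valid $\omega_{j'}$ as a subword.

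For ($\Leftarrow$), I would write $w = w'a^i$ and $v = x^jv'$ with $i+j = t$, and choose $M := a^r x^{n-t-r}$, which lies in the support of $P(r,n-t-r)$. Then
\[
wMv \;=\; w' a^{i+r} x^{n-(i+r)} v'
\]
contains $\omega_{i+r}$ as a subword, and the index $i+r$ lies in $\{1,\dots,n-1\}$ since $r \ge 1$ forces $i+r \ge 1$ while $i+r \le t+r < n$ forces $i+r \le n-1$. Hence $wP(r,n-t-r)v$ is reducible.

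For ($\Rightarrow$), I would suppose that some word $wMv$ in the support contains $\omega_{j'}$ at positions $p+1,\ldots,p+n$ of $wMv$, for some monomial $M$ of $P(r,n-t-r)$. Since $|w|=|v|=t<n$ and $|M|=n-t<n$, counting positions forces $p \in \{0,1,\ldots,t\}$, and in every such case $M$ (positions $t+1,\ldots,n$ of $wMv$) lies entirely inside the occurrence of $\omega_{j'}$, matching positions $t-p+1,\ldots,n-p$ of $\omega_{j'} = a^{j'}x^{n-j'}$. A short case analysis on where the $a$-to-$x$ break of $\omega_{j'}$ falls shows that if the break lies outside the range $[t-p+1, n-p]$, then $M$ is a pure power of $a$ or of $x$, contradicting $1 \le r \le n-t-1$ (which uses $r \ge 1$ and $r+t < n$). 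Therefore the break is strictly inside $M$, forcing $j' = (t-p)+r$, and making $w$ end with $a^{t-p}$ (the part of $\omega_{j'}$ before $M$) and $v$ start with $x^p$ (the part after $M$); taking $i := t-p$ and $j := p$ delivers the conclusion.

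The hard part will be the careful position-tracking and handling of the two boundary cases $p = 0$ and $p = t$. In these, one half of the conclusion (``$v$ starts with $x^0$'' when $p = 0$, or ``$w$ ends with $a^0$'' when $p = t$) is vacuous, so no constraint is placed on the uninvolved end of $wMv$; I expect these to slot into the generic analysis without requiring separate treatment.
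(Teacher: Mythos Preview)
Your proof is correct and follows essentially the same approach as the paper's: both the $(\Leftarrow)$ direction via the monomial $a^r x^{n-t-r}$ and the $(\Rightarrow)$ direction by locating where the length-$n$ subword $\omega_{j'}$ sits inside $wMv$ match the paper's argument. The only cosmetic difference is that the paper splits $(\Rightarrow)$ into three cases (the subword lies in $wP$, in $Pv$, or straddles as $w_2 P v_1$), whereas you handle all positions $p\in\{0,\dots,t\}$ uniformly via position-counting; your treatment is slightly cleaner but not materially different.
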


\begin{proof}
Let $i,j \in \mathbb{Z}_{\geq 0}$, $i+j=t$, $w=w_{1}a^{i}$ and $v=x^{j}v_{2}$. Then $wP(r,n-t-r)v$ contains the reducible subword $a^{i}(a^{r}x^{n-t-r})x^{j}$ as claimed.

\noindent Conversely, suppose $wP(r,n-t-r)v$  contains a reducible word. Then, since the left sides of the defining relations (\ref{graded}) are homogeneous of total degree $n$, this reducible word must occur in one of the following homogeneous elements of $k \langle X \rangle$ of total degree $n$:
$$
wP(r,n-t-r), \qquad P(r,n-t-r)v \qquad \text{and} \qquad w_{2}P(r,n-t-r)v_{1}.
$$
The left hand sides of the defining relations begin with $a^{j}$ and end with $x^{n-j}$ for $1 \leq j \leq n-1$. If the reducible word is in $wP(r, n - t- r)$, this forces $w$ to be $a^{t}$ since $r \geq 1$ and $w$ has length $t$. Similarly, $v$ in $P(r,n-t-r)v$ must be $x^{t}$ since $t+r < n$. Finally, a reducible word contained in $w_{2}P(r,n-t-r)v_{1}$, with $w_{2}$ and $v_{1}$ both non-identity words, must have $w_{2}=a^{i}$ and $v_{1}=x^{j}$ for $i+j=t$, since $ r \geq 1$ and $t+r<n$. 
\end{proof}

\begin{lemma}
\label{irreduce}
For $n \geq 4$, $t \in \{ 1,2, \cdots, n-3 \}, \, r \in \{ 1,2, \cdots, n-2 \}$ with $r+t < n$, $w,v$ words of length $t$ in $a$ and $x$, all the words in $wQ(r,n-t-r)v$ are irreducible.
\end{lemma}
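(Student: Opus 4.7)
The plan is to show that for any monomial $\mu$ appearing in $Q(r, n-t-r)$ and for any length-$t$ words $w, v$, the word $w \mu v$ has no subword of the form $\omega_j = a^j x^{n-j}$. The essential observation is that $Q(r, n-t-r) = P(r, n-t-r) - a^r x^{n-t-r}$ is the sum of all monomials of degree $(r, n-t-r)$ \emph{except} the sorted one $a^r x^{n-t-r}$. Consequently, every monomial $\mu$ in $Q(r, n-t-r)$ contains at least one occurrence of $xa$, i.e., is not of the form $a^{i'}x^{k'}$.

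Fix such a $\mu$ and suppose, toward a contradiction, that $w\mu v$ contains an $\omega_j = a^j x^{n-j}$ as a subword. Count positions: $|w\mu v| = t + (n-t) + t = n+t$, the subword $\mu$ occupies positions $[t+1, n]$, and the hypothetical subword $\omega_j$ of length $n$ must begin at some position $p$ with $1 \leq p \leq t+1$. For any such $p$ one has $p \leq t+1$ and $p + n - 1 \geq n$, so the range $[p, p+n-1]$ of $\omega_j$ automatically contains the range $[t+1, n]$ of $\mu$. Thus the entire monomial $\mu$ sits inside this occurrence of $\omega_j$ as a contiguous subword.

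The key punchline: any contiguous subword of $a^j x^{n-j}$ is itself of the form $a^{i'} x^{k'}$, that is, sorted with all $a$'s preceding all $x$'s. So $\mu$ would be sorted. But $\mu$ has exactly $r$ letters $a$ and $n-t-r$ letters $x$, which forces $\mu = a^r x^{n-t-r}$, contradicting the fact that $\mu$ is a monomial of $Q(r, n-t-r)$. Hence no reducible subword can appear in $w\mu v$.

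This holds for every monomial $\mu$ occurring in $Q(r, n-t-r)$ (equivalently, every word in $wQ(r,n-t-r)v$), completing the proof. I do not foresee a serious obstacle: the argument is purely combinatorial, relying on the single structural fact that every monomial of $Q(r, n-t-r)$ necessarily contains an $xa$-descent, together with a length count forcing any potential copy of $\omega_j$ to engulf $\mu$ entirely.
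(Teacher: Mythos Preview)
Your argument is correct. The key observation---that any length-$n$ subword of $w\mu v$ must contain the whole of $\mu$, so $\mu$ would have to be a sorted word $a^{i'}x^{k'}$, contradicting $\mu\in Q(r,n-t-r)$---is exactly the idea driving the paper's treatment as well.

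The packaging differs slightly. The paper deduces the lemma from Lemma~\ref{reduce} (whose proof carries out the same position analysis for $P(r,n-t-r)$ and shows the only reducible contribution comes from the sorted monomial $a^{r}x^{n-t-r}$), and then handles the boundary case $r=n-2$, $t=1$ separately because Lemma~\ref{reduce} is stated only for $r\le n-3$. Your direct length-counting argument works uniformly over the full range $1\le r\le n-2$ and so avoids that case split; conversely, the paper's version has the minor advantage of reusing a lemma already on the shelf. Substantively the two proofs are the same.
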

\begin{proof}
For $r,t \leq n-3$, this is a corollary of Lemma \ref{reduce} since reducible words occurred there only from the word $a^{r}x^{n-t-r} \in P(r,n-t-r)$, which  no longer appears in $Q(r,n-t-r)$. When $r = n-2$ and $t = 1$ the lemma is clear, since $\omega_j \notin aQ(n-2,1)x$, for all $j = 1, \ldots , n-1$.
\end{proof}

In the following proof we will use the symbol $``\rightarrow"$ whenever we replace a monomial $\omega_{j}$ (that is $a^{j}x^{n-j}$) with the right hand side $\left(-\left(Q(j,n-j) + \sum_{i=j}^{n-1} r_{i}P(j,i-j)\right) + r_{j}a^{n}\right)$ of the $j$th relation. Whenever a linear combination of irreducible words appear during the reduction process, we will underline it. For instance, $wQ(r,n-t-r)v$ from Lemma \ref{irreduce} above is irreducible so is written as $\underline{wQ(r,n-t-r)v}$. We use the symbol $\in$  to indicate that a word $\omega$ appears in an element of $k\langle X \rangle$: for example, $a^j x^{n-j} \in P(j,n-j)$. We shall frequently make use of the identities (\ref{heart}) from the proof of Lemma \ref{xa}, together with companion identities for $Q(r,s)$, namely: for $r \geq 0, s > 0$,
\begin{equation}\label{lungs} Q(r,s) \quad = \quad Q(r,s-1)x +  P(r-1, s)a.
\end{equation}

The arguments used to prove Proposition \ref{amb} are elementary, but long and involved beyond $t=1$. So we include only the proof for the case $t=1$ here, relegating the proofs of the cases $t=2$, $t=3$ to the Appendix.

\begin{proposition}
\label{amb}
Retain the notation of $\S\S$\ref{diamond} and \ref{PBW}. Then the overlap ambiguities 
$$(\sigma_j, \sigma_{j+t}, a^t, a^j x^{n-j-t}, x^t)$$ 
are resolvable when 
\begin{enumerate}
\item[(i)] $t=1$ and $n \geq 3$;
\item[(ii)]$t=2$ and $n \geq 4$;
\item[(iii)] $t=3$ and $n \geq 5$.
\end{enumerate}
\end{proposition}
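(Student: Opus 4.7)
The plan is to resolve each of the listed overlap ambiguities directly, by computing both reductions of the word $a^{j+t}x^{n-j}$ and showing that further reductions carry them to a common element of $k\langle X\rangle_{\mathrm{irr}}$. For the ambiguity $(\sigma_j,\sigma_{j+t},a^t,a^j x^{n-j-t},x^t)$, the two elementary reductions yield
\[ R_1 \;=\; f_{\sigma_{j+t}} \cdot x^{t} \qquad \text{and} \qquad R_2 \;=\; a^{t}\cdot f_{\sigma_{j}}, \]
so the task is to reduce each of these to irreducible normal form and to check that the two normal forms coincide.

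For $t=1$, I would first expand $R_1$ and $R_2$ using the formula (\ref{graded}), and then manipulate the resulting expressions via the $P$-identities (\ref{heart}) and the $Q$-identities (\ref{lungs}). The key move is to compare the two splittings of $Q(j+1,n-j)$, namely
\[ a\,Q(j,n-j) + xP(j+1,n-j-1) \;=\; Q(j+1,n-j) \;=\; Q(j+1,n-j-1)\,x + P(j,n-j)\,a, \]
which extracts the identity $a\,Q(j,n-j) - Q(j+1,n-j-1)\,x = P(j,n-j)\,a - xP(j+1,n-j-1)$. Similarly, the two splittings
\[ aP(j,i-j) = P(j+1,i-j) - xP(j+1,i-j-1), \qquad P(j+1,i-j-1)\,x = P(j+1,i-j) - P(j,i-j)\,a \]
allow the common $r_i P(j+1,i-j)$ contributions from the two reductions to cancel. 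What then remains is a finite collection of reducible words, of the forms $\omega_k\cdot a$, $x\cdot\omega_k$, together with $a^n x$, which must be reduced further using appropriate relations $\sigma_k$ (principally $\sigma_{n-1}$, $\sigma_j$ and $\sigma_{j+1}$). A bookkeeping argument, invoking $\sum_{i=j}^n r_i P(j,i-j)=r_j a^n$ and its shift at $j+1$ to collapse the residual sums, should show that after all cascading reductions the contributions from $R_1$ and from $R_2$ agree.

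For $t=2$ and $t=3$ the same blueprint applies: expand both reductions, align the distinguished $Q$-terms via identities analogous to the above, cancel common $P$-contributions via the two splittings, and close by further applications of $\sigma_k$. The main obstacle is combinatorial rather than conceptual: the intermediate expressions after the first round of cancellations contain many more reducible subwords, and each further reduction via some $\sigma_k$ introduces fresh terms that may themselves still be reducible. The resulting cascade remains systematic but grows very long, which is precisely why the paper's hypothesis $n\leq 5$ is imposed (so that $t\leq n-2\leq 3$) and why the detailed verifications for $t=2,3$ are relegated to the Appendix; beyond $t=3$ this direct approach appears to become unmanageable by hand.
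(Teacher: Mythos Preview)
Your outline is on the right track and matches the paper's strategy: both sides of each ambiguity are expanded via the relations $\sigma_j$, $\sigma_{j+t}$, the identities (\ref{heart}) and (\ref{lungs}) are used to split $P$- and $Q$-terms, Lemmas \ref{reduce} and \ref{irreduce} are invoked to isolate the remaining reducible subwords, and those are reduced further until both sides agree. Your proposed reorganisation---cancelling common $P(j+1,i-j)$ contributions by exploiting the two splittings of $aP(j,i-j)$ and $P(j+1,i-j-1)x$---is a legitimate variant of what the paper does, though the paper instead reduces each side separately to a fully irreducible expression (labelled $(\alpha)$ and $(\tau)$ in the $t=1$ case) and then compares.

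The genuine gap is that what you have written is a plan, not a proof. Phrases such as ``should show'' and ``a bookkeeping argument\ldots'' are placeholders for exactly the content that this proposition consists of. The paper's proof for $t=1$ alone runs to roughly two pages of explicit reductions: after the first application of $\sigma_j$ and $\sigma_{j+1}$ one must identify precisely which monomials are still reducible (in the paper these are $a^j x^{n-j}a$ on one side and $x a^{j+1}x^{n-j-1}$ on the other, together with $a^{j+1}x^{n-j-1}$), apply the appropriate relation again, and then verify termwise that the resulting irreducible expressions coincide. None of that is present in your proposal. For $t=2$ and $t=3$ the situation is much worse: the paper's Appendix splits each case into several subcases (e.g.\ for $t=2$: $j=1$, $j=n-3$, and $1<j<n-3$ separately) and each subcase requires tracking a dozen or more auxiliary reductions. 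Asserting that ``the same blueprint applies'' does not discharge this obligation. To turn your sketch into a proof you must actually carry out the reductions and exhibit the common normal form---or find a structural argument that avoids the case analysis, which neither you nor the paper has done.
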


\begin{proof}
(i) Let $ 1 \leq j \leq n-1$, and consider the overlap ambiguity $\{ \omega_{j}, \omega_{j+1} \}$ for $A(x,a,g)$. That is we consider the word in the free algebra $\field \langle a,x \rangle$ given by
$$
a \omega_{j} = a (a^{j}x^{n-j}) = ( a^{j+1}x^{n-j-1})x=\omega_{j+1}x.
$$
We show that applying either the relation $\sigma_{j}$ or the relation $\sigma_{j+1}$ to this word leads to the same linear combination of irreducible words in $A_0(x,a,g)$. 

Beginning with the resolution of $a\omega_j$, write $\omega_j$ with the aid of (\ref{heart}) and (\ref{lungs}) as
\begin{eqnarray*}
a^{j}x^{n-j} &\rightarrow& - \left( \sum_{i=j}^{n-1} r_{i}P(j,i-j) +   Q(j,n-j) \right) + r_{j}a^{n}\\
&=& - \sum_{i=j}^{n-1} r_{i}P(j-1,i-j)a -  \sum_{i=j+1}^{n-1}r_i P(j,i-j-1)x\\  
&\,& \qquad \qquad   - Q(j,n-j-1)x + P(j-1,n-j)a  + r_{j}a^{n}\\
&=& -\left( \sum_{i=j}^{n} r_{i}P(j-1,i-j)a +   \sum_{i=j+1}^{n-1}r_i P(j,i-j-1)x + Q(j,n-j-1)x \right) + r_{j}a^{n}.
\end{eqnarray*}
Now premultiply the above by $a$, and  use Lemmas \ref{reduce} and \ref{irreduce} to separate reducible and irreducible words, yielding
\begin{eqnarray*}
(\gamma) \qquad \qquad &\,&\\
a( a^{j}x^{n-j}) &\rightarrow& - \left( \sum_{i=j}^{n} r_{i}aP(j-1,i-j)a + \sum_{i=j+1}^{n-1} r_{i}aP(j,i-j-1)x + \underline{aQ(j,n-j-1)x} \right)\\
&\,& \qquad \qquad  + r_{j}\underline{a^{n+1}}\\
&=& -  \sum_{i=j}^{n-1} r_{i}\underline{aP(j-1,i-j)a} -a^jx^{n-j}a -\underline{aQ(j-1,n-j)a}\\ 
&\,& \qquad \qquad - \sum_{i=j+1}^{n-1} r_{i}aP(j,i-j-1)x - \underline{aQ(j,n-j-1)x} + r_{j}\underline{a^{n+1}}.
\end{eqnarray*}
\noindent As indicated by underlining above, the only irreducible words in ($\gamma$) are $a^{j}x^{n-j}a \in  aP(j-1,n-j)a$ and $ a^{j+1}x^{n-j-1} \in a P(j, n-j-2 )x $. We reduce the first of these, first applying the relation $\sigma_j$ to give
\begin{eqnarray*}
- a^{j}x^{n-j} &\rightarrow& \sum_{i=j}^{n-1} r_{i}P(j,i-j) + Q(j,n-j)  -  r_{j}a^{n}\\
 &=&   \sum_{i=j}^{n-1} r_{i}aP(j-1,i-j) +  \sum_{i=j+1}^{n-1} r_{i}xP(j,i-j-1) + aQ(j-1,n-j) +xP(j, n-j-1) -  r_{j}a^{n}\\
&=&   \sum_{i=j}^{n-1} r_{i}\underline{aP(j-1,i-j)}   + \sum_{i=j+1}^{n} r_{i}\underline{xP(j,i-j-1)} + \underline{aQ(j-1,n-j)} -  r_{j}\underline{a^{n}}.
\end{eqnarray*}
\noindent Postmultiplying this by $a$ yields 
$$
- a^{j}x^{n-j}a \rightarrow  \sum_{i=j}^{n-1} r_{i}\underline{aP(j-1,i-j)a} + \underline{aQ(j-1,n-j)a} + \sum_{i=j+1}^{n} r_{i}\underline{xP(j,i-j-1)a}  -  r_{j}\underline{a^{n+1}},
$$
where irreducibility is assured by Lemmas \ref{reduce} and \ref{irreduce}. Substitute this reduction in the reduction ($\gamma$) for $a(a^j x^{n-j})$, to obtain
\begin{eqnarray*}
(\alpha) \qquad \qquad &\,&\\
a( a^{j}x^{n-j}) &\rightarrow& -\sum_{i=j}^{n-1}r_i\underline{aP(j-1,i-j)a}  - \underline{aQ(j-1,n-j)a} -\sum_{i=j+1}^{n-1}r_i\underline{aP(j,i-j-1)x}\\
&\,& \qquad  -\underline{aQ(j,n-j-1)x} + r_j \underline{a^{n+1}} + \sum_{i=j}^{n-1} r_{i}\underline{aP(j-1,i-j)a}\\
&\,& \qquad  + \underline{aQ(j-1,n-j)a} + \sum_{i=j+1}^{n} r_{i}\underline{xP(j,i-j-1)a}  -  r_{j}\underline{a^{n+1}}\\
&=&   -\sum_{i=j+1}^{n-1}r_iaP(j,i-j-1)x + \sum_{i=j+1}^n r_i\underline{xP(j, i-j-1)a} - \underline{aQ(j,n-j-1)x}.
\end{eqnarray*}

Consider now the second side of the ambiguity, namely $\omega_{j+1}x$. The relation for $\omega_{j+1}$ is
\begin{eqnarray*}
a^{j+1}x^{n-j-1} &\rightarrow&  -  \left( \sum_{i=j+1}^{n-1} r_{i}P(j+1,i-j-1) + Q(j+1,n-j-1)  \right) + r_{j+1}\underline{a^{n}}\\
&=&  - \sum_{i=j+1}^{n-1} r_{i}aP(j,i-j-1) - aQ(j,n-j-1) - xP(j + 1, n - j - 2) \\
&\,& \qquad \qquad  - \sum_{i=j+2}^{n-1} r_{i}xP(j+1,i-j-2)  + r_{j+1}a^{n}\\
&=& -  \left( \sum_{i=j+1}^{n-1} r_{i}\underline{aP(j,i-j-1)} + \underline{aQ(j,n-j-1)} + \sum_{i=j+2}^{n} r_{i}\underline{xP(j+1,i-j-2)}  \right) + r_{j+1}\underline{a^{n}}.
\end{eqnarray*}
\noindent Postmultiplying this reduction by $x$ and using Lemmas \ref{reduce} and \ref{irreduce} to separate reducible and irreducible words yields
\begin{eqnarray*}
(\beta) \qquad \qquad \quad &\,&\\
(a^{j+1}x^{n-j-1})x &\rightarrow& -  \left( \sum_{i=j+1}^{n-1} r_{i}aP(j,i-j-1)x + \underline{aQ(j,n-j-1)x} + \sum_{i=j+2}^{n} r_{i}xP(j+1,i-j-2)x  \right) \\
&\,& \qquad \qquad + r_{j+1}\underline{a^{n}x}.
\end{eqnarray*}
\noindent Now $x a^{j+1}x^{n-j-1} \in xP(j+1,n-j-2)x$ and $ a^{j+1}x^{n-j-1} \in a P(j, n-j-2 )x $ are the only reducible words remaining the above reduction. To reduce $xa^{j+1}x^{n-j-1}$, we first write $\omega_{j+1}$ as
\begin{eqnarray*}
-a^{j+1}x^{n-j-1} &\rightarrow&   \sum_{i=j+1}^{n-1} r_{i}\underline{P(j,i-j-1)a} + \sum_{i=j+2}^{n-1} r_{i}\underline{P(j+1,i-j-2)x} + \underline{Q(j+1, n-j-2)x}\\
&\,& \qquad \qquad  + \underline{P(j,n-j-1)a} - r_{j+1}\underline{a^{n}}\\
&=&  \sum_{i=j+1}^{n} r_{i}\underline{P(j,i-j-1)a} + \sum_{i=j+2}^{n-1} r_{i}\underline{P(j+1,i-j-2)x} + \underline{Q(j+1, n-j-2)x}\\
&\,& \qquad \qquad - r_{j+1}\underline{a^{n}}.
\end{eqnarray*}
Premultiplying this reduction by $x$ and using the centrality of $a^n$, Proposition \ref{cent}(iii), we get
\begin{eqnarray*}
- x a^{j+1}x^{n-j-1} &\rightarrow& \left( \sum_{i=j+1}^{n} r_{i}\underline{xP(j,i-j-1)a} + \sum_{i=j+2}^{n-1} r_{i}\underline{xP(j+1,i-j-2)x} + \underline{xQ(j+1, n-j-2)x} \right)\\
&\,& \qquad \qquad - r_{j+1}\underline{a^{n}x}.
\end{eqnarray*}
Substituting this reduction in ($\beta$) yields
\begin{equation}
(\tau) \qquad (a^{j+1}x^{n-j-1})x \rightarrow   \sum_{i=j+1}^{n} r_{i}\underline{xP(j,i-j-1)a}  -  \sum_{i=j+1}^{n-1} r_{i}aP(j,i-j-1)x - \underline{aQ(j,n-j-1)x}.
\end{equation}
Comparing the processes ($\alpha$) and ($\tau$), we conclude that the overlap ambiguity $\{ \omega_{j}, \omega_{j+1} \}$ is resolvable for all $j=1, \cdots, n-1$. This proves (i).
\end{proof}

\begin{proof}[Proof of Theorem \ref{gotcha}] This is immediate from Theorem \ref{berg} and Proposition \ref{amb}.
\end{proof}

It is now a simple matter to deduce the PBW theorem for the corresponding algebras $A(x,a,g)$, as follows.

\begin{corollary}\label{basis} Let $n \in \mathbb{Z}$ with $2 \leq n \leq 5$, and let $A := A(x,a,g)$ be defined as in $\S$\ref{nota}. Then $A$ has $k$-basis
$$ \{  x^{\ell}  \langle \mathcal{L}_n \rangle  a^m : \ell \in \mathbb{Z}_{\geq 0}, m \in \mathbb{Z} \}. $$
\end{corollary}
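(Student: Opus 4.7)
The plan is to deduce the basis of $A := A(x,a,g)$ from the PBW basis of $A_0 := A_0(x,a,g)$ supplied by Theorem \ref{gotcha}, by realising $A$ as an Ore localisation of $A_0$ at the powers of $a^n$. The argument is formal once Theorem \ref{gotcha} is in hand; the main content is already there.

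First, I would observe that, since $F = E[a^{-1}]$ (as $F$ is the free product of $k[x]$ with $k[a^{\pm 1}]$, which is obtained from $E = k\langle x,a\rangle$ by inverting the powers of $a$), the algebra $A = F/I$ is obtained from $A_0 = E/I_g$ by adjoining an inverse to (the image of) $a$. Equivalently, since $a\cdot a^{n-1} = a^n$, it suffices to invert $a^n$. By Proposition \ref{cent}(iii), the element $a^n$ is central in $A$; inspection of the proof of that proposition shows that only the defining relations $\sigma_1,\dots,\sigma_{n-1}$ are used, so in fact $a^n$ is already central in $A_0$. Thus $S := \{a^{nk} : k\geq 0\}$ is a central, hence Ore, multiplicative subset of $A_0$.

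Next I would verify that $S$ consists of non-zero-divisors of $A_0$. This is where Theorem \ref{gotcha} enters. The basis $\langle x\rangle\langle \mathcal{L}_n\rangle\langle a\rangle$ is of the form $\{x^i \omega a^\ell : i,\ell\in\mathbb{Z}_{\geq 0},\,\omega\in\langle \mathcal{L}_n\rangle\}$, and multiplication by $a^n$ (on either side, by centrality) maps this basis injectively into itself by $x^i\omega a^\ell\mapsto x^i\omega a^{\ell+n}$. Hence $a^n$ is a non-zero-divisor in $A_0$, and the canonical map $A_0 \to A_0[S^{-1}]$ is an injection.

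Finally, I would identify $A_0[S^{-1}]$ with $A$ by the universal property of localisation. Both algebras receive a canonical map from $E$ which kills $I_g$ and makes $a$ invertible, and both are universal with this property, so the natural comparison maps are mutually inverse isomorphisms. Under this identification, every element of $A$ has a unique expression $u\cdot a^{-nk}$ with $u\in A_0$ and $k\geq 0$; writing $u$ in the PBW basis of $A_0$ and combining the $a$-exponents gives uniqueness of expression in the set $\{x^\ell \omega a^m : \ell\in\mathbb{Z}_{\geq 0},\,\omega\in\langle \mathcal{L}_n\rangle,\,m\in\mathbb{Z}\}$, which is therefore a $k$-basis of $A$. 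The only non-formal step is the non-zero-divisor check in the previous paragraph, and this is immediate from the explicit PBW basis, so there is no real obstacle beyond Theorem \ref{gotcha} itself.
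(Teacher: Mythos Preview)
Your proposal is correct and follows essentially the same route as the paper: use Theorem \ref{gotcha} to see that right multiplication by the central element $a^n$ acts injectively on the PBW basis of $A_0(x,a,g)$, hence $a^n$ is regular; then localise at $\{a^{nk}:k\geq 0\}$ and identify the localisation with $A(x,a,g)$ to read off the basis. The paper phrases the identification as ``$Q$ has the same generators and relations as $A(x,a,g)$'' while you invoke the universal property, but these amount to the same thing.
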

\begin{proof} Consider the algebra $A_0(x,a,g)$, defined in $\S$\ref{PBW}. By Theorem \ref{gotcha}, $A_0(x,a,g)$ has vector space basis $\langle x \rangle \langle \mathcal{L}_n \rangle \langle a \rangle$, from which it follows that $A_0(x,a,g)$ is a free right $k[a]$-module on the basis $\langle x \rangle \langle \mathcal{L}_n \rangle $. Since $a^n$ is central in $A_0(x,a,g)$ by the proof of Proposition \ref{cent}(iii), $A_0(x,a,g)$ is thus a free left and right $k[a^n]$-module. It follows in particular that $a^n$ is not a zero divisor in $A_0(x,a,g)$, so that we can form the partial quotient algebra $Q$ of $A_0(x,a,g)$ got by inverting the central regular elements $\{a^{nt} : t \geq 0 \}$. It is clear that (\emph{a}) $Q$ has vector space basis $ \{  x^{\ell}  \langle \mathcal{L}_n \rangle  a^m : \ell \in \mathbb{Z}_{\geq 0}, m \in \mathbb{Z} \}, $ and (\emph{b}) $Q$ has the same generators and relations as $A(x,a, g)$. This proves the corollary.
\end{proof}

\section{The nested Hopf algebras $A(x,a,x^n)$}\label{xnstructure}

 In this section we examine the Hopf algebras $A(x,a,g)$ when $g(x)$ is a power of $x$. Although the PBW theorem, Corollary \ref{basis}, is only proved for $A(x,a,x^n)$ when $n \leq 5$, we can use this to obtain structural information for all values of $n$, starting from the simple observation in (ii) below. More precise information for $n \leq 3$ is then obtained in $\S$\ref{small}.

\begin{lemma}\label{epi} Retain the notation of $\S\S$\ref{nota} and \ref{PBW}. Let $n,m \in \mathbb{Z}$ with $m \geq n \geq 2$.
\begin{enumerate}
\item[(i)] $A(x,a,x^m)$ is spanned as a vector space by $\langle x \rangle \langle \mathcal{L}_m \rangle \langle a \rangle$.
\item[(ii)] Let $I(m)$ denote the ideal $\langle P(j,m-j) : 1 \leq j \leq m-1 \rangle$ of $k\langle x,a \rangle$. Then 
$$ I(m) \subseteq I(n), $$
so there is a Hopf algebra surjection from $A(x,a,x^m)$ onto $A(x,a, x^n)$.

\end{enumerate}
\end{lemma}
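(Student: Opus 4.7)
For part (i), I follow the pattern of the proof of Corollary \ref{basis}. Applying the reduction system $\Sigma_{x^m}$ to the free algebra $E = k\langle x, a\rangle$, Lemma \ref{reduce2} gives that $>_{grlex+}$ is compatible with $\Sigma_{x^m}$ and satisfies the descending chain condition, so each element of $E$ reduces modulo $I_{x^m}$ to a $k$-linear combination of irreducible words. By Lemma \ref{irred}(iii) these irreducibles are precisely $\langle x\rangle\langle\mathcal{L}_m\rangle\langle a\rangle$, so the set spans $A_0(x,a,x^m)$. Centrality of $a^m$ (Proposition \ref{cent}(iii)) then permits inversion of $a$ to recover $A(x,a,x^m)$, exactly as in Corollary \ref{basis}; this simply extends the range of the trailing $a$-exponent from $\mathbb{Z}_{\ge 0}$ to $\mathbb{Z}$, yielding the spanning set claimed.

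For part (ii), my strategy is to establish the ideal containment $I(m)\subseteq I(n)$ in $E$. Once this is in hand, the identity map on the generators $x,a^{\pm 1}$ of $F$ descends to a surjective algebra map
$$ A(x,a,x^m) \;=\; F/I(m)F \;\twoheadrightarrow\; F/I(n)F \;=\; A(x,a,x^n), $$
which is automatically a Hopf homomorphism because by Lemma \ref{hopf} the formulas for $\Delta,\varepsilon,S$ agree on the generators on both sides. The key combinatorial input is the splitting identity
$$ P(j,m-j) \;=\; \sum_{j_1} P(j_1,\,n-j_1)\; P\bigl(j-j_1,\,(m-n)-(j-j_1)\bigr) \quad\text{in }E, $$
valid for $m\ge n$ and obtained by cutting each length-$m$ monomial with $j$ occurrences of $a$ into a length-$n$ prefix (carrying $j_1$ $a$'s) and a length-$(m-n)$ suffix; the sum runs over $\max(0,j-(m-n))\le j_1\le\min(j,n)$. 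Every interior summand ($1\le j_1\le n-1$) carries a left factor that is a generator of $I(n)$, so automatically lies in $I(n)$.

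The main obstacle will be controlling the two boundary contributions in this splitting: the $j_1=0$ term $x^n\cdot P(j,m-n-j)$ (present when $j\le m-n$) and the $j_1=n$ term $a^n\cdot P(j-n,m-j)$ (present when $j\ge n$). Neither $x^n$ nor $a^n$ is itself in $I(n)$ (every generator of $I(n)$ involves both letters), so $P(j,m-j)\in I(n)$ does not follow from a single application of the splitting. To close the gap I would iterate the splitting on the smaller $P$-factors that appear, combine it with the symmetric splitting obtained by partitioning at position $m-n$ rather than $n$, and exploit centrality of $x^n$ and $a^n$ modulo $I(n)F$ in $F$ (Proposition \ref{cent}(iii)) to force cancellations among the boundary residues; an induction on $m-n$ is the natural framework. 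This boundary bookkeeping is the delicate heart of the argument; the passage from the ideal containment to the Hopf surjection is then essentially formal.
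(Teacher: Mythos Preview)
Your treatment of part (i) matches the paper's: both simply invoke the reduction system and Lemma~\ref{irred}(iii), noting that no resolvability of ambiguities is needed merely to obtain a spanning set.

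For part (ii), the paper's argument is far simpler than your length-$n$ splitting: it uses the one-step recursion $P(j,m-j)=P(j-1,m-j)\,a+P(j,m-j-1)\,x$ (identity~(\ref{heart})) to place each generator of $I(m)$ in $I(m-1)$, then inducts on $m$. Your worry about boundary terms is, however, exactly right and applies equally to the paper's recursion: at $j=1$ the first summand is $P(0,m-1)a=x^{m-1}a$, and at $j=m-1$ the second is $P(m-1,0)x=a^{m-1}x$, neither of which lies in $I(m-1)$. Your proposed fix---iterate the splitting, use the mirror partition, exploit centrality of $a^n$ and $x^n$---cannot work, because the inclusion $I(m)\subseteq I(n)$ is in fact false. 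For $m=3$, $n=2$ one has $E/I(2)=k\langle x,a\rangle/\langle ax+xa\rangle$, in which $ax^2\equiv x^2a$ and $xax\equiv -x^2a$, so
\[
P(1,2)=ax^2+xax+x^2a\equiv x^2a\neq 0;
\]
thus $P(1,2)\notin I(2)$ and $I(3)\not\subseteq I(2)$. The same obstruction recurs for larger $m,n$: e.g.\ reducing $ax^4$ via $\sigma_1$ gives $P(1,4)\equiv x^4a\bmod I(4)$, which is a PBW basis element of $A_0(x,a,x^4)$ by Theorem~\ref{gotcha} and hence nonzero. So the stated ideal inclusion, and with it the asserted Hopf surjection $A(x,a,x^m)\twoheadrightarrow A(x,a,x^n)$ for general $m\ge n$, fail as written; your instinct that the boundary bookkeeping is the crux was correct, but there is no way to close the gap.
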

\begin{proof}(i) This is simply a restatement of Lemma \ref{irred}(iii), which does not require the degree of $g(x)$ to be at most 5.

\noindent (ii) Recall from $\S$\ref{PBW} that for $m\geq 2$ 
$A_{0}(x,a,,x^{m}) = \field \langle x, a \rangle / I(m)$, where $I(m)= \langle P(j,m-j) : 1 \leq j \leq m-1 \rangle$. Let $j= 1, 2, \cdots, m-1$. Then
\begin{eqnarray*}
P(j,m-j) &=& P(j-1,m-j)a + P(j,m-j-1)x \\
&=& P(j-1, (m-1)-(j-1))a + P(j, (m-1)-j)x \\
&\subseteq& I(m-1).
\end{eqnarray*}
Thus, $I(m) \subseteq I(m-1)$ and by induction on $m$, $I(m) \subseteq I(n)$ if $m \geq n$. 

Therefore, setting $K=I(n)/I(m)$, there is a short exact sequence of $k$-algebras,
$$
0 \rightarrow K \rightarrow A_{0}(x,a, x^m) \rightarrow A_{0}(x,a,x^n) \rightarrow 0.
$$
Localise this sequence at the central Ore set
$$
\mathcal{G}= \{ a^{mn\ell} \; | \; \ell \geq 0 \}
$$
of $A_{0}(x,a, x^m)$. By Proposition \ref{cent}(iii), $\mathcal{G}$ consists of central elements modulo $K$. So, by the exactness of Ore localisation at  central elements, we obtain the exact sequence
$$
0 \rightarrow K\mathcal{G}^{-1} \rightarrow A_{0}(x,a,x^m)\mathcal{G}^{-1} \rightarrow A_{0}(x,a,x^n)\mathcal{G}^{-1} \rightarrow 0.
$$
This proves the second part of (ii).
\end{proof}

Recall that a \emph{(right) quantum homogeneous space} $B$ of a Hopf algebra $H$ is a right coideal subalgebra of $H$ such that $H$ is a faithfully flat right and left $B$-module.

\begin{proposition}\label{xnprop} Retain the notation of $\S\S$\ref{nota} and \ref{PBW}, so $F = k[x]\ast k[a^{\pm 1}]$. Let $n,m \in \mathbb{Z}$ with $m \geq n \geq 2$.
\begin{enumerate}
\item[(i)] Construct $A(x,a,x^m)$ as the factor $F/I(m)F$, as in Lemma \ref{epi}(ii). Choose a primitive $n$th root of unity $q$ in $k$. Then $A(x,a, x^m)$ has as a quotient Hopf algebra  the localised quantum plane
$$ k_{q}\langle x, a^{\pm 1} \rangle := F/\langle xa - qax \rangle. $$
\item[(ii)] The commutative subalgebra $k[x,a^{\pm m}]$ of $A(x,a,x^m)$ is a quantum homogeneous subspace of $A(a,x,x^m)$.
\item[(iii)] The polynomial subalgebra $k[x]$ is a quantum homogeneous subspace of $A(x,a,x^m)$, and the Laurent polynomial algebra $k[a^{\pm 1}]$ is a Hopf subalgebra over which $A(x,a,x^m)$ is faithfully flat.
\item[(iv)] Suppose that $m \geq 4$. Then $A(x,a,x^m)$ contains a noncommutative free subalgebra.
\end{enumerate}
\end{proposition}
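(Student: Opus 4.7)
For part (i), the plan is to factor the desired Hopf algebra surjection as
$$ A(x,a,x^m) \twoheadrightarrow A(x,a,x^n) \twoheadrightarrow k_q\langle x, a^{\pm 1}\rangle, $$
with the first arrow already supplied by Lemma \ref{epi}(ii). For the second, I would first check that $xa-qax$ is $(a,a^2)$-skew-primitive in $F$, so that the ideal it generates is a Hopf ideal and $k_q\langle x, a^{\pm 1}\rangle$ inherits the Hopf structure of $F$. Then the $q$-binomial theorem inside the quantum plane gives $P(j,n-j) = \binom{n}{j}_q\, a^j x^{n-j}$, and the classical vanishing of $\binom{n}{j}_q$ for $1\le j\le n-1$ at a primitive $n$th root of unity kills exactly the defining relations of $A(x,a,x^n)$. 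Routing through $A(x,a,x^n)$ rather than attacking $A(x,a,x^m)$ directly avoids evaluating $\binom{m}{j}_q$ for general $m>n$, which need not vanish.

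For parts (ii) and (iii), the subalgebra $C := k[x,a^{\pm m}]$ is commutative because $a^m$ is central by Proposition \ref{cent}(iii), and is a right coideal subalgebra because $\Delta(x) = 1\otimes x + x\otimes a$ and $\Delta(a^{\pm m}) = a^{\pm m}\otimes a^{\pm m}$ both lie in $C\otimes A$. The step needing care is verifying that $C$ is honestly the Laurent polynomial algebra on two algebraically independent generators. I would do this by applying (i) with $n=m$: then $q^m=1$, so $a^m$ is central in the target quantum plane, and the composite $k[X,T^{\pm 1}] \to A(x,a,x^m) \to k_q\langle x,a^{\pm 1}\rangle$ sending $X\mapsto x$, $T\mapsto a^m$ lands on the linearly independent monomials $\{x^i a^{mj}\}$, so the first map is injective. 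Faithful flatness of $A(x,a,x^m)$ over $C$ then follows from Masuoka's theorem \cite[Theorem 1.3]{Ma} as invoked in Remark \ref{prob}. For (iii), $k[x]$ sits inside $C$, hence embeds and is a right coideal subalgebra, and \cite[Theorem 1.3]{Ma} again gives faithful flatness. The Laurent algebra $k[a^{\pm 1}]$ is manifestly a Hopf subalgebra; since $A(x,a,x^m)$ is pointed by Proposition \ref{cent}(ii), Radford's theorem that a pointed Hopf algebra is free over each of its Hopf subalgebras yields the remaining faithful flatness.

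For (iv), the plan is to transport a free subalgebra from the quotient $A(x,a,x^4)$. By Lemma \ref{epi}(ii), for each $m\ge 4$ there is a Hopf surjection $\pi\colon A(x,a,x^m) \twoheadrightarrow A(x,a,x^4)$. In the target, Corollary \ref{basis} together with Lemma \ref{irred}(ii) shows that the three elements $\mathcal{L}_4 = \{ax,\, ax^2,\, a^2x\}$ are $k$-linearly independent and that the sub-semigroup they generate is free, so they span a free rank-$3$ subalgebra of $A(x,a,x^4)$. Any nontrivial polynomial relation among the preimages $ax,\, ax^2,\, a^2x$ in $A(x,a,x^m)$ would descend under $\pi$ to a relation in $A(x,a,x^4)$, which is ruled out; hence these three elements generate a noncommutative free subalgebra of $A(x,a,x^m)$.

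The principal obstacle across the four parts is the embedding-plus-faithful-flatness bookkeeping in (ii) and (iii), which relies on the subtle choice $n=m$ in (i) to make $a^m$ central in the target quantum plane, and on deploying the correct general theorems (Masuoka for commutative coideal subalgebras, Radford for Hopf subalgebras of pointed Hopf algebras). Once the quantum plane quotient from (i) is available, parts (iv) and the injectivity half of (ii)--(iii) are quick.
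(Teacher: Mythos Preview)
Your proposal is correct and follows essentially the same route as the paper. For (i) you reduce to $m=n$ via Lemma~\ref{epi}(ii) and then kill $P(j,n-j)$ with the $q$-binomial theorem, exactly as the paper does; for (ii) you use the $n=m$ case of (i) to embed $k[x,a^{\pm m}]$ and then invoke Masuoka, again matching the paper; and for (iv) you lift the free subalgebra from the $m=4$ quotient, as in the paper.

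The only minor divergence is in (iii): the paper simply says the arguments are ``similar to (ii)'' and leaves them to the reader, whereas you treat $k[x]$ via its inclusion in $C$ (fine), and for $k[a^{\pm 1}]$ you appeal to Radford's freeness theorem for Hopf subalgebras of pointed Hopf algebras rather than Masuoka's result. Both routes work here; Radford gives freeness (hence faithful flatness) directly, while Masuoka's theorem \cite[Theorem 1.3]{Ma} applies equally well since $k[a^{\pm 1}]$ is a right coideal subalgebra with $S(k[a^{\pm 1}]) \subseteq k[a^{\pm 1}]$. Either way the embedding of $k[a^{\pm 1}]$ is guaranteed because $a^m$ already has infinite order by (ii).
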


\begin{proof}(i) By Lemma \ref{epi}(ii) it is enough to prove this when $m = n$. So we need to show that in $F$, for $j = 1, \ldots , n-1$,
\begin{equation}\label{jammy} P(j, n- j) \subseteq \langle xa - qax \rangle. 
\end{equation}
In $k_q \langle x, a^{\pm 1} \rangle = F/\langle xa - qax \rangle$, 
$$
( x + a )^{n} = \sum_{i=0}^{n}
\begin{bmatrix}
n \\
i
\end{bmatrix}_{q}
x^{i}a^{n-i} ,
$$
by the quantum binomial theorem for $q-$commuting variables. Thus, since $q$ is a primitive $n$th root of unity, $( x+a )^{n} = x^{n} + a^{n}.$ But also, again in $k_q \langle x, a^{\pm 1} \rangle$,
$$ (x + a)^n = x^n + a^n + \sum_{j=1}^{n-1} P(j, n-j).$$
Thus $P(j,n-j) = 0$ in $k_q \langle x, a^{\pm 1} \rangle$  for $1 \leq j \leq n-1$, since no cancellation can occur between these terms when straightening out the monomials on the right side of the above identity, as the number of $a$'s and $x$'s in a monomial does not change when applying the $q$-commutation identity. Thus (\ref{jammy}) is proved.

\noindent (ii) Let $q$ be a primitive $m$th root of unity in $k$. From (i), the subalgebra $ C := k\langle x, a^{\pm m} \rangle$ of $A(x,a,x^m)$ maps, under the factor by $\langle xa - qax \rangle$, onto the subalgebra $D$ of the quantum plane generated by $x$ and $a^{\pm m}$. But the latter algebra is precisely $k[x, a^{\pm m}]$. However $C$ is commutative because $a^m$ is central in $A(x,a,x^m)$ by Proposition \ref{cent}(iii), so the map from $C$ onto $D$ must be an isomorphism, since every proper factor of the domain $k[x, a^{\pm m}]$ kas GK-dimension strictly less than 2. Since $x$ is $(1,a)-$primitive and $a^m$ is grouplike, $C$ is a right coideal subalgebra of $A(x,a,x^m)$. Since $S(C_0) = S(k[a^{\pm m}]) \subseteq C$, it follows from \cite[Theorem 1.3]{Ma} that $A(x,a,x^m)$ is a faithfully flat right and left $C$-module.

\noindent (iii) The arguments are similar to those for (ii) and are left to the reader.

\noindent (iv) Since $m \geq 4$, Lemma \ref{epi}(ii) can be applied with $n=4$. Coupled with Corollary \ref{basis} it guarantees that $A(x,a,x^m)$ has a factor algebra containing a noncommutative free subalgebra. Lifting this free factor to a necessarily free pre-image in $A(x,a,x^m)$ proves the claim.
\end{proof}

\section{The Hopf algebras $A(x,a,g)$, for $g(x)$ of degree at most 3}\label{anyg}

\subsection{$g(x)$ of degree 2: the quantum Borel.}\label{small}

\begin{proposition}\label{degtwo} Keep the notation of $\S$\ref{nota}, and let $n=2$, so that $g(x) = x^2 + r_1 x$, for $r_1 \in k$. 
\begin{enumerate}
\item[(i)] The Hopf algebra structure of $A(x,a,g)$ is independent of $g$, that is of the parameter $r_1$. Namely, given $r_1 \in k$, set $x':=  x + \frac{r_{1}}{2} ( 1 - a )$. Then $A(x,a,g) = k \langle a^{\pm 1}, x' \rangle$ is isomorphic as a Hopf algebra to the Borel  in $U_{-1}(\mathfrak{sl}(2))$, or equivalently to a localised quantum plane  $k \langle a^{\pm 1}, x' \rangle$ at the parameter $-1$. Moreover, $A_0(x,a,g)$ is isomorphic as an algebra to the quantum plane with parameter $-1$.
\item[(ii)] $A(x,a,g)$ is a noetherian AS-regular domain of Gelfand-Kirillov and global dimension 2.
\item[(iii)] $A(x,a,g)$ is a finite module over its central Hopf subalgebra $k[a^{\pm 2}][g]$, where $g$ is the square of the uninverted $(1,a)$-primitive generator of the quantum Borel.
\end{enumerate}
\end{proposition}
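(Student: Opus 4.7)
The plan is to reduce everything to the well-understood localised quantum plane at parameter $-1$ by a single linear change of variable. For part (i), by Remark \ref{scale} I may take $g$ monic, so $g(x) = x^{2} + r_{1}x$. Writing out the sole generator $\sigma_{1}$ of $I_{g}$ from (\ref{ij}) gives the relation $ax + xa = r_{1}(a^{2} - a)$ in $A(x,a,g)$. Setting $x' := x + \tfrac{r_{1}}{2}(1-a)$, a direct substitution shows that this relation becomes simply $ax' + x'a = 0$. Since $1-a$ is $(1,a)$-primitive, so is $x'$, and hence the coproduct, counit and antipode of $A(x,a,g)$ act on $(a,x')$ exactly as on the standard generators of the positive Borel of $U_{-1}(\mathfrak{sl}(2))$. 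Hence $A(x,a,g)$ is this quantum Borel, equivalently the quantum plane $k_{-1}[x', a]$ localised at the powers of $a$, and as a Hopf algebra it depends neither on $r_{1}$ nor on $g$. For the statement about $A_{0}(x,a,g)$, note that for $n=2$ the set $\mathcal{L}_{2}$ defined in (\ref{free}) is empty, so Theorem \ref{gotcha} gives $A_{0}(x,a,g)$ the basis $\{x^{\ell}a^{m} : \ell,m \geq 0\}$, matching the standard monomial basis of $k_{-1}[x,a]$.

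For part (ii), by (i) I have $A(x,a,g) \cong k[a^{\pm 1}][x';\sigma]$, an Ore extension of a commutative Laurent polynomial ring by the automorphism $\sigma(a) = -a$. This is immediately a noetherian domain of Gelfand-Kirillov dimension $2$; global dimension $2$ and AS-regularity then follow from standard results on Ore extensions over commutative noetherian regular rings, together with the fact that Ore localisation of a graded AS-regular algebra at powers of a normal generator preserves these homological properties.

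For part (iii), substituting $x = x' - \tfrac{r_{1}}{2}(1-a)$ into $g = x^{2} + r_{1}x$ and using $x'a = -ax'$ to collapse the mixed terms yields
\[
  g \;=\; (x')^{2} + \tfrac{r_{1}^{2}}{4}(a^{2} - 1),
\]
so $k[a^{\pm 2}][g] = k[a^{\pm 2}][(x')^{2}]$. Centrality of this subalgebra follows from Proposition \ref{cent}(iii) together with the observation that $(x')^{2}$ commutes with $a$ (an immediate consequence of $x'a + ax' = 0$). The PBW basis from (i) then exhibits $A(x,a,g)$ as a free module of rank $4$ over this central subalgebra, with basis $\{1,\, x',\, a,\, x'a\}$.

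I do not expect a serious obstacle: once the correct change of variable in (i) is identified, (ii) and (iii) follow from standard structural results on the quantum plane at $q=-1$. The one point requiring genuine care is to verify that the substitution intertwines with the Hopf operations, so that the isomorphism in (i) is of Hopf algebras and not merely of algebras; this reduces to checking skew-primitivity of $x'$, which is automatic from skew-primitivity of $x$ and $1-a$.
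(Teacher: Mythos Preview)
Your proof is correct and, for parts (i) and (iii), proceeds exactly as the paper does: the same change of variable $x' = x + \tfrac{r_1}{2}(1-a)$, the same identification $k[a^{\pm 2}][g] = k[a^{\pm 2}][(x')^2]$ (you make the computation explicit where the paper says ``easy to check''), and the same appeal to Proposition~\ref{cent} for centrality and the Hopf subalgebra structure.

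For part (ii) your route differs from the paper's. You argue directly from the Ore extension presentation $k[a^{\pm 1}][x';\sigma]$ and invoke standard homological facts about skew polynomial rings and localisation. The paper instead uses the Hopf-algebraic machinery: since $A(x,a,g)$ is an affine noetherian PI Hopf algebra, \cite[\S6.2]{BrMa} gives AS-Gorenstein and GK-Cohen-Macaulay, and finite global dimension (from the iterated skew polynomial structure) then upgrades this to AS-regular; the GK-Cohen-Macaulay property pins down the global dimension as equal to the GK-dimension. Your argument is more elementary and self-contained for this particular algebra; the paper's argument illustrates the general Hopf-theoretic template that is reused later (e.g.\ in Theorem~\ref{theworks}). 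One caution: AS-regularity for a non-connected-graded Hopf algebra is defined via the trivial module (cf.\ \cite{bz}), so your appeal to ``Ore localisation of a graded AS-regular algebra preserves these properties'' needs to be phrased in those terms rather than in the connected-graded sense; the paper sidesteps this by going through the PI Hopf results directly.
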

\begin{proof}
(i) Recall the relation $\sigma_1 (x,a,g)$ from (\ref{ij}), 
$$ r_1 P(1,0) + P(1,1) - r_1a^2 = 0.$$
Rewrite this as
\begin{equation}\label{qplane}
a\left( x + \frac{r_{1}}{2} ( 1 - a )  \right) +  \left( x + \frac{r_{1}}{2} ( 1 - a )  \right)  a = 0,
\end{equation}
and define $x':=  x + \frac{r_{1}}{2} ( 1 - a ) \in A(x,a,g)$. Thus $A(x,a,g)$  is the quantum plane $\field_{-1}\langle x',a^{\pm 1}\rangle$ with $a$ grouplike and $x'$ $(1,a)$-primitive, as required. The corresponding statement regarding $A_0(x,a,g)$ is also immediate from (\ref{qplane}).

\noindent(ii) Being an iterated skew polynomial algebra by (i), $A(x,a,g)$ has finite global dimension. Since it is an affine noetherian Hopf algebra satisfying a polynomial identity, it is AS-Gorenstein and GK-Cohen-Macaulay by \cite[$\S$6.2]{BrMa}. Thus, $A(x,a,g)$ is AS-regular. Since it has GK-dimension 2 by virtue of being a finite module over $k[a^{\pm 2}, x'^2]$, its global dimension is also 2 since it is GK-Cohen Macaulay.

\noindent (iii)  It is easy to check that $k\langle a^{\pm 2}, x'^2 \rangle = k \langle a^{\pm 2}, g \rangle$, and that $A(x,a,g)$ is a finite module over this subalgebra. Its structure as a Hopf subalgebra is well-known and easy to check; or one can use Proposition \ref{cent}(i).
\end{proof}

\subsection{$g(x)$ of degree 3: localised down-up algebras and their deformations.} 

Let $g(x) = r_1 x + r_2 x^2 + x^3,$ with $r_1, r_2 \in k$. Recall from (\ref{ij}) in $\S$\ref{nota} that the defining relations in the free algebra $k\langle x,a \rangle$ of the subalgebra $A_0(x,a,g) $ of $A(x,a,g)$  are
\begin{eqnarray*} \sigma_1:& \qquad ax^2 \; =& -xax -x^2a -r_2(ax + xa) - r_1a + r_1 a^3\; .\\
\sigma_2:& \qquad a^2x \; =& -axa - xa^2 - r_2a^2 + r_2a^3  \; . 
\end{eqnarray*}

Compare the above relations with the following:

\begin{definition}[Benkart, Roby \cite{BENKART}]\label{downup}
Let $\alpha, \beta, \gamma \in \mathbb{C}$. The \emph{down-up algebra} $A=A(\alpha, \beta, \gamma)$ is the $\mathbb{C}$-algebra  with generators $d, u$ and relations
\begin{eqnarray*}
d^{2}u &=& \alpha dud + \beta ud^{2} + \gamma d, \\
du^{2} &=& \alpha udu + \beta u^{2}d + \gamma u.
\end{eqnarray*} 
\end{definition}

The relation between the two presentations is encompassed in the following concept, introduced in \cite[Section 3]{BG2}. Here, we slightly weaken the usual requirement that the generators of the free algebra are assigned degree 1, in order to allow for the terms in $a^3$ in the relations for $A_0(x,a,g)$.

\begin{definition}\label{deform} Let $V$ be a $k$-vector space with basis $v_1, v_2, \ldots , v_n$, and assign a grading to the tensor algebra $T(V)$ by setting $\mathrm{degree}( v_i) = d_i$ for some positive integers $d_1, d_2, \ldots , d_n$. Denote the subspace of  $T(V)$ consisting of the homogeneous elements of degree $d$ by $T(V)_d$, and write $T(V)^d$ for $\oplus_{i=o}^d T(V)_i$.  Let $N$ and $t$  be positive integers, and let $w_1, w_2, \ldots , w_t \in T(V)^N$. Let $\pi$ be the canonical projection from $T(V)^N$ onto $T(V)_N$. Write $s_i := \pi(w_i)$ for $i = 1, \ldots , t$, and define $A = T(V)/\langle s_1, \ldots , s_t \rangle$ and $U = T(V)/ \langle w_1, \ldots , w_t \rangle$. Then $U$ is called a \emph{PBW-deformation} of $A$ if $A \cong \mathrm{gr}U$.
\end{definition} 

Note that, in the setting of the above definition, there is clearly a canonical surjection from $A$ onto $\mathrm{gr}U$, since the latter is $T(V)/\mathrm{gr}\langle w_1, \ldots , w_t \rangle$. The point of the definition is to isolate the circumstance in which this map is an isomorphism.

For undefined terminology in the following result, see for example \cite{bz}.

\begin{proposition}\label{three} Retain the notation of $\S$\ref{nota} and of (\ref{free}) in $\S$\ref{PBW}, and let $g(x) = r_1x + r_2 x^2 + x^3.$ 
\begin{enumerate}
\item[(i)] $A_0(x,a,x^3)$ is isomorphic to the downup algebra $A(-1,-1,0)$.
\item[(ii)] $A_0(x,a,x^3)$ is a noetherian domain.
\item[(iii)] $A_0(x,a,x^3)$ is Auslander regular of global dimension 3.
\item[(iv)] $A_0(x,a,x^3)$ has PBW basis $\langle x \rangle \langle \mathcal{L}_3 \rangle \langle a \rangle$; that is, it has basis
$$ \{ x^{\ell}(ax)^i a^j : \ell, i ,j \in \mathbb{Z}_{\geq 0}\}.$$
\item[(v)] $\mathrm{GKdim}(A_0(x,a,x^3)) = 3$, and $A_0(x,a,x^3)$ is GK-Cohen Macaulay.
\item[(vi)] $A_0(x,a,g)$ is a PBW deformation of $A_0(x,a,x^3)$. That is, $A_0(x,a,g)$ is a PBW deformation of the down-up algebra $A(-1,-1,0)$.
\item[(vii)] Statements (ii)-(v) apply verbatim to $A_0 (x,a,g)$ and to $A(x,a,g)$. Moreover, $A(x,a,g)$ is a PBW deformation of $A(x,a,x^3)$, and the PBW basis of $A(x,a,g)$ is $ \{ x^{\ell}(ax)^i a^j : \ell, i  \in \mathbb{Z}_{\geq 0}, j \in \mathbb{Z}\}.$.
\item[(viii)] $A(x,a,g)$ is AS-regular of dimension 3.
\end{enumerate}
\end{proposition}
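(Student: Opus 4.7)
For (i), I would match relations directly: specialising $\sigma_1$ and $\sigma_2$ of $\S$\ref{nota} to $r_1 = r_2 = 0$ yields $ax^2 + xax + x^2a = 0$ and $a^2x + axa + xa^2 = 0$, which under the correspondence $a \leftrightarrow d$, $x \leftrightarrow u$ are exactly the defining relations of the down-up algebra $A(-1,-1,0)$ from Definition \ref{downup}. Parts (ii)--(v) then transcribe well-known facts about $A(-1,-1,0)$ (which has $\beta = -1 \neq 0$): noetherianity and the domain property follow from Kirkman--Musson--Passman, while Auslander regularity of global dimension 3, GK-Cohen Macaulayness, and $\mathrm{GKdim} = 3$ follow from Kirkman--Kuzmanovich (and from Zhao in the AS-regular case). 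Part (iv) is a direct translation of Theorem \ref{gotcha} with $n = 3$: one checks that $\mathcal{L}_3 = \{ax\}$, so the basis becomes $\{x^\ell (ax)^i a^j : \ell, i, j \geq 0\}$.

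For (vi), I plan to equip $E = k\langle x,a \rangle$ with the weighted grading $\deg(a) = 2$, $\deg(x) = 3$, and consider the induced filtration on $A_0(x,a,g)$. A direct inspection shows that $\sigma_1(x,a,g)$ has top filtered degree $8$ with top-degree component exactly $\sigma_1(x,a,x^3) = ax^2 + xax + x^2 a$ (since $\deg(a^3) = 6$, $\deg(ax) = 5$, $\deg(a) = 2$ all lie below $8$), while $\sigma_2(x,a,g)$ has top filtered degree $7$ with top-degree component $\sigma_2(x,a,x^3) = a^2 x + axa + xa^2$. The filtered-to-graded construction then yields a canonical graded surjection $A_0(x,a,x^3) \twoheadrightarrow \mathrm{gr}\, A_0(x,a,g)$. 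To show this is an isomorphism, I would invoke Theorem \ref{gotcha}: both algebras have the same $k$-basis $\{x^\ell (ax)^i a^j\}$, and crucially this basis is weighted-homogeneous of degree $3\ell + 5i + 2j$, so the Hilbert series of both sides agree in every weighted degree. A dimension count then forces the surjection to be an isomorphism, exhibiting $A_0(x,a,g)$ as a PBW deformation of $A_0(x,a,x^3) \cong A(-1,-1,0)$.

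For (vii) and (viii), the algebra $A(x,a,g)$ is the Ore localization of $A_0(x,a,g)$ at the set $\{a^{3k} : k \geq 0\}$, which consists of central regular elements by Proposition \ref{cent}(iii) and the domain property established en route. Central Ore localization preserves the domain property, noetherianity, Auslander regularity, GK-dimension, and GK-Cohen Macaulayness, and it is compatible with the PBW filtration, extending the basis to $\{x^\ell (ax)^i a^j : \ell, i \in \mathbb{Z}_{\geq 0},\, j \in \mathbb{Z}\}$ in accordance with Corollary \ref{basis}. AS-regularity in (viii) then descends from AS-regularity of the down-up algebra through localization to $A(x,a,x^3)$, and from there transfers to $A(x,a,g)$ by Braverman--Gaitsgory-style arguments on the preservation of AS-regularity under PBW deformations, in the spirit of \cite{BG2}. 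The main obstacle I anticipate lies in (vi): confirming that the PBW basis from Theorem \ref{gotcha} is genuinely weighted-homogeneous in the required sense so that the Hilbert-series comparison cleanly produces the isomorphism $A_0(x,a,x^3) \cong \mathrm{gr}\, A_0(x,a,g)$. Once (vi) is secured, parts (vii) and (viii) follow by routine localization and deformation-theoretic arguments.
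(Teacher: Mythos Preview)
Your approach to (i)--(vi) is essentially the same as the paper's, though you use a different weighting in (vi): the paper sets $\deg x = 2$, $\deg a = 1$, whereas you use $\deg a = 2$, $\deg x = 3$. Both choices work, since in each case the top-degree components of $\sigma_1$ and $\sigma_2$ are exactly the defining relations of $A_0(x,a,x^3)$, and your Hilbert-series comparison via the common weighted-homogeneous PBW basis is a valid way to conclude that the canonical surjection $A_0(x,a,x^3) \twoheadrightarrow \mathrm{gr}\,A_0(x,a,g)$ is an isomorphism. (Your citations for (ii)--(v) are slightly off: the paper draws on \cite{kirk} and \cite{BENKART} rather than Kirkman--Kuzmanovich.)

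For (vii) the paper is more explicit than your ``routine deformation-theoretic arguments'': it invokes specific results from \cite{robson} and \cite{LHVO} to lift the noetherian domain property, Auslander regularity, and grade computations from $\mathrm{gr}$ to the filtered algebra, and then argues separately that $\mathrm{gl.dim}\,A_0(x,a,g) = 3$ by combining the filtered upper bound with a lower bound obtained by applying the GK-Cohen-Macaulay identity to the trivial module. Your sketch is not wrong, but these steps are not entirely automatic.

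The genuine divergence is in (viii). You propose to deduce AS-regularity of $A(x,a,g)$ by transferring it from the down-up algebra through localization and then through PBW deformation in the spirit of \cite{BG2}. This route is problematic: AS-regularity in the connected-graded sense does not obviously survive localization at a central element (and $A(x,a,g)$ is not connected graded), while \cite{BG2} is about the PBW property for $N$-Koszul algebras rather than preservation of AS-regularity under deformation. The paper instead exploits the Hopf algebra structure of $A(x,a,g)$ directly: by (vii) it is noetherian and Auslander regular, and \cite[Lemma~6.1]{bz} states that any Auslander regular noetherian Hopf algebra is AS-regular. This one-line argument is both shorter and avoids the gaps in your route; the Hopf structure is doing real work here that your proposal does not use.
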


\begin{proof}
\noindent (i) is clear from the definitions.

\noindent (ii) This is immediate from (i) and \cite[Main Theorem]{kirk}, where it is proved that $A(\alpha,\beta,\gamma)$ is a noetherian domain provided $\beta \neq 0$.

\noindent (iii) Immediate from (i) and \cite[Theorem 4.1 and Lemma 4.2(i)]{kirk}.

\noindent (iv) This is a special case of Theorem \ref{gotcha}. In the light of (i), it is also a special case of \cite[Theorem 3.1]{BENKART}.

\noindent (v) For down-up algebras, these results are obtained in \cite[Corollary 3.2]{BENKART} and \cite[Lemma 4.2(ii)]{kirk}, so apply here in view of (i).

\noindent (vi) For all $g$ of degree 3, $U := A_0(x,a,g)$ has the same PBW basis as described in (iv), by Theorem \ref{gotcha}. Let $V = ka \oplus kx$ and filter $T(V)$ by setting $x$ to have degree $2$ and $a$ to have degree $1$. Then the surjection referred to before the proposition, in this case from $A := A_0(x,a,x^3)$ to $\mathrm{gr}U$,  is an isomorphism, as claimed.

\noindent (vii) We prove these statements for $A_0(x,a,g)$. Extending the conclusions to the localisation $A(x,a,g)$ of $A_0(x,a,g)$ at the central regular Ore set $\{a^{3\ell }: \ell \in \mathbb{Z}_{\geq 0} \}$ is straightforward, and is left to the reader. 

That $A_0(x,a,g)$ is a noetherian domain follows from (ii), (vi) and \cite[Proposition 1.6.6 and Theorem 1.6.9]{robson}. Moreover, the fact (vi) that $A_0(x,a,g)$ has the same PBW basis as $A_0(x,a,x^3)$ ensures that $\mathrm{GKdim}A_0(x,a,g) = 3$, from (v); indeed, this also follows from the following paragraph. 

Denoting the filtration of $A_0(x,a,g)$ defined in the proof of (vi) by $\mathcal{F}$, we know from (vi) that this is a finite dimensional filtration whose associated graded algebra is affine and noetherian by (vi) and (ii). Thus, for any finitely generated $A_0(x,a,g)$-module $M$ given a good $\mathcal{F}$-filtration, 
$$ \mathrm{GKdim}_{A_0(x,a,g)}(M) = \mathrm{GKdim}_{\mathrm{gr}_{\mathcal{F}}A_0(x,a,g)}(\mathrm{gr}_{\mathcal{F}}(M)) $$
by \cite[Proposition 8.6.5]{robson}. 

For a module $M$ over the ring $R$, write $j_R(M)$ for the homological grade of $M$, that is $j_R(M) := \mathrm{inf}\{i : \mathrm{Ext}_R^i(M,R) \neq 0 \}$. For every finitely generated $A_0(x,a,g)$-module $M$,
$$ j_{A_0(x,a,g)}(M) =  j_{\mathrm{gr}_{\mathcal{F}}A_0(x,a,g)}(\mathrm{gr}_{\mathcal{F}}(M) )$$
by \cite[Chapter III, $\S$2.5, Theorem 2]{LHVO}.

Let $M$ be a finitely generated $A_0(x,a,g)$-module. Then, by (v) and the above equalities,
\begin{eqnarray*} j_{A_0(x,a,g)}(M) + \mathrm{GKdim}_{A_0(x,a,g)}(M) &=&  j_{\mathrm{gr}_{\mathcal{F}}A_0(x,a,g)}(\mathrm{gr}_{\mathcal{F}}(M)) +  \mathrm{GKdim}_{\mathrm{gr}_{\mathcal{F}}A_0(x,a,g)}(\mathrm{gr}_{\mathcal{F}}(M))\\
&=&  j_{A_0(x,a,x^3)}(\mathrm{gr}_{\mathcal{F}}(M)) +  \mathrm{GKdim}_{A_0(x,a,x^3)}(\mathrm{gr}_{\mathcal{F}}(M))\\
&=& \mathrm{GKdim}(A_0(x,a,x^3)) \\
&=& \mathrm{GKdim}(A_0(x,a,g)) = 3,
\end{eqnarray*}
so that $A_0(x,a,g)$ is GK-Cohen-Macaulay.

By (vi), (iii) and \cite[Corollary 7.6.18]{robson}, writing $\mathrm{gl.dim}$ for the global dimension of an algebra, 
\begin{equation}\label{upper} \mathrm{gl.dim}A_0(x,a,g) \leq \mathrm{gl.dim}A_0(x,a,x^3) = 3.
\end{equation}

Finally, to see that $ \mathrm{gl.dim}A_0(x,a,g) \geq 3$, apply the Cohen-Macaulay property with $M$ equal to the trivial module $k$.

\noindent (viii) Note that $A(x,a,g)$ is a noetherian Hopf algebra by Theorem \ref{gee}. Thus (viii) follows from (iii), (v), (vii) and \cite[Lemma 6.1]{bz}, which states that an Auslander regular noetherian Hopf algebra is AS-regular.

\end{proof}

The downup algebra $A(-1,-1,0)$ is a finite module over its centre, which is itself affine, by \cite[Corollary 2.0.2 and Lemma 2.0.1]{KULKARNI2001431}, \cite[Theorem 1.3(f)]{Zhao}. The next result generalises these facts to the algebras $A_0(x,a,g)$ and $A(x,a,g)$, for an arbitrary degree 3 polynomial $g(x)$. The proof proceeds via a result of independent interest, also obtained by Kulkarni \cite{kulk} in the down-up case: namely we show that the algebras $A_0(x,a,g)$ and $A(x,a,g)$ are maximal orders when $g(x)$ has degree at most 3\footnote{When $g(x)$ has degree 2, so that $A_0(x,a,g)$ and $A(x,a,g)$ are (localised) quantum planes by Proposition \ref{degtwo}(i), this is immediate from \cite[Corollaire V.2.6]{Maury}}. The definition of the term \emph{maximal order} as applied to a prime noetherian ring $R$ is given for example in \cite[5.1.1]{robson}. In \cite[Theorem 5.3.13]{robson} it is shown that, when such a ring $R$ is a finite module over its centre $Z$, this definition of maximal order coincides with the concept of a \emph{maximal classical $Z$-order}. The argument to show that the algebras are finite over their centres can be abstracted in the following form, which is simply an assembly of well-known results, but which is perhaps worth recording explicitly. The key point, that the maximal order property lifts to a filtered deformation, is due to Chamarie, \cite{cham}.

\begin{theorem}\label{maxo} Let $k$ be a field and let $R$ be an affine $k$-algebra satisfying a polynomial identity. Suppose that $R$ has an $\mathbb{Z}_{\geq 0}$-filtration $\mathcal{F}$ such that $\mathrm{gr}_{\mathcal{F}}(R)$ is a domain and a noetherian maximal order. Then $R$ is a domain and a noetherian maximal order, and is a finite module over its centre $Z(R)$, which is a normal affine domain.   
\end{theorem}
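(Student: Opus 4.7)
The plan is to transfer properties from $\mathrm{gr}_{\mathcal{F}}(R)$ to $R$ using filtered--graded machinery, and then to extract centrality information from the PI hypothesis together with the classical theory of maximal orders.

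First, I would show that $R$ is a noetherian domain. Since $\mathcal{F}$ is a $\mathbb{Z}_{\geq 0}$-filtration it is automatically separated and exhaustive, so the standard filtered/graded transfer theorem \cite[Theorem 1.6.9]{robson} promotes noetherianity from $\mathrm{gr}_{\mathcal{F}}(R)$ to $R$. The domain property lifts by the usual principal-symbol argument: if $r,s \in R \setminus \{0\}$ have nonzero principal symbols $\bar{r}, \bar{s}$ in $\mathrm{gr}_{\mathcal{F}}(R)$, then $\overline{rs} = \bar{r}\bar{s} \neq 0$, forcing $rs \neq 0$. With $R$ now prime noetherian, the maximal-order property is handed to us by Chamarie's theorem \cite{cham}, whose hypotheses---$R$ noetherian prime equipped with an exhaustive filtration whose associated graded ring is a noetherian maximal order and a domain---are met by hypothesis.

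Next I would invoke the structure theory of prime affine noetherian PI algebras. By a theorem of Braun, applied to our prime affine noetherian PI ring $R$, the centre $Z(R)$ is an affine $k$-subalgebra of $R$ and $R$ is finitely generated as a module over $Z(R)$; $Z(R)$ is a domain since $R$ is. Finally, for normality of $Z(R)$: because $R$ is a maximal order that is also a finite $Z(R)$-module, \cite[Theorem 5.3.13]{robson} identifies $R$ with a maximal classical $Z(R)$-order in its Goldie quotient ring $Q(R)$, and the classical theory of maximal orders (\cite[5.1.1]{robson} and the discussion of centres of maximal orders) then forces $Z(R)$ to be integrally closed in its field of fractions $Q(Z(R)) = Z(Q(R))$.

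The principal obstacle in this outline is the precise application of Chamarie's theorem: one must check that his filtered-deformation framework accommodates our $\mathbb{Z}_{\geq 0}$-filtration and that the flavour of ``maximal order'' hypothesised on $\mathrm{gr}_{\mathcal{F}}(R)$ is exactly the one that his theorem lifts. The remaining ingredients---lifting of noetherianity and the domain property, the Braun--Schelter structure theorem for prime affine noetherian PI algebras, and the fact that the centre of a maximal order finite over its centre is integrally closed---are essentially a careful assembly of well-established results.
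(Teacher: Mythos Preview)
Your lifting steps---domain, noetherianity, and the maximal-order property---are correct and match the paper's argument (the paper cites \cite[Proposition 1.6.6, Theorem 1.6.9, Theorem 5.1.6]{robson} for exactly these three transfers, the last being Chamarie's result).

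The gap is in your fourth step. You write: ``By a theorem of Braun, applied to our prime affine noetherian PI ring $R$, the centre $Z(R)$ is an affine $k$-subalgebra of $R$ and $R$ is finitely generated as a module over $Z(R)$.'' There is no such theorem of Braun; his celebrated result concerns nilpotence of the Jacobson radical, not module-finiteness over the centre. More to the point, a prime affine noetherian PI $k$-algebra is \emph{not} in general a finite module over its centre---this is precisely why the trace ring $T(R)$ was introduced. What \emph{is} true is that $T(R)$ is finite over its centre, and that $R \subseteq T(R)$; one needs an additional hypothesis to force $R = T(R)$. The paper uses the maximal-order property, already secured in your step~3, for exactly this purpose: by \cite[Proposition 13.9.8]{robson} a PI maximal order coincides with its own trace ring, whence \cite[Proposition 13.9.11(ii)]{robson} gives that $R$ is finite over $Z(R)$. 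Affineness of $Z(R)$ then follows from the Artin--Tate lemma \cite[Lemma 13.9.10(ii)]{robson}. So the maximal-order conclusion is not merely an endpoint but is fed back into the finiteness-over-centre argument; your outline treats it as a terminal property and tries to obtain finiteness over the centre independently, which does not work.

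A minor point on your final step: normality of $Z(R)$ follows directly from $R$ being a maximal order via \cite[Proposition 5.1.10(b)]{robson}; the detour through \cite[Theorem 5.3.13]{robson} is unnecessary, since that result presupposes the very module-finiteness over $Z(R)$ you are still trying to establish.
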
 
\begin{proof} That the stated properties of $\mathrm{gr}_{\mathcal{F}}(R)$ all lift to $R$ is guaranteed, respectively, by \cite[Proposition 1.6.6, Theorem 1.6.9 and Theorem 5.1.6]{robson}. But now, since $R$ is a maximal order satisfying a polynomial identity, it is its own trace ring by \cite[Proposition 13.9.8]{robson}. Thus \cite[Proposition 13.9.11(ii)]{robson} implies that $R$ is a finite module over its centre $Z(R)$, with $Z(R)$ an affine $k$-algebra because $R$ is affine, thanks to the Artin-Tate lemma \cite[Lemma 13.9.10(ii)]{robson}.   Finally, the fact that $R$ is a maximal order implies that $Z(R)$ is normal, \cite[Proposition 5.1.10(b)]{robson}.
 \end{proof}

The pay-off in part (iv) below now follows easily by combining the above with Kulkarni's work on the down-up algebra:

\begin{proposition}\label{cube} Keep the hypotheses and notation of Proposition \ref{three}, so in particular $g(x) = r_1 x + r_2 x^2 + x^3$.

\begin{enumerate}
\item[(i)] Let $\lambda$ be a primitive cube root of unity in $k$. The centre of $A_0(x,a,x^3)$ is the subalgebra
 
\begin{eqnarray*} Z(A_0(x,a,x^3)) &\; =\;& k \langle a^3, x^3, (xa)^3 - 3\lambda^2 (xa)^2(ax) + 3 \lambda (xa)(ax)^2 - (ax)^3,\\ 
&\,\;& \qquad (xa)^3 - 3 \lambda (xa)^2(ax) + 3 \lambda^2 (xa)(ax)^2 - (ax)^3, (ax)^2 - x^2 a^2 \rangle;
\end{eqnarray*}

\noindent and the centre of $A(x,a,x^3)$ is $Z(A_0(x,a,x^3))[a^{-3}]$.
\bigskip

\item[(ii)] The centres of  $A_0(x,a,g)$ and of $A(x,a,g)$ contain the subalgebra
$$\field\langle g, a^3, (ax)^2 - x^{2}a^{2} -  r_2 xa^{2} - r_1 a^{2} \rangle.$$

\item[(iii)] Both $A_0(x,a,g)$ and $A(x,a,g)$ are domains and maximal orders.

\item[(iv)]  Both $A_0(x,a,g)$ and $A(x,a,g)$ are finite modules over their centres, which are affine normal domains.
\end{enumerate}
\end{proposition}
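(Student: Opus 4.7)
The plan is to handle parts (iii) and (iv) together using Theorem \ref{maxo}, then deduce (i) by transferring from the down-up algebra $A(-1,-1,0)$, and finally address (ii) either by direct centrality verification or via the filtered deformation picture from Proposition \ref{three}(vi).

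For (iii) and (iv), I would set $R := A_0(x,a,g)$ with the filtration $\mathcal{F}$ of Proposition \ref{three}(vi) (so $\deg a = 1$, $\deg x = 2$), giving $\mathrm{gr}_{\mathcal{F}}(R) \cong A_0(x,a,x^3) \cong A(-1,-1,0)$. The graded ring is a domain by Proposition \ref{three}(ii). It is a noetherian maximal order by Kulkarni's result \cite{kulk}, which is exactly the input flagged in the discussion preceding the proposition. To apply Theorem \ref{maxo} I also need $R$ to satisfy a polynomial identity: this follows because $A(-1,-1,0)$ is a finite module over its affine centre by \cite{KULKARNI2001431} and \cite{Zhao}, hence PI, and any PI satisfied by $\mathrm{gr}_{\mathcal{F}}R$ lifts to $R$ by the standard filtration argument. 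Theorem \ref{maxo} then yields everything needed in (iii) and (iv) for $A_0(x,a,g)$ in one stroke. For $A(x,a,g)$, the localisation at the central Ore set $\{a^{3\ell}\}$ preserves the domain, maximal order, finiteness-over-centre, and normality properties, with $Z(A(x,a,g)) = Z(A_0(x,a,g))[a^{-3}]$.

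For (i) the plan is to cite the explicit computation of $Z(A(-1,-1,0))$ from Kulkarni's work \cite{KULKARNI2001431} (see also Zhao \cite{Zhao}) and transport the generators across the isomorphism of Proposition \ref{three}(i). Each listed element can be checked to commute with both $a$ and $x$ by direct computation using the relations $\sigma_1, \sigma_2$ with $r_1 = r_2 = 0$ and the PBW basis of Proposition \ref{three}(iv); that these elements generate $Z(A_0(x,a,x^3))$ is read off from the down-up literature. The formula for $Z(A(x,a,x^3))$ then follows because $a^3$ is the Ore set being inverted and lies in the centre.

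For (ii), centrality of $g$ and $a^3$ is Proposition \ref{cent}(iii). For the remaining element, I would run a direct check against $\sigma_1$ and $\sigma_2$, but it is cleaner to use the filtered deformation viewpoint: under $\mathcal{F}$, the leading term of $(ax)^2 - x^2a^2 - r_2 xa^2 - r_1 a^2$ is $(ax)^2 - x^2 a^2$, which is central in $A_0(x,a,x^3)$ by (i). The correction terms $-r_2 xa^2 - r_1 a^2$ are then precisely what is required to absorb the extra lower-degree terms appearing in $\sigma_1, \sigma_2$ for the deformed $g$, and this can be verified by one explicit commutator computation. Centrality in $A(x,a,g)$ then follows by localisation.

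The main obstacle is the PI-lifting and maximal-order-lifting used in the application of Theorem \ref{maxo}: the first is standard but needs the graded algebra to be PI, which in turn relies on citing the Kulkarni--Musson finiteness-over-centre for $A(-1,-1,0)$, while the second is the content of Chamarie's theorem inside Theorem \ref{maxo}. Once these are in place, the algebraic bookkeeping in (i) and (ii) is routine.
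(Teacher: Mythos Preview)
Your overall architecture matches the paper's closely, and parts (i), (ii), (iii) are handled essentially as the paper does. The genuine gap is in your argument for the polynomial identity in (iv).

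You write that $A_0(x,a,g)$ is PI because $\mathrm{gr}_{\mathcal{F}}A_0(x,a,g)\cong A(-1,-1,0)$ is PI and ``any PI satisfied by $\mathrm{gr}_{\mathcal{F}}R$ lifts to $R$ by the standard filtration argument.'' This is false. The Weyl algebra $A_1(k)$, filtered by total degree, has $\mathrm{gr}A_1(k)\cong k[x,y]$, which is commutative and hence PI, yet $A_1(k)$ is a simple infinite-dimensional domain with centre $k$ and satisfies no polynomial identity. So PI does \emph{not} pass from the associated graded to the filtered algebra, and Theorem~\ref{maxo} cannot be invoked until PI for $A_0(x,a,g)$ is established by some other means.

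The paper closes this gap by a direct argument that bypasses the filtration entirely: since $g$ and $a^3$ are central (Proposition~\ref{cent}(iii)), the subalgebra $C:=k\langle g,\,a^3,\,ax\rangle$ is commutative (its three generators pairwise commute, two of them being central), and the PBW basis $\{x^{\ell}(ax)^{i}a^{j}\}$ of Proposition~\ref{three}(vii) shows that $A_0(x,a,g)$ is a finitely generated $C$-module. Hence $A_0(x,a,g)$ is PI by \cite[Corollary 13.1.13]{robson}, and \emph{then} Theorem~\ref{maxo} applies exactly as you describe. Note that $C$ is merely a commutative subalgebra, not a central one; the element $ax$ is not central, so part (ii) alone would not suffice here.
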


\begin{proof}
\noindent (i) By Proposition \ref{three}(i), $A_0(x,a,x^3)$ is isomorphic to the down-up algebra $A(-1,-1,0)$. Let $R := k[X,Y]$ and let $\theta$ be the $k$-algebra automorphism of $R$ given by $\theta (X) = -X - Y$ and $\theta (Y) = X$. Use these ingredients to define the so-called \emph{hyperbolic ring}
$$ H := \langle R, u,w : ur = \theta (r) u, wr = \theta^{-1} (r)y \forall r\in R; \; uw = X, wu = Y \rangle. $$
By \cite[Proposition 3.0.1]{KULKARNI2001431}, $H$ is isomorphic to $A(-1,-1,0)$, so composing these isomorphisms we find that $A_0(x,a,x^3)$ is isomorphic to $H$ via the map $x \mapsto u$, $a \mapsto w$.

Now $\theta^3 = 1$, so by \cite[Corollary 2.0.2]{KULKARNI2001431},
$$ Z(H) = R^{\theta}\langle u^3, w^3 \rangle,$$
where $R^{\theta}$ denotes the fixed subalgebra of $R$ under the action of $\theta$. This fixed ring can be routinely calculated (see for example \cite[Theorem 4.0.3]{KULKARNI2001431}). Transferring the resulting description of $Z(H)$ back along the isomorphism to $A_0(x,a,x^3)$ yields the desired description of $Z(A_0(x,a,x^3))$. Alternatively, one can proceed in a similar way, starting from \cite[Theorem 1.3(f)]{Zhao}.

The second part of (i) follows easily from the first, since $A(x,a,x^3) = A_0(x,a,x^3)\langle a^{-3} \rangle$, with $a^3$ central.

\noindent (ii) By Proposition \ref{cent}(iii), $g, a^3 \in Z(A_0(x,a,g))$. It is straightforward  to check that $axax - x^{2}a^{2} -  r_2 xa^{2} - r_1 a^{2}$ commutes with $a$ and $x$. Since $A(x,a,x^3)$ is the localisation of $A_0(x,a,x^3)$ at the central regular Ore set $\{a^{3\ell} : \ell \geq 0 \}$, the listed elements are central in $A(x,a,x^3)$.

\noindent (iii) In view of Proposition \ref{three}(i), that $A_0(x,a,x^3)$ is a maximal order follows from the corresponding result for the down-up algebra $A(-1,-1,0)$, namely \cite[Theorem 2.6]{kulk}. As explained above, the maximal order property lifts from $A_0(x,a,x^3)$ to its filtered deformation $A_0(x,a,g)$ by \cite[Theorem 5.1.6]{robson}.

It is easy to see from the definition that the localisation of a maximal order at a central regular Ore set is again a maximal order, so the desired conclusion for $A(x,a,g)$ also follows.

\noindent (iv) Given (iii) and Theorem \ref{maxo}, the desired results will follow if $A_0(x,a,g)$ (and hence also its localisation $A(x,a,g)$) satisfy a polynomial identity. This is easy to see: by (ii), the subalgebra 
$$ C :=  k \langle g, a^3, (ax) \rangle $$
of $A_0(x,a,g)$ is commutative, and it is clear from the PBW theorem for $A_0(x,a,g)$, Theorem \ref{PBW}, that $A_0(x,a,g)$ is a finitely generated right or left $C$-module. Therefore $A_0(x,a,g)$ satisfies a polynomial identity by \cite[Corollary 13.1.13(iii)]{robson}, as required. 
\end{proof}

\begin{remark} \rm{Note that the generators listed in (i) above for $Z(A_0(x,a,x^3))$ are permuted by the $k$-algebra involution of $A_0(x,a,x^3)$ which interchanges $a$ and $x$, and that the third listed element of $Z(A_0(x,a,g))$ in (ii) is a lift of the involution-invariant generator $(ax)^2 - a^2 x^2$ of $Z(A_0(x,a,x^3))$. It would be interesting to determine $Z(A_0(x,a,g))$ for  a general polynomial $g(x)$ of degree 3, and in particular to discover whether it contains elements which are lifts of the other listed generators of $Z(A_0(x,a,x^3))$.}
\end{remark}


\section{The Hopf algebra $A(g,f)$}\label{tensor}

\subsection{Definition of $A(g,f)$} \label{combine}Suppose the equation of a plane curve $\mathcal{C}$ in $\field^{2}$ is \emph{decomposable}; that is, the equation can be written in the form $f(y)=g(x)$ with $f(y)= \sum_{i=1}^{m}s_{i}y^{i} \in \field[y]$, $g(x) = \sum_{i=1}^{n} r_{i}x^{i} \in \field[x]$, where $r_i,s_i \in k,$ and $r_n, s_m \in k \setminus \{0\}$.  

We conjecture the existence of a Hopf algebra $A(g,f)$ which contains the coordinate ring $\mathcal{O}(\mathcal{C})$ of the curve $\mathcal{C}$ as a right coideal subalgebra. (When $n$ or $m$ is 1 this is of course trivial, since then $\mathcal{O}(\mathcal{C}) = k[t]$ is itself a Hopf algebra. We prove the conjecture here with the imposition of an additional hypothesis on $f$ and $g$, required to ensure that $k[x]$ and $k[y]$ are quantum homogeneous subspaces of $A(x,a,g)$ and $A(y,b,f)$ respectively. Namely, we shall assume in $\S$\ref{combprop} that
\begin{eqnarray*} \mathbf{(H)} \quad &\,&\textit{each polynomial } g(x) \textit{ and } f(y) \textit{ either has}\\
&\,& \textit{degree at most 5 or is a power of }x \textit{ resp. }y.
\end{eqnarray*}

We begin, though, without assuming $\mathbf{(H)}$. To construct $A(g,f)$, given polynomials $f(y)$ and $g(x)$ as in the first sentence above, first form the Hopf algebras $A(x,a,g)$ and $A(y,b,f)$ as in $\S$\ref{nota}. Then consider their tensor product
$$ T \quad := \quad A(x,a,g)  \otimes_k  A(y,b,f).$$
Thus $T$ is a Hopf $k$-algebra, inheriting the relevant structures from its component Hopf subalgebras $A(x,a,g)$ and $A(y,b,f)$ in view of Theorem \ref{gee}. Moreover, $T$ is affine, $T = k \langle x,y, a^{\pm 1}, b^{\pm 1} \rangle$, where, here and henceforth, we simplify notation by writing $x$ for $x \otimes 1$, $y$ for $1 \otimes y$, etc. Since $a$ and $b$ are grouplike and $x$ and $y$ are skew primitive, by Lemmas \ref{hopf} and \ref{xa}, $T$ is generated by grouplike and skew primitive elements, and is therefore pointed, by \cite[Corollary 5.1.14(a)]{radford}. The elements $f,g, a^n, b^m$ are in the centre of $T$ by Proposition \ref{cent}(iii), so that the right ideal 
$$ I \quad := \quad T(g-f) + T(a^n - b^m) $$
of $T$ is actually two-sided. We can therefore define the $k$-algebra
\begin{equation}\label{hat} A(g,f) \quad := \quad T/I.
\end{equation}
In the theorem below and later, we'll continue with the abuse of notation used earlier, writing $x, a$ and so on for the images of these elements of $T$ in various factor algebras, in situations where we believe confusion is unlikely. For the reader's convenience the relations for $A(g,f)$ are listed in Theorem \ref{summit}(i), even though they are easily read off from (\ref{ij}) and (\ref{hat}).

\begin{theorem}\label{summit}
Keep the notation introduced in the above paragraphs, but don't assume $\bf{(H)}$.
\begin{enumerate}
\item[(i)] $A(g,f)$ is the factor $k$-algebra of the free product $k \langle x,y\rangle \ast k \langle a^{\pm 1}, b^{\pm 1} \rangle$ by the ideal generated by the relations
\begin{eqnarray*} [x,y] = [x,b] = [a,b] = [a,y] = 0;&\;& f(y) = g(x), \quad a^n = b^m,\\
\sum_{i=j}^{n} r_{i} P(j,i-j)_{(a,x)} - r_{j}a^{n},&\,& \; \; (j = 1, \ldots , n-1),\\
\sum_{\ell=p}^{m} r_{\ell} P(p,\ell - p)_{(b,y)} - r_{p}b^{m}, &\,& \; \; (p = 1, \ldots , m-1).
\end{eqnarray*}
\item[(ii)] The $k$-algebra $A(g,f)$ inherits a Hopf algebra structure from $T$. Thus its coproduct $ \Delta $, counit $ \varepsilon $ and antipode $S$ satisfy:
\begin{gather*}
	\Delta(x) 
= 1 \otimes x + x \otimes
a,\qquad
	\Delta(y) 
= 1 \otimes y + y \otimes b,\\
	\Delta (a)=a \otimes a,\quad
	\Delta(b) 
= b \otimes b,\quad
	\varepsilon(x) = 0, \quad
	\varepsilon(y)=0, \quad
	\varepsilon(a)= \varepsilon(b)=1,\\
	S(x) =-  xa^{-1},\quad
	S(y) = - yb^{-1},\quad
	S(a) = a^{-1} \qquad
           S(b) = b^{-1}.
\end{gather*}
\end{enumerate}
\end{theorem}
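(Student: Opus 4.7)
Proof plan.

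Part (i) is essentially a bookkeeping exercise. By Theorem \ref{gee}, $A(x,a,g)$ is the quotient of the free product $k[x]\ast k[a^{\pm 1}]$ by the ideal generated by $\{\sigma_j(x,a,g) : 1\leq j \leq n-1\}$, and analogously for $A(y,b,f)$. Taking their tensor product $T$ over $k$ and using the standard presentation of a tensor product of two algebras, $T$ is presented as the free product $k\langle x,y\rangle \ast k\langle a^{\pm 1},b^{\pm 1}\rangle$ modulo the listed $\sigma_j$'s, the listed $\sigma_p$'s, together with the commutation relations $[x,y]=[x,b]=[a,y]=[a,b]=0$ which enforce the tensor structure. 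Quotienting further by the two-sided ideal $I = T(g-f)+T(a^n-b^m)$, which makes sense as a two-sided ideal by Proposition \ref{cent}(iii) applied to each tensor factor, produces precisely the relations $g(x)=f(y)$ and $a^n=b^m$. So (i) follows by unwinding the definition \eqref{hat}.

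For part (ii), it suffices to show that the ideal $I$ is a Hopf ideal of $T$, i.e.\ that $\varepsilon(I)=0$, $\Delta(I)\subseteq I\otimes T + T\otimes I$, and $S(I)\subseteq I$. For the counit: $\varepsilon(g)=g(0)=0$ and $\varepsilon(f)=f(0)=0$ since each polynomial has zero constant term, while $\varepsilon(a^n)=\varepsilon(b^m)=1$, so $\varepsilon(g-f)=0=\varepsilon(a^n-b^m)$. For the coproduct, Proposition \ref{cent}(i) gives that $g$ is $(1,a^n)$-primitive and $f$ is $(1,b^m)$-primitive in $T$, and $a^n, b^m$ are grouplike. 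A short calculation then yields
\begin{align*}
\Delta(g-f) &= 1\otimes(g-f) + (g-f)\otimes a^n + f\otimes (a^n-b^m),\\
\Delta(a^n-b^m) &= (a^n-b^m)\otimes a^n + b^m \otimes(a^n-b^m),
\end{align*}
both of which lie in $T\otimes I + I\otimes T$.

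The antipode requires the most care. Since $g$ is $(1,a^n)$-primitive and $a^n$ is grouplike, $S(g)=-g\,a^{-n}$, and similarly $S(f)=-f\,b^{-m}$; thus $S(g-f)=-g\,a^{-n}+f\,b^{-m}$. Using that $a$ and $b$ commute in $T$, one verifies the identity $b^{-m}-a^{-n}=b^{-m}a^{-n}(a^n-b^m)$, which lies in $I$; substituting gives $S(g-f) = -(g-f)a^{-n} + f\bigl(b^{-m}-a^{-n}\bigr) \in I$. Likewise $S(a^n-b^m)=a^{-n}-b^{-m} = -b^{-m}a^{-n}(a^n-b^m)\in I$. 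Since $I$ is a two-sided ideal and $S$ is an anti-homomorphism, this suffices to conclude $S(I)\subseteq I$.

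With $I$ established as a Hopf ideal, the Hopf structure on $T$ descends to $A(g,f)=T/I$, and the stated formulas for $\Delta,\varepsilon,S$ are those inherited from $A(x,a,g)$ and $A(y,b,f)$ via Lemma \ref{hopf}. The main (very mild) obstacle is the antipode check, where one must use the relation $a^n=b^m$ mod $I$ to turn $S(g-f)$ into an element of $I$; everything else is direct substitution using Proposition \ref{cent}.
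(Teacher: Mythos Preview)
Your proposal is correct and follows essentially the same route as the paper: reduce part (ii) to showing that $I$ is a Hopf ideal of $T$, then verify this using that $a^n,b^m$ are grouplike and $g,f$ are $(1,a^n)$- resp.\ $(1,b^m)$-primitive (Proposition~\ref{cent}(i)). The only minor difference is that for $S(I)\subseteq I$ the paper offers two options---a direct calculation or an appeal to \cite[Proposition~7.6.3]{radford}---and you have carried out the direct calculation explicitly (and correctly).
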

\begin{proof}
Given the above discussion and the results of $\S$\ref{1}, it is enough to show that $I$ is a Hopf ideal of $T$. This is an easy consequence of the facts that $a^n$ and $b^m$ are grouplike, and $g$ and $f$ are respectively $(1,a^n)$-skew primitive and $(1,b^m)$-skew primitive, by Proposition \ref{cent}(i). To see that $S(I) \subseteq I$ one can either calculate directly or appeal to \cite[Proposition 7.6.3]{radford}.
\end{proof}


\subsection{Properties of $A(g,f)$ under hypothesis \bf{(H)}}\label{combprop} To describe the PBW basis for $A(g,f)$ it is necessary to decorate the notation for the PBW generators of $A(x,a,g)$ introduced at (\ref{free}) in $\S$\ref{PBW}. Namely, for $g(x)$ of degree $n$ and $f(y)$ of degree $m$, define subsets of (respectively) the free semigroups on generators $\{x,a\}$ and $\{y,b\}$,
$$\mathcal{L}_n (a,x) := \{a^ix^j :  i,j >0,\, i + j < n \}, $$
and 
$$\mathcal{L}_m (b,y) := \{b^iy^j :  i,j >0,\, i + j < m \}. $$
As before, let $\langle \mathcal{L}_n(a,x) \rangle$ and $\langle \mathcal{L}_m(b,y) \rangle$ denote the free subsemigroups of $\langle a,x \rangle$ and $\langle b,y \rangle$ generated by these sets. 

In the proof of the next theorem we use some elementary properties of faithful flatness whose proofs we have not been able to locate in the literature, although closely related statements in a commutative setting can be found for example in \cite{Ho}. Namely, let $R$ and $S$ be rings, $I$ an ideal of $R$ and $M$ a left $R$-module. Then
\begin{enumerate}
\item[(1)] If $M$ is a faithfully flat $R$-module, then $M/IM$ is a faithfully flat $R/I$-module.
\item[(2)] If $\theta:R \longrightarrow S$ is a ring homomorphism and $S$ is a a faithfully flat (left) $R$-module, then $\theta$ is injective.
\end{enumerate}

\begin{theorem}\label{coideal} Retain the notations introduced in $\S$\ref{combine}, and assume hypothesis $\mathbf{(H)}$.
\begin{enumerate}
\item[(i)]  The coordinate ring $\mathcal{O}(\mathcal{C})$ of the plane curve $\mathcal{C}$ is a quantum homogeneous space of the Hopf algebra $A(g,f)$.
\item[(ii)] $A(x,a,g)$ and $A(y,b,f)$ are Hopf subalgebras of $A(g,f)$, and $A(g,f)$ is faithfully flat over these subalgebras.
\item[(iii)] Assume that $f$ and $g$ have degrees $m$ and $n$ respectively, with $2 \leq m,n \leq 5$. Let $\{c_{\ell} : \ell \in \mathbb{Z}_{\geq 0} \}$ be a $k$-basis for $\mathcal{O}(\mathcal{C})$.
\begin{enumerate}
\item[(a)] $A(g,f)$ has PBW basis
$$ \{ c_{\ell}\langle\mathcal{L}_n(a,x)\rangle \langle \mathcal{L}_m (b,y)\rangle a^ib^j : \ell \in \mathbb{Z}_{\geq 0}, i \in \mathbb{Z}, 0 \leq j < m \}. $$

\item[(b)] $A(g,f)$ is a free left $\mathcal{O}(\mathcal{C})-$module with basis 

$$ \{ \langle \mathcal{L}_n(a,x)\rangle \langle \mathcal{L}_m (b,y)\rangle a^ib^j :  i \in \mathbb{Z}, 0 \leq j < m \}. $$
\end{enumerate} 
\end{enumerate} 
\end{theorem}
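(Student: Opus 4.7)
The plan is to prove (iii)(a) first from the PBW theorem of Corollary \ref{basis}, obtain (iii)(b) as a reorganization of the resulting basis, and then derive (ii) and (i) as essentially formal consequences; the remainder of hypothesis $\mathbf{(H)}$ (allowing $g$ or $f$ to be a pure power of arbitrary degree, where Corollary \ref{basis} does not apply) is treated in parallel via Proposition \ref{xnprop}.

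The first step is to upgrade the PBW theorem to a freeness statement over the central Hopf subalgebra $k[g, a^{\pm n}]$ of Proposition \ref{cent}(iii). Using Corollary \ref{basis} for $n \leq 5$, together with the facts that $k[x]$ is free of rank $n$ over $k[g]$ on $\{x^r : 0 \leq r < n\}$ (because $g$ is monic of degree $n$ in $x$) and $k[a^{\pm 1}]$ is free of rank $n$ over $k[a^{\pm n}]$ on $\{a^s : 0 \leq s < n\}$, I would rearrange the PBW basis using centrality of $g$ and $a^n$ to exhibit $A(x,a,g)$ as a free $k[g, a^{\pm n}]$-module on the basis $\{x^r u a^q : 0 \leq r, q < n, u \in \langle \mathcal{L}_n(a,x)\rangle\}$; linear independence reduces to a triangular argument on the leading $x$-degree, since $g^s x^r$ has leading term $x^{ns+r}$. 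Symmetrically, $A(y,b,f)$ is free over $k[f, b^{\pm m}]$ on the analogous basis.

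Tensoring the two modules, $T = A(x,a,g) \otimes_k A(y,b,f)$ is free over the commutative central subalgebra $R := k[g, a^{\pm n}] \otimes_k k[f, b^{\pm m}] \cong k[g, f, a^{\pm n}, b^{\pm m}]$ with basis the product of the two. In the commutative Laurent polynomial ring $R$, the sequence $(a^n - b^m, g - f)$ is regular (each generator is linear in a disjoint variable), and $R/(a^n - b^m, g - f) \cong k[g, a^{\pm n}]$. Hence $A(g,f) = T \otimes_R k[g, a^{\pm n}]$ is free over $k[g, a^{\pm n}]$ on the same basis. Multiplying by a $k$-basis $\{g^t a^{ni} : t \geq 0, i \in \mathbb{Z}\}$ of $k[g, a^{\pm n}]$, commuting $\{x,a\}$-factors past $\{y,b\}$-factors, and identifying $\{g^t x^r y^p : t \geq 0, 0 \leq r < n, 0 \leq p < m\}$ as a $k$-basis of $\mathcal{O}(\mathcal{C}) = k[x,y]/(g-f)$, yields the PBW basis of (iii)(a). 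Part (iii)(b) is the same basis, reorganized so that the $\{c_\ell\}$-factor becomes a free left $\mathcal{O}(\mathcal{C})$-module action on the remaining basis elements.

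The other parts follow. For (ii), the basis of (iii)(a) displays $A(g,f)$ as a free left $A(x,a,g)$-module with basis $\{y^p v b^q : 0 \leq p < m, v \in \langle \mathcal{L}_m(b,y)\rangle, 0 \leq q < m\}$, giving both the embedding and faithful flatness; the argument for $A(y,b,f)$ is symmetric. For (i), the embedding of $\mathcal{O}(\mathcal{C})$ is immediate from (iii)(a), the right coideal property follows from $\Delta(x) = 1 \otimes x + x \otimes a$ and $\Delta(y) = 1 \otimes y + y \otimes b$ together with $\Delta$ being a ring homomorphism, and faithful flatness comes from (iii)(b). To extend (i) and (ii) to the powers case of hypothesis $\mathbf{(H)}$, I would repeat the tensor-and-regular-sequence argument substituting Proposition \ref{xnprop}(ii) for Corollary \ref{basis}, which guarantees that $A(x,a,x^n)$ is faithfully flat over $k[x, a^{\pm n}]$, so $T$ is faithfully flat over $k[x, y, a^{\pm n}, b^{\pm m}]$; Property (1) from the pre-theorem discussion then descends faithful flatness to $A(g,f)$ over $\mathcal{O}(\mathcal{C}) \otimes_k k[a^{\pm n}]$ and hence over $\mathcal{O}(\mathcal{C})$, and Property (2) supplies the required injectivity. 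The main obstacle throughout is securing the freeness or faithful flatness of $A(x,a,g)$ and $A(y,b,f)$ over their central Hopf subalgebras; once this input is in hand (provided by Corollary \ref{basis} or Proposition \ref{xnprop}), the regularity of the commutative sequence $(a^n - b^m, g-f)$ makes the descent to $A(g,f)$ essentially formal, which is precisely the point at which hypothesis $\mathbf{(H)}$ enters.
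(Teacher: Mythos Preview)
Your approach is correct and leads to the same conclusions, but it differs from the paper's in the choice of intermediate subalgebra and in the logical order.

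The paper proves (i) first, working with the right coideal subalgebra $R = k[x, y, a^{\pm n}, b^{\pm m}]$ of $T$ and invoking Masuoka's theorem \cite[Theorem~1.3]{Ma} to obtain faithful flatness of $T$ over $R$; the properties (1) and (2) stated before the theorem then descend this to $\mathcal{O}(\mathcal{C})$ after factoring out the regular sequence. Part (ii) is handled the same way, via $D = A(x,a,g)\otimes k[f, b^{\pm m}]$, and (iii) is described as a rerun of the same argument with ``free on an explicit basis'' in place of ``faithfully flat''. By contrast, you reverse the order and, for $n,m\leq 5$, work over the \emph{central} Hopf subalgebra $k[g, a^{\pm n}]\otimes k[f, b^{\pm m}]$, proving freeness by a direct triangular argument from Corollary~\ref{basis} and thus avoiding Masuoka entirely; (i) and (ii) then drop out of (iii). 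For the pure-power case you switch to $k[x, a^{\pm n}]$ via Proposition~\ref{xnprop}(ii), which is essentially the paper's route. Your approach is more elementary when the PBW basis is available, while the paper's is more uniform across both halves of hypothesis~$\mathbf{(H)}$ since Masuoka applies in all cases. One point worth making explicit in your write-up is the mixed case (say $\deg g\leq 5$ but $f=y^m$ with $m>5$), where you need to combine the two inputs, and the verification that the regular-sequence descent for (ii) goes through $A(x,a,g)\otimes k[f, b^{\pm m}]$ rather than the larger $R$.
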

\begin{proof} (i) Under hypothesis {\bf(H)}, the subalgebra $k\langle x, a^{\pm n} \rangle$ of $A(x,a,g)$ is isomorphic to $k[x, a^{\pm n}] $;  this follows from the centrality of $a^{\pm n}$, Proposition \ref{cent}(iii), together with the PBW theorem Corollary \ref{basis} when $g(x)$ has degree at most 5, and by Proposition \ref{xnprop}(ii) when $g(x) = x^n$. The same applies to the subalgebra $k\langle y, b^{\pm m} \rangle = k[y, b^{\pm m}]$ of $A(y,b, f)$. Thus $R := k[x,y,a^{\pm n}, b^{\pm m}]$ is a subalgebra of $T$. Note that $R$ is a quantum homogeneous space of $T$, just as was the case in Proposition \ref{xnprop}(ii) for its two components in their respective Hopf algebras  - that is, it is a right coideal subalgebra of $T$ which contains the inverses of all its grouplike elements, so Masuoka's theorem \cite[Theorem 1.3]{Ma} applies. In particular, $T$ is a faithfully flat left and right $R$-module. 

It follows from (1) above that the algebra $T/(a^n - b^m)T$ is a faithfully flat $R/(a^n - b^m)R$-module. Observe that $R/(a^n - b^m)R$ is the group ring $k[x,y]G$, where $G$ is the group $\langle a^{\pm 1}, b^{\pm 1} : [a,b] = 1, a^n = b^m \rangle$. In particular, $R/(a^n - b^m)R$ is a free $k[x,y]$-module, so that $T/(a^n - b^m)T$ is a faithfully flat $k[x,y]$-module. Hence, by (2) above, $k[x,y]$ embeds in $T/(a^n - b^m)T$. A second application of (1) and (2), this time to the ideal $(g^n - f^m)k[x,y]$ of $k[x,y]$, now implies that $T/(a^n - b^m)T + (g^n - f^m)T$ is a faithfully flat $\mathcal{O}(\mathcal{C})$-module. That is, again appealing to (2), $\mathcal{O}(\mathcal{C})$ embeds in $A(g,f)$ and is a quantum homogeneous space of $A(g,f)$.

\noindent (ii) In a similar way to (i), $k[f, b^{\pm m}]$ is a right coideal subalgebra of $A(y,b, f)$  and hence, again using \cite[Theorem 1.3]{Ma}, $A(y,b,f)$ is a faithfully flat left and right $k[f, b^{\pm m}]$-module. Define 
$$D \; := \; A(x,a,g) \otimes k[f, b^{\pm m}] \subseteq T, $$
so that $D \cong A(x,a,g)[f, b^{\pm m}]$ and $T$ is a faithfully flat left and right $D$-module. Let $J = (f-g)D + (b^{m} - a^n)D$, an ideal of $D$ with $D/J \cong A(x,a,g)$ and $T/JT = A(g,f)$. By (1) above, $T/JT$ is a faithfully flat left and right $A(x,a,g)$-module. In particular, by (2) above, $A(x,a,g)$ embeds in $A(g,f)$.

The argument for $A(y,b,f)$ is exactly similar.  

\noindent (iii) Both parts are similar to (i), but easier: thanks to Corollary \ref{basis} there is an explicit PBW basis for $T$ under the given hypotheses. Thus one can simply retrace the proof of (i), replacing ``faithful flatness'' by ``free over an explicitly stated basis" at each occurrence of the former term. Details are left to the reader.
\end{proof}

\begin{remark} In fact there is less of a gap between parts (i) and (iii) of the above result than at first appears. For, notice that the proof of (i) starts with the fact that $R = k[x,y, a^{\pm n}, b^{\pm m}]$ is a quantum homogeneous space of $T$. In particular $T$ is a faithfully flat left $R$-module, and hence, by \cite[Theorem 2.1]{MaW}, $T$ is a projective generator for $R$. Therefore, by the Quillen-Suslin theorem on projective modules over polynomial algebras, as strengthened by Swan \cite[Corollary 1.4]{Sw} to allow Laurent polynomial generators, $T$ is a free left $R$-module. Finally, freeness is preserved by factoring by $(a^n - b^m)R + (g-f)R$. So the only extra feature in (iii)(b) as compared with (i)  is the explicit description of a free basis.
\end{remark}

Fundamental properties of the Hopf algebras $A(g,f)$ can be read off from our knowledge gained about the algebras $A(x,a,g)$ in $\S\S$\ref{1}-\ref{anyg}, provided hypothesis {\bf(H)} is in play. The following portmanteau result summarises the basic facts.

\begin{theorem}\label{theworks} Retain the notation of $\S$\ref{combine}, so $g(x)$ and $f(y)$ are polynomials of degree $n$ and $m$ respectively. Assume hypothesis {\bf(H)} for parts (ii) - (v). Then $A(g,f)$ satisfies the following properties.
\begin{enumerate}
\item[(i)] $A(g,f)$ is a pointed Hopf algebra, with its grouplikes being the finitely generated abelian group $\langle a^{\pm 1}, b^{\pm 1} : [a,b] = 1, a^n = b^m \rangle$.
\item[(ii)] The following statements are equivalent:
\begin{enumerate}
\item[(a)] $\mathrm{max}\{n,m\} \leq 3;$
\item[(b)] $A(g,f)$ is a finite module over its centre.
\item[(c)] $A(g,f)$ satisfies a polynomial identity;
\item[(d)] $\mathrm{GKdim}(A(g,f)) < \infty$;
\item[(e)] $A(g,f)$ does not contain a noncommutative free subalgebra;
\end{enumerate} 

\item[(iii)] If the equivalent conditions in (ii) hold, then $A(g,f)$ is noetherian.
\item[(iv)] Suppose that $\mathrm{max}\{n,m\} \leq 3$. Then $A(g,f)$ is AS-Gorenstein and GK-Cohen-Macaulay, with
$$ \mathrm{inj.dim}(A(g,f)) = \mathrm{GKdim}(A(g,f)) = n + m -2. $$

\item[(v)]  If $\mathcal{C}$ has a singular point at the origin then $\mathrm{gl.dim}(A(g,f)) = \infty$
\end{enumerate}
\end{theorem}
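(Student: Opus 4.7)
The plan is to leverage the embedding $\mathcal{O}(\mathcal{C}) \hookrightarrow A(g,f)$ provided by Theorem \ref{coideal}(i) and reduce infinite global dimension for $A(g,f)$ to that of the singular commutative ring $\mathcal{O}(\mathcal{C})$, via a standard restriction-of-scalars argument along a faithfully flat coideal inclusion.

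First, I would translate the singularity condition into commutative-algebraic terms. The origin $(0,0)$ lies on $\mathcal{C}$ since $g(0) = 0 = f(0)$, and the Jacobian of $g(x) - f(y)$ at the origin is $(r_1, -s_1)$; so the origin is singular precisely when $r_1 = s_1 = 0$. In that case the one-dimensional local ring $\mathcal{O}(\mathcal{C})_{\mathfrak{m}}$ at $\mathfrak{m} = (\bar x, \bar y)$ has cotangent space $\mathfrak{m}/\mathfrak{m}^2$ of dimension $2$ over $k$, so $\mathcal{O}(\mathcal{C})_\mathfrak{m}$ is not regular. By the Auslander-Buchsbaum-Serre theorem it has infinite global dimension, and consequently $\mathrm{pd}_{\mathcal{O}(\mathcal{C})}(k_{\mathfrak{m}}) = \infty$, where $k_\mathfrak{m} := \mathcal{O}(\mathcal{C})/\mathfrak{m}$ is the residue field at the origin.

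Next, I would invoke the fact that $\mathcal{O}(\mathcal{C})$ is a quantum homogeneous space of $A(g,f)$ by Theorem \ref{coideal}(i), so that $A(g,f)$ is faithfully flat on both sides over $\mathcal{O}(\mathcal{C})$. By the Masuoka--Wigner theorem \cite[Theorem 2.1]{MaW}, already used in $\S$\ref{combprop}, $A(g,f)$ is then a projective left $\mathcal{O}(\mathcal{C})$-module. Since the counit $\varepsilon$ of $A(g,f)$ restricts on $\mathcal{O}(\mathcal{C})$ to evaluation at the origin, its kernel intersected with $\mathcal{O}(\mathcal{C})$ is exactly $\mathfrak{m}$, and the trivial left $A(g,f)$-module $k_\varepsilon$ restricts along the inclusion to $k_\mathfrak{m}$.

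Suppose for contradiction that $\mathrm{gl.dim}(A(g,f)) = d < \infty$. Then $k_\varepsilon$ admits a left projective resolution $P_\bullet \to k_\varepsilon$ of length at most $d$ over $A(g,f)$. Each $P_i$ is a direct summand of a free $A(g,f)$-module, which, as a left $\mathcal{O}(\mathcal{C})$-module, is a direct sum of copies of the $\mathcal{O}(\mathcal{C})$-projective module $A(g,f)$; hence each $P_i$ is projective over $\mathcal{O}(\mathcal{C})$. Since $A(g,f)$ is $\mathcal{O}(\mathcal{C})$-flat, the sequence $P_\bullet$ remains exact after restriction, so it is a finite projective resolution of $k_\mathfrak{m}$ over $\mathcal{O}(\mathcal{C})$, contradicting the first step. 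The only real hurdle is confirming that restriction of an $A(g,f)$-projective module along the inclusion yields an $\mathcal{O}(\mathcal{C})$-projective module; this follows cleanly from Masuoka--Wigner given the two-sided faithful flatness built into the paper's definition of a quantum homogeneous space, and I do not expect serious technical obstacles beyond the careful bookkeeping of sides.
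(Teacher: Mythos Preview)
Your argument is correct and follows essentially the same strategy as the paper: assume finite global dimension, take a finite projective $A(g,f)$-resolution of the trivial module, restrict along the faithfully flat inclusion $\mathcal{O}(\mathcal{C}) \hookrightarrow A(g,f)$, and contradict the infinite projective dimension of $k_{\mathfrak{m}}$ over the singular ring $\mathcal{O}(\mathcal{C})$. The only minor difference is that you invoke Masuoka--Wigner to upgrade faithful flatness to projectivity of $A(g,f)$ over $\mathcal{O}(\mathcal{C})$, whereas the paper is content with a finite \emph{flat} $\mathcal{O}(\mathcal{C})$-resolution and then uses noetherianity of $\mathcal{O}(\mathcal{C})$ to identify flat and projective dimension; both routes are clean and equivalent here.
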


\begin{proof}(i) It has already been shown in Theorem \ref{summit} that $A(g,f)$ is a Hopf algebra. It is generated by the grouplike elements $a,b$ and the skew primitives $x$ and $y$, and hence is pointed by \cite[Corollary 5.1.14(a)]{radford}. Moreover by \cite[Corollary 5.1.14(b)]{radford}, $\{ a, b \}$ generate its grouplikes $G(A(g,f))$. Hence $G(A(g,f))$ is as stated.

\noindent (ii) $(a)\Longrightarrow (b):$ Suppose $\mathrm{max}\{n,m\} \leq 3$. Then both $A(x,a,g)$ and $A(y,b,f)$ are finite modules over their centres, by Propositions \ref{degtwo}(ii) and \ref{cube}(iv). The same is thus clearly true of the tensor product $T$ of these algebras, and so of its factor algebra $A(g,f)$.

\noindent $(b)\Longrightarrow (c):$ \cite[Corollary 13.1.13(i)]{robson}.

\noindent $(c)\Longrightarrow (d):$ \cite[Corollary 10.7]{krause}.

\noindent $(d)\Longrightarrow (e):$ Suppose that $\mathrm{GKdim}(A(g,f))$ is finite. Then $A(g,f)$ cannot contain a noncomutative free subalgebra, since such an algebra has infinite GK-dimension, \cite[Example 1.2]{krause}.

\noindent $(e)\Longrightarrow (a):$ Suppose that  $A(g,f)$ does not contain a noncommutative free subalgebra. By Theorem \ref{coideal}(ii), the same is true for the subalgebras $A(x,a,g)$ and $A(y,b,f)$. By Corollary \ref{basis} and Proposition \ref{xnprop}(iv), $\mathrm{max}\{n,m\} \leq 3$.

\noindent (iii) Suppose that (ii)(b) holds. Since $A(g,f)$ is an affine $k$-algebra, its centre $Z$ is also affine, by (ii)(b) and the Artin-Tate lemma \cite[Lemma 13.9.10(ii)]{robson}. Thus $Z$ is noetherian by the Hilbert basis theorem, and so $A(g,f)$ is a noetherian algebra since it is a finite $Z$-module.

\noindent (iv) Since $\mathrm{max}\{n,m\} \leq 3$, the Hopf algebra $A(g,f)$ is a finite module over its affine centre by (ii). Indeed, the same is also true for $T = A(x,a,g) \otimes_k A(y,b,f)$. Therefore $T$ and $A(g,f)$ are both AS-Gorenstein and GK-Cohen-Macaulay, by \cite[Theorem 0.2]{WuZh}. Now 
\begin{equation}\label{hype} \mathrm{GKdim}(T) = \mathrm{GKdim}(A(x,a,g)) + \mathrm{GKdim}(A(y,b,f)) = n +m, 
\end{equation}
by \cite[Corollary 10.17]{krause} for the first equality and the second by Propositions \ref{degtwo}(ii) and \ref{three}(v),(vii). 

Next, the central elements $f-g$ and $a^n - b^m$ of the GK-Cohen-Macaulay algebra $T$ are easily seen to form a regular central sequence in $T$, using the PBW theorem, Theorem \ref{coideal}(ii)(a). So, by (\ref{hype}) and \cite[Proposition 2.11 and Theorem 4.8$(i)\Longleftrightarrow (iii)$]{BrMa},
\begin{equation}\mathrm{GKdim}(A(g,f)) = \mathrm{GKdim}(T) - 2 = n + m - 2,
\end{equation}
as required. Finally, since $A(g,f)$ is an AS-Gorenstein GK-Cohen-Macaulay algebra, its injective and Gel'fand-Kirillov dimensions are equal, \cite[Theorem 0.2]{WuZh}.

\noindent (v) Suppose that $\mathcal{C}$ has a singularity at the origin. Thus, letting $\mathfrak{m} = \langle x, y \rangle \lhd \mathcal{O}(\mathcal{C}) \subset A(g,f)$,
\begin{equation}\label{sing}   \mathrm{pr.dim}_{\mathcal{O}(\mathcal{C})}(\mathcal{O}(\mathcal{C})/\mathfrak{m}) = \infty. 
\end{equation}
\noindent Suppose that $\mathrm{gl.dim}(A(g,f)) < \infty$. Then, in particular, $\mathrm{pr.dim}_{A(g,f)}(k_{\mathrm{tr}}) < \infty$, where $k_{\mathrm{tr}}$ denotes the trivial module. By Theorem \ref{coideal}(i) and ({\bf{H}}), $A(g,f)$ is a flat $\mathcal{O}(\mathcal{C})$-module, so the restriction to $\mathcal{O}(\mathcal{C})$ of a projective $A(g,f)$-resolution of $k_{\mathrm{tr}}$ yields a finite flat $\mathcal{O}(\mathcal{C})$-resolution of $k_{\mathrm{tr}}$.

But $\epsilon (x) = \epsilon (y) = 0$, so that $\mathfrak{m} k_{\mathrm{tr}} = 0$. Hence, bearing in mind that flat dimension and weak dimension are the same for modules over a noetherian ring,  
$$\mathrm{pr.dim}_{\mathcal{O}(\mathcal{C})}(\mathcal{O}(\mathcal{C})/\mathfrak{m}) < \infty.$$ 
This contradicts (\ref{sing}), and so $\mathrm{gl.dim}(A(g,f)) = \infty$, as required. 
\end{proof}

Presumably Theorem \ref{theworks}(v) remains true for all singular plane decomposable curves $\mathcal{C}$, but we have been unable so far to prove this.


\section{Examples}\label{examples}

In this section we gather together and discuss some special cases of the construction described in $\S$\ref{tensor}. 

We begin by noting that, given a decomposable plane curve $\mathcal{C}$ with equation $g(x) = f(y)$, we may always use a linear change of coordinates to assume that the curve passes though the origin, so that $f$ and $g$ have constant term 0. We can further assume that both $f$ and $g$ are monic. For, recall the scaling maps $\{\theta_{\lambda}: x \mapsto \lambda x : \lambda \in k \setminus \{0\} \}$ of $k\langle x, a^{\pm 1} \rangle$ introduced in $\S$\ref{leading}, where we wrote $g^{\lambda}$ for $\theta (g)$. The following lemma extends this scaling procedure to the Hopf algebras $A(g,f)$. Its straightforward proof is left to the reader.

\begin{lemma}\label{Hopfscale} Let $\mathcal{C}$ be the decomposable plane curve with equation $f(y) = g(x)$, where $f$ and $g$ are polynomials with constant term 0, respectively of degree $m,n$ with $m,n \geq 2$ and satisfying hypothesis {\bf (H)} of $\S$\ref{combine}. Let $\lambda, \mu \in k \setminus \{0\}$. 
\begin{enumerate}
\item[(i)] $\theta_{\lambda} \otimes \mathrm{Id}_{A(y,b,f)}$ and $\mathrm{Id}_{A(x,a,g)} \otimes \theta_{\mu}$ are commuting automorphisms of $k \langle x, a^{\pm 1} \rangle \otimes k \langle y, b^{\pm 1} \rangle$, whose composition induces an isomorphism of Hopf algebras 
$$ \theta_{\lambda, \mu} : A(x,a,g) \otimes A(y,b,f) \longrightarrow A(x,a,g^{\lambda}) \otimes A(y,b,f^{\mu}). $$
\item[(ii)] The map $\theta_{\lambda,\mu}$ induces an isomorphism of Hopf algebras 
$$ \overline{\theta_{\lambda,\mu}} : A(g,f) \longrightarrow A(g^{\lambda}, f^{\mu}),$$
under which the quantum homogeneous space $k[x,y]/\langle g - f \rangle$ of $A(g,f)$ is mapped to the quantum homogeneous space $k[x,y]/\langle g^{\lambda} - f^{\mu} \rangle$ of $A(g^{\lambda}, f^{\mu})$.
\end{enumerate}

\end{lemma}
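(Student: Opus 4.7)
The plan is to reduce everything to Lemma \ref{coef}, which already tells us that $\theta_\lambda$ descends to a Hopf algebra isomorphism $A(x,a,g) \to A(x,a,g^\lambda)$, and likewise for $\theta_\mu$. For part (i), I would first observe that $\theta_\lambda \otimes \mathrm{Id}$ and $\mathrm{Id} \otimes \theta_\mu$ are algebra automorphisms of the tensor product $k\langle x, a^{\pm 1}\rangle \otimes k\langle y, b^{\pm 1}\rangle$ simply because each factor is acted on by an automorphism of itself, and they commute because they act on disjoint tensor factors. Their composition $\theta_{\lambda,\mu}$ sends the relation set $I_g \otimes 1 + 1 \otimes I_f$ of $A(x,a,g)\otimes A(y,b,f)$ bijectively onto the corresponding relation set for $A(x,a,g^\lambda)\otimes A(y,b,f^\mu)$ by Lemma \ref{coef}(i) applied in each factor. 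Hence $\theta_{\lambda,\mu}$ induces an algebra isomorphism between the two tensor products, and the fact that it respects the Hopf operations follows factorwise from Lemma \ref{coef}(ii).

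For part (ii), the main point is to verify that the ideal $I = T(g-f) + T(a^n - b^m)$ defining $A(g,f)$ inside $T = A(x,a,g) \otimes A(y,b,f)$ is carried under $\theta_{\lambda,\mu}$ onto the corresponding ideal $T'(g^\lambda - f^\mu) + T'(a^n - b^m)$ inside $T' := A(x,a,g^\lambda) \otimes A(y,b,f^\mu)$. This is immediate: $\theta_{\lambda,\mu}$ fixes $a$ and $b$, so it fixes $a^n - b^m$, and it sends $g(x) \otimes 1$ to $g(\lambda x)\otimes 1 = g^\lambda$ and $1\otimes f(y)$ to $1 \otimes f(\mu y) = f^\mu$, so $g - f \mapsto g^\lambda - f^\mu$. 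Passing to quotients then yields the desired Hopf algebra isomorphism $\overline{\theta_{\lambda,\mu}} : A(g,f) \to A(g^\lambda, f^\mu)$, since the Hopf structure on $A(g,f)$ was itself inherited from $T$ via Theorem \ref{summit}.

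Finally, for the quantum homogeneous space assertion, since $\theta_{\lambda,\mu}$ maps the commutative polynomial subalgebra generated by $x \otimes 1$ and $1 \otimes y$ to the commutative polynomial subalgebra generated by $\lambda x \otimes 1$ and $1 \otimes \mu y$, which (as $\lambda,\mu \in k\setminus\{0\}$) is the same subalgebra $k[x,y]$, the image of $k[x,y]/\langle g-f\rangle \subseteq A(g,f)$ under $\overline{\theta_{\lambda,\mu}}$ is exactly $k[x,y]/\langle g^\lambda - f^\mu\rangle \subseteq A(g^\lambda, f^\mu)$.

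There is no real obstacle here; the argument is essentially a bookkeeping exercise reducing the claim to Lemma \ref{coef} in each tensor factor together with the observation that the central generators $g-f$ and $a^n - b^m$ of the Hopf ideal transform in the expected way. The only point worth checking carefully is that the relations (\ref{ij}) for $A(x,a,g)$ scale correctly under $\theta_\lambda$ so as to yield the relations for $A(x,a,g^\lambda)$, but this is precisely the content of Lemma \ref{coef}(i).
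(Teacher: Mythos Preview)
Your proposal is correct and is exactly the kind of argument the paper has in mind: the paper itself declares the proof ``straightforward'' and leaves it to the reader, so there is nothing to compare against beyond your reduction to Lemma~\ref{coef} in each tensor factor together with the obvious check on the generators $g-f$ and $a^n-b^m$ of the Hopf ideal.
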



\subsection{$A(g,f)$ for degree 2 polynomials}\label{degs2}

Let $\mathcal{C}$ be an arbitrary decomposable plane curve of degree 2 - that is, $\mathcal{C}$ has defining equation $g(x) = f(y)$ with $\mathrm{deg}f = \mathrm{deg}g = 2$. After an application of Lemma \ref{Hopfscale} we can assume without loss of generality that $g$ and $f$ are monic. Thus the equation of $\mathcal{C}$ has the form
\begin{equation}\label{deg2ex}  rx + x^2 \; = \; sy + y^2, 
\end{equation}
where $(r,s) \in k^2$. An easy exercise determines the possibilities for $\mathcal{C}$. Namely, the Jacobian criterion shows that $\mathcal{C}$ is smooth if and only if $r \neq \pm s$; and if $r = \pm s$, $\mathcal{C}$ has a unique singular point. Then, by the linear change of variables 
$$ u = x - y + \frac{1}{2}(r-s), \; v = x + y + \frac{1}{2}(r - s), $$
one sees that the coordinate ring $\mathcal{O}(\mathcal{C})$ is isomorphic to $k[u,v]/\langle uv + C \rangle$, where $C$ is a constant which is non-zero in the smooth case and 0 in the singular case, the latter being the coordinate crossing.

Consider now the corresponding Hopf algebra $A(g,f)$.  As in Proposition \ref{degtwo}(i), proceed by changing the variables $x$ and $y$ in $A(x,a,g)$ and $A(y,b,f)$ to
\begin{equation}\label{change} x' := x + \frac{r}{2}(1-a), \qquad y' := y + \frac{s}{2}(1 - b).
\end{equation}
These elements are respectively $(1,a)-$ and $(1,b)-$primitive, and by Proposition \ref{degtwo}(i) we have 
\begin{eqnarray*} T \; &=& A(x,a,g) \otimes A(x,b,f)\\
&=& k \langle a^{\pm 1}, b^{\pm 1}, x', y' : x'a + ax' = 0, \, y'b + by' = 0, \\
&\,& \qquad \qquad [a,b] = [x',y'] = [a,y'] = [x',b] = 0 \rangle.
\end{eqnarray*}
One calculates that, in $T$,
$$ x'^2 -y'^2 \; = \; f-g + \frac{1}{4}(r^2 - s^2) - \frac{1}{4}(r^2a^2 - s^2b^2). $$
Therefore, in $A(g,f)$, that is $modulo \langle f-g, a^2 - b^2 \rangle$,
\begin{equation} \label{crossing} x'^2 - y'^2 \; \equiv \; \frac{1}{4}(r^2 - s^2)(1 - a^2).
\end{equation}
The outcome is summarised in the next result.

\begin{proposition} \label{deg2story} Let $\mathcal{C}$ be the decomposable degree 2 plane curve embedded in the plane by the equation (\ref{deg2ex}). Then $\mathcal{O}(\mathcal{C})$ is a quantum homogeneous space of the Hopf algebra $A(g,f)$, where $A(g,f)$ has the following properties.
\begin{enumerate}
\item[(i)] Defining $x'$ and $y'$ as in (\ref{change}), $A(g,f)$ has presentation
\begin{eqnarray*}
A(g,f) \; &=& \;  k \langle a^{\pm 1}, b^{\pm 1}, x', y' : x'a + ax' = 0, \, y'b + by' = 0, \\
&\,& \qquad \qquad [a,b] = [x',y'] = [a,y'] = [x',b] = 0, \\
&\,& \qquad \qquad a^2 = b^2, \; \; x'^2 - y'^2 = \frac{1}{4}(r^2 - s^2)(1 - a^2) \rangle.
\end{eqnarray*}
Here, $a$ and $b$ are grouplike, $x'$ is $(1,a)$-primitive and $y'$ is $(1,b)$-primitive.
\item [(ii)] $A(g,f)$ is an affine noetherian pointed Hopf algebra, is a finite module over its centre, and is AS-Gorenstein and GK-Cohen-Macaulay, with injective and Gel'fand-Kirillov dimensions equal to 2. 
\item[(iii)]  $\mathrm{gl.dim}(A(g,f)) < \infty \Longleftrightarrow r \neq \pm s,$ that is, if and only if $\mathcal{C}$ is smooth. In this case, $\mathrm{gl.dim}(A(g,f)) = 2.$
\item[(iv)] Up to an isomorphism of Hopf algebras there are only two possible algebras $A(g,f)$ - the smooth case and the singular case.
\item[(v)] $A(g,f)$ is prime but is not a domain.
\end{enumerate}
\end{proposition}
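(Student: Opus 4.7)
For (i), the plan is to combine the presentations of $A(x,a,g)$ and $A(y,b,f)$ afforded by Proposition \ref{degtwo}(i)---each is a localised quantum plane at $-1$ generated respectively by $a^{\pm 1}, x'$ and $b^{\pm 1}, y'$ with $x'a+ax'=0$ and $y'b+by'=0$---and impose the identifications $a^2=b^2$ and $g=f$ defining $A(g,f) = T/\langle a^2-b^2, g-f\rangle$. Equation (\ref{crossing}), derived in the preceding discussion, encodes $g=f$ as the single quadratic relation $x'^2 - y'^2 = \tfrac14(r^2-s^2)(1-a^2)$; the Hopf structure descends from Theorem \ref{summit}(ii) by a routine verification that $x', y'$ remain $(1,a)$- and $(1,b)$-primitive respectively. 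Part (ii) is then a direct transcription of Theorem \ref{theworks}(i)--(iv) with $n=m=2$, yielding pointedness, noetherianity, finiteness over an affine centre, AS-Gorenstein, GK-Cohen-Macaulay, and $\mathrm{inj.dim} = \mathrm{GKdim} = n+m-2 = 2$. For (iv), Lemma \ref{Hopfscale} applied with uniform scaling $(\lambda,\mu) = (\lambda,\lambda)$ sends $x'\mapsto\lambda x',\, y'\mapsto\lambda y'$ in the presentation of (i), rescaling $c := \tfrac14(r^2-s^2)$ to $c/\lambda^2$. Since $k$ is algebraically closed, every non-zero $c$ is normalised to a fixed value, while the sign ambiguity in the singular case is absorbed by $y\mapsto -y$, leaving exactly two Hopf-isomorphism classes distinguished by whether $c=0$.

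For (iii), the $(\Rightarrow)$ direction follows from (iv) and Theorem \ref{theworks}(v): any singular $A(g,f)$ is Hopf-isomorphic to $A(x^2, y^2)$, whose defining curve is singular at the origin, forcing $\mathrm{gl.dim}\,A(g,f) = \infty$. For the $(\Leftarrow)$ direction, since $A(g,f)$ is AS-Gorenstein of injective dimension 2 by (ii), any finite global dimension must equal 2; the plan is to build $A(g,f)$ as an iterated extension for which finite global dimension is visible at each step. Concretely, start with $R_0 := k_{-1}[a^{\pm 1}, x']$ (a localised quantum plane, global dimension 2), adjoin $y'$ as a central element via $R_1 := R_0[y']/(y'^2 - \alpha)$ where $\alpha := x'^2 - c(1-a^2)$ lies in the centre of $R_0$ and is a non-square irreducible element there (in the smooth case $c\neq 0$); then form the skew Ore extension $R_1[b;\sigma]$ with $\sigma$ the $R_0$-linear involution negating $y'$, and quotient by the central regular element $b^2 - a^2$. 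The Ore step increases global dimension by at most one, and factoring by a central regular element decreases it by at least one, yielding $\mathrm{gl.dim}\,A(g,f) \leq 2$. Verifying that adjoining the central root $y'$ preserves the global dimension in the smooth case, which exploits that $\alpha$ defines a smooth central hypersurface precisely when $c\neq 0$, is the main obstacle.

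For (v), a non-trivial zero divisor is furnished by $z:=ab^{-1}$: the relation $a^2=b^2$ gives $z^2=1$, whence $(1-z)(1+z) = 0$ in $A(g,f)$, with both factors non-zero thanks to the PBW basis of Theorem \ref{coideal}(iii). For primeness, the plan is to exhibit the Laurent polynomial subring $Z_0 := k[a^{\pm 2}, x'^2]$ of $Z(A(g,f))$ (centrality of $x'^2$ is inherited from its centrality in $T$; of $a^{\pm 2}$ from Proposition \ref{cent}(iii)), over which $A(g,f)$ is a free module of rank 16 by the PBW basis. Setting $K := \mathrm{Frac}(Z_0)$, the central localisation $A(g,f)\otimes_{Z_0} K$ is recognised as the tensor product $C_1 \otimes_K C_2$ of two rank-4 Clifford-type algebras, where $C_1 = K\langle a, x'\mid a^2, x'^2 \in K^{\times},\, ax'+x'a = 0\rangle$ and $C_2$ is generated analogously by $b, y'$ with $y'^2 = x'^2 - c(1-a^2) \in K$. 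A direct check shows each $C_i$ has centre $K$, hence is Azumaya, so their tensor product is central simple of rank 16 over $K$. Freeness of $A(g,f)$ over the domain $Z_0$ makes the central localisation map injective and ensures non-zero two-sided ideals extend non-trivially; a zero product of non-zero ideals would then contradict simplicity of the localisation, giving primeness.
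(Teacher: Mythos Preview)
Your treatment of (i), (ii), (iv), and (iii)($\Rightarrow$) is sound and matches the paper's reasoning. Your approach to (v) via central localisation and a quaternion-algebra decomposition is correct and genuinely different from the paper's: the paper instead recognises $A(g,f)$ as a smash product $R\#\Gamma$ with $R$ the commutative subalgebra $k\langle x',y',a^{\pm 2}\rangle$ and $\Gamma$ the Klein four-group generated by $a$ and $ab^{-1}$, then appeals to standard results on primeness of crossed products. Your route is arguably more self-contained; theirs reuses structure already built for (iii).

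The real problem is (iii)($\Leftarrow$). Your assertion that ``factoring by a central regular element decreases global dimension by at least one'' is false: $k[x]$ has global dimension $1$, yet $k[x]/(x^2)$ has infinite global dimension. What is true is the \emph{opposite} inequality (if $S/(z)$ has finite global dimension then so does $S$), which is useless here. So even granting finite global dimension of $R_1[b;\sigma]$, the passage to the quotient by $b^2-a^2$ is unjustified. Moreover, you correctly flag the step $R_0 \to R_1 = R_0[y']/(y'^2-\alpha)$ as ``the main obstacle'' but offer no mechanism to resolve it; this is again a central-hypersurface quotient, subject to the same failure mode. The paper avoids both pitfalls by working \emph{upward} from a smooth commutative right coideal subalgebra $R = k\langle x',y',a^{\pm 2}\rangle$: in the smooth case $R$ is a localisation of $k[X,Y,Z]/(XY-Z)$ with $X=x'-y'$, $Y=x'+y'$, $Z=\tfrac14(r^2-s^2)(1-a^2)$, hence regular of global dimension $2$; its augmentation ideal $R^+$ is normal in $A(g,f)$ with quotient the group algebra of the Klein four-group; faithful flatness of $A(g,f)$ over $R$ then transports a finite $R$-projective resolution of $R/R^+$ to a finite $A(g,f)$-projective resolution of the semisimple quotient, which contains the trivial module as a summand. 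One then invokes the standard fact (Lorenz--Lorenz) that finite projective dimension of the trivial module forces finite global dimension for a Hopf algebra.
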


\begin{proof}(i) This is sketched in the discussion before the proposition. 

\noindent (ii) These are all special cases of Theorem \ref{theworks}(i),(ii), (iv), (v).

\noindent (iii) Suppose first that $r^2 = s^2$. Then the relation (\ref{crossing}) of $A(g,f)$ becomes $(x' - y')(x' + y') = 0$. Note that, by Masuoka's theorem \cite[Theorem 1.3]{Ma}, $A(g,f)$ is faithfully flat over its right coideal subalgebra $k\langle x', y' \rangle$. As in the proof of Theorem \ref{theworks}(v), if the trivial $A(g,f)$-module had a finite projective resolution then the same would be true for the $k\langle x', y' \rangle$-module $k\langle x', y' \rangle/\langle x',y' \rangle$ since the latter is the restriction of the former. But this is manifestly false, since $\langle x',y' \rangle$ defines the singular point of this curve. So $\mathrm{gl.dim}(A(g,f)) = \infty$. 

Conversely, suppose that $r^2 \neq s^2$. Define $R$ to be the commutative subalgebra $k\langle x',y', a^{\pm 2} \rangle$ of $A(g,f)$. Observe that, setting 
$$X := x'-y', \, Y := x'+y', \, Z := \frac{1}{4}(r^2 - s^s)(1 - a^2),$$
$R$ is isomorphic to the localisation of 
$$ k[X,Y,Z]/\langle XY - Z \rangle $$
at the powers of $Z - \frac{1}{4}(r^2 - s^2)$. In particular, (for example by the Jacobian criterion), $R$ is smooth, $\mathrm{gl.dim}(R) = 2$. Now, by the defining relations of $A(g,f)$, the augmentation ideal $R^+ := \langle X,Y,Z \rangle$ of $R$ is normal in $A(g,f)$, so that the right ideal $R^+ A(g,f)$ of $A(g,f)$ is a two sided ideal. Then one easily checks that
\begin{equation}\label{quot} A(g,f)/R^+ A(g,f) \; \cong \; k(\mathbb{Z}_2 \times \mathbb{Z}_2),
\end{equation}
the group algebra of the Klein 4-group $K$, with generators the images of $a$ and $ab^{-1}$. In fact, $A(g,f)$ is a crossed product $R \ast K$, though we don't need this. Rather, it is enough to note that $A(g,f)$ is faithfully flat as a right and left $R$-module, being a quantum homogeneous space using as usual \cite[Theorem 1.3]{Ma}. 

The simple $R$-module $R/R^+$ has a finite $R$-projective resolution $\mathcal{P}$, by smoothness of $R$. Faithful flatness ensures exactness of $- \otimes_{R} A(g,f)$, yielding a finite projective resolution $\mathcal{P} \otimes_R A(g,f)$ of the $A(g,f)$-module $A(g,f)/ R^+ A(g,f)$. However, $A(g,f)/ R^+ A(g,f)$ is semisimple Artinian, by (\ref{quot}), and hence contains the trivial module $k$ of the Hopf algebra $A(g,f)$ as a direct summand. Therefore $\mathrm{pr.dim}_{A(g,f)}( k) < \infty$. By \cite[$\S$2.4]{LoLo}, $\mathrm{gl.dim}(A(g,f)) < \infty$, as required. That the global dimension is equal to 2 follows either from the fact that the injective dimension is 2, as shown in (ii), or from the fact that $\mathrm{pr.dim}_R(R/R^+) = 2$ since $R$ is the coordinate ring of a smooth surface.

\noindent (iv) It is clear from the presentation in (i) how to define a Hopf algebra isomorphism between any two members of the smooth family simply by scaling the generators $x' $ and $y'$. On the other hand, the coefficients disappear from the relations when $r = \pm s$, so the result is clear in this case also.

\noindent (v) It is clear from the defining relations and the PBW theorem that $A(g,f)$ is not a domain, since $a \neq \pm b$, but $(a-b)(a + b) = 0$. The easiest way to see that $A(g,f)$ is prime is to use the smash product description of $A(g,f)$ found in the proof of (iii). Namely, it was shown there that $A(g,f)$ contains a commutative subalgebra $R := k[X,Y,Z]/\langle XY - tZ \rangle$, where $t \in k$ is 0 in the singular case and non-zero in the smooth case. Then $A(g,f)$ is a smash product  $R\# \Gamma$ where $\Gamma = \langle \overline{a}, \overline{b} \rangle$ is a Klein 4-group such that $a: X \leftrightarrow -Y$ and $b: X \leftrightarrow Y$. When $t \neq 0$, $R$ is a domain and $\Gamma$ acts faithfully on its quotient field $Q$. Hence $Q\#\Gamma$ is a simple ring by \cite{Az}, see \cite[Exercise 6, p. 48]{Pas}. When $t = 0$, $R$ is $\Gamma$-prime and primeness of $R\#\Gamma$ follows by passing to the quotient ring $Q(R)\# \Gamma$, where $Q(R) = k(X, Z) \oplus k(Y,Z)$ is $\Gamma$-simple and $\Gamma$ acts faithfully on $Q(R)$. Thus $Q(R)\#\Gamma$ is a simple ring by \cite[Theorem 1.2(d)]{O}. So in all cases $R\# \Gamma$ has a simple quotient ring and hence is a prime ring.
\end{proof}

\subsection{The cusps $y^m = x^n$}\label{cusps} Let $n$ and $m$ be coprime integers, with $m > n \geq 2$. The cusp $y^m = x^n$ was shown to be a quantum homogeneous space in a pointed affine noetherian Hopf $k$-algebra domain,  in \cite[Construction 1.2]{MR2732991}. In the notation introduced by Goodearl and Zhang in \cite{MR2732991}, the Hopf algebra constructed is labelled $B(1,1,n,m,q)$, where $q$ is a primitive $nm$th root of unity in $k$. The algebra $B(1,1,n,m,q)$ is constructed as the skew group algebra of the infinite cyclic group whose coefficient ring is the coordinate ring of the cusp, with the twisting automorphism acting on the generators of the coordinate ring by multiplication by appropriate powers of $q$. In particular, this means that $B(1,1,n,m,q)$ has GK-dimension 2 and is a finite module over its centre. It is also straightforward to see from its construction that $B(1,1,n,m,q)$ is a factor Hopf algebra of a suitable localised quantum 4-space $Q := k_{{\bf q}}[x^{\pm 1}_1,x^{\pm 1}_2,x_3,x_4]$, where $[x_1,x_2] = [x_3,x_4] = 0$, the other pairs of generators $q$-commute, and the factoring relations are 
\begin{equation}\label{factor} x_1^n - x_2^m, \textit{  and  }  x_3^n - x_4^m.
\end{equation}
Now it is also not hard to deduce from Proposition \ref{xnprop}(i) that the \emph{same} localised quantum 4-space $Q$ is a factor Hopf algebra of $T := A(x,a,x^n) \otimes A(y,b,y^m)$. Moreover, the relations used to define $A(x^n,y^m)$ as a factor of $T$ have images in $Q$ which are exactly the elements listed in  (\ref{factor}). The upshot is therefore the following observation, the details of the argument being left to the interested reader.

\begin{lemma} \label{cuspstory} With the above notation, $B(1,1,n,m,q)$ is a factor Hopf algebra of $A(x^n,y^m)$.
\end{lemma}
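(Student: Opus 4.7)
The plan is to construct the desired Hopf algebra surjection as a composition, factoring through the tensor product of two localized quantum planes — exactly the ``same quantum 4-space'' alluded to in the sketch preceding the lemma.

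First, I would fix roots of unity $q_1 := q^{m^2}$ and $q_2 := q^{n^2}$, which are primitive $n$-th and $m$-th roots respectively, using $\gcd(n,m) = 1$ and the primitive $nm$-th order of $q$. Applying Proposition \ref{xnprop}(i) to each factor of $T$ produces Hopf algebra surjections $\pi_1 : A(x,a,x^n) \twoheadrightarrow k_{q_1}\langle x, a^{\pm 1}\rangle$ and $\pi_2 : A(y,b,y^m) \twoheadrightarrow k_{q_2}\langle y, b^{\pm 1}\rangle$, whose tensor product is a Hopf algebra surjection
$$\Pi \;:\; T \;\twoheadrightarrow\; \widetilde{Q} \;:=\; k_{q_1}\langle x, a^{\pm 1}\rangle \otimes_k k_{q_2}\langle y, b^{\pm 1}\rangle.$$

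Second, I would identify $\widetilde{Q}$ with the Goodearl--Zhang localized quantum 4-space $Q$, via $x_1 \mapsto a$, $x_2 \mapsto b$, $x_3 \mapsto x$, $x_4 \mapsto y$. The within-factor relations $[a,b] = 0 = [x,y]$ and the two non-trivial $q$-commutations $ax = q_1 xa$, $by = q_2 yb$ are built into $\widetilde{Q}$; the cross-factor pairs $(a,y)$ and $(b,x)$ commute outright by the tensor product, and this coincides with Goodearl--Zhang's structure because in their setup these pairs $q$-commute with exponents which are multiples of $nm$, hence trivial since $q^{nm} = 1$.

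Third, I would push the factoring downward. Under $\Pi$, the ideal $\langle a^n - b^m, \, x^n - y^m\rangle$ of $T$ cutting out $A(x^n, y^m)$ maps surjectively onto the ideal $\langle x_1^n - x_2^m, \, x_3^n - x_4^m\rangle$ of $Q$ in (\ref{factor}) cutting out $B(1,1,n,m,q)$. Hence $\Pi$ descends to a Hopf algebra surjection $A(x^n, y^m) \twoheadrightarrow B(1,1,n,m,q)$, as claimed.

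The main technical obstacle lies in the identification in the second step: one must carefully track the skew action of the cyclic group generator $t$ (with $tx = q^m xt$, $ty = q^n yt$), identify $a \leftrightarrow t^m$ and $b \leftrightarrow t^n$, and verify that all six pairwise $q$-commutations in $Q$ agree with the exponents forced by the tensor product structure of $\widetilde{Q}$ — in particular that the would-be exponents for $(a,y)$ and $(b,x)$ are multiples of $nm$, thereby disappearing. No deep ideas enter; the computation is precisely where coprimality of $n$ and $m$ is used essentially.
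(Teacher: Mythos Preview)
Your proposal is correct and follows precisely the route sketched in the paragraph preceding the lemma: factor $T$ through the tensor product of two localized quantum planes via Proposition~\ref{xnprop}(i), identify that tensor product with the Goodearl--Zhang quantum 4-space $Q$, and then observe that the defining ideal of $A(x^n,y^m)$ in $T$ maps onto the ideal (\ref{factor}) defining $B(1,1,n,m,q)$ in $Q$. The paper itself leaves the details to the reader, and your specification of the roots $q_1 = q^{m^2}$, $q_2 = q^{n^2}$ together with the verification that the cross-pair exponents vanish modulo $nm$ is exactly the bookkeeping required; one small caveat is that the paper's convention in Proposition~\ref{xnprop}(i) is $xa = qax$, so you may need to replace your $q_1,q_2$ by their inverses (still primitive $n$-th and $m$-th roots) when matching conventions.
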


Note that even in the ``smallest'' case, $(n,m) = (2,3)$, $B(1,1,2,3,q)$ is a \emph{proper} factor of $A(x^2, y^3)$ - here, both algebras are finite over their centres, but $A(x^2, y^3)$ has GK-dimension 3 and $B(1,1,2,3)$ is a domain of GK-dimension 2.   

\subsection{The nodal cubic}\label{nodal} Consider the nodal cubic, $y^2 = x^2 + x^3$. Since $(n,m) = (2,3)$, the results of $\S\S$\ref{combine},\ref{combprop} apply. The presentation of $A(x^2 + x^3, y^2)$ is
\begin{eqnarray*} A(x^2 + x^3,y^2) \; &=& \; k\langle x,y,a^{\pm 1}, b^{\pm 1} : y^2 = x^2 + x^3, \; a^3 = b^2, \\
&\,& \qquad \qquad [x,y] = [x,b] = [a,y] = [a,b] = 0, \\
&\,& \qquad \qquad ax + xa + ax^2 + xax + x^2a = 0, \\
&\,& \qquad \qquad a^2x + axa + xa^2 = a^3 + a^2, \\
&\,& \qquad \qquad yb + by = 0 \rangle.
\end{eqnarray*}
This is precisely the Hopf algebra presented in \cite{KT}, for the parameters $p = q = 0$. As noted in \cite[$\S$2.3]{KM}, the other values of $p$ and $q$ yield isomorphic Hopf algebras. From the results of $\S$\ref{combprop} we see that $A(x^2 + x^3, y^2)$ is an affine noetherian Hopf algebra of GK-dimension 3, which is a finite module over its centre. It is AS-Gorenstein and GK-Cohen Macaulay of injective dimension 3, but has infinite global dimension. The Hopf algebra constructed in \cite{KM} is a quotient Hopf algebra of $A(x^2 + x^3, y^2)$ of GK-dimension 1 which still admits the nodal cubic as a quantum homogeneous space, demonstrating that $A(x^2 + x^3, y^2)$ is \emph{not} minimal with this property.

\subsection{The lemniscate}\label{lem} The Lemniscate of Gerono is usually presented by the equation $ y^2 = x^2 - x^4. $ Applying Lemma \ref{Hopfscale} with $\mu = 1$ and $\lambda$ a primitive $8^{th}$ root of 1 in $k$, we can work with the presentation 
\begin{equation}\label{lem2}y^2 = x^4 + \lambda^2 x^2 
\end{equation}
of the lemniscate. The outcome is as follows, recalling that in $\S$\ref{combprop} we defined $\langle \mathcal{L}_4(a,x) \rangle$ to denote the free subsemigroup of the free semigroup $\langle a,x \rangle$ generated by $\{(ax), (ax^2), (a^2x) \}$.

\begin{lemma} Let the lemniscate $\mathcal{L}$ be presented by the equation (\ref{lem2}), so $f(y) = y^2$ and $g(x) = x^4 + \lambda^2 x^2$.
\begin{enumerate}
\item[(i)] The coordinate ring $\mathcal{O}(\mathcal{L})$ is a quantum homogeneous space in $A(x^4 + \lambda^2 x^2, y^2),$ a Hopf algebra with generators $x, a^{\pm 1}, y , b^{\pm 1}$ and relations
\begin{gather*}
[a,b] = [x,y] = [a,y] = [x,b] = 0,\\
	by + yb = 0, \qquad     a x^3+ xax^{2} + x^2ax + x^3a + \lambda^2 (xa + ax) = 0,\\
 \lambda^2 a^2 + a^2x^2 +x^2a^2 + xaxa + ax^2a + xa^2x + axax - \lambda^2  a^4 = 0,\\
a^3x  + a^2xa+ axa^2  + xa^3 = 0.
\end{gather*}
\item[(ii)] The algebra $A(x^4 + \lambda^2 x^2, y^2)$ has PBW basis
$$ \{ x^r y^{\epsilon_1}\langle \mathcal{L}_4(a,x) \rangle a^s b^{\epsilon_2} : r \in \mathbb{Z}_{\geq 0}, s \in \mathbb{Z}, \epsilon_j \in \{0,1\}, j = 1,2 \}. $$
\item[(iii)] $A(x^4 + \lambda^2 x^2, y^2)$ is not a domain and has infinite Gel'fand-Kirillov and global dimensions.
\end{enumerate}
\end{lemma}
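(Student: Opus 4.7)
The plan is to obtain all three parts by specialising the general results of $\S\S$\ref{combine}--\ref{combprop} to the particular polynomials $g(x) = x^4 + \lambda^2 x^2$ and $f(y) = y^2$, both of which satisfy hypothesis $\mathbf{(H)}$. The main calculations are routine and the key obstacles are bookkeeping rather than conceptual.

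For (i), I first observe that Lemma \ref{Hopfscale} justifies the initial reduction from $y^2 = x^2 - x^4$ to (\ref{lem2}), since the scaling $\theta_{\lambda}$ with $\lambda^{8} = 1$ (in fact $\lambda^2$ a primitive fourth root) sends $x^2 - x^4$ to $\lambda^2 x^2 + x^4$ after an obvious sign normalisation. Since $\deg(f) = 2 \leq 5$ and $\deg(g) = 4 \leq 5$, hypothesis $\mathbf{(H)}$ holds, so Theorem \ref{coideal}(i) provides a Hopf algebra $A(g,f)$ containing $\mathcal{O}(\mathcal{L})$ as a quantum homogeneous space, while Theorem \ref{summit}(i) gives the explicit presentation. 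I would then simply write out the relations $\sigma_j$ of (\ref{ij}) in this case: the coefficients of $g$ are $r_1 = r_3 = 0$, $r_2 = \lambda^2$, $r_4 = 1$, giving $\sigma_1 = \lambda^2 P(1,1) + P(1,3)$, $\sigma_2 = \lambda^2 P(2,0) + P(2,2) - \lambda^2 a^4$ and $\sigma_3 = P(3,1)$; expanding these using the definition of $P(i,j)_{(a,x)}$ yields the three displayed $x$-$a$ relations. The single $y$-$b$ relation $by + yb = 0$ comes from $\sigma_1(y,b,f) = P(1,1)_{(b,y)} = by + yb$.

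For (ii), I apply Theorem \ref{coideal}(iii) with $n = 4$ and $m = 2$. The key simplification is that $\mathcal{L}_2(b,y) = \{b^iy^j : i,j > 0, \, i+j < 2\} = \emptyset$, so $\langle \mathcal{L}_2(b,y) \rangle$ reduces to the empty word. Furthermore, a $k$-basis for $\mathcal{O}(\mathcal{L}) = k[x,y]/\langle y^2 - x^4 - \lambda^2 x^2 \rangle$ is $\{x^r y^{\epsilon_1} : r \geq 0, \, \epsilon_1 \in \{0,1\}\}$, since $y^2$ is rewritable in $\mathcal{O}(\mathcal{L})$. Substituting these data into the basis formula of Theorem \ref{coideal}(iii)(a) produces the claimed PBW basis.

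For (iii), I would assemble three independent observations. The failure of the domain property is immediate from the relation $a^4 = b^2$, since the PBW basis from (ii) shows that $a^2 - b$ and $a^2 + b$ are both non-zero in $A(g,f)$, yet their product vanishes. Infinite Gel'fand-Kirillov dimension follows from Theorem \ref{theworks}(ii), because $\max\{n,m\} = 4 > 3$, and indeed this implication already uses the presence of a noncommutative free subalgebra. Finally, infinite global dimension follows from Theorem \ref{theworks}(v) once I verify that $\mathcal{L}$ has a singular point at the origin: the equation $y^2 - x^4 - \lambda^2 x^2 = 0$ and its partial derivatives $-2y$ and $-(4x^3 + 2\lambda^2 x)$ all vanish at $(0,0)$, and $(0,0)$ lies on $\mathcal{L}$. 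No step presents a serious obstacle; the most delicate bookkeeping is in expanding the $P(j, 4-j)_{(a,x)}$ in part (i).
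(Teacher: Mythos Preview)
Your proposal is correct and follows essentially the same approach as the paper: part (i) via Theorem \ref{summit} (and Theorem \ref{coideal}(i)) with the relations written out explicitly, part (ii) via Theorem \ref{coideal}(iii)(a) after observing $\mathcal{L}_2(b,y)=\emptyset$ and $\{1,y\}$ is a $k[x]$-basis of $\mathcal{O}(\mathcal{L})$, and part (iii) via the zero-divisor $(a^2-b)(a^2+b)=a^4-b^2=0$, the free subalgebra (equivalently Theorem \ref{theworks}(ii)), and Theorem \ref{theworks}(v). The only cosmetic differences are that the paper phrases the zero-divisor as $(a^{-2}b-1)(a^{-2}b+1)=0$ and reads the free subalgebra $k\langle \mathcal{L}_4(a,x)\rangle$ directly off the PBW basis rather than quoting Theorem \ref{theworks}(ii).
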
 
\begin{proof} (i) Theorem \ref{summit}.

\noindent (ii) The PBW basis is given by Theorem \ref{coideal}(iii)(a), noting that $\mathcal{O}(\mathcal{L}) = k\langle x,y \rangle$ is a free $k[x]$-module on the basis $\{1, y \}$.

\noindent (iii) The group-like element $a^{-2}b$  of $A(x^4 + \lambda^2 x^2, y^2)$ has order 2, so $(a^{-2}b - 1)(a^{-2}b + 1) = 0$ and $A(x^4 + \lambda^2 x^2, y^2)$ is not a domain. By (ii), $A(x^4 + \lambda^2 x^2, y^2)$ contains  the noncommutative free algebra $k\langle \mathcal{L}_4(a,x) \rangle$, and hence has infinite Gel'fand-Kirillov dimension by \cite[Example 1.2]{krause}. Since the lemniscate has a sungularity at the origin, $\mathrm{gl.dim}(A(x^4 + \lambda^2 x^2, y^2)) = \infty$ by Theorem \ref{theworks}(v).
\end{proof}

\section*{Acknowledgements} Some of this research will form part of the PhD thesis of the second author at the University of Glasgow. Her PhD is funded by a Schlumberger Foundation Faculty for the Future Fellowship, for which we are very grateful. The research of the first author was supported in part by Leverhulme Emeritus Fellowship EM-2017-081. 

We thank Uli Kraehmer (Dresden) for his unfailing interest in this work and for many helpful comments. We also very much appreciate input from Stefan Kolb (Newcastle), Manuel Martins (Dresden), Chelsea Walton (Urbana) and James Zhang (Seattle).

\bibliographystyle{plain}

\newpage

\section*{Appendix}

We present here details of the proofs of the cases $t=2$ and $t=3$ of Proposition $2.8$.

\noindent (ii) Let $t=2$ and $n \geq 4$. We use the following identities throughout the proofs:
\begin{eqnarray}
P(r,s) &=&  P(r-1,s-1)xa +  P(r-1,s-1)ax + P(r-2,s)a^{2} + P(r,s-2)x^{2}, \label{a}\\
P(r,s) &=&  xaP(r-1,s-1) +  axP(r-1,s-1) + a^{2}P(r-2,s) + x^{2}P(r,s-2),\label{b}\\
P(r,s) &=& xP(r-1,s-1)a + aP(r-1,s-1)x + aP(r-2,s)a + xP(r,s-2)x \label{c}.
\end{eqnarray}
We resolve the amabiguities in three cases.

\noindent (a) The ambiguity arises from the two routes to resolve the word
$$
a^{2}\omega_{1} = a^{2}(ax^{n-1}) = (a^{3}x^{n-3})x^{2}= \omega_{3}x^{2}
$$
in the free algebra $\field \langle a,x \rangle$ using the relations $\sigma_{1}$ and $\sigma_{3}$. Considering first $\sigma_{1}$, use  (\ref{a}) to write it as
$$
\omega_{1} = ax^{n-1} \rightarrow - (\sum_{i=2}^{n-1}r_{i} \underline{( P(0, i-2)ax} + \underline{P(0,i-2)xa }) + \sum_{i=3}^{n-1}r_{i}\underline{P(1,i-3)x^{2}} +
$$
$$
 \underline{Q(1,n-3)x^{2}}  + \underline{ P(0, n-2)ax} + \underline{P(0,n-2)xa } +  r_{1}\underline{a}) + r_{1}\underline{a^{n}}
$$
$$
= - ( \sum_{i=2}^{n}r_{i} \underline{( P(0, i-2)ax} + \underline{P(0,i-2)xa }) + \sum_{i=3}^{n-1}r_{i}\underline{P(1,i-3)x^{2}} + \underline{Q(1,n-3)x^{2}} + r_{1}\underline{a}) + r_{1}\underline{a^{n}}.
$$
Premultiply this by $a^{2}$, and use Lemmas 2.6 and 2.7 to separate reducible and irreducible words, yielding
\begin{equation}
\tag{$\alpha$}
a^{2}\omega_{1} \rightarrow - ((\sum_{i=2}^{n}r_{i} ( a^{2}P(0, i-2)ax + a^{2}P(0,i-2)xa)  + \sum_{i=3}^{n-1}r_{i} a^{2}P(1,i-3)x^{2} + \underline{a^{2}Q(1,n-3)ax} + r_{1}\underline{a^{3}}) + r_{1}\underline{a^{n+2}}.
\end{equation}
The following words in ($\alpha$) of length $n+2$ are reducible:
$$
-a^{2}x^{n-2}ax \qquad \text{and} \qquad -a^{2}x^{n-2}xa.
$$
Using (\ref{b}), we write $\sigma_{2}$ as
$$
-a^{2}x^{n-2}  \rightarrow  \sum_{i=3}^{n-1}r_{i} (\underline{xaP(1,i-3)}  + \underline{axP(1,i-3)}) + \sum_{i=4}^{n-1}r_{i}\underline{x^{2}P(2,i-4)}  + \sum_{i=2}^{n-1}r_{i}\underline{a^{2}P(0,i-2)} 
$$
$$
+ \underline{xaP(1,n-3)}  + \underline{axP(1,n-3)} + \underline{x^{2}P(2,n-4)} - r_{2}\underline{a^{n}}
$$
$$
=  \sum_{i=3}^{n}r_{i} (\underline{xaP(1,i-3)}  + \underline{axP(1,i-3)}) + \sum_{i=4}^{n}r_{i}\underline{x^{2}P(2,i-4)}  + \sum_{i=2}^{n-1}r_{i}\underline{a^{2}P(0,i-2)} - r_{2}\underline{a^{n}}.
$$
Post multiplying this by $ax$ and using Lemmas 2.6 and 2.7 to separate reducible and irreducible words, yields
\begin{equation}
\tag{$\beta$}
-a^{2}x^{n-2}ax  \rightarrow \sum_{i=3}^{n}r_{i} (\underline{xaP(1,i-3)ax}  + \underline{axP(1,i-3)ax}) + \sum_{i=4}^{n}r_{i}\underline{x^{2}P(2,i-4)ax}  + 
\end{equation}
$$
\sum_{i=2}^{n-1}r_{i}\underline{a^{2}P(0,i-2)ax} - r_{2}\underline{axa^{n}}
$$
 Similarly, post multiplying the relation for $-a^{2}x^{n-2}$ above with $xa$  and using Lemmas 2.6 and 2.7 to separate reducible and irreducible words, yields
\begin{equation}
\tag{$\tau$}
-a^{2}x^{n-2}xa \rightarrow \sum_{i=3}^{n}r_{i} (xaP(1,i-3)xa  + \underline{axP(1,i-3)xa}) + \sum_{i=4}^{n}r_{i}\underline{x^{2}P(2,i-4)xa}  + 
\end{equation}
$$
\sum_{i=2}^{n-1}r_{i}a^{2}P(0,i-2)xa - r_{2}\underline{xa^{n+1}}.
$$

To reduce $xa^{2}x^{n-2}a$, note first that using (\ref{c}), the left hand side of $\sigma_{2}$ is written as
$$
a^{2}x^{n-2} \rightarrow - ( \sum_{i=2}^{n-1} \underline{aP(0,i-2)a} + \sum_{i=3}^{n-1}r_{i}( \underline{xP(1,i-3)a} + \underline{aP(1,i-3)x})  + \sum_{i=4}^{n-1}\underline{xP(2,i-4)x} 
$$
$$
 + \underline{xP(1,n-3)a} + \underline{aQ(1,n-3)x} + \underline{xP(2,n-4)x} + \underline{aP(0,n-2)a}  ) + r_{2}\underline{a^{n}}.
$$
$$
= - ( \sum_{i=2}^{n} \underline{aP(0,i-2)a} + \sum_{i=3}^{n-1}r_{i}( \underline{xP(1,i-3)a} + \underline{aP(1,i-3)x})  + \sum_{i=4}^{n}\underline{xP(2,i-4)x} 
$$
$$
 + \underline{xP(1,n-3)a} + \underline{aQ(1,n-3)x}  ) + r_{2}\underline{a^{n}}.
$$
Pre and post multiplying this by $x$ and $a$ respectively and using Lemmas 2.6 and 2.7 to separate reducible and irreducible words, yields
\begin{equation}
\tag{$\gamma$}
xa^{2}x^{n-2}a \rightarrow - ( \sum_{i=2}^{n} \underline{xaP(0,i-2)a^{2}} + \sum_{i=3}^{n-1}r_{i}( \underline{x^{2}P(1,i-3)a^{2}} + \underline{xaP(1,i-3)xa})  + \sum_{i=4}^{n}\underline{x^{2}P(2,i-4)xa} 
\end{equation}
$$
 + \underline{x^{2}P(1,n-3)a^{2}} + \underline{xaQ(1,n-3)xa}  ) + r_{2}\underline{xa^{n+1}}.
$$
Thus, the reduction process ends here. Substituting ($\beta$), ($\tau$) and ($\gamma$) into ($\alpha$) and simplifying yields
\begin{equation}
\tag{$\chi$}
a^{2}\omega_{1} \rightarrow ( \sum_{i=3}^{n}r_{i} (\underline{xaP(1,i-3)ax} + \underline{axP(1,i-3)ax} + \underline{axP(1,n-3)xa} ) +  \sum_{i=4}^{n}r_{i} \underline{x^{2}P(2,i-4)ax} +
\end{equation}
$$
  r_{1}\underline{a^{n+2}} )  -  (  \sum_{i=2}^{n}r_{i} \underline{xaP(0,i-2)a^{2}} + \sum_{i=3}^{n-1}r_{i} a^{2}P(i,i-3)x^{2} +  \sum_{i=3}^{n}r_{i}\underline{x^{2}P(1,i-3)a^{2}} + 
$$
$$
\underline{a^{2}Q(1,n-3)x^{2}}  + r_{1}\underline{a^{3}}  +   r_{2}\underline{axa^{n}} ).
$$

Turning now to $\omega_{3}x^{2}$, using (\ref{b}) to write the right hand side of $\sigma_{3}$ yields
$$
\omega_{3} \rightarrow - ( \sum_{i=3}^{n-1}r_{i}\underline{a^{2}P(1,i-3)} +  \underline{a^{2}Q(1,n-3)} +  \sum_{i=4}^{n-1}r_{i} (\underline{xaP(2,i-4)} + \underline{axP(2,i-4)}) + 
$$
$$
\underline{xaP(2,n-4)} + \underline{axP(2,n-4)} + \sum_{i=5}^{n-1}r_{i}\underline{x^{2}P(3,i-5)} + \underline{x^{2}P(3,n-5)} ) + r_{3}\underline{a^{n}}
$$
$$
= - ( \sum_{i=3}^{n-1}r_{i}\underline{a^{2}P(1,i-3)} +  \underline{a^{2}Q(1,n-3)} +  \sum_{i=4}^{n}r_{i} (\underline{xaP(2,i-4)} + \underline{axP(2,i-4)}) + 
$$
$$
\sum_{i=5}^{n}r_{i}\underline{x^{2}P(3,i-5)} ) + r_{3}\underline{a^{n}}.
$$
When we post multiply this by $x^{2}$ and use Lemmas 2.6 and 2.7 to separate reducible and irreducible words, this yields
\begin{equation}
\tag{$\alpha'$}
\omega_{3}x^{2} \rightarrow - ( \sum_{i=3}^{n-1}r_{i}a^{2}P(1,i-3)x^{2} +  \underline{a^{2}Q(1,n-3)x^{2}} +  \sum_{i=4}^{n}r_{i} (xaP(2,i-4)x^{2} + axP(2,i-4)x^{2}) + 
\end{equation}
$$
\sum_{i=5}^{n}r_{i}x^{2}P(3,i-5)x^{2} ) + r_{3}\underline{x^{2}a^{n}}.
$$
We get the following reducible words of length $n+2$ from ($\alpha'$):
$$
xaa^{2}x^{n-2}, \quad axa^{2}x^{n-2}, \quad x^{2}a^{3}x^{n-3}
$$
from $ xaP(2,n-4)x^{2} $, $axP(2,n-4)x^{2}$ and $x^{2}P(3,n-5)x^{2}$ respectively. Using (\ref{a}), the right hand side of $\sigma_{2}$ is written as 
$$
a^{2}x^{n-2} \rightarrow - ( \sum_{i=2}^{n-1}r_{i}\underline{P(0,i-2)a^{2}} + \sum_{i=3}^{n-1}r_{i} ( \underline{P(1,i-3)ax} + \underline{P(1,i-3)xa} )  + \sum_{i=4}^{n-1}r_{i}\underline{P(2,i-4)x^{2}}
$$
$$
 +  \underline{Q(2,n-4)x^{2}} + \underline{P(0,n-2)a^{2}} + \underline{P(1,n-3)ax} + \underline{P(1,n-3)xa} ) +  r_{2}\underline{a^{n}}
$$
$$
= - ( \sum_{i=2}^{n}r_{i}\underline{P(0,i-2)a^{2}} + \sum_{i=3}^{n}r_{i} ( \underline{P(1,i-3)ax} + \underline{P(1,i-3)xa} )  + \sum_{i=4}^{n-1}r_{i}\underline{P(2,i-4)x^{2}} 
$$
$$
+  \underline{Q(2,n-4)x^{2}}  ) +  r_{2}\underline{a^{n}}.
$$
Pre multiplying this by $xa$ and $ax$ and using Lemmas 2.6 and 2.7 to separate reducible and irreducible words, yields
\begin{equation}
\tag{$\beta'$}
-xaa^{2}x^{n-2} \rightarrow ( \sum_{i=2}^{n}r_{i}\underline{xaP(0,i-2)a^{2}} + \sum_{i=3}^{n}r_{i} ( \underline{xaP(1,i-3)ax} + xaP(1,i-3)xa )  + 
\end{equation}
$$
\sum_{i=4}^{n-1}r_{i}xaP(2,i-4)x^{2} +  \underline{xaQ(2,n-4)x^{2}}  ) - r_{2}\underline{xa^{n+1}}
$$
and
\begin{equation}
\tag{$\gamma'$}
-axa^{2}x^{n-2} \rightarrow ( \sum_{i=2}^{n}r_{i}axP(0,i-2)a^{2} + \sum_{i=3}^{n}r_{i} ( \underline{axP(1,i-3)ax} + \underline{axP(1,i-3)xa} )  + 
\end{equation}
$$
\sum_{i=4}^{n-1}r_{i}\underline{axP(2,i-4)x^{2}} +  \underline{axQ(2,n-4)x^{2}}  ) -  r_{2}\underline{axa^{n}}
$$
respevtively.
Recall from ($\gamma$) above that the reducible word $xa^{2}x^{n-2}a$ is given by,
\begin{equation}
\tag{$\eta'$}
xa^{2}x^{n-2}a \rightarrow - (  \sum_{i=3}^{n-1}r_{i}( \underline{x^{2}P(1,i-3)a^{2}} + \underline{xaP(1,i-3)xa})  + \sum_{i=4}^{n}\underline{x^{2}P(2,i-4)xa} +
\end{equation}
$$
\sum_{i=2}^{n} \underline{xaP(0,i-2)a^{2}} + \underline{x^{2}P(1,n-3)a^{2}} + \underline{xaQ(1,n-3)xa}  ) + r_{2}\underline{xa^{n+1}}.
$$

Turning now to the reducible word $ax^{n-1}a^{2}$ in ($\gamma'$), using (\ref{a}), the right hand side of $\sigma_{1}$ is written as
$$
ax^{n-1} \rightarrow - ( \sum_{i=2}^{n-1}r_{i}(  \underline{axP(0,i-2)} + \underline{xaP(0,i-2)})  + \underline{xaP(0,n-2)} + \sum_{i=3}^{n-1}r_{i}\underline{x^{2}P(1,i-3)} + 
$$
$$
\underline{x^{2}P(1,n-3)} +  r_{1}\underline{a}) + r_{1}\underline{a^{n}}
$$
so that
\begin{equation}
\tag{$\tau'$}
ax^{n-1}a^{2} \rightarrow - (\sum_{i=2}^{n-1}r_{i}(  \underline{axP(0,i-2)a^{2}} + \underline{xaP(0,i-2)a^{2}})  + \underline{xaP(0,n-2)a^{2}} + 
\end{equation}
$$
\sum_{i=3}^{n}r_{i}\underline{x^{2}P(1,i-3)a^{2}} + r_{1}\underline{a^{3}}) + r_{1}\underline{a^{n+2}}.
$$

Using (\ref{a}), the right hand side of $\sigma_{3}$ becomes,
$$
a^{3}x^{n-3} \rightarrow - ( \sum_{i=4}^{n-1}r_{i}\underline{P(1,i-3)a^{2}} + \sum_{i=4}^{n-1}r_{i} ( \underline{P(2,i-4)ax} + \underline{P(2,i-4)xa}  ) + \sum_{i=5}^{n-1}r_{i}\underline{P(3,i-5)x^{2}}
$$
$$
  + \underline{Q(3,n-5)x^{2}} + \underline{P(1,n-3)a^{2}}  + \underline{P(2,n-4)ax} + \underline{P(2,n-4)xa} +  r_{3}\underline{a^{3}} ) + r_{3}\underline{a^{n}}
$$
$$
= - ( \sum_{i=4}^{n}r_{i}\underline{P(1,i-3)a^{2}} + \sum_{i=4}^{n}r_{i} ( \underline{P(2,i-4)ax} + \underline{P(2,i-4)xa}  ) + \sum_{i=5}^{n-1}r_{i}\underline{P(3,i-5)x^{2}}
$$
$$
  + \underline{Q(3,n-5)x^{2}} +  r_{3}\underline{a^{3}} ) + r_{3}\underline{a^{n}}.
$$
Premultiplying this by $x^{2}$ and using Lemmas 2.6 and 2.7 to separate reducible and irreducible words, yields
\begin{equation}
\tag{$\chi'$}
-x^{2}a^{3}x^{n-3} \rightarrow ( \sum_{i=4}^{n}r_{i}\underline{x^{2}P(1,i-3)a^{2}} + \sum_{i=4}^{n}r_{i} ( \underline{x^{2}P(2,i-4)ax} + \underline{x^{2}P(2,i-4)xa}  ) + 
\end{equation}
$$
\sum_{i=5}^{n-1}r_{i}\underline{x^{2}P(3,i-5)x^{2}}  + \underline{x^{2}Q(3,n-5)x^{2}} + r_{3}\underline{x^{2}a^{3}} ) +  r_{3}\underline{x^{2}a^{n}}.
$$
Thus, the reduction process ends here. Substituting ($\beta'$), ($\gamma'$), ($\eta'$, ($\tau'$) and ($\chi'$) into ($\alpha'$)  and simplifying yields
\begin{equation}
\tag{$\alpha'$}
\omega_{3}x^{2} \rightarrow ( \sum_{i=3}^{n}r_{i} (\underline{xaP(1,i-3)ax} + \underline{axP(1,i-3)ax} + \underline{axP(1,n-3)xa} ) +  \sum_{i=4}^{n}r_{i} \underline{x^{2}P(2,i-4)ax} +
\end{equation}
$$
  r_{1}\underline{a^{n+2}} )  -  (  \sum_{i=2}^{n}r_{i} \underline{xaP(0,i-2)a^{2}} + \sum_{i=3}^{n-1}r_{i} a^{2}P(i,i-3)x^{2} +  \sum_{i=3}^{n}r_{i}\underline{x^{2}P(1,i-3)a^{2}} + 
$$
$$
 \underline{a^{2}Q(1,n-3)x^{2}}  + r_{1}\underline{a^{3}}  +   r_{2}\underline{axa^{n}} ).
$$
Comparing ($\alpha$) and ($\alpha'$), we conclude that the overlap ambiguity $\{\omega_{1}, \omega_{3} \}$ is resolvable.\\

\noindent (b) Let $j=n-3$, and consider the overlap ambiguity $\{\omega_{n-3}, \omega_{n-1} \}$. We may assume without loss of generallity that $n \geq 5$ since we have dealt with $1=4-3$ in Proposition 2.8(i). Using (\ref{a}), we write the right hand side of $\sigma_{n-3}$ as
$$
\omega_{n-3} \rightarrow - ( \sum_{i=n-3}^{n-1}r_{i}  \underline{P(n-5,i-(n-3))a^{2}}  +  \sum_{i=n-2}^{n-1}r_{i} ( \underline{P(n-4,i-(n-2))xa} + \underline{P(n-4,i-(n-2))ax})    
$$
$$
+  r_{n-1}\underline{P(n-3,0)x^{2}} + \underline{Q(n-3,1)x^{2}} + \underline{P(n-5,3)a^{2}} + \underline{P(n-4,2)xa} + \underline{P(n-4,2)ax} ) + r_{n-3}\underline{a^{n}}
$$
$$
= - ( \sum_{i=n-3}^{n}r_{i}  \underline{P(n-5,i-(n-3))a^{2}}  +  \sum_{i=n-2}^{n}r_{i} ( \underline{P(n-4,i-(n-2))xa} + \underline{P(n-4,i-(n-2))ax})    
$$
$$
+  r_{n-1}\underline{P(n-3,0)x^{2}} + \underline{Q(n-3,1)x^{2}}  ) + r_{n-3}\underline{a^{n}}
$$

Thus, premultiplying this by $a^{2}$ and using Lemmas 2.6 and 2.7 to separate reducible and irreducible words, yields
\begin{equation}
\tag{$I$}
a^{2}\omega_{n-3} \rightarrow - ( \sum_{i=n-3}^{n}r_{i}  a^{2}P(n-5,i-(n-3))a^{2}  +  \sum_{i=n-2}^{n}r_{i} ( a^{2}P(n-4,i-(n-2))xa + 
\end{equation}
$$
a^{2}P(n-4,i-(n-2))ax) +   r_{n-1}a^{2}P(n-3,0)x^{2} + \underline{a^{2}Q(n-3,1)x^{2}} ) + r_{n-3}\underline{a^{n+2}} ,
$$
The reducible words of length $n+2$ above are :
$$
a^{n-3}x^{3}a^{2}, \qquad a^{n-2}x^{2}ax, \qquad a^{n-2}x^{2}xa,
$$
from $a^{2}P(n-5,3)a^{2}$, $a^{2}P(n-4,2)ax$ and $a^{2}P(n-4,2)xa$ respectively. 

Using (\ref{b}), we write the right hand side of $\sigma_{n-3}$ as 
$$
a^{n-3}x^{3} \rightarrow - ( \sum_{i=n-3}^{n-1}r_{i}\underline{a^{2}P(n-5,i-(n-3))} + \sum_{i=n-2}^{n-1}r_{i} (\underline{axP(n-4,i-(n-2))} + \underline{xaP(n-4,i-(n-2))} )   
$$
$$
+ r_{n-1}\underline{x^{2}P(n-3,0)} +  \underline{a^{2}Q(n-5,3)} + \underline{axP(n-4,2)} + \underline{xaP(n-4,2)} + \underline{x^{2}P(n-3,1)} ) + r_{n-3}\underline{a^{n}}
$$
$$
= - ( \sum_{i=n-3}^{n-1}r_{i}\underline{a^{2}P(n-5,i-(n-3))}  + \sum_{i=n-2}^{n}r_{i} (\underline{axP(n-4,i-(n-2))} + \underline{xaP(n-4,i-(n-2))} )
$$
$$
+ \sum_{i=n-1}^{n}r_{i}  \underline{x^{2}P(n-3,i-(n-1))} +  \underline{a^{2}Q(n-5,3)} ) + r_{n-3}\underline{a^{n}}.
$$
Post multiplying this by $a^{2}$ and using Lemmas 2.6 and 2.7 to separate reducible and irreducible words, yields
\begin{equation}
\tag{$II$}
- a^{n-3}x^{3}a^{2} \rightarrow ( \sum_{i=n-3}^{n-1}r_{i}\underline{a^{2}P(n-5,i-(n-3))a^{2}} + \sum_{i=n-2}^{n}r_{i} (\underline{axP(n-4,i-(n-2))a^{2}} + 
\end{equation}
$$
\underline{xaP(n-4,i-(n-2))a^{2}} ) +   \sum_{i=n-1}^{n}r_{i}\underline{x^{2}P(n-3,i - (n-1))a^{2}} +  \underline{a^{2}Q(n-5,3)a^{2}}  ) -  r_{n-3}\underline{a^{n+2}}.
$$

Similarly, using (\ref{b}), the right hand side of $\sigma_{n-2}$ becomes 
$$
a^{n-2}x^{2} \rightarrow - ( \sum_{i=n-2}^{n-1}r_{i} \underline{a^{2}P(n-4,i-(n-2))} + r_{n-1} (\underline{xaP(n-3,i-(n-1))} + \underline{axP(n-3,i-(n-1))}) 
$$
$$
+ \underline{xaP(n-3,1)} + \underline{axP(n-3,1)} + \underline{x^{2}P(n-2,0)} + \underline{a^{2}Q(n-4,2)} ) + r_{n-2}\underline{a^{n}}.
$$
Thus, post multiplying this by $ax$ and $xa$ and using Lemmas 2.6 and 2.7 to separate reducible and irreducible words, yield
\begin{equation}
\tag{$III$}
-a^{n-2}x^{2}ax \rightarrow (  \sum_{i=n-2}^{n-1}r_{i} \underline{a^{2}P(n-4,i-(n-2))ax} + \sum_{i=n-1}^{n}r_{i} (\underline{xaP(n-3,i-(n-1))ax}  
\end{equation}
$$
+ \underline{axP(n-3,i-(n-1))ax}) + x^{2}P(n-2,0)ax + \underline{a^{2}Q(n-4,2ax)} ) - r_{n-2}\underline{axa^{n}}.
$$
and
\begin{equation}
\tag{$IV$}
-a^{n-2}x^{2}xa \rightarrow (\sum_{i=n-2}^{n-1}r_{i} \underline{a^{2}P(n-4,i-(n-2))xa} +  \sum_{i=n-1}^{n}r_{i} (xaP(n-3,i-(n-1))xa 
\end{equation}
$$
 + \underline{axP(n-3,i-(n-1))xa}) +  \underline{x^{2}P(n-2,0)xa} + \underline{a^{2}Q(n-4,2)xa} ) - r_{n-2}\underline{xa^{n+1}}
$$
respectively.
Turning now to the reducible word $x^{2}a^{n-1}x$, using (\ref{a}), the right hand side of $\sigma_{n-1}$ becomes
$$
a^{n-1}x  \rightarrow - \left( \underline{P(n-2,0)xa} + \underline{P(n-3,1)a^{2}} + r_{n-1}\underline{a^{n-1}}  \right) + r_{n-1}\underline{a^{n}}.
$$
Premultiplying this by $x^{2}$ and using Lemmas 2.6 and 2.7 to separate reducible and irreducible words, yields
\begin{equation}
\tag{$V$}
x^{2}a^{n-1}x  \rightarrow - \left( \underline{x^{2}P(n-2,0)xa} + \underline{x^{2}P(n-3,1)a^{2}} + r_{n-1}\underline{x^{2}a^{n-1}}  \right) + r_{n-1}\underline{x^{2}a^{n}}.
\end{equation}

Returning to (IV), the reducible word $xa^{n-2}x^{2}a \in xaP(n-3,1)xa$ is reduced as follows. We use (\ref{c}) to write the right hand side of $\sigma_{n-2}$ as 
$$
a^{n-2}x^{2} \rightarrow - ( \sum_{i=n-2}^{n-1}r_{i}  \underline{aP(n-4,i-(n-2))a}  +  r_{n-1}(\underline{aP(n-3,0)x} + \underline{x P(n-3,0)a} ) 
$$
$$
+ \underline{xP(n-2,0)x} + \underline{aQ(n-3,1)x}   +  \underline{aP(n-4,2)a} + \underline{x P(n-3,1)a}  ) + r_{n-2}\underline{a^{n}}.
$$
$$
= - ( \sum_{i=n-2}^{n}r_{i}  \underline{aP(n-4,i-(n-2))a}  +  r_{n-1}\underline{aP(n-3,0)x} + \sum_{i=n-1}^{n}r_{i} \underline{x P(n-3,i-(n-1))a}  
$$
$$
+ \underline{xP(n-2,0)x} + \underline{aQ(n-3,1)x}   ) + r_{n-2}\underline{a^{n}}.
$$
Therefore, pre and post multiplying this by $x$ and $a$ respectively and using Lemmas 2.6 and 2.7 to separate reducible and irreducible words, yields
\begin{equation}
\tag{$VI$}
xa^{n-2}x^{2}a \rightarrow - (  \sum_{i=n-2}^{n}r_{i}  \underline{xaP(n-4,i-(n-2))a^{2}}  +  r_{n-1}\underline{xaP(n-3,0)xa} + 
\end{equation}
$$
\sum_{i=n-1}^{n}r_{i} \underline{x^{2} P(n-3,i-(n-1))a^{2}}   + \underline{x^{2}P(n-2,0)xa} + \underline{xaQ(n-3,1)xa}     ) + r_{n-2}\underline{xa^{n+1}}.
$$
Thus, the reduction process stops here and we get the following after assemblying (I), (II), (III), (IV), (V), (VI) and simplifying yields:
\begin{equation}
\tag{$\Gamma$}
a^{2}\omega_{n-3} \rightarrow (  \sum_{i=n-1}^{n}r_{i}( \underline{xaP(n-3,i-(n-1))ax} + \underline{axP(n-3,i-(n-1))ax} + \underline{axP(n-3,i-(n-1))xa}) 
\end{equation}
$$
+ \sum_{i=n-2}^{n}r_{i}\underline{axP(n-4,i-(n-2))a^{2}} + r_{n-1}\underline{x^{2}a^{n}}) - ( r_{n-2}\underline{axa^{n}} + r_{n-1}a^{2}P(n-3,0)x^{2} +  
$$
$$
\underline{a^{2}Q(n-3,1)x^{2}} +  \underline{x^{2}P(n-2,0)xa}  + \sum_{i=n-1}^{n}r_{i} \underline{x^{2}P(n-3,i-(n-1))a^{2}}  ).
$$
\\

 Now, consider the alternate grouping of the overlap ambiguity, namely $\omega_{n-1}x^{2}=(a^{n-1}x)x^{2}$. Using (\ref{b}), the words on the right hand side of $\sigma_{n-1}$ are written as
$$
\omega_{n-1} \rightarrow - \left( \underline{a^{2}Q(n-3,1)} + \underline{axP(n-2,0)} + \underline{xaP(n-2,0)}  + r_{n-1}\underline{a^{n-1}} \right) + r_{n-1}\underline{a^{n}}.
$$
Postmultiplying this by $x^{2}$ and using Lemmas 2.6 and 2.7 to separate reducible and irreducible words, yields
\begin{equation}
\tag{$a$}
\omega_{n-1}x^{2} \rightarrow  - \left( \underline{a^{2}Q(n-3,1)x^{2}} + axP(n-2,0)x^{2} + xaP(n-2,0)x^{2}  + r_{n-1}a^{n-1}x^{2} \right) + r_{n-1}\underline{x^{2}a^{n}}.
\end{equation}
The reducible words of length $n+2$ above are
$$
axa^{n-2}x^{2} \qquad \text{and} \qquad xaa^{n-2}x^{2}.
$$
Using (\ref{a}), the right hand side of $\sigma_{n-2}$ becomes
$$
a^{n-2}x^{2} \rightarrow - ( \sum_{i=n-1}^{n}r_{i} (  \underline{P(n-4,i-(n-2))a^{2}} + \underline{P(n-3,i-(n-1))xa})
$$
$$
 + \underline{P(n-3,i-(n-1))ax} +  r_{n-2}\underline{a^{n-2}} ) + r_{n-2}\underline{a^{n}}.
$$
Premultiplying this by $ax$ and $xa$ gives
\begin{equation}
\tag{$b$}
-axa^{n-2}x^{2} \rightarrow  ( \sum_{i=n-1}^{n}r_{i} (  \underline{axP(n-4,i-(n-2))a^{2}} + \underline{axP(n-3,i-(n-1))xa} + 
\end{equation}
$$
\underline{axP(n-3,i-(n-1))ax} ) +  r_{n-2}\underline{axa^{n-2}} ) -  r_{n-2}\underline{axa^{n}}
$$
and
\begin{equation}
\tag{$c$}
- xaa^{n-2}x^{2} \rightarrow ( \sum_{i=n-1}^{n}r_{i} (  \underline{xaP(n-4,i-(n-2))a^{2}} + xaP(n-3,i-(n-1))xa + 
\end{equation}
$$
\underline{xaP(n-3,i-(n-1))ax} ) +  r_{n-2}\underline{xa^{n-1}} ) + r_{n-2}\underline{xa^{n+1}}
$$
respectively.
The reducible word $xa^{n-2}x^{2}a$ is given by $(VI)$. Thus, the reduction process ends here and assemblying (a), (b), (c) and (VI), we obtain
\begin{equation}
\tag{$\Gamma'$}
\omega_{n-1}x^{2}  \rightarrow (  \sum_{i=n-1}^{n}r_{i}( \underline{xaP(n-3,i-(n-1))ax} + \underline{axP(n-3,i-(n-1))ax} + \underline{axP(n-3,i-(n-1))xa}) 
\end{equation}
$$
+ \sum_{i=n-2}^{n}r_{i}\underline{axP(n-4,i-(n-2))a^{2}} + r_{n-1}\underline{x^{2}a^{n}}) - ( r_{n-2}\underline{axa^{n}} + r_{n-1}a^{2}P(n-3,0)x^{2} +  
$$
$$
\underline{a^{2}Q(n-3,1)x^{2}} +  \underline{x^{2}P(n-2,0)xa} + \sum_{i=n-1}^{n}r_{i} \underline{x^{2}P(n-3,i-(n-1))a^{2}}  ).
$$
Therefore, comparing ($\Gamma$) and ($\Gamma'$), we conclude that the overlap ambiguity $\{ \omega_{n-3}, \omega_{n-1} \}$ is resolvable.
\\

\noindent (c) Suppose now that $ 1 < j < n-3$, so that $n \geq 6$. Consider the overlap ambiguity $\{ \omega_{j}, \omega_{j+2} \}$. We use (\ref{a}) to write the right hand side of $\sigma_{j}$ as
$$
\omega_{j}  \rightarrow - (  \sum_{i=j}^{n-1} r_{i} \underline{P(j-2,i-j)a^{2}} +  \sum_{i=j+1}^{n-1} r_{i} \left( \underline{P(j-1, i-j-1)ax} + \underline{P(j-1,i-j-1)xa}    \right)  +
$$
$$
  \sum_{i=j+2}^{n-1}\underline{P(j,i-j-2)x^{2}} \underline{Q(j,n-j-2)x^{2}} + \underline{P(j-2,n-j)a^{2}} + \underline{P(j-1, n-j-1)ax} 
$$
$$
+ \underline{P(j-1,n-j-1)xa}  ) + r_{j}\underline{a^{n}}
$$
$$
=  - (  \sum_{i=j}^{n} r_{i} \underline{P(j-2,i-j)a^{2}} +  \sum_{i=j+1}^{n} r_{i} \left( \underline{P(j-1, i-j-1)ax} + \underline{P(j-1,i-j-1)xa}    \right) +
$$
$$
 \sum_{i=j+2}^{n-1}\underline{P(j,i-j-2)x^{2}} + \underline{Q(j,n-j-2)x^{2}} ) + r_{j}\underline{a^{n}}
$$
Premultiplying this by $a^{2}$ and using Lemmas 2.6 and 2.7 to separate reducible and irreducible words yields, 
\begin{equation}
\tag{$\mu$}
a^{2}\omega_{j}  \rightarrow - ( \sum_{i=j}^{n} r_{i} \underline{a^{2}P(j-2,i-j)a^{2}} + \sum_{i=j+1}^{n} r_{i} \left( a^{2}P(j-1, i-j-1)ax + a^{2}P(j-1,i-j-1)xa  \right) 
\end{equation}
$$
+ \sum_{i=j+2}^{n-1}a^{2}P(j,i-j-2)x^{2} +  \underline{a^{2}Q(j,n-j-2)x^{2}}  ) + r_{j}\underline{a^{n+2}}
$$

We have the following reducible words of length $n+2$ from $(\mu)$:
\begin{equation}
\tag{$M$}
a^{j+1}x^{n-j-1}ax, \; a^{j+1}x^{n-j-1}xa, \; a^{j}x^{n-j}a^{2}
\end{equation}
from $a^{2}P(j-1, n-j-1)ax$, $a^{2}P(j-1,n-j-1)xa$ and $a^{2}P(j-2,n-j)a^{2}$ respectively. The word $a^{j+1}x^{n-j-1}xa$ is $(a^{j+1}x^{n-j-1})xa$ and $a(a^{j}x^{n-j})a$. So it involves an overlap ambiguity, but for $t=1$ and this overlap ambiguity has been resolved in (3) and (4). Thus, we using the route $(a^{j+1}x^{n-j-1})xa$ will lead to the same result. 

We treat the reducible words in ($M$) in turn, starting with $a^{j+1}x^{n-j-1}ax$ and $a^{j+1}x^{n-j-1}xa$. Expand $\sigma_{j+1}$ using $(\ref{b})$ as
$$
a^{j+1}x^{n-j-1}  \rightarrow - (\sum_{i=j+1}^{n-1}\underline{a^{2}P(j-1,i-j-1)} + \underline{a^{2}Q(j-1,n-j-1)} +
$$
$$
  \sum_{i=j+2}^{n-1} r_{i} \left( \underline{axP(j, i-j-2)} + \underline{xaP(j,i-j-2)}  \right) + \underline{axP(j, n-j-2)} + \underline{xaP(j,n-j-2)} +
$$
$$
 \sum_{i=j+3}^{n-1}\underline{x^{2}P(j+1,i-j-3)} +\underline{x^{2}P(j+1,n-j-3)}  ) + r_{j+1}\underline{a^{n}}.
$$
$$
= - (\sum_{i=j+1}^{n-1}\underline{a^{2}P(j-1,i-j-1)} + \underline{a^{2}Q(j-1,n-j-1)} +
$$
$$
  \sum_{i=j+2}^{n} r_{i} \left( \underline{axP(j, i-j-2)} + \underline{xaP(j,i-j-2)}  \right) + \sum_{i=j+3}^{n}\underline{x^{2}P(j+1,i-j-3)}  ) + r_{j+1}\underline{a^{n}}.
$$
Thus, post multiplying this by $ax$ and $xa$ and using Lemmas 2.6 and 2.7 to separate reducible and irreducible words yields, 
\begin{equation}
\tag{$\omega$}
- a^{j+1}x^{n-j-1}ax  \rightarrow  (\sum_{i=j+1}^{n-1}\underline{a^{2}P(j-1,i-j-1)ax} + \underline{a^{2}Q(j-1,n-j-1)ax} +
\end{equation}
$$
 \sum_{i=j+2}^{n} r_{i} \left( \underline{axP(j, i-j-2)ax} + \underline{xaP(j,i-j-2)ax}  \right) + \sum_{i=j+3}^{n}\underline{x^{2}P(j+1,i-j-3)ax}  ) - r_{j+1}\underline{axa^{n}}
$$
and
\begin{equation}
\tag{$\zeta$}
- a^{j+1}x^{n-j-1}xa  \rightarrow   (\sum_{i=j+1}^{n-1}\underline{a^{2}P(j-1,i-j-1)xa} + \underline{a^{2}Q(j-1,n-j-1)xa} +
\end{equation}
$$
 \sum_{i=j+2}^{n} r_{i} \left( \underline{axP(j, i-j-2)xa} + \underline{xaP(j,i-j-2)xa}  \right) + \sum_{i=j+3}^{n}\underline{x^{2}P(j+1,i-j-3)xa}  ) - r_{j+1}\underline{xa^{n+1}}
$$
respectively.
Turning now to the reduction of $xa^{j+1}x^{n-j-1}a$ in ($\zeta$), use (\ref{c}) to expand $\sigma_{j+1}$ as
$$
a^{j+1}x^{n-j-1}  \rightarrow - ( \sum_{i=j+1}^{n-1} r_{i} \underline{aP(j-1,i-j-1)a} + \underline{aP(j-1,n-j-1)a} + \sum_{i=j+2}^{n-1} r_{i} \underline{aP(j,i-j-2)x} +
$$
$$
  \underline{aQ(j,n-j-2)x}  + \sum_{i=j+2}^{n-1} r_{i}\underline{xP(j,i-j-2)a} + \underline{xP(j,n-j-2)a} + \sum_{i=j+3}^{n} r_{i}\underline{xP(j+1,i-j-3)x} 
$$
$$
+ \underline{xP(j+1,n-j-3)x}  ) + r_{j+1}\underline{a^{n}}
$$
$$
= - ( \sum_{i=j+1}^{n} r_{i} \underline{aP(j-1,i-j-1)a} + \sum_{i=j+2}^{n-1} r_{i} \underline{aP(j,i-j-2)x} + \underline{aQ(j,n-j-2)x} 
$$
$$
   + \sum_{i=j+2}^{n} r_{i}\underline{xP(j,i-j-2)a} + \sum_{i=j+3}^{n} r_{i}\underline{xP(j+1,i-j-3)x} ) + r_{j+1}\underline{a^{n}}.
$$
Pre and post multiplying this by $x$ and $a$ respectively and using Lemmas 2.6 and 2.7 to separate reducible and irreducible words yields, 
\begin{equation}
\tag{$N$}
xa^{j+1}x^{n-j-1}a  \rightarrow - ( \sum_{i=j+1}^{n} r_{i} \underline{xaP(j-1,i-j-1)a^{2}}  + \sum_{i=j+2}^{n-1} r_{i} \underline{xaP(j,i-j-2)xa} +
\end{equation}
$$
  \underline{xaQ(j,n-j-2)xa}  + \sum_{i=j+2}^{n} r_{i}\underline{x^{2}P(j,i-j-2)a^{2}} + \sum_{i=j+3}^{n} r_{i}\underline{x^{2}P(j+1,i-j-3)xa} ) + r_{j+1}\underline{xa^{n+1}}.
$$
Turning now to the reducible word $a^{j}x^{n-j}a^{2}$, the third reducible term in ($M$), use (\ref{b}) to expand $\sigma_{j}$  as
$$
a^{j}x^{n-j}  \rightarrow - (\sum_{i=j}^{n-1} r_{i}\underline{a^{2}P(j-2,i-j)} + \underline{a^{2}Q(j-2,n-j)} + \sum_{i=j+1}^{n-1} r_{i} ( \underline{axP(j-1,i-j-1)}  +
$$
$$
   \underline{xaP(j-1,i-j-1)}) +  \underline{axP(j-1,n-j-1)} +  \underline{xaP(j-1,n-j-1)} + \sum_{i=j+2}^{n-1} r_{i}\underline{x^{2}P(j,i-j-2)}
$$
$$
+ \underline{x^{2}P(j,n-j-2)} )  + r_{j}\underline{a^{n}}
$$
$$
= - (\sum_{i=j}^{n-1} r_{i}\underline{a^{2}P(j-2,i-j)} + \underline{a^{2}Q(j-2,n-j)} + \sum_{i=j+1}^{n} r_{i} ( \underline{axP(j-1,i-j-1)}  
$$
$$
 +  \underline{xaP(j-1,i-j-1)})  + \sum_{i=j+2}^{n} r_{i}\underline{x^{2}P(j,i-j-2)} )  + r_{j}\underline{a^{n}}.
$$
Thus, post multiplying this by $a^{2}$ and using  Lemmas 2.6 and 2.7 to separate reducible and irreducible words yields, 
\begin{equation}
\tag{$P$}
- a^{j}x^{n-j}a^{2}  \rightarrow  (\sum_{i=j}^{n-1} r_{i}\underline{a^{2}P(j-2,i-j)a^{2}} + \underline{a^{2}Q(j-2,n-j)a^{2}} + \sum_{i=j+1}^{n} r_{i} ( \underline{axP(j-1,i-j-1)a^{2}}
\end{equation}
$$
 +  \underline{xaP(j-1,i-j-1)a^{2}})  + \sum_{i=j+2}^{n} r_{i}\underline{x^{2}P(j,i-j-2)a^{2}} )  - r_{j}\underline{a^{n+2}} .
$$
Hence, the reduction process ends here and assemblying $(\mu)$, ($M$), ($\omega$), ($\zeta$), ($N$), and ($P$), we obtain

\begin{equation}
\tag{$Q$}
a^{2}\omega_{j} \rightarrow (\sum_{i=j+1}^{n} r_{i}\underline{axP(j-1,i-j-1)a^{2}} + \sum_{i=j+2}^{n} r_{i} (\underline{xaP(j,i-j-2)ax} + \underline{axP(j,i-j-2)ax}   
\end{equation}
$$
+  \underline{axP(j,i-j-2)xa}) +  \sum_{i=j+3}^{n} r_{i} \underline{x^{2}P(j+1,i-j-3)ax} )   - (  \sum_{i=j+2}^{n-1} r_{i} a^{2}P(j,i-j-3)x^{2} + 
$$
$$
\underline{a^{2}Q(j,n-j-2)x^{2}} +  r_{j+1}  \underline{axa^{n}})
$$

Turning now to the other half of the overlap ambiguity, use (\ref{b}) to expand $\sigma_{j+2}$ as
$$
\omega_{j+2} \rightarrow - ( \sum_{i=j+2}^{n-1} r_{i}\underline{a^{2}P(j,i-j-2)} +  \underline{a^{2}Q(j,n-j-2)} + \sum_{i=j+3}^{n-1} r_{i}( \underline{axP(j+1,i-j-3)} +   
$$
$$
  \underline{xaP(j+1,i-j-3)}) + \underline{axP(j+1,n-j-3)} +   \underline{xaP(j+1,n-j-3)} +  \sum_{i=j+4}^{n-1} r_{i}\underline{x^{2}P(j+2,i-j-4)} 
$$
$$
+ \underline{x^{2}P(j+2,n-j-4)}  ) + \underline{r_{j+2}a^{n}}
$$
$$
= - (\sum_{i=j+2}^{n-1} r_{i}\underline{a^{2}P(j,i-j-2)} +  \underline{a^{2}Q(j,n-j-2)} +  \sum_{i=j+3}^{n} r_{i}\left( \underline{axP(j+1,i-j-3)} +   \underline{xaP(j+1,i-j-3)}\right) 
$$
$$
+  \sum_{i=j+4}^{n} r_{i}\underline{x^{2}P(j+2,i-j-4)}  ) + \underline{r_{j+2}a^{n}}.
$$
Thus, post multiplying this by $x^{2}$ and using  Lemmas 2.6 and 2.7 to separate reducible and irreducible words yields, 
\begin{equation}
\tag{$R$}
\omega_{j+2}x^{2} \rightarrow - (\sum_{i=j+2}^{n-1} r_{i}a^{2}P(j,i-j-2)x^{2} +  \underline{a^{2}Q(j,n-j-2)x^{2}}  + \sum_{i=j+3}^{n} r_{i}(axP(j+1,i-j-3)x^{2} +   
\end{equation}
$$
 xaP(j+1,i-j-3)x^{2}) + \sum_{i=j+4}^{n} r_{i}x^{2}P(j+2,i-j-4)x^{2}    ) + r_{j+2}\underline{a^{n}x^{2}}.
$$
The reducible words in $(R)$ of length $n+2$ are 
$$
xaa^{j+1}x^{n-j-1}, \; axa^{j+1}x^{n-j-1}, \;  x^{2}a^{j+2}x^{n-j-2}
$$
which belong to $xaP(j+1,n-j-3)x^{2}$, $axP(j+1,n-j-3)x^{2} $ and $ x^{2}P(j+2,n-j-4)x^{2}$ respectively. The word $xaa^{j+1}x^{n-j-1}$ is $xa(a^{j+1}x^{n-j-1})$ and $x(a^{j+2}x^{n-j-2})x$. So it involves an overlap ambiguity, but for $t=1$ and this overlap ambiguity has been resolved in (3) and (4). 

To deal with the first two reducible words, first, use (\ref{a}) to expand $\sigma_{j+1}$  as
$$
a^{j+1}x^{n-j-1}  \rightarrow - ( \sum_{i=j+1}^{n-1} r_{i} \underline{P(j-1,i-j-1)a^{2}}   + \underline{P(j-1,n-j-1)a^{2}} + \sum_{i=j+2}^{n-1} r_{i} ( \underline{P(j, i-j-2)ax} + 
$$
$$
\underline{P(j,i-j-2)xa} ) +  \underline{P(j, n-j-2)ax} +  \underline{P(j,n-j-2)xa} + \sum_{i=j+3}^{n-1}\underline{P(j+1,i-j-3)x^{2}}
$$
$$
  \underline{Q(j+1,n-j-3)x^{2}} ) + r_{j+1}\underline{a^{n}}
$$
$$
= - ( \sum_{i=j+1}^{n} r_{i} \underline{P(j-1,i-j-1)a^{2}} + \sum_{i=j+2}^{n} r_{i} \left( \underline{P(j, n-j-2)ax} + \underline{P(j,n-j-2)xa}  \right) +
$$
$$
\sum_{i=j+3}^{n-1}\underline{P(j+1,i-j-3)x^{2}} + \underline{Q(j+1,n-j-3)x^{2}} ) + r_{j+1}\underline{a^{n}}
$$

Thus, premultiplying this $xa$ and $ax$ and using  Lemmas 2.6 and 2.7 to separate reducible and irreducible words yield,
\begin{equation}
\tag{$S$}
-xaa^{j+1}x^{n-j-1} \rightarrow (\sum_{i=j+1}^{n} r_{i} \underline{xaP(j-1,i-j-1)a^{2}} + \sum_{i=j+2}^{n} r_{i} \left( \underline{xaP(j, n-j-2)ax} + xaP(j,n-j-2)xa  \right) 
\end{equation}
$$
+ \sum_{i=j+3}^{n-1} xaP(j+1,i-j-3)x^{2} + \underline{xaQ(j+1,n-j-3)x^{2}} ) -  r_{j+1}\underline{xa^{n+1}}
$$
and
\begin{equation}
\tag{$T$}
-axa^{j+1}x^{n-j-1} \rightarrow (\sum_{i=j+1}^{n} r_{i} \underline{axP(j-1,i-j-1)a^{2}} + \sum_{i=j+2}^{n} r_{i} \left( \underline{axP(j, n-j-2)ax} + \underline{axP(j,n-j-2)xa}  \right) 
\end{equation}
$$
+ \sum_{i=j+3}^{n-1}\underline{axP(j+1,i-j-3)x^{2}} + \underline{axQ(j+1,n-j-3)x^{2}} ) -  r_{j+1}\underline{axa^{n}}
$$
respectively. The reduction of $xa^{j+1}x^{n-j-1}a$  to irreducible words is given in $(N)$.
To reduce $x^{2}a^{j+2}x^{n-j-2}$, first, use (\ref{a}) to expand $\sigma_{j+2}$ as
$$
a^{j+2}x^{n-j-2} \rightarrow - ( \sum_{i=j+2}^{n-1} r_{i}\underline{P(j,i-j-2)a^{2}} + \underline{P(j,n-j-2)a^{2}} +  \sum_{i=j+3}^{n} r_{i} ( \underline{P(j+1,i-j-3)ax} +   
$$
$$
\underline{P(j+1,i-j-3)xa} )  + \underline{P(j+1,n-j-3)ax} +   \underline{P(j+1,n-j-3)xa} +  \sum_{i=j+4}^{n-1} r_{i}\underline{P(j+2,i-j-4)x^{2}} 
$$
$$
+  \underline{Q(j,n-j-4)x^{2}}   ) + \underline{r_{j+2}a^{n}}
$$
$$
=  - ( \sum_{i=j+2}^{n} r_{i}\underline{P(j,i-j-2)a^{2}} +  \sum_{i=j+3}^{n} r_{i}\left( \underline{P(j+1,i-j-3)ax} +   \underline{P(j+1,i-j-3)xa}\right) +
$$
$$
   \sum_{i=j+4}^{n-1} r_{i}\underline{P(j+2,i-j-4)x^{2}} +  \underline{Q(j,n-j-4)x^{2}}   ) + r_{j+2}\underline{a^{n}}.
$$
Thus, premultiplying this by $x^{2}$ and using  Lemmas 2.6 and 2.7 to separate reducible and irreducible words yields,
\begin{equation}
\tag{$U$}
-x^{2}a^{j+2}x^{n-j-2} \rightarrow ( \sum_{i=j+2}^{n} r_{i}\underline{x^{2}P(j,i-j-2)a^{2}} +  \sum_{i=j+3}^{n} r_{i}( \underline{x^{2}P(j+1,i-j-3)ax} +   
\end{equation}
$$
 \underline{x^{2}P(j+1,i-j-3)xa}) + \sum_{i=j+4}^{n-1} r_{i}\underline{x^{2}P(j+2,i-j-4)x^{2}} +  \underline{x^{2}Q(j,n-j-4)x^{2}}   ) + r_{j+2}\underline{x^{2}a^{n}}.
$$ 
Thus, the reduction process ends here and when we assemble $(R)$, $(S)$, $(T)$, $(N)$ and $(U)$, we obtain
\begin{equation}
\tag{$V$}
\omega_{j+2}x^{2}\rightarrow   (\sum_{i=j+1}^{n} r_{i}\underline{axP(j-1,i-j-1)a^{2}} + \sum_{i=j+2}^{n} r_{i} (\underline{xaP(j,i-j-2)ax} + \underline{axP(j,i-j-2)ax}  +   
\end{equation}
$$
\underline{axP(j,i-j-2)xa}) + \sum_{i=j+3}^{n} r_{i}\underline{x^{2}P(j+1,i-j-3)ax} )   - (  \sum_{i=j+2}^{n-1} r_{i} a^{2}P(j,i-j-2)x^{2} +
$$
$$
\underline{a^{2}Q(j,n-j-2)x^{2}} +   r_{j+1}  \underline{axa^{n}}).
$$
Comparing $(Q)$ and $(V)$, we conclude that the overlap ambiguity $\{ \omega_{j}, \omega_{j+2} \}$ is resolvable for all $j$ with $1  <  j < n-3$. Thus, Proposition 2.8(ii) follows from cases (a), (b) and (c).\\

\noindent (iii) We get an ambiguity from the two routes to resolve the word
$$
a^{3}\omega_{j} = a^{3}(a^{j}x^{n-j}) = (a^{j+3}x^{n-j-3})x^{3}=\omega_{j+3}x^{3}
$$
in the free algebra $\field \langle a, x \rangle$ uring the relations $\sigma_{j}$ and $\sigma_{j+3}$ for $ 1 \leq j \leq n-4$. Throughout the proofs, we use the following identities:
\begin{equation}
\label{d}
P(r,s) = P(r-3,s)a^{3} + P(r-2,s-1)(a^{2}x + axa +xa^{2}) + P(r-1,s-2)(ax^{2} + xax + x^{2}a) + P(r,s-3)x^{3}
\end{equation}
\begin{equation}
\label{e}
P(r,s) = a^{3}P(r-3,s) + (a^{2}x + axa +xa^{2}) P(r-2,s-1)+ (ax^{2} + xax + x^{2}a)P(r-1,s-2) + x^{3}P(r,s-3)
\end{equation}
\begin{equation}
\label{f}
P(r,s) = \left( x^{2}P(r-1,s-2) + axP(r-2,s-1) + xaP(r-2,s-1) + a^{2}P(r-3,s) \right)a + 
\end{equation}
$$
\left( x^{2}P(r,s-3) + axP(r-1,s-2) + xaP(r-1,s-2) + a^{2}P(r-2,s-1) \right)x
$$
\begin{equation}
\label{g}
P(r,s) = a\left( P(r-1,s-2)x^{2} + P(r-2,s-1)ax + P(r-2,s-1)xa + P(r-3,s)a^{2} \right) + 
\end{equation}
$$
x\left( P(r,s-3)x^{2} + P(r-1,s-2)ax + P(r-1,s-2)xa + P(r-2,s-1)a^{2} \right).
$$
We resolve the amabiguities in four cases.

\noindent (a) Let $j=1$ and consider the overlap ambiguity $\{ \omega_{1}, \omega_{4} \}$. Use (\ref{d}) to expand $\sigma_{1}$ as
$$
\omega_{1} \rightarrow - (  \sum_{i=3}^{n-1}r_{i}\underline{P(0,i-3)( x^{2}a + xax + ax^{2} )} + \underline{P(0,n-3)( x^{2}a + xax + ax^{2} )} 
$$
$$
+ \underline{Q(1,n-4)x^{3}} +  \sum_{i=4}^{n-1}r_{i}\underline{P(1,i-4)x^{3}} + r_{2}\underline{P(1,1)} + r_{1}\underline{a}) + r_{1}\underline{a^{n}}
$$
$$
= - (  \sum_{i=3}^{n}r_{i}\underline{P(0,i-3)( x^{2}a + xax + ax^{2} )} + \sum_{i=4}^{n-1}r_{i}\underline{P(1,i-4)x^{3}} + \underline{Q(1,n-4)x^{3}} + r_{2}\underline{P(1,1)} + r_{1}\underline{a}) + r_{1}\underline{a^{n}}.
$$
Thus, premultiplying this by $a^{3}$ and using Lemmas 2.6 and 2.7 to separate reducible and irredicible words, yields
\begin{equation}
\label{3t1}
a^{3} \omega_{1} \rightarrow - (  \sum_{i=3}^{n}r_{i}a^{3} P(0,i-3)( x^{2}a + xax + ax^{2} ) + \sum_{i=4}^{n-1}r_{i}\underline{a^{3} P(1,i-4)x^{3}} + 
\end{equation}
$$
\underline{a^{3}Q(1,n-4)x^{3}} + r_{2}\underline{a^{3} P(1,1)} + r_{1}\underline{a^{4}}) + r_{1}\underline{a^{n+3}}.
$$
The following words in $(\ref{3t1})$ of length $n+3$ are reducible:
$$
a^{3}x^{n-1}a, \; a^{3}x^{n-3}ax^{2}, \; a^{3}x^{n-2}ax.
$$ 
We first reduce $a^{3}x^{n-1}a$ as follows. Use (\ref{e}) to expand $\sigma_{3}$ as
$$
a^{3}x^{n-3} \rightarrow - ( \sum_{i=3}^{n-1}r_{i}  \underline{a^{3}P(0,i-3)} + \sum_{i=4}^{n-1}r_{i}\underline{(xa^{2} + axa + a^{2}x) P(1,i-4)} +  
$$
$$
\underline{(xa^{2} + axa + a^{2}x) P(1,n-4)}  + \sum_{i=5}^{n-1}r_{i}\underline{(x^{2}a + xax + ax^{2})P(2,i-5)} +
$$
$$
 \underline{(x^{2}a + xax + ax^{2})P(2,n-5)} +  \sum_{i=6}^{n-1}r_{i}\underline{x^{3}P(3,i-6)} ) + \underline{x^{3}P(3,n-6)} ) + r_{3}\underline{a^{n}}
$$
$$
= - ( \sum_{i=3}^{n-1}r_{i}  \underline{a^{3}P(0,i-3)} + \sum_{i=4}^{n}r_{i}\underline{(xa^{2} + axa + a^{2}x) P(1,i-4)} +  
$$
$$
\sum_{i=5}^{n}r_{i}\underline{(x^{2}a + xax + ax^{2})P(2,i-5)} +  \sum_{i=6}^{n}r_{i}\underline{x^{3}P(3,i-6)} ) + r_{3}\underline{a^{n}}
$$
Post multiplying this by $x^{2}a$, $xax$ and $ax^{2}$, and using Lemmas 2.6 and 2.7 to separate reducible and irredicible words, yields
\begin{equation}
\label{3t2}
- a^{3}x^{n-1}a \rightarrow ( \sum_{i=3}^{n-1}r_{i}  a^{3}P(0,i-3)x^{2}a + \sum_{i=4}^{n}r_{i} (xa^{2} + axa + a^{2}x) P(1,i-4)x^{2}a + 
\end{equation}
$$
\sum_{i=5}^{n}r_{i} \underline{(x^{2}a + xax + ax^{2})P(2,i-5)x^{2}a} +  \sum_{i=6}^{n}r_{i}\underline{x^{3}P(3,i-6)x^{2}a} ) - r_{3}\underline{x^{2}a^{n+1}}, 
$$
\begin{equation}
\label{3t3}
- a^{3}x^{n-2}ax \rightarrow  ( \sum_{i=3}^{n-1}r_{i}  \underline{a^{3}P(0,i-3)xax} + \sum_{i=4}^{n}r_{i} (xa^{2} + axa + a^{2}x) P(1,i-4)xax +   
\end{equation}
$$
\sum_{i=5}^{n}r_{i}\underline{(x^{2}a + xax + ax^{2})P(2,i-5)xax} +  \sum_{i=6}^{n}r_{i}\underline{x^{3}P(3,i-6)xax} ) - r_{3}\underline{xaxa^{n}}
$$
and
\begin{equation}
\label{3t4}
- a^{3}x^{n-3}ax^{2} \rightarrow ( \sum_{i=3}^{n-1}r_{i}  \underline{a^{3}P(0,i-3)ax^{2}} + \sum_{i=4}^{n}r_{i}\underline{(xa^{2} + axa + a^{2}x) P(1,i-4)ax^{2}} +  
\end{equation}
$$
\sum_{i=5}^{n}r_{i}\underline{(x^{2}a + xax + ax^{2})P(2,i-5)ax^{2}} +  \sum_{i=6}^{n}r_{i}\underline{x^{3}P(3,i-6)ax^{2}} ) -  r_{3}\underline{ax^{2}a^{n}} 
$$
respectively. The reducible words in (\ref{3t2})-(\ref{3t4}) of length $n+3$ are as follows:
\begin{enumerate}
\item $axa^{2}x^{n-2}a \;  \in \; axa P(1,n-4) x^{2}a $,
\item $x^{2}a^{3}x^{n-3}a \;  \in \;  x^{2}a P(2,n-5) x^{2}a$,
\item $xa^{3}x^{n-3}xa \; \in \; xa^{2} P(1,n-4) x^{2}a$,
\item $ xa^{3}x^{n-3}ax \; \in \; xa^{2} P(1,n-4) xax $.
\end{enumerate}
Turning now to reduce $axa^{2}x^{n-2}a$, use (\ref{g}) to expand $\sigma_{2}$ as
$$
a^{2}x^{n-2} \rightarrow - (  \sum_{i=3}^{n-1}r_{i} ( \underline{aP(0,i-3)ax} + \underline{aP(0,i-3)xa} + \underline{xP(0,i-3)a^{2}} ) + \underline{aP(0,n-3)ax} + 
$$
$$
 \underline{aP(0,n-3)xa} + \underline{xP(0,n-3)a^{2}} + \sum_{i=4}^{n-1}r_{i} \underline{a P(1,i-4)x^{2}} + \underline{aQ(1,n-4)x^{2}} + 
$$
$$
\sum_{i=4}^{n-1}r_{i} ( \underline{xP(1,i-4)ax} + \underline{xP(1,i-4)xa} ) + \underline{xP(1,n-4)ax} + \underline{xP(1,n-4)xa} + 
$$
$$
\sum_{i=5}^{n-1}r_{i}\underline{xP(2,i-5)x^{2}} + \underline{xP(2,n-5)x^{2}} +  r_{2}\underline{a^{2}} ) + r_{2}\underline{a^{n}}
$$
$$
= - (\sum_{i=3}^{n}r_{i} ( \underline{aP(0,i-3)ax} + \underline{aP(0,i-3)xa} + \underline{xP(0,i-3)a^{2}} ) +  \sum_{i=4}^{n-1}r_{i} \underline{a P(1,i-4)x^{2}} + \underline{aQ(1,n-4)x^{2}}
$$
$$
  + \sum_{i=4}^{n}r_{i} ( \underline{xP(1,i-4)ax} + \underline{xP(1,i-4)xa} ) + \sum_{i=5}^{n}r_{i}\underline{xP(2,i-5)x^{2}} +    r_{2}\underline{a^{2}} ) + r_{2}\underline{a^{n}}.
$$
Pre and postmultiply this by $ax$ and $a$ respectively and use Lemmas 2.6 and 2.7 to separate reducible and irredicible words to get
\begin{equation}
\label{3t5}
axa^{2}x^{n-2}a  \rightarrow  -  ( \sum_{i=3}^{n}r_{i} ( \underline{axaP(0,i-3)axa} + \underline{axaP(0,i-3)xa^{2}} + ax^{2}P(0,i-3)a^{3} ) + 
\end{equation}
$$
 \sum_{i=4}^{n-1}r_{i} \underline{axa P(1,i-4)x^{2}a} + \underline{axaQ(1,n-4)x^{2}a} + \sum_{i=4}^{n}r_{i} ( \underline{ax^{2}P(1,i-4)axa} + 
$$
$$
\underline{ax^{2}P(1,i-4)xa^{2}} ) + \sum_{i=5}^{n}r_{i}\underline{ax^{2}P(2,i-5)x^{2}a} +    r_{2}\underline{axa^{3}} ) + r_{2}\underline{axa^{n+1}}.
$$
The only reducible word above is $ax^{n-1}a^{3} \in ax^{2}P(0,n-3)a^{3}$. To reduce $ax^{n-1}a^{3}$, use (\ref{e}) to expand $\sigma_{1}$ as
$$
ax^{n-1} \rightarrow - ( \sum_{i=4}^{n-1}r_{i}\underline{x^{3} P(1,i-4)} + \underline{x^{3} P(1,n-4)} + \sum_{i=3}^{n-1}r_{i}\underline{(x^{2}a + xax + ax^{2})P(0,i-3)} 
$$
$$
+ r_{2}\underline{P(1,1)} + r_{1}\underline{a} ) + r_{1}\underline{a^{n}}
$$
$$
= - ( \sum_{i=4}^{n}r_{i}\underline{x^{3} P(1,i-4)} +  \sum_{i=3}^{n-1}r_{i}\underline{(x^{2}a + xax + ax^{2})P(0,i-3)} + r_{2}\underline{P(1,1)} + r_{1}\underline{a} ) + r_{1}\underline{a^{n}}.
$$
Post multiplying this by $a^{3}$ and using Lemmas 2.6 and 2.7 to separate reducible and irredicible words, yields
\begin{equation}
\label{3t6}
- ax^{n-1}a^{3} \rightarrow ( \sum_{i=4}^{n}r_{i}\underline{x^{3} P(1,i-4)a^{3}} +  \sum_{i=3}^{n-1}r_{i}\underline{(x^{2}a + xax + ax^{2})P(0,i-3)a^{3}} + r_{2}\underline{P(1,1)a^{3}} + r_{1}\underline{a^{4}} ) 
\end{equation}
$$
-  r_{1}\underline{a^{n+3}} .
$$
Turning now to the reduction of $xa^{3}x^{n-3}ax$ and $xa^{3}x^{n-3}xa$, use (\ref{f}) to expand $\sigma_{3}$ as
$$
a^{3}x^{n-3} \rightarrow - ( \sum_{i=3}^{n-1}r_{i} \underline{a^{2}P(0,i-3)a} + \underline{a^{2}P(0,n-3)a} + \sum_{i=4}^{n-1}r_{i} \underline{a ^{2}P(1,i-4)x} +  \underline{a^{2}Q(1,n-4)x}  
$$
$$
+ \sum_{i=4}^{n-1}r_{i} ( \underline{ax P(1,i-4)a} + \underline{xa P(1,i-4)a} ) + \underline{ax P(1,n-4)a} + \underline{xa P(1,n-4)a} +
$$
$$
 \sum_{i=5}^{n-1}r_{i} ( \underline{x^{2} P(2,i-5)a} + \underline{xa P(2,i-5)x}  + \underline{ax P(2,i-5)x} ) +  \underline{x^{2} P(2,n-5)a} +
$$
$$
 \underline{xa P(2,n-5)x}  + \underline{ax P(2,n-5)x} + \sum_{i=6}^{n-1}r_{i}\underline{x^{2}P(3,i-6)x} +   \underline{x^{2}P(3,n-6)x}) + r_{3}\underline{a^{n}}
$$
$$
= - ( \sum_{i=3}^{n}r_{i} \underline{a^{2}P(0,i-3)a} + \sum_{i=4}^{n-1}r_{i} \underline{a ^{2}P(1,i-4)x} + \underline{a^{2}Q(1,n-4)x}   + \sum_{i=4}^{n}r_{i} ( \underline{ax P(1,i-4)a} 
$$
$$
+ \underline{xa P(1,i-4)a} ) +  \sum_{i=5}^{n}r_{i} ( \underline{x^{2} P(2,i-5)a} + \underline{xa P(2,i-5)x}  + \underline{ax P(2,i-5)x} ) +
$$
$$
  \sum_{i=6}^{n}r_{i}\underline{x^{2}P(3,i-6)x}) + r_{3}\underline{a^{n}}.
$$
Thus, premultiplying this by $x$ and post multiplying by $ax$ and $xa$ and using Lemmas 2.6 and 2.7 to separate reducible and irredicible words, yield
\begin{equation}
\label{3t7}
xa^{3}x^{n-3}ax \rightarrow - ( \sum_{i=3}^{n}r_{i} \underline{xa^{2}P(0,i-3)a^{2}x} + \sum_{i=4}^{n-1}r_{i} \underline{xa ^{2}P(1,i-4)xax} + \underline{xa^{2}Q(1,n-4)xax}   + 
\end{equation}
$$
 \sum_{i=4}^{n}r_{i} ( \underline{xax P(1,i-4)a^{2}x} + \underline{x^{2}a P(1,i-4)a^{2}x} ) +  \sum_{i=5}^{n}r_{i} ( \underline{x^{3} P(2,i-5)a^{2}x} + \underline{x^{2}a P(2,i-5)xax}  
$$
$$
+ \underline{xax P(2,i-5)xax} ) +  \sum_{i=6}^{n}r_{i}\underline{x^{3}P(3,i-6)xax}) + r_{3}\underline{xaxa^{n}}.
$$
and
\begin{equation}
\label{3t8}
xa^{3}x^{n-3}xa \rightarrow - ( \sum_{i=3}^{n}r_{i} \underline{xa^{2}P(0,i-3)axa} + \sum_{i=4}^{n-1}r_{i} \underline{xa ^{2}P(1,i-4)x^{2}a} + \underline{xa^{2}Q(1,n-4)x^{2}a}   +
\end{equation}
$$
 \sum_{i=4}^{n}r_{i} ( \underline{xax P(1,i-4)axa}+ \underline{x^{2}a P(1,i-4)axa} ) +  \sum_{i=5}^{n}r_{i} ( \underline{x^{3} P(2,i-5)axa} + x^{2}a P(2,i-5)x^{2}a 
$$
$$
+ \underline{xax P(2,i-5)x^{2}a} ) +  \sum_{i=6}^{n}r_{i}\underline{x^{3}P(3,i-6)x^{2}a}) + r_{3}\underline{x^{2}a^{n+1}}
$$
respectively. All the words in (\ref{3t7}) and (\ref{3t8}) are irreducible except $x^{2}a^{3}x^{n-3}a \;  \in \; x^{2}a P(2,n-5)x^{2}a$. But $x^{2}a^{3}x^{n-3}a$ appears with opposite sign to the same reducible word in (\ref{3t2}) so they cancel out. Thus, the reduction process ends here and substituting (\ref{3t2}), (\ref{3t3}), (\ref{3t4}), (\ref{3t5}), (\ref{3t6}), (\ref{3t7}), (\ref{3t8}) into (\ref{3t1}) and simplifying gives
\begin{equation}
\label{3t'}
a^{3}\omega_{1} \rightarrow ( r_{2}\underline{xa^{4}} + r_{2}\underline{axa^{n+1}} + \sum_{i=3}^{n-1}r_{i} \underline{( x^{2}a + xax )P(0,i-3)a^{3}} + \sum_{i=4}^{n}r_{i}( \underline{x^{3}P(1,i-4)a^{3}} + 
\end{equation}
$$
\underline{a^{2}xP(1,i-4)(ax^{2} + x^{2}a + xax )} + \underline{axaP(1,i-4)(ax^{2} + xax )} + \underline{xa^{2}P(1,i-4)ax^{2}})  + 
$$
$$
 \sum_{i=5}^{n}r_{i} (  \underline{ ( ax^{2}+ x^{2}a + xax )P(2,i-5)ax^{2}} +  \underline{ax^{2}P(2,i-5)xax}) + \sum_{i=6}^{n}r_{i} \underline{x^{3}P(3,i-6)ax^{2}}
$$
$$
 - ( r_{2}\underline{a^{3}P(1,1)} + r_{3}\underline{ax^{2}a^{n}}+  \sum_{i=3}^{n}r_{i} (  \underline{xa^{2}P(0,i-3)(a^{2}x + axa)} +  \underline{axaP(0,i-3)(axa + xa^{2})}) +  
$$
$$
\sum_{i=4}^{n}r_{i} (  \underline{(xax + ax^{2} + x^{2}a)P(1,i-4)axa} + \underline{(xax + x^{2}a )P(1,i-4)a^{2}x} + \underline{ax^{2}P(1,i-4)xa^{2}} )  
$$
$$
\sum_{i=4}^{n-1}r_{i}a^{3}P(1,i-4)x^{3}  + \underline{a^{3}Q(1,n-4)x^{3}}    + \sum_{i=5}^{n}r_{i}\underline{x^{3}P(2,i-5)(a^{2}x + axa)}.
$$

Moving now to $\omega_{4}x^{3}$, use (\ref{e}) to write $\sigma_{4}$ as
$$
\omega_{4} \rightarrow - ( \sum_{i=4}^{n-1}r_{i}\underline{a^{3}P(1,i-4)} + \underline{a^{3}Q(1,n-4)}  +  \sum_{i=5}^{n-1}r_{i} \underline{( a^{2}x + axa + xa^{2})P(2,i-5)}  +
$$
$$
\underline{( a^{2}x + axa + xa^{2})P(2,n-5)}   + \sum_{i=6}^{n-1}r_{i}\underline{(ax^{2} + xax + x^{2}a) P(3,i-6)} +
$$
$$
\underline{(ax^{2} + xax + x^{2}a) P(3,n-6)}  + \sum_{i=7}^{n-1}r_{i} \underline{x^{3}P(4,i-7)} + \underline{x^{3}P(4,n-7)}  ) + r_{4}\underline{a^{n}}
$$
$$
= - ( \sum_{i=4}^{n-1}r_{i}\underline{a^{3}P(1,i-4)} + \underline{a^{3}Q(1,n-4)} +  \sum_{i=5}^{n}r_{i} \underline{( a^{2}x + axa + xa^{2})P(2,i-5)}  
$$
$$
+ \sum_{i=6}^{n}r_{i}\underline{(ax^{2} + xax + x^{2}a) P(3,i-6)} + \sum_{i=7}^{n}r_{i} \underline{x^{3}P(4,i-7)}    ) + r_{4}\underline{a^{n}}.
$$
Post multiplying this by $x^{3}$ and using Lemmas 2.6 and 2.7 to separate reducible and irreducible words, yields 
\begin{equation}
\label{3t9}
\omega_{4}x^{3} \rightarrow - ( \sum_{i=4}^{n-1}r_{i}a^{3}P(1,i-4)x^{3} + \underline{a^{3}Q(1,n-4)x^{3}} +  \sum_{i=5}^{n}r_{i} ( a^{2}x + axa + xa^{2})P(2,i-5)x^{3}   
\end{equation}
$$
+ \sum_{i=6}^{n}r_{i}(ax^{2} + xax + x^{2}a) P(3,i-6)x^{3} + \sum_{i=7}^{n}r_{i} x^{3}P(4,i-7)x^{3}    ) + r_{4}\underline{x^{3}a^{n}}.
$$
We get the following reducible words of length $n+3$ from (\ref{3t9}): 
\begin{enumerate}
\item $xa^{2}(a^{2}x^{n-2}) \; \in \; xa^{2}P(2,n-5)x^{3}$,
\item $axa(a^{2}x^{n-2}) \; \in \; axaP(2,n-5)x^{3}$,
\item $a^{2}x(a^{2}x^{n-2}) \; \in \; a^{2}xP(2,n-5)x^{3}$,
\item $x^{2}a(a^{3}x^{n-3}) \; \in \; x^{2}aP(3,n-6)x^{3}$,
\item $ax^{2}(a^{3}x^{n-3}) \; \in \; ax^{2}P(3,n-6)x^{3}$,
\item $xax(a^{3}x^{n-3}) \; \in \; xaxP(3,n-6)x^{3}$,
\item $x^{3}(a^{4}x^{n-4}) \; \in \; x^{3}P(4,n-7)x^{3}$.
\end{enumerate}

Use (\ref{d}) to expand $\sigma_{2}$ as
$$
a^{2}x^{n-2} \rightarrow - ( \sum_{i=3}^{n-1}r_{i} \underline{ P(0,i-3)(xa^{2}+ axa + a^{2}x) }  + \underline{ P(0,n-3)(xa^{2}+ axa + a^{2}x) } + 
$$
$$
\sum_{i=4}^{n-1}r_{i}  \underline{P(1,i-4)(ax^{2} + xax + x^{2}a)} + \underline{P(1,n-4)(ax^{2} + xax + x^{2}a)} + 
$$
$$
\sum_{i=5}^{n-1}r_{i} \underline{P(2,i-5)x^{3}} + \underline{Q(2,n-5)x^{3}} + r_{2}\underline{a^{2}} ) + r_{2}\underline{a^{n}}.
$$
$$
= - ( \sum_{i=3}^{n}r_{i} \underline{ P(0,i-3)(xa^{2}+ axa + a^{2}x) }  + \sum_{i=4}^{n}r_{i}  \underline{P(1,i-4)(ax^{2} + xax + x^{2}a)} + 
$$
$$
\sum_{i=5}^{n-1}r_{i} \underline{P(2,i-5)x^{3}} + \underline{Q(2,n-5)x^{3}} + r_{2}\underline{a^{2}} ) + r_{2}\underline{a^{n}}.
$$
Thus, premultiplying this by $xa^{2}$, $axa$ and $a^{2}x$, and using Lemmas 2.6 and 2.7 to separate reducible and irreducible words, yield
\begin{equation}
\label{3t10}
- xa^{2}(a^{2}x^{n-2}) \rightarrow  ( \sum_{i=3}^{n}r_{i} xa^{2}P(0,i-3)(xa^{2}+ axa + a^{2}x)   + \sum_{i=4}^{n}r_{i}   xa^{2}P(1,i-4)(ax^{2} + xax + x^{2}a) +
\end{equation}
$$
\sum_{i=5}^{n-1}r_{i}  xa^{2}P(2,i-5)x^{3} + \underline{ xa^{2}Q(2,n-5)x^{3}} + r_{2}\underline{xa^{4}} ) - r_{2}\underline{xa^{n+2}},
$$
\begin{equation}
\label{3t11}
-axa(a^{2}x^{n-2}) \rightarrow ( \sum_{i=3}^{n}r_{i} \underline{axaP(0,i-3)(xa^{2}+ axa + a^{2}x) }  + \sum_{i=4}^{n}r_{i}  axaP(1,i-4)(ax^{2} + xax + x^{2}a) + 
\end{equation}
$$
\sum_{i=5}^{n-1}r_{i} \underline{axaP(2,i-5)x^{3}} + \underline{axaQ(2,n-5)x^{3}} + r_{2}\underline{axa^{3}} ) - r_{2}\underline{axa^{n+1}},
$$
and
\begin{equation}
\label{3t12}
-a^{2}x(a^{2}x^{n-2}) \rightarrow  ( \sum_{i=3}^{n}r_{i} a^{2}xP(0,i-3)(xa^{2}+ axa + a^{2}x)  + \sum_{i=4}^{n}r_{i}  \underline{a^{2}xP(1,i-4)(ax^{2} + xax + x^{2}a)} + 
\end{equation}
$$
\sum_{i=5}^{n-1}r_{i} \underline{a^{2}xP(2,i-5)x^{3}} + \underline{a^{2}xQ(2,n-5)x^{3}} + r_{2}\underline{a^{2}xa^{2}} ) - r_{2}\underline{a^{2}xa^{n}}
$$
respectively.
Also, use (\ref{d}) to expand $\sigma_{3}$ as
$$
a^{3}x^{n-3} \rightarrow - ( \sum_{i=3}^{n-1}r_{i}\underline{P(0,i-3)a^{3}} + \underline{P(0,n-3)a^{3}} + \sum_{i=4}^{n-1}r_{i} \underline{P(1,i-4)(a^{2}x + axa + xa^{2})} + 
$$
$$
\underline{P(1,n-4)(a^{2}x + axa + xa^{2})} + \sum_{i=5}^{n-1}r_{i} \underline{P(2,i-5)(ax^{2}  + xax + x^{2}a)}  +
$$
$$
\underline{P(2,n-5)(ax^{2}  + xax + x^{2}a)}  + \sum_{i=6}^{n-1}r_{i} \underline{P(3,i-6)x^{3}} + \underline{Q(3,n-6)x^{3}}  ) + r_{3}\underline{a^{n}}
$$
$$
= - ( \sum_{i=3}^{n}r_{i}\underline{P(0,i-3)a^{3}} + \sum_{i=4}^{n}r_{i} \underline{P(1,i-4)(a^{2}x + axa + xa^{2})} + 
$$
$$
\sum_{i=5}^{n}r_{i} \underline{P(2,i-5)(ax^{2}  + xax + x^{2}a)}  + \sum_{i=6}^{n-1}r_{i} \underline{P(3,i-6)x^{3}} + \underline{Q(3,n-6)x^{3}}  ) + r_{3}\underline{a^{n}}.
$$
Premultiplying this by $x^{2}a$, $ax^{2}$ and $xax$ and using Lemmas 2.6 and 2.7 to separate reducible and irreducible words, yield
\begin{equation}
\label{3t13}
-x^{2}a(a^{3}x^{n-3}) \rightarrow ( \sum_{i=3}^{n}r_{i}\underline{x^{2}aP(0,i-3)a^{3}} + \sum_{i=4}^{n}r_{i} \underline{x^{2}aP(1,i-4)(a^{2}x + axa + xa^{2})} + 
\end{equation}
$$
 \sum_{i=5}^{n}r_{i} x^{2}aP(2,i-5)(ax^{2}  + xax + x^{2}a)  + \sum_{i=6}^{n-1}r_{i} \underline{x^{2}aP(3,i-6)x^{3}} + \underline{x^{2}aQ(3,n-6)x^{3}}  ) - r_{3}\underline{x^{2}a^{n+1}},
$$
\begin{equation}
\label{3t14}
-ax^{2}(a^{3}x^{n-3}) \rightarrow  ( \sum_{i=3}^{n}r_{i}ax^{2}P(0,i-3)a^{3} + \sum_{i=4}^{n}r_{i} \underline{ax^{2}P(1,i-4)(a^{2}x + axa + xa^{2})} + 
\end{equation}
$$
+ \sum_{i=5}^{n}r_{i} \underline{ax^{2}P(2,i-5)(ax^{2}  + xax + x^{2}a)}  + \sum_{i=6}^{n-1}r_{i} \underline{ax^{2}P(3,i-6)x^{3}} +  \underline{ax^{2}Q(3,n-6)x^{3}}  ) - r_{3}\underline{ax^{2}a^{n}},
$$
and
\begin{equation}
\label{3t15}
-xax(a^{3}x^{n-3}) \rightarrow ( \sum_{i=3}^{n}r_{i}\underline{xaxP(0,i-3)a^{3}} + \sum_{i=4}^{n}r_{i} \underline{xaxP(1,i-4)(a^{2}x + axa + xa^{2})} + 
\end{equation}
$$
+  \sum_{i=5}^{n}r_{i} \underline{xaxP(2,i-5)(ax^{2}  + xax + x^{2}a)}  + \sum_{i=6}^{n-1}r_{i} \underline{xaxP(3,i-6)x^{3}} + \underline{xaxQ(3,n-6)x^{3}}  ) - r_{3}\underline{xaxa^{n}}
$$
respectively.
Moreover, use (\ref{d}) to expand $\sigma_{4}$ as
$$
a^{4}x^{n-4} \rightarrow - ( \sum_{i=4}^{n-1}r_{i}\underline{P(1,i-4)a^{3}} + \underline{P(1,n-4)a^{3}} +  \sum_{i=5}^{n-1}r_{i} \underline{P(2,i-5)(a^{2}x + axa + xa^{2})} + 
$$
$$
\underline{P(2,n-5)(a^{2}x + axa + xa^{2})} + \sum_{i=6}^{n-1}r_{i} \underline{P(3,i-6)(ax^{2}  + xax + x^{2}a)} 
$$
$$
 \underline{P(3,n-6)(ax^{2}  + xax + x^{2}a)}  + \sum_{i=7}^{n-1}r_{i} \underline{P(4,i-7)x^{3}} + \underline{Q(4,n-7)x^{3}}  ) + r_{4}\underline{a^{n}}
$$
$$
= - ( \sum_{i=4}^{n}r_{i}\underline{P(1,i-4)a^{3}} + \sum_{i=5}^{n}r_{i} \underline{P(2,i-5)(a^{2}x + axa + xa^{2})} + 
$$
$$
\sum_{i=6}^{n}r_{i} \underline{P(3,i-6)(ax^{2}  + xax + x^{2}a)}  + \sum_{i=7}^{n-1}r_{i} \underline{P(4,i-7)x^{3}} + \underline{Q(4,n-7)x^{3}}  ) + r_{4}\underline{a^{n}}.
$$
Premultiplying this by $x^{3}$ and using Lemmas 2.6 and 2.7 to separate reducible and irreducible words, yields
\begin{equation}
\label{3t16}
-x^{3}(a^{4}x^{n-4}) \rightarrow ( \sum_{i=4}^{n}r_{i}\underline{x^{3}P(1,i-4)a^{3}} + \sum_{i=5}^{n}r_{i} \underline{x^{3}P(2,i-5)(a^{2}x + axa + xa^{2})} + 
\end{equation}
$$
\sum_{i=6}^{n}r_{i} \underline{x^{3}P(3,i-6)(ax^{2}  + xax + x^{2}a)}  + \sum_{i=7}^{n-1}r_{i} \underline{x^{3}P(4,i-7)x^{3}} + \underline{x^{3}Q(4,n-7)x^{3}}  ) - r_{4}\underline{x^{3}a^{n}}.
$$
We get the following reducible words of length $n+3$ from (\ref{3t10})-(\ref{3t16}):
\begin{enumerate}
\item $xa^{3}x^{n-2}a \; \in \; xa^{2}P(1,n-4)x^{2}a$,
\item $xa^{3}x^{n-3}ax \; \in \; xa^{2}P(1,n-4)xax$,
\item $axa^{2}x^{n-2}a \; \in \; axaP(1,n-4)x^{2}a$,
\item $ a^{2}x^{n-1}a^{2} \; \in \; a^{2}xa^{2}x^{n-2} $,
\item $ a^{2}x^{n-2}a^{2}x \; \in \; a^{2}xa^{2}x^{n-2} $,
\item $ a^{2}x^{n-2}axa \; \in \; a^{2}xa^{2}x^{n-2} $,
\item $ x^{2}a^{3}x^{n-3}a \; \in \; x^{2}a P(2,n-5)x^{2}a $,
\item $ ax^{n-1}a^{3} \; \in \; ax^{2}a^{3}x^{n-3} $,
\item $ xa^{2}x^{n-2}a^{2} \; \in \; xa^{2}a^{2}x^{n-2} $.
\end{enumerate}
Recall from (\ref{3t8}) that 
$$
xa^{3}x^{n-3}xa \rightarrow - ( \sum_{i=3}^{n}r_{i} \underline{xa^{2}P(0,i-3)axa} + \sum_{i=4}^{n-1}r_{i} \underline{xa ^{2}P(1,i-4)x^{2}a} + \underline{xa^{2}Q(1,n-4)x^{2}a}   +
$$
$$
 \sum_{i=4}^{n}r_{i} ( \underline{xax P(1,i-4)axa}+ \underline{x^{2}a P(1,i-4)axa} ) +  \sum_{i=5}^{n}r_{i} ( \underline{x^{3} P(2,i-5)axa} + x^{2}a P(2,i-5)x^{2}a 
$$
$$
+ \underline{xax P(2,i-5)x^{2}a} ) +  \sum_{i=6}^{n}r_{i}\underline{x^{3}P(3,i-6)x^{2}a}) + r_{3}\underline{x^{2}a^{n+1}}.
$$
All the words in (\ref{3t8}) except $ -x^{2}a^{3}x^{n-3}a \; \in  -x^{2}a P(2,n-5)x^{2}a $. Thus this term cancels out with the (vii) term in the list above beacuse they appear with opposite signs. 

Similarly, recall from (\ref{3t7}) that 
$$
xa^{3}x^{n-3}ax \rightarrow - ( \sum_{i=3}^{n}r_{i} \underline{xa^{2}P(0,i-3)a^{2}x} + \sum_{i=4}^{n-1}r_{i} \underline{xa ^{2}P(1,i-4)xax} + \underline{xa^{2}Q(1,n-4)xax}   + 
$$
$$
 \sum_{i=4}^{n}r_{i} ( \underline{xax P(1,i-4)a^{2}x} + \underline{x^{2}a P(1,i-4)a^{2}x} ) +  \sum_{i=5}^{n}r_{i} ( \underline{x^{3} P(2,i-5)a^{2}x} + \underline{x^{2}a P(2,i-5)xax}  
$$
$$
+ \underline{xax P(2,i-5)xax} ) +  \sum_{i=6}^{n}r_{i}\underline{x^{3}P(3,i-6)xax}) + r_{3}\underline{xaxa^{n}}.
$$

Furthermore, use (\ref{e}) to expand $\sigma_{2}$ as
$$
a^{2}x^{n-2}\rightarrow  - ( \sum_{i=3}^{n-1}r_{i}\underline{a^{2}xP(0,i-3)} + \sum_{i=3}^{n-1}r_{i}\underline{(xa^{2}+ axa) P(0,i-3)}  + \underline{(xa^{2}+ axa) P(0,n-3)} +
$$
$$
 \sum_{i=4}^{n-1}r_{i} \underline{(ax^{2} + xax + x^{2}a)P(1,i-4)} + \underline{(ax^{2} + xax + x^{2}a)P(1,n-4)} + 
$$
$$
\sum_{i=5}^{n-1}r_{i} \underline{x^{3}P(2,i-5)}  + \underline{x^{3}P(2,n-5)} + r_{2}\underline{a^{2}} ) + r_{2}\underline{a^{n}}
$$
$$
=  - ( \sum_{i=3}^{n-1}r_{i}\underline{a^{2}xP(0,i-3)} + \sum_{i=3}^{n}r_{i}\underline{(xa^{2}+ axa) P(0,i-3)}  + \sum_{i=4}^{n}r_{i} \underline{(ax^{2} + xax + x^{2}a)P(1,i-4)} 
$$
$$
+ \sum_{i=5}^{n}r_{i} \underline{x^{3}P(2,i-5)}  + r_{2}\underline{a^{2}} ) + r_{2}\underline{a^{n}}.
$$
Post multiply this by $a^{2}x$, $xa^{2}$ and $axa$ and use Lemmas 2.6 and 2.7 to separate reducible and irreducible words to get
\begin{equation}
\label{3t17}
a^{2}x^{n-2}a^{2}x  \rightarrow  - ( \sum_{i=3}^{n-1}r_{i}\underline{a^{2}xP(0,i-3)a^{2}x} + \sum_{i=3}^{n}r_{i}\underline{(xa^{2}+ axa) P(0,i-3)a^{2}x}  + 
\end{equation}
$$
\sum_{i=4}^{n}r_{i} \underline{(ax^{2} + xax + x^{2}a)P(1,i-4)a^{2}x}  + \sum_{i=5}^{n}r_{i} \underline{x^{3}P(2,i-5)a^{2}x}   + r_{2}\underline{a^{4}x} ) + r_{2}\underline{a^{2}xa^{n}},
$$

\begin{equation}
\label{3t18}
a^{2}x^{n-2}xa^{2} \rightarrow  - ( \sum_{i=3}^{n-1}r_{i}\underline{a^{2}xP(0,i-3)xa^{2}} + \sum_{i=3}^{n}r_{i}(xa^{2}+ axa) P(0,i-3)xa^{2}  + 
\end{equation}
$$
\sum_{i=4}^{n}r_{i} \underline{(ax^{2} + xax + x^{2}a)P(1,i-4)xa^{2}} +  \underline{x^{3}P(2,n-5)xa^{2}} + xa^{2}x^{n-2}a^{2} + \underline{axax^{n-2}a^{2}})
$$
and
\begin{equation}
\label{3t19}
a^{2}x^{n-2}axa \rightarrow  - ( \sum_{i=3}^{n-1}r_{i}\underline{a^{2}xP(0,i-3)axa} + \sum_{i=3}^{n}r_{i}\underline{(xa^{2}+ axa) P(0,i-3)axa}  + 
\end{equation}
$$
\sum_{i=4}^{n}r_{i} \underline{(ax^{2} + xax + x^{2}a)P(1,i-4)axa} +  \sum_{i=5}^{n}r_{i} \underline{x^{3}P(2,i-5)axa}  + r_{2}\underline{a^{3}xa} ) + r_{2}\underline{axa^{n+1}}
$$
respectively.
The reducible word  $(- xa^{2}x^{n-2}a^{2})$ in (\ref{3t18}) cancels out with $xa^{2}x^{n-2}a^{2}$ which is the (ix) term in the list of reducible words above.

Recall from (\ref{3t5}) that
$$
axa^{2}x^{n-2}a  \rightarrow  -  ( \sum_{i=3}^{n}r_{i} ( \underline{axaP(0,i-3)axa} + \underline{axaP(0,i-3)xa^{2}} + ax^{2}P(0,i-3)a^{3} ) + 
$$
$$
 \sum_{i=4}^{n-1}r_{i} \underline{axa P(1,i-4)x^{2}a} + \underline{axaQ(1,n-4)x^{2}a} + \sum_{i=4}^{n}r_{i} ( \underline{ax^{2}P(1,i-4)axa} + 
$$
$$
\underline{ax^{2}P(1,i-4)xa^{2}} ) + \sum_{i=5}^{n}r_{i}\underline{ax^{2}P(2,i-5)x^{2}a} +    r_{2}\underline{axa^{3}} ) + r_{2}\underline{axa^{n+1}}.
$$
All the words in (\ref{3t5}) are irreducible except $-ax^{2}P(0,i-3)a^{3}$ which can be written as $-ax^{n-1}a^{3}$. But this cancels out with $ax^{n-1}a^{3}$ which is the term (h) in the list of reducible words above.

Thus, the reduction process ends here and when we substitute (\ref{3t5}), (\ref{3t7}), (\ref{3t8}), (\ref{3t10}), (\ref{3t11}), (\ref{3t12}), (\ref{3t13}), (\ref{3t14}), (\ref{3t15}), (\ref{3t16}), (\ref{3t17}), (\ref{3t18}) and (\ref{3t19}) into (\ref{3t9}), we get
\begin{equation}
\label{3t''}
\omega_{4}x^{3} \rightarrow  ( r_{2}\underline{xa^{4}} + r_{2}\underline{axa^{n+1}} + \sum_{i=3}^{n-1}r_{i} \underline{( x^{2}a + xax )P(0,i-3)a^{3}} + \sum_{i=4}^{n}r_{i}( \underline{x^{3}P(1,i-4)a^{3}} + 
\end{equation}
$$
\underline{a^{2}xP(1,i-4)(ax^{2} + x^{2}a + xax )} + \underline{axaP(1,i-4)(ax^{2} + xax )} + \underline{xa^{2}P(1,i-4)ax^{2}})  + 
$$
$$
 \sum_{i=5}^{n}r_{i} (  \underline{ ( ax^{2}+ x^{2}a + xax )P(2,i-5)ax^{2}} +  \underline{ax^{2}P(2,i-5)xax}) + \sum_{i=6}^{n}r_{i} \underline{x^{3}P(3,i-6)ax^{2}}
$$
$$
 - ( r_{2}\underline{a^{3}P(1,1)} + r_{3}\underline{ax^{2}a^{n}}+  \sum_{i=3}^{n}r_{i} (  \underline{xa^{2}P(0,i-3)(a^{2}x + axa)} +  \underline{axaP(0,i-3)(axa + xa^{2})}) +  
$$
$$
\sum_{i=4}^{n}r_{i} (  \underline{(xax + ax^{2} + x^{2}a)P(1,i-4)axa} + \underline{(xax + x^{2}a )P(1,i-4)a^{2}x} + \underline{ax^{2}P(1,i-4)xa^{2}} )  
$$
$$
\sum_{i=4}^{n-1}r_{i}a^{3}P(1,i-4)x^{3}  + \underline{a^{3}Q(1,n-4)x^{3}}    + \sum_{i=5}^{n}r_{i}\underline{x^{3}P(2,i-5)(a^{2}x + axa)}.
$$

Comparing (\ref{3t'}) and (\ref{3t''}), we conclude that the overlap ambiguity $\{ \omega_{1}, \omega_{4} \}$ is resolvable.

\noindent (b) Let $j=2$ and consider the overlap ambiguity $\{ \omega_{2}, \omega_{5} \}$. Use (\ref{d}) to expand $\sigma_{2}$ as
$$
 \omega_{2} \rightarrow - ( \sum_{i=3}^{n-1}r_{i}\underline{P(0,i-3)(a^{2}x + axa + xa^{2})} + \underline{P(0,n-3)(a^{2}x + axa + xa^{2})}
$$
$$
 \sum_{i=4}^{n-1}r_{i} \underline{P(1,i-4) (ax^{2} + xax + x^{2}a)}  +  \underline{P(1,n-4) (ax^{2} + xax + x^{2}a)}  +  
$$
$$
\sum_{i=5}^{n-1}r_{i}\underline{P(2,i-5)x^{3}} + \underline{Q(2,n-5)x^{3}} + r_{2}\underline{a^{2}} ) + r_{2}\underline{a^{n}}
$$
$$
 = - ( \sum_{i=3}^{n}r_{i}\underline{P(0,i-3)(a^{2}x + axa + xa^{2})} +  \sum_{i=4}^{n}r_{i} \underline{P(1,i-4) (ax^{2} + xax + x^{2}a)}  +   
$$
$$
\sum_{i=5}^{n-1}r_{i}\underline{P(2,i-5)x^{3}} + \underline{Q(2,n-5)x^{3}} + r_{2}\underline{a^{2}} ) + r_{2}\underline{a^{n}}.
$$
Premultiplying this by $a^{3}$ and using Lemmas 2.6 and 2.7 to separate reducible and irreducible words, yields
\begin{equation}
\label{3j1}
a^{3} \omega_{2}  \rightarrow - ( \sum_{i=3}^{n}r_{i}a^{3}P(0,i-3)(a^{2}x + axa + xa^{2}) +  \sum_{i=4}^{n}r_{i} a^{3}P(1,i-4) (ax^{2} + xax + x^{2}a)  +   
\end{equation}
$$
 \sum_{i=5}^{n-1}r_{i}a^{3}P(2,i-5)x^{3} + \underline{a^{3}Q(2,n-5)x^{3}} + r_{2}\underline{a^{5}} ) + r_{2}\underline{a^{n+3}}.
$$
The following words of length $n+3$ in (\ref{3j1}) are reducible:
\begin{enumerate}
\item $(a^{4}x^{n-4})x^{2}a \in a^{3}P(1,n-4)x^{2}a$
\item $(a^{4}x^{n-4})xax \in a^{3}P(1,n-4)xax$
\item $(a^{4}x^{n-4})ax^{2} \in a^{3}P(1,n-4)ax^{2}$
\item $a^{3}x^{n-3}axa $
\item $a^{3}x^{n-3}a^{2}x$
\item $a^{3}x^{n-2}a^{2}$.
\end{enumerate}
Using (\ref{e}), we expand $\sigma_{4}$ as
$$
 a^{4}x^{n-4}  \rightarrow - ( \sum_{i=4}^{n-1}r_{i} \underline{a^{3}P(1,i-4)} +  \underline{a^{3}Q(1,n-4)} + \sum_{i=5}^{n-1}r_{i} \underline{(a^{2}x + axa + xa^{2})P(2,i-5)}  +     
$$
$$
\underline{(a^{2}x + axa + xa^{2})P(2,n-5)}  +   \sum_{i=6}^{n-1}r_{i}  \underline{(ax^{2} + xax + x^{2}a)P(3,i-6)} + \underline{(ax^{2} + xax + x^{2}a)P(3,n-6)} 
$$
$$
+ \sum_{i=7}^{n-1}r_{i} \underline{x^{3}P(4,i-7)} +  \underline{x^{3}P(4,n-7)} ) + r_{4}\underline{a^{n}}
$$
$$
= - ( \sum_{i=4}^{n-1}r_{i} \underline{a^{3}P(1,i-4)} +  \underline{a^{3}Q(1,n-4)} + \sum_{i=5}^{n}r_{i} \underline{(a^{2}x + axa + xa^{2})P(2,i-5)}  +     
$$
$$
   \sum_{i=6}^{n}r_{i}  \underline{(ax^{2} + xax + x^{2}a)P(3,i-6)} +  \sum_{i=7}^{n-1}r_{i} \underline{x^{3}P(4,i-7)} ) + r_{4}\underline{a^{n}}.
$$
Thus, post multiplying this by $x^{2}a$, $xax$ and $ax^{2}$, and using Lemmas 2.6 and 2.7 to separate reducible and irreducible words, yield
\begin{equation}
\label{3j2}
- (a^{4}x^{n-4})x^{2}a  \rightarrow  ( \sum_{i=4}^{n-1}r_{i} a^{3}P(1,i-4)x^{2}a + \sum_{i=5}^{n}r_{i} (a^{2}x + axa + xa^{2})P(2,i-5)x^{2}a  +     
\end{equation}
$$
  \sum_{i=6}^{n}r_{i}  \underline{(ax^{2} + xax + x^{2}a)P(3,i-6)x^{2}a} +  \sum_{i=7}^{n}r_{i} \underline{x^{3}P(4,i-7)x^{2}a}  +  \underline{a^{3}Q(1,n-4)x^{2}a}  ) - r_{4}\underline{x^{2}a^{n+1}},
$$

\begin{equation}
\label{3j3}
- (a^{4}x^{n-4})xax  \rightarrow ( \sum_{i=4}^{n-1}r_{i} \underline{a^{3}P(1,i-4)xax} + \sum_{i=5}^{n}r_{i} (a^{2}x + axa + xa^{2})P(2,i-5)xax  + 
\end{equation}
$$
  \sum_{i=6}^{n}r_{i}  \underline{(ax^{2} + xax + x^{2}a)P(3,i-6)xax} +  \sum_{i=7}^{n}r_{i} \underline{x^{3}P(4,i-7)xax}  +  \underline{a^{3}Q(1,n-4)xax}  ) - r_{4}\underline{xaxa^{n}},
$$
and
\begin{equation}
\label{3j4}
- (a^{4}x^{n-4})ax^{2}  \rightarrow ( \sum_{i=4}^{n-1}r_{i} \underline{a^{3}P(1,i-4)ax^{2}} + \sum_{i=5}^{n}r_{i} \underline{(a^{2}x + axa + xa^{2})P(2,i-5)ax^{2}}  +    
\end{equation}
$$
  \sum_{i=6}^{n}r_{i}  \underline{(ax^{2} + xax + x^{2}a)P(3,i-6)ax^{2}} +  \sum_{i=7}^{n}r_{i} \underline{x^{3}P(4,i-7)ax^{2}}  +  \underline{a^{3}Q(1,n-4)ax^{2}}  ) - r_{4}\underline{ax^{2}a^{n}}
$$
respectively.
Similarly, use  (\ref{e}) to expand $\sigma_{3}$ as
$$
 a^{3}x^{n-3}  \rightarrow - ( \sum_{i=3}^{n-1}r_{i} \underline{a^{3}P(0,i-3)}  + \sum_{i=4}^{n-1}r_{i}\underline{(a^{2}x + axa + xa^{2})P(1,i-4)} +\underline{(a^{2}x + axa + xa^{2})P(1,n-4)} 
$$
$$
+ \sum_{i=5}^{n-1}r_{i}(\underline{(ax^{2} + xax + x^{2}a)P(2,i-5)} ) + \underline{(ax^{2} + xax + x^{2}a)P(2,n-5)} + \sum_{i=6}^{n}r_{i}  \underline{x^{3}P(3,i-6)} )    + r_{3}\underline{a^{n}}
$$
$$
= - ( \sum_{i=3}^{n-1}r_{i} \underline{a^{3}P(0,i-3)}  + \sum_{i=4}^{n}r_{i}\underline{(a^{2}x + axa + xa^{2})P(1,i-4)} 
$$
$$
+ \sum_{i=5}^{n}r_{i}(\underline{(ax^{2} + xax + x^{2}a)P(2,i-5)} ) + \sum_{i=6}^{n}r_{i}  \underline{x^{3}P(3,i-6)} )    + r_{3}\underline{a^{n}}.
$$
Post multiply this by $axa$, $a^{2}x$ and $xa^{2}$,  and use Lemmas 2.6 and 2.7 to separate reducible and irreducible words to get
\begin{equation}
\label{3j5}
-a^{3}x^{n-3}axa  \rightarrow (\sum_{i=3}^{n-1}r_{i} \underline{a^{3}P(0,i-3)axa}  + \sum_{i=4}^{n}r_{i}\underline{(a^{2}x + axa + xa^{2})P(1,i-4)axa} 
\end{equation}
$$
 + \sum_{i=5}^{n}r_{i}(\underline{(ax^{2} + xax + x^{2}a)P(2,i-5)axa} ) + \sum_{i=6}^{n}r_{i}  \underline{x^{3}P(3,i-6)axa} )  -  r_{3}\underline{axa^{n+1}},
$$

\begin{equation}
\label{3j6}
-a^{3}x^{n-3}a^{2}x  \rightarrow (\sum_{i=3}^{n-1}r_{i} \underline{a^{3}P(0,i-3)a^{2}x}  + \sum_{i=4}^{n}r_{i}\underline{(a^{2}x + axa + xa^{2})P(1,i-4)a^{2}x} 
\end{equation}
$$
+ \sum_{i=5}^{n}r_{i}(\underline{(ax^{2} + xax + x^{2}a)P(2,i-5)a^{2}x} ) + \sum_{i=6}^{n}r_{i}  \underline{x^{3}P(3,i-6)a^{2}x} ) - r_{3}\underline{a^{2}xa^{n}}
$$
and
\begin{equation}
\label{3j7}
- a^{3}x^{n-3}xa^{2}  \rightarrow (\sum_{i=3}^{n-1}r_{i} \underline{a^{3}P(0,i-3)xa^{2}}  + \sum_{i=4}^{n}r_{i}(a^{2}x + axa + xa^{2})P(1,i-4)xa^{2}
\end{equation}
$$
+ \sum_{i=5}^{n}r_{i}(\underline{(ax^{2} + xax + x^{2}a)P(2,i-5)xa^{2}} ) + \sum_{i=6}^{n}r_{i}  \underline{x^{3}P(3,i-6)xa^{2}} ) -  r_{3}\underline{xa^{n+2}}.
$$
The following words of length $n+3$ from (\ref{3j2}), (\ref{3j3}), (\ref{3j4}), (\ref{3j5}), (\ref{3j6}) and (\ref{3j7}) are reducible:
\begin{enumerate}
\item $x^{2}a^{4}x^{n-4}a \in x^{2}aP(3,n-6)x^{2}a$
\item $x(a^{4}x^{n-4})xa \in xa^{2}P(2,n-5)x^{2}a $
\item $axa^{3}x^{n-3}a \in axaP(2,n-5)x^{2}a$
\item $xa^{4}x^{n-4}ax \in xa^{2}P(2,n-5)xax $
\item $xa^{3}x^{n-3}a^{2} \in xa^{2}P(1,n-4)x a^{2}$.
\end{enumerate}
Use (\ref{f}) to expand $\sigma_{4}$ as
$$
 a^{4}x^{n-4}   \rightarrow - ( \sum_{i=4}^{n-1}r_{i} \underline{a^{2}P(1,i-4)a} + \underline{a^{2}P(1,n-4)a} + \sum_{i=5}^{n-1}r_{i} \underline{a^{2}P(2,i-5)x}  + \underline{a^{2}Q(2,n-5)x} +
$$
$$
 \sum_{i=5}^{n-1}r_{i} ( \underline{axP(2,i-5)a} + \underline{xaP(2,i-5)a} ) + \underline{axP(2,n-5)a} + \underline{xaP(2,n-5)a} +
$$
$$
 \sum_{i=6}^{n-1}r_{i} ( \underline{x^{2}P(3,i-6)a} + \underline{axP(3,i-6)x} + \underline{xaP(3,i-6)x}  )  + \underline{x^{2}P(3,n-6)a} + 
$$
$$
\underline{axP(3,n-6)x} + \underline{xaP(3,n-6)x} + \sum_{i=7}^{n-1}r_{i} \underline{x^{2}P(4,i-7)x} +  \underline{x^{2}P(4,n-7)x}  ) + r_{4}\underline{a^{n}}
$$
$$
 = - ( \sum_{i=4}^{n}r_{i} \underline{a^{2}P(1,i-4)a} + \sum_{i=5}^{n-1}r_{i} \underline{a^{2}P(2,i-5)x}+ \sum_{i=5}^{n}r_{i} ( \underline{axP(2,i-5)a} + \underline{xaP(2,i-5)a} ) 
$$
$$
 + \sum_{i=6}^{n}r_{i} ( \underline{x^{2}P(3,i-6)a} + \underline{axP(3,i-6)x} + \underline{xaP(3,i-6)x}  )  +  \sum_{i=7}^{n}r_{i} \underline{x^{2}P(4,i-7)x} 
$$
$$
+ \underline{a^{2}Q(2,n-5)x}   ) + r_{4}\underline{a^{n}}.
$$
Thus, premultiplying this by $x$ and post multiplying this by $xa$ and $ax$, and using Lemmas 2.6 and 2.7 to separate reducible and irreducible words, yield
\begin{equation}
\label{3j8}
xa^{4}x^{n-4}xa  \rightarrow - ( \sum_{i=4}^{n}r_{i} \underline{xa^{2}P(1,i-4)axa} + \sum_{i=5}^{n-1}r_{i} \underline{xa^{2}P(2,i-5)x^{2}a}+ \underline{xa^{2}Q(2,n-5)x^{2}a} 
\end{equation}
$$
+ \sum_{i=5}^{n}r_{i} ( \underline{xaxP(2,i-5)axa} + \underline{x^{2}aP(2,i-5)axa} )  + \sum_{i=6}^{n}r_{i} ( \underline{x^{3}P(3,i-6)axa} + 
$$
$$
\underline{xaxP(3,i-6)x^{2}a} + x^{2}aP(3,i-6)x^{2}a  )  +  \sum_{i=7}^{n}r_{i} \underline{x^{3}P(4,i-7)x^{2}a}   ) + r_{4}\underline{x^{2}a^{n+1}}
$$
and
\begin{equation}
\label{3j9}
xa^{4}x^{n-4}ax  \rightarrow - ( \sum_{i=4}^{n}r_{i} \underline{xa^{2}P(1,i-4)a^{2}x} + \sum_{i=5}^{n-1}r_{i} \underline{xa^{2}P(2,i-5)xax}+ \underline{xa^{2}Q(2,n-5)xax} 
\end{equation}
$$
+ \sum_{i=5}^{n}r_{i} ( \underline{xaxP(2,i-5)a^{2}x} + \underline{x^{2}aP(2,i-5)a^{2}x} )  + \sum_{i=6}^{n}r_{i} ( \underline{x^{3}P(3,i-6)a^{2}x} + 
$$
$$
\underline{xaxP(3,i-6)xax} + \underline{x^{2}aP(3,i-6)xax}  )  +  \sum_{i=7}^{n}r_{i} \underline{x^{3}P(4,i-7)xax} ) + r_{4}\underline{xaxa^{n}}
$$
respectively.
The reducible word $-x^{2}a^{4}x^{n-4}a$ in (\ref{3j8}) has an opposite sign with $x^{2}a^{4}x^{n-4}a$, the term (i) in the list above so they cancel out.

Again, use (\ref{g}) to expand $\sigma_{3}$ as
$$
a^{3}x^{n-3}  \rightarrow - ( \sum_{i=3}^{n-1}r_{i} \underline{aP(0,i-3)a^{2}} + \underline{aP(0,n-3)a^{2}}+  \sum_{i=4}^{n-1}r_{i} ( \underline{aP(1,i-4)ax} + \underline{aP(1,i-4)xa} $$
$$
+ \underline{xP(1,i-4)a^{2}}) +  \underline{aP(1,n-4)ax} + \underline{aP(1,n-4)xa}+ \underline{xP(1,n-4)a^{2}} +
$$
$$
 \sum_{i=5}^{n-1}r_{i}\underline{aP(2,i-5)x^{2}} + \underline{aQ(2,n-5)x^{2}} +  \sum_{i=5}^{n-1}r_{i} ( \underline{xP(2,i-5)ax} +  \underline{xP(2,i-5)xa}  )  +
$$
$$
\underline{xP(2,n-5)ax} +  \underline{xP(2,n-5)xa} +  \sum_{i=6}^{n-1}r_{i} \underline{xP(3,i-6)x^{2}} + \underline{xP(3,n-6)x^{2}} ) + r_{3}\underline{a^{n}}
$$
$$
= - ( \sum_{i=3}^{n}r_{i} \underline{aP(0,i-3)a^{2}} + \sum_{i=4}^{n}r_{i} ( \underline{aP(1,i-4)ax} + \underline{aP(1,i-4)xa} + \underline{xP(1,i-4)a^{2}}) +  
$$
$$
 \sum_{i=5}^{n-1}r_{i}\underline{aP(2,i-5)x^{2}} +  \sum_{i=5}^{n}r_{i} ( \underline{xP(2,i-5)ax} +  \underline{xP(2,i-5)xa}  )  +  \sum_{i=6}^{n}r_{i} \underline{xP(3,i-6)x^{2}} 
$$
$$
+ \underline{aQ(2,n-5)x^{2}}   ) + r_{3}\underline{a^{n}}.
$$
Pre and post multiplying this by $ax$ and $a$ respectively, and using Lemmas 2.6 and 2.7 to separate reducible and irreducible words, yields
\begin{equation}
\label{3j10}
axa^{3}x^{n-3}a  \rightarrow - (  \sum_{i=3}^{n}r_{i} \underline{axaP(0,i-3)a^{3}} + \sum_{i=4}^{n}r_{i} ( \underline{axaP(1,i-4)axa} + \underline{axaP(1,i-4)xa^{2}} 
\end{equation}
$$
+ \underline{ax^{2}P(1,i-4)a^{3}}) + \sum_{i=5}^{n-1}r_{i}\underline{axaP(2,i-5)x^{2}a} +  \sum_{i=5}^{n}r_{i} ( \underline{ax^{2}P(2,i-5)axa} +  
$$
$$
 \underline{ax^{2}P(2,i-5)xa^{2}}  )  +  \sum_{i=6}^{n}r_{i} \underline{ax^{2}P(3,i-6)x^{2}a} + \underline{axaQ(2,n-5)x^{2}a}   ) + r_{3}\underline{axa^{n+1}} .
$$
Turning now to the reduction of $xa^{3}x^{n-3}a^{2}$, use (\ref{f}) to expand $\sigma_{3}$ as
$$
a^{3}x^{n-3}  \rightarrow - (  \sum_{i=3}^{n-1}r_{i} \underline{a^{2}P(0,i-3)a} + \underline{a^{2}P(0,n-3)a}  + \sum_{i=4}^{n-1}r_{i}\underline{a^{2}P(1,i-4)x} +  \underline{a^{2}Q(1,n-4)x} + 
$$
$$
 \sum_{i=4}^{n-1}r_{i} ( \underline{axP(1,i-4)a} + \underline{xaP(1,i-4)a} ) + \underline{axP(1,n-4)a} + \underline{xaP(1,n-4)a} 
$$
$$
+  \sum_{i=5}^{n-1}r_{i}  ( \underline{x^{2}P(2,i-5)a} + \underline{axP(2,i-5)x} +  \underline{xaP(2,i-5)x} )    +
$$
$$
\underline{x^{2}P(2,n-5)a} + \underline{axP(2,n-5)x} +  \underline{xaP(2,n-5)x} )+ r_{3}\underline{a^{n}}
$$
$$
= - (  \sum_{i=3}^{n}r_{i} \underline{a^{2}P(0,i-3)a} + \sum_{i=4}^{n-1}r_{i}\underline{a^{2}P(1,i-4)x} +  \sum_{i=4}^{n}r_{i} ( \underline{axP(1,i-4)a} + \underline{xaP(1,i-4)a} ) 
$$
$$
+  \sum_{i=5}^{n}r_{i}  ( \underline{x^{2}P(2,i-5)a} + \underline{axP(2,i-5)x} +  \underline{xaP(2,i-5)x} )    +  \underline{a^{2}Q(1,n-4)x}   ) + r_{3}\underline{a^{n}}
$$
Pre and post multiplying this by $x$ and $a^{2}$ respectively, and using Lemmas 2.6 and 2.7 to separate reducible and irreducible words, yields
\begin{equation}
\label{3j11}
xa^{3}x^{n-3}a^{2}  \rightarrow - (  \sum_{i=3}^{n}r_{i} \underline{xa^{2}P(0,i-3)a^{3}} + \sum_{i=4}^{n-1}r_{i}\underline{xa^{2}P(1,i-4)xa^{2}} +  \sum_{i=4}^{n}r_{i} ( \underline{xaxP(1,i-4)a^{3}} 
\end{equation}
$$
+ \underline{x^{2}aP(1,i-4)a^{3}} )  +  \sum_{i=5}^{n}r_{i}  ( \underline{x^{3}P(2,i-5)a^{3}} + \underline{xaxP(2,i-5)xa^{2}} +  \underline{x^{2}aP(2,i-5)xa^{2}}    
$$
$$
+  \underline{xa^{2}Q(1,n-4)xa^{2}}   ) + r_{3}\underline{xa^{n+2}} .
$$
Thus, the reduction process ends here and when we substitute (\ref{3j2}), (\ref{3j3}), (\ref{3j4}), (\ref{3j5}), (\ref{3j6}), (\ref{3j7}), (\ref{3j8}), (\ref{3j9}), (\ref{3j10}) and (\ref{3j11}) into (\ref{3j1}), we get
\begin{equation}
\label{3j12}
a^{3}\omega_{2}  \rightarrow ( r_{2} \underline{a^{n+3}} + \sum_{i=4}^{n}r_{i} ( \underline{a^{2}xP(1,i-4)( a^{2}x + axa + xa^{2}} ) + \underline{axa P(1,i-4) a^{2}x} )+  
\end{equation}
$$
\sum_{i=5}^{n}r_{i} (  \underline{a^{2}xP(2,i-5)(ax^{2} + xax + x^{2}a} )  +  \underline{axaP(2,i-5)(ax^{2}+xax})  + \underline{xa^{2}P(2,i-5)ax^{2}} 
$$
$$
+ \underline{ax^{2}P(2,i-5)a^{2}x} ) + \sum_{i=6}^{n}r_{i} ( \underline{ax^{2}P(3,i-6)xax} + \underline{(ax^{2} + xax + x^{2}a)P(3,i-6)ax^{2}} ) + 
$$
$$
\sum_{i=7}^{n}r_{i} \underline{x^{3}P(4,i-7)ax^{2}}   - (  r_{2}\underline{a^{5}} + r_{3}\underline{a^{2}xa^{n}} + r_{4}\underline{ax^{2}a^{n}} + \sum_{i=3}^{n}r_{i} (\underline{axa + xa^{2}) P(0,i-3)a^{3}} +  
$$
$$
\sum_{i=4}^{n}r_{i} \underline{(ax^{2} + xax + x^{2}a )P(1,i-4)a^{3}} +   \sum_{i=5}^{n}r_{i} \underline{x^{3}P(2,i-5)a^{3}} 
$$
$$
+  \sum_{i=5}^{n-1}r_{i}   a^{3}P(2,i-5)x^{3}   +  \underline{a^{3}Q(2,n-5)x^{3}} ) .
$$

Turning now to the reduction of $\omega_{5}x^{3}$, use (\ref{e}) to expand $\sigma_{5}$ as
$$
\omega_{5} \rightarrow - ( \sum_{i=5}^{n-1}r_{i} \underline{a^{3}P(2,i-5)} + \underline{a^{3}Q(2,n-5)}  +  \sum_{i=6}^{n-1}r_{i} \underline{(a^{2}x + axa + xa^{2} )P(3,i-6)}   +  
$$
$$
\underline{(a^{2}x + axa + xa^{2} )P(3,n-6)}   +  \sum_{i=7}^{n-1}r_{i} \underline{(ax^{2} + xax + xa^{2} )P(4,i-7)} +
$$
$$
\underline{(ax^{2} + xax + xa^{2} )P(4,n-7)} + \sum_{i=8}^{n-1}r_{i}\underline{x^{3}P(5,i-8)} + \underline{x^{3}P(5,n-8)}   ) + r_{5}\underline{a^{n}}
$$
$$
= - ( \sum_{i=5}^{n-1}r_{i} \underline{a^{3}P(2,i-5)} + \underline{a^{3}Q(2,n-5)}  +  \sum_{i=6}^{n}r_{i} \underline{(a^{2}x + axa + xa^{2} )P(3,i-6)}   +  
$$
$$
 \sum_{i=7}^{n}r_{i} \underline{(ax^{2} + xax + xa^{2} )P(4,i-7)} +  \sum_{i=8}^{n}r_{i}\underline{x^{3}P(5,i-8)}    ) + r_{5}\underline{a^{n}}.
$$
Post multiplying this by $x^{3}$ and using Lemmas 2.6 and 2.7 to separate reducible and irreducible words, yields
\begin{equation}
\label{3j13}
\omega_{5}x^{3} \rightarrow - ( \sum_{i=5}^{n-1}r_{i} a^{3}P(2,i-5)x^{3} + \underline{a^{3}Q(2,n-5)x^{3}}  +  \sum_{i=6}^{n}r_{i} (a^{2}x + axa + xa^{2} )P(3,i-6)x^{3}   +
\end{equation}
$$
\sum_{i=7}^{n}r_{i} (ax^{2} + xax + xa^{2} )P(4,i-7)x^{3} + \sum_{i=8}^{n}r_{i}x^{3}P(5,i-8)x^{3}   ) + r_{5}\underline{a^{n+3}}.
$$
The following words in (\ref{3j13}) of length $n+3$ are reducible:
\begin{enumerate}
\item $x^{3}a^{5}x^{n-5} \in x^{3}P(5,n-8)x^{3} $
\item $x^{2}a(a^{4}x^{n-4}) \in x^{2}aP(4,n-7)x^{3}$
\item $xaxa^{4}x^{n-4} \in  xaxP(4,n-7)x^{3} $
\item $ax^{2}a^{4}x^{n-4} \in ax^{2}P(4,n-7)x^{3}$
\item $xa^{2}(a^{3}x^{n-3}) \in xa^{2}P(3,n-6)x^{3}$
\item $a^{2}xa^{3}x^{n-3} \in a^{2}xP(3,n-6)x^{3}$
\item $ axa(a^{3}x^{n-3}) \in axaP(3,n-6)x^{3}$.
\end{enumerate}
In order to reduce (b), (c) and (d) in the list above, use (\ref{d}) to expand $\sigma_{4}$ as
$$
a^{4}x^{n-4} \rightarrow - (  \sum_{i=4}^{n-1}r_{i}\underline{P(1,i-4)a^{3}} + \underline{P(1,n-4)a^{3}}  +  \sum_{i=5}^{n-1}r_{i} \underline{P(2,i-5)(a^{2}x + axa + xa^{2} )}  
$$
$$
+ \underline{P(2,n-5)(a^{2}x + axa + xa^{2} )} + \sum_{i=6}^{n-1}r_{i} \underline{P(3,i-6)(ax^{2} + xax + x^{2}a)} + 
$$
$$
\underline{P(3,n-6)(ax^{2} + xax + x^{2}a)} + \sum_{i=7}^{n-1}r_{i}\underline{P(4,i-7)x^{3}} + \underline{Q(4,n-7)x^{3}} ) + r_{4}\underline{a^{n}}
$$
$$
= - (  \sum_{i=4}^{n}r_{i}\underline{P(1,i-4)a^{3}} +  \sum_{i=5}^{n}r_{i} \underline{P(2,i-5)(a^{2}x + axa + xa^{2} )} +  
$$
$$
 \sum_{i=6}^{n}r_{i} \underline{P(3,i-6)(ax^{2} + xax + x^{2}a)} + \sum_{i=7}^{n-1}r_{i}\underline{P(4,i-7)x^{3}} + \underline{Q(4,n-7)x^{3}} ) + r_{4}\underline{a^{n}}.
$$
Premultiply this by $x^{2}a$, $xax$ and $ax^{2}$, and using Lemmas 2.6 and 2.7 to separate reducible and irreducible words, yield
\begin{equation}
\label{3j14}
- x^{2}a(a^{4}x^{n-4}) \rightarrow (  \sum_{i=4}^{n}r_{i}\underline{x^{2}aP(1,i-4)a^{3}} +  \sum_{i=5}^{n}r_{i} \underline{x^{2}aP(2,i-5)(a^{2}x + axa + xa^{2} )} 
\end{equation}
$$
+ \sum_{i=6}^{n}r_{i} x^{2}aP(3,i-6)(ax^{2} + xax + x^{2}a) + \sum_{i=7}^{n-1}r_{i}\underline{x^{2}aP(4,i-7)x^{3}} +
$$
$$
 \underline{x^{2}aQ(4,n-7)x^{3}} ) - r_{4}\underline{x^{2}a^{n+1}}, 
$$

\begin{equation}
\label{3j15}
- xax(a^{4}x^{n-4}) \rightarrow ( \sum_{i=4}^{n}r_{i}\underline{xaxP(1,i-4)a^{3}} +  \sum_{i=5}^{n}r_{i} \underline{xaxP(2,i-5)(a^{2}x + axa + xa^{2} )}  
\end{equation}
$$
+  \sum_{i=6}^{n}r_{i} \underline{xaxP(3,i-6)(ax^{2} + xax + x^{2}a)} + \sum_{i=7}^{n-1}r_{i}\underline{xaxP(4,i-7)x^{3}} 
$$
$$
+ \underline{xaxQ(4,n-7)x^{3}} ) -  r_{4}\underline{xaxa^{n}},
$$
and
\begin{equation}
\label{3j16}
- ax^{2}(a^{4}x^{n-4}) \rightarrow ( \sum_{i=4}^{n}r_{i}\underline{ax^{2}P(1,i-4)a^{3}} +  \sum_{i=5}^{n}r_{i} \underline{ax^{2}P(2,i-5)(a^{2}x + axa + xa^{2} )}    
\end{equation}
$$
 + \sum_{i=6}^{n}r_{i} \underline{ax^{2}P(3,i-6)(ax^{2} + xax + x^{2}a)} + \sum_{i=7}^{n-1}r_{i}\underline{ax^{2}P(4,i-7)x^{3}} 
$$
$$
+ \underline{ax^{2}Q(4,n-7)x^{3}} ) -  r_{4}\underline{ax^{2}a^{n}}
$$
respectively. Similarly, use (\ref{d}) to expand $\sigma_{5}$ as
$$
a^{5}x^{n-5} \rightarrow  - ( \sum_{i=5}^{n-1}r_{i} \underline{P(2,i-5)a^{3}} + \underline{P(2,n-5)a^{3}} + \sum_{i=6}^{n-1}r_{i}  \underline{P(3,i-6)(a^{2}x + axa + xa^{2})}    
$$
$$
+ \underline{P(3,n-6)(a^{2}x + axa + xa^{2})}   + \sum_{i=7}^{n-1}r_{i}  \underline{P(4,i-7)(ax^{2} + xax + ax^{2})} + 
$$
$$
\underline{P(4,n-7)(ax^{2} + xax + ax^{2})} +  \sum_{i=8}^{n-1}r_{i}\underline{P(5,i-8)x^{3}} + \underline{Q(5,n-8)x^{3}}  ) + r_{5}\underline{a^{n}}
$$
$$
=  - ( \sum_{i=5}^{n}r_{i} \underline{P(2,i-5)a^{3}} +  \sum_{i=6}^{n}r_{i}  \underline{P(3,i-6)(a^{2}x + axa + xa^{2})} +    
$$
$$
\sum_{i=7}^{n}r_{i}  \underline{P(4,i-7)(ax^{2} + xax + ax^{2})} +  \sum_{i=8}^{n-1}r_{i}\underline{P(5,i-8)x^{3}} + \underline{Q(5,n-8)x^{3}}  ) + r_{5}\underline{a^{n}}.
$$
Premultiply this by $x^{3}$ and use Lemmas 2.6 and 2.7 to separate reducible and irreducible words to get
\begin{equation}
\label{3j17}
- x^{3}a^{5}x^{n-5} \rightarrow  ( \sum_{i=5}^{n}r_{i} \underline{x^{3}P(2,i-5)a^{3}} +  \sum_{i=6}^{n}r_{i}  \underline{x^{3}P(3,i-6)(a^{2}x + axa + xa^{2})} +    
\end{equation}
$$
\sum_{i=7}^{n}r_{i}  \underline{x^{3}P(4,i-7)(ax^{2} + xax + ax^{2})} +  \sum_{i=8}^{n-1}r_{i}\underline{x^{3}P(5,i-8)x^{3}} 
$$
$$
+ \underline{x^{3}Q(5,n-8)x^{3}}  ) - r_{5}\underline{x^{3}a^{n}}.
$$
Similarly, expand $\sigma_{3}$ using (\ref{d}) to get
$$
a^{3}x^{n-3} \rightarrow - ( \sum_{i=3}^{n-1}r_{i}  \underline{P(0,i-3)a^{3}} + \underline{P(0,n-3)a^{3}} +  \sum_{i=4}^{n-1}r_{i}\underline{P(1,i-4)(a^{2}x + axa + xa^{2} )}   +  
$$
$$
\underline{P(1,n-4)(a^{2}x + axa + xa^{2} )}   +   \sum_{i=5}^{n-1}r_{i} \underline{P(2,i-5)(ax^{2} + xax + x^{2}a)} + \underline{P(2,n-5)(ax^{2} + xax + x^{2}a)} 
$$
$$
+ \sum_{i=6}^{n-1}r_{i} \underline{P(3,i-6)x^{3}} +  \underline{Q(3,n-6)x^{3}}  ) + r_{3}\underline{a^{n}}  
$$
$$
= - ( \sum_{i=3}^{n}r_{i}  \underline{P(0,i-3)a^{3}} + \sum_{i=4}^{n}r_{i}\underline{P(1,i-4)(a^{2}x + axa + xa^{2} )}   +  
$$
$$
\sum_{i=5}^{n}r_{i} \underline{P(2,i-5)(ax^{2} + xax + x^{2}a)} + \sum_{i=6}^{n-1}r_{i} \underline{P(3,i-6)x^{3}} +  \underline{Q(3,n-6)x^{3}}  ) + r_{3}\underline{a^{n}}  .
$$
Thus, premultiplying this by $xa^{2}$, $a^{2}x$ and $axa$, and using Lemmas 2.6 and 2.7 to separate reducible and irreducible words, yield
\begin{equation}
\label{3j18}
-xa^{2}(a^{3}x^{n-3}) \rightarrow (  \sum_{i=3}^{n}r_{i}  \underline{xa^{2}P(0,i-3)a^{3}} + \sum_{i=4}^{n}r_{i}\underline{xa^{2}P(1,i-4)(a^{2}x + axa + xa^{2} )}   +    
\end{equation}
$$
\sum_{i=5}^{n}r_{i} xa^{2}P(2,i-5)(ax^{2} + xax + x^{2}a) + \sum_{i=6}^{n-1}r_{i} \underline{xa^{2}P(3,i-6)x^{3}} 
$$
$$
+  \underline{xa^{2}Q(3,n-6)x^{3}}  ) - r_{3}\underline{xa^{n+2}} ,
$$

\begin{equation}
\label{3j19}
-a^{2}x(a^{3}x^{n-3}) \rightarrow (  \sum_{i=3}^{n}r_{i}  a^{2}xP(0,i-3)a^{3} + \sum_{i=4}^{n}r_{i}\underline{a^{2}xP(1,i-4)(a^{2}x + axa + xa^{2} )}   +  
\end{equation}
$$
\sum_{i=5}^{n}r_{i} \underline{a^{2}xP(2,i-5)(ax^{2} + xax + x^{2}a)} + \sum_{i=6}^{n-1}r_{i} \underline{a^{2}xP(3,i-6)x^{3}} 
$$
$$
+  \underline{a^{2}xQ(3,n-6)x^{3}}  ) - r_{3}\underline{a^{2}xa^{n}} 
$$
and
\begin{equation}
\label{3j20}
-axa(a^{3}x^{n-3}) \rightarrow (  \sum_{i=3}^{n}r_{i}  \underline{axaP(0,i-3)a^{3}} + \sum_{i=4}^{n}r_{i}\underline{axaP(1,i-4)(a^{2}x + axa + xa^{2} )}   + 
\end{equation}
$$
\sum_{i=5}^{n}r_{i} axaP(2,i-5)(ax^{2} + xax + x^{2}a) + \sum_{i=6}^{n-1}r_{i} \underline{axaP(3,i-6)x^{3}} 
$$
$$
+  \underline{axaQ(3,n-6)x^{3}}  ) -  r_{3}\underline{axa^{n+1}}  
$$
respectively.
The following words in (\ref{3j14})-(\ref{3j20}) of length $n+3$ are irreducible:
\begin{enumerate}
\item $x^{2}a^{4}x^{n-4}a \in x^{2}aP(3,n-6)x^{2}a$
\item $x(a^{4}x^{n-4})xa \in xa^{2}P(2,n-5)x^{2}a$
\item $xa^{4}x^{n-4}ax \in  xa^{2}P(2,n-5)xax$
\item $xa^{3}x^{n-3}a^{2} \in xa^{2}P(1,n-4)xa^{2}  $
\item $a^{2}x^{n-2}a^{3}$
\item $axa^{3}x^{n-3}a \in axaP(2,n-5)x^{2}a$
\end{enumerate}
Recall (\ref{3j8}), (\ref{3j9}), (\ref{3j10}) and (\ref{3j11}) as follows:
$$
xa^{4}x^{n-4}xa  \rightarrow - ( \sum_{i=4}^{n}r_{i} \underline{xa^{2}P(1,i-4)axa} + \sum_{i=5}^{n-1}r_{i} \underline{xa^{2}P(2,i-5)x^{2}a}+ \underline{xa^{2}Q(2,n-5)x^{2}a} 
$$
$$
+ \sum_{i=5}^{n}r_{i} ( \underline{xaxP(2,i-5)axa} + \underline{x^{2}aP(2,i-5)axa} )  + \sum_{i=6}^{n}r_{i} ( \underline{x^{3}P(3,i-6)axa} + 
$$
$$
\underline{xaxP(3,i-6)x^{2}a} + x^{2}aP(3,i-6)x^{2}a  )  +  \sum_{i=7}^{n}r_{i} \underline{x^{3}P(4,i-7)x^{2}a}   ) + r_{4}\underline{x^{2}a^{n+1}},
$$

$$
xa^{4}x^{n-4}ax  \rightarrow - ( \sum_{i=4}^{n}r_{i} \underline{xa^{2}P(1,i-4)a^{2}x} + \sum_{i=5}^{n-1}r_{i} \underline{xa^{2}P(2,i-5)xax}+ \underline{xa^{2}Q(2,n-5)xax} 
$$
$$
+ \sum_{i=5}^{n}r_{i} ( \underline{xaxP(2,i-5)a^{2}x} + \underline{x^{2}aP(2,i-5)a^{2}x} )  + \sum_{i=6}^{n}r_{i} ( \underline{x^{3}P(3,i-6)a^{2}x} + 
$$
$$
\underline{xaxP(3,i-6)xax} + \underline{x^{2}aP(3,i-6)xax}  )  +  \sum_{i=7}^{n}r_{i} \underline{x^{3}P(4,i-7)xax} ) + r_{4}\underline{xaxa^{n}},
$$

$$
axa^{3}x^{n-3}a  \rightarrow - (  \sum_{i=3}^{n}r_{i} \underline{axaP(0,i-3)a^{3}} + \sum_{i=4}^{n}r_{i} ( \underline{axaP(1,i-4)axa} + \underline{axaP(1,i-4)xa^{2}} 
$$
$$
+ \underline{ax^{2}P(1,i-4)a^{3}}) + \sum_{i=5}^{n-1}r_{i}\underline{axaP(2,i-5)x^{2}a} +  \sum_{i=5}^{n}r_{i} ( \underline{ax^{2}P(2,i-5)axa} +  
$$
$$
 \underline{ax^{2}P(2,i-5)xa^{2}}  )  +  \sum_{i=6}^{n}r_{i} \underline{ax^{2}P(3,i-6)x^{2}a} + \underline{axaQ(2,n-5)x^{2}a}   ) + r_{3}\underline{axa^{n+1}},
$$
and
$$
xa^{3}x^{n-3}a^{2}  \rightarrow - (  \sum_{i=3}^{n}r_{i} \underline{xa^{2}P(0,i-3)a^{3}} + \sum_{i=4}^{n-1}r_{i}\underline{xa^{2}P(1,i-4)xa^{2}} +  \sum_{i=4}^{n}r_{i} ( \underline{xaxP(1,i-4)a^{3}} 
$$
$$
+ \underline{x^{2}aP(1,i-4)a^{3}} )  +  \sum_{i=5}^{n}r_{i}  ( \underline{x^{3}P(2,i-5)a^{3}} + \underline{xaxP(2,i-5)xa^{2}} +  \underline{x^{2}aP(2,i-5)xa^{2}}    
$$
$$
+  \underline{xa^{2}Q(1,n-4)xa^{2}}   ) + r_{3}\underline{xa^{n+2}} .
$$

 In order to reduce $a^{2}x^{n-2}a^{3} $, use (\ref{e}) to expand $\sigma_{2}$ as
$$
a^{2}x^{n-2} \rightarrow - ( \sum_{i=3}^{n-1}r_{i} \underline{(axa + xa^{2})P(0,i-3)} + \underline{(axa + xa^{2})P(0,n-3)} + \sum_{i=3}^{n-1}r_{i} \underline{a^{2}x P(0,i-3)}    
$$
$$
+ \sum_{i=4}^{n-1}r_{i} \underline{(ax^{2} + xax + x^{2}a)P(1,i-4)} + \underline{(ax^{2} + xax + x^{2}a)P(1,n-4)} + 
$$
$$
\sum_{i=5}^{n-1}r_{i} \underline{x^{3}P(2,i-5)} + \underline{x^{3}P(2,n-5)} + r_{2}\underline{a^{2}} ) + r_{2}\underline{a^{n}}
$$
$$
= - ( \sum_{i=3}^{n}r_{i} \underline{(axa + xa^{2})P(0,i-3)} +  \sum_{i=3}^{n-1}r_{i} \underline{a^{2}x P(0,i-3)} +   
$$
$$
\sum_{i=4}^{n}r_{i} \underline{(ax^{2} + xax + x^{2}a)P(1,i-4)} +  \sum_{i=5}^{n}r_{i} \underline{x^{3}P(2,i-5)} + r_{2}\underline{a^{2}} ) + r_{2}\underline{a^{n}}.
$$
Post multiply this by $a^{3}$ and use Lemmas 2.6 and 2.7 to separate reducible and irreducible words to get
\begin{equation}
\label{3j21}
a^{2}x^{n-2}a^{3} \rightarrow - ( \sum_{i=3}^{n}r_{i} \underline{(axa + xa^{2})P(0,i-3)a^{3}} +  \sum_{i=3}^{n-1}r_{i} \underline{a^{2}x P(0,i-3)a^{3}} +  
\end{equation}
$$
\sum_{i=4}^{n}r_{i} \underline{(ax^{2} + xax + x^{2}a)P(1,i-4)a^{3}} +  \sum_{i=5}^{n}r_{i} \underline{x^{3}P(2,i-5)a^{3}} + r_{2}\underline{a^{5}} ) + r_{2}\underline{a^{n+3}}.
$$
The reducible word $-x^{2}a^{4}x^{n-4}a$ in (\ref{3j8}) has an opposite sign with $x^{2}a^{4}x^{n-4}a$, the term (i) in the list above so they cancel out. Hence, the reduction process ends here and when we substitute (\ref{3j14})-(\ref{3j21}) and (\ref{3j8})-(\ref{3j11}) into (\ref{3j3}), yielding
\begin{equation}
\label{3j22}
\omega_{5}x^{3}  \rightarrow  ( r_{2} \underline{a^{n+3}} + \sum_{i=4}^{n}r_{i} ( \underline{a^{2}xP(1,i-4)( a^{2}x + axa + xa^{2}} ) + \underline{axa P(1,i-4) a^{2}x} )+  
\end{equation}
$$
\sum_{i=5}^{n}r_{i} (  \underline{a^{2}xP(2,i-5)(ax^{2} + xax + x^{2}a} )  +  \underline{axaP(2,i-5)(ax^{2}+xax})  + \underline{xa^{2}P(2,i-5)ax^{2}} 
$$
$$
+ \underline{ax^{2}P(2,i-5)a^{2}x} ) + \sum_{i=6}^{n}r_{i} ( \underline{ax^{2}P(3,i-6)xax} + \underline{(ax^{2} + xax + x^{2}a)P(3,i-6)ax^{2}} ) + 
$$
$$
\sum_{i=7}^{n}r_{i} \underline{x^{3}P(4,i-7)ax^{2}}   - (  r_{2}\underline{a^{5}} + r_{3}\underline{a^{2}xa^{n}} + r_{4}\underline{ax^{2}a^{n}} + \sum_{i=3}^{n}r_{i} (\underline{axa + xa^{2}) P(0,i-3)a^{3}} +  
$$
$$
\sum_{i=4}^{n}r_{i} \underline{(ax^{2} + xax + x^{2}a )P(1,i-4)a^{3}} +   \sum_{i=5}^{n}r_{i} \underline{x^{3}P(2,i-5)a^{3}} 
$$
$$
+  \sum_{i=5}^{n-1}r_{i}   a^{3}P(2,i-5)x^{3}   +  \underline{a^{3}Q(2,n-5)x^{3}} ) .
$$
Comparing (\ref{3j12}) and (\ref{3j22}), we conclude that the overlap ambiguity $\{ \omega_{2}, \omega_{5} \}$ is resolvable.

\noindent (c) Let $3 \leq j < n-4$ and consider the overlap ambiguity $\{\omega_{j}, \omega_{j+3}\}$. Using (\ref{d}), expand $\sigma_{j}$ to get
$$
\omega_{j} \rightarrow - ( \sum_{i=j}^{n-1}r_{i} \underline{P(j-3,i-j)a^{3}} + \underline{P(j-3,n-j)a^{3}} + \sum_{i=j+1}^{n-1}r_{i} \underline{P(j-2,i-j-1)(a^{2}x + axa + xa^{2})}      
$$
$$
+ \underline{P(j-2,n-j-1)(a^{2}x + axa + xa^{2})}   +  \sum_{i=j+2}^{n-1}r_{i}\underline{P(j-1,i-j-2)(ax^{2} + xax + x^{2}a)} +
$$
$$
\underline{P(j-1,n-j-2)(ax^{2} + xax + x^{2}a)} + \sum_{i=j+3}^{n-1}r_{i} \underline{P(j,i-j-3)x^{3}} +  \underline{Q(j,n-j-3)x^{3}} ) + r_{j}\underline{a^{n}}
$$
$$
= - ( \sum_{i=j}^{n}r_{i} \underline{P(j-3,i-j)a^{3}} + \sum_{i=j+1}^{n}r_{i} \underline{P(j-2,i-j-1)(a^{2}x + axa + xa^{2})} +      
$$
$$
\sum_{i=j+2}^{n}r_{i}\underline{P(j-1,i-j-2)(ax^{2} + xax + x^{2}a)} + \sum_{i=j+3}^{n-1}r_{i} \underline{P(j,i-j-3)x^{3}} +  \underline{Q(j,n-j-3)x^{3}} ) + r_{j}\underline{a^{n}}.
$$
Premultiplying this by $a^{3}$ and using Lemmas 2.6 and 2.7 to separate reducible and irreducible words, yields
\begin{equation}
\label{j1}
a^{3}\omega_{j} \rightarrow - ( \sum_{i=j}^{n}r_{i} a^{3}P(j-3,i-j)a^{3} + \sum_{i=j+1}^{n}r_{i} a^{3}P(j-2,i-j-1)(a^{2}x + axa + xa^{2})    
\end{equation}
$$
+ \sum_{i=j+2}^{n}r_{i}a^{3}P(j-1,i-j-2)(ax^{2} + xax + x^{2}a) + \sum_{i=j+3}^{n-1}r_{i} a^{3}P(j,i-j-3)x^{3} +  
$$
$$
\underline{a^{3}Q(j,n-j-3)x^{3}} ) + r_{j}\underline{a^{n+3}}.
$$
The following words in (\ref{j1}) of length $n+3$ are reducible:
\begin{enumerate}
\item $a^{j}x^{n-j}a^{3} \in a^{3}P(j-3,n-j)a^{3}$
\item $(a^{j+2}x^{n-j-2}) x^{2} a \in  a^{3}P(j-1,n-j-2)x^{2}a$
\item $(a^{j+2}x^{n-j-2}) xax \in  a^{3}P(j-1,n-j-2)xax$
\item $(a^{j+2}x^{n-j-2}) ax^{2}  \in  a^{3}P(j-1,n-j-2)ax^{2}$
\item $(a^{j+1}x^{n-j-1})xa^{2} \in a^{3}P(j-2,n-j-1)xa^{2}$
\item $(a^{j+1}x^{n-j-1})axa \in a^{3}P(j-2,n-j-1)axa$
\item $(a^{j+1}x^{n-j-1})a^{2}x \in a^{3}P(j-2,n-j-1)a^{2}x$.
\end{enumerate}
Using (\ref{e}), expand $\sigma_{j}$ as follows:
$$
a^{j}x^{n-j} \rightarrow - (  \sum_{i=j}^{n-1}r_{i} \underline{a^{3}P(j-3,i-j)} + \underline{a^{3}Q(j-3,n-j)} + \sum_{i=j+1}^{n-1}r_{i} \underline{(a^{2}x + axa + xa^{2})P(j-2,i-j-1)}  
$$
$$
+ \underline{(a^{2}x + axa + xa^{2})P(j-2,n-j-1)} + \sum_{i=j+2}^{n-1}r_{i} \underline{(ax^{2} + xax + x^{2}a)P(j-1,i-j-2)} +
$$
$$
 \underline{(ax^{2} + xax + x^{2}a)P(j-1,n-j-2)} + \sum_{i=j+3}^{n-1}r_{i} \underline{x^{3}P(j,i-j-3)} + \underline{x^{3}P(j,n-j-3)}   )  + r_{j}\underline{a^{n}}
$$
$$
= - (  \sum_{i=j}^{n-1}r_{i} \underline{a^{3}P(j-3,i-j)} + \underline{a^{3}Q(j-3,n-j)} + \sum_{i=j+1}^{n}r_{i} \underline{(a^{2}x + axa + xa^{2})P(j-2,i-j-1)}  
$$
$$
+ \sum_{i=j+2}^{n}r_{i} \underline{(ax^{2} + xax + x^{2}a)P(j-1,i-j-2)} + \sum_{i=j+3}^{n}r_{i} \underline{x^{3}P(j,i-j-3)}   )  + r_{j}\underline{a^{n}}.
$$
Postmultiplying this by $a^{3}$ and using Lemmas 2.6 and 2.7 to separate reducible and irreducible words, yields
\begin{equation}
\label{j2}
-a^{j}x^{n-j}a^{3} \rightarrow ( \sum_{i=j}^{n-1}r_{i} \underline{a^{3}P(j-3,i-j)a^{3}} + \underline{a^{3}Q(j-3,n-j)a^{3}}  
\end{equation}
$$
+ \sum_{i=j+1}^{n}r_{i} \underline{(a^{2}x + axa + xa^{2})P(j-2,i-j-1)a^{3}}  +
$$
$$
\sum_{i=j+2}^{n}r_{i} \underline{(ax^{2} + xax + x^{2}a)P(j-1,i-j-2)a^{3}} + \sum_{i=j+3}^{n}r_{i} \underline{x^{3}P(j,i-j-3)a^{3}} )  - r_{j}\underline{a^{n+3}}.
$$
Using (\ref{e}), expand $\sigma_{j+2}$ as
$$
a^{j+2}x^{n-j-2} \rightarrow - ( \sum_{i=j+2}^{n-1}r_{i} \underline{a^{3}P(j-1,i-j-2)} + \underline{a^{3}Q(j-1,n-j-2)}  + 
$$
$$
\sum_{i=j+3}^{n-1}r_{i} \underline{(a^{2}x + axa + xa^{2} )P(j,i-j-3)} + \underline{(a^{2}x + axa + xa^{2} )P(j,n-j-3)} + 
$$
$$
 \sum_{i=j+4}^{n-1}r_{i}  \underline{(ax^{2} + xax + x^{2}a )P(j+1, i-j-4)} + \underline{(ax^{2} + xax + x^{2}a )P(j+1, n-j-4)} +
$$
$$
 + \sum_{i=j+5}^{n-1}r_{i} \underline{x^{3}P(j+2,i-j-5)} + \underline{x^{3}P(j+2,n-j-5)} ) + r_{j+2}\underline{a^{n}}
$$
$$
= - ( \sum_{i=j+2}^{n-1}r_{i} \underline{a^{3}P(j-1,i-j-2)} + \underline{a^{3}Q(j-1,n-j-2)}  + 
$$
$$
\sum_{i=j+3}^{n}r_{i} \underline{(a^{2}x + axa + xa^{2} )P(j,i-j-3)} +   \sum_{i=j+4}^{n}r_{i}  \underline{(ax^{2} + xax + x^{2}a )P(j+1, i-j-4)} 
$$
$$
 + \sum_{i=j+5}^{n}r_{i} \underline{x^{3}P(j+2,i-j-5)} ) + r_{j+2}\underline{a^{n}}
$$

Thus, post multiplying this by $x^{2}a$, $xax$ and $ax^{2}$, and using Lemmas 2.6 and 2.7 to separate reducible and irreducible words, yield
\begin{equation}
\label{j3}
-(a^{j+2}x^{n-j-2}) x^{2} a \rightarrow ( \sum_{i=j+2}^{n-1}r_{i} a^{3}P(j-1,i-j-2)x^{2} a + \underline{a^{3}Q(j-1,n-j-2)x^{2} a}  + 
\end{equation}
$$
\sum_{i=j+3}^{n}r_{i} (a^{2}x + axa + xa^{2} )P(j,i-j-3)x^{2} a +   \sum_{i=j+4}^{n}r_{i}  (ax^{2} + xax + x^{2}a )P(j+1, i-j-4)x^{2} a
$$
$$
 + \sum_{i=j+5}^{n}r_{i} \underline{x^{3}P(j+2,i-j-5)x^{2} a} ) -  r_{j+2}\underline{x^{2} a^{n+1}},
$$

\begin{equation}
\label{j4}
-(a^{j+2}x^{n-j-2}) xax \rightarrow ( \sum_{i=j+2}^{n-1}r_{i} \underline{a^{3}P(j-1,i-j-2)xax} + \underline{a^{3}Q(j-1,n-j-2)xax}  + 
\end{equation}
$$
\sum_{i=j+3}^{n}r_{i} (a^{2}x + axa + xa^{2} )P(j,i-j-3)xax +   \sum_{i=j+4}^{n}r_{i}  \underline{(ax^{2} + xax + x^{2}a )P(j+1, i-j-4)xax} 
$$
$$
 + \sum_{i=j+5}^{n}r_{i} \underline{x^{3}P(j+2,i-j-5)xax} ) - r_{j+2}\underline{xaxa^{n}}
$$
and
\begin{equation}
\label{j5}
-(a^{j+2}x^{n-j-2}) ax^{2}  \rightarrow ( \sum_{i=j+2}^{n-1}r_{i} \underline{a^{3}P(j-1,i-j-2)ax^{2} } + \underline{a^{3}Q(j-1,n-j-2)ax^{2} }  + 
\end{equation}
$$
\sum_{i=j+3}^{n}r_{i} \underline{(a^{2}x + axa + xa^{2} )P(j,i-j-3)ax^{2} } +   \sum_{i=j+4}^{n}r_{i}  \underline{(ax^{2} + xax + x^{2}a )P(j+1, i-j-4)ax^{2} } 
$$
$$
 + \sum_{i=j+5}^{n}r_{i} \underline{x^{3}P(j+2,i-j-5)ax^{2} } )  -  r_{j+2}\underline{ax^{2} a^{n}}
$$
respectively. Again, using (\ref{e}), expand $\sigma_{j+1}$ as
$$
a^{j+1}x^{n-j-1} \rightarrow - ( \sum_{i=j+1}^{n-1}r_{i}\underline{a^{3}P(j-2,i-j-1)} + \underline{a^{3}Q(j-2,n-j-1)} + 
$$
$$
\sum_{i=j+2}^{n-1}r_{i} \underline{(a^{2}x + axa + xa^{2}) P(j,i-j-3)} + \underline{(a^{2}x + axa + xa^{2}) P(j,n-j-3)} + 
$$
$$
 \sum_{i=j+3}^{n-1}r_{i} \underline{(ax^{2} + xax + x^{2}a)P(j,i-j-3)}+ \underline{(ax^{2} + xax + x^{2}a)P(j,n-j-3)}+ 
$$
$$
\sum_{i=j+4}^{n-1}r_{i}  \underline{x^{3}P(j+1,i-j-4)} +  \underline{x^{3}P(j+1,n-j-4)})  + r_{j+1}\underline{a^{n}}.
$$
$$
= - ( \sum_{i=j+1}^{n-1}r_{i}\underline{a^{3}P(j-2,i-j-1)} + \underline{a^{3}Q(j-2,n-j-1)} + 
$$
$$
\sum_{i=j+2}^{n}r_{i} \underline{(a^{2}x + axa + xa^{2}) P(j,i-j-3)} +   \sum_{i=j+3}^{n}r_{i} \underline{(ax^{2} + xax + x^{2}a)P(j,i-j-3)}+ 
$$
$$
\sum_{i=j+4}^{n}r_{i}  \underline{x^{3}P(j+1,i-j-4)} )  + r_{j+1}\underline{a^{n}}.
$$
Hence post multiplying this by $xa^{2}$, $axa$ and $a^{2}x$, and using Lemmas 2.6 and 2.7 to separate reducible and irreducible words, yield
\begin{equation}
\label{j6}
-(a^{j+1}x^{n-j-1})xa^{2} \rightarrow ( \sum_{i=j+1}^{n-1}r_{i}\underline{a^{3}P(j-2,i-j-1)xa^{2} } + \underline{a^{3}Q(j-2,n-j-1)xa^{2} } + 
\end{equation}
$$
\sum_{i=j+2}^{n}r_{i} (a^{2}x + axa + xa^{2}) P(j,i-j-2)xa^{2}  +   \sum_{i=j+3}^{n}r_{i} \underline{(ax^{2} + xax + x^{2}a)P(j,i-j-3)xa^{2} }
$$
$$
+ \sum_{i=j+4}^{n}r_{i}  \underline{x^{3}P(j+1,i-j-4)xa^{2} } )  - r_{j+1}\underline{xa^{n+2}},
$$

\begin{equation}
\label{j7}
-(a^{j+1}x^{n-j-1})axa \rightarrow (  \sum_{i=j+1}^{n-1}r_{i}\underline{a^{3}P(j-2,i-j-1)axa} + \underline{a^{3}Q(j-2,n-j-1)axa} + 
\end{equation}
$$
\sum_{i=j+2}^{n}r_{i} \underline{(a^{2}x + axa + xa^{2}) P(j,i-j-2)axa} +   \sum_{i=j+3}^{n}r_{i} \underline{(ax^{2} + xax + x^{2}a)P(j,i-j-3)axa}
$$
$$
+ \sum_{i=j+4}^{n}r_{i}  \underline{x^{3}P(j+1,i-j-4)axa} )  - r_{j+1}\underline{axa^{n+1}}
$$
and
\begin{equation}
\label{j8}
-(a^{j+1}x^{n-j-1})a^{2}x \rightarrow ( \sum_{i=j+1}^{n-1}r_{i}\underline{a^{3}P(j-2,i-j-1)a^{2}x} + \underline{a^{3}Q(j-2,n-j-1)a^{2}x} + 
\end{equation}
$$
\sum_{i=j+2}^{n}r_{i} \underline{(a^{2}x + axa + xa^{2}) P(j,i-j-2)a^{2}x} +   \sum_{i=j+3}^{n}r_{i} \underline{(ax^{2} + xax + x^{2}a)P(j,i-j-3)a^{2}x}
$$
$$
+ \sum_{i=j+4}^{n}r_{i}  \underline{x^{3}P(j+1,i-j-4)a^{2}x} )  - r_{j+1}\underline{a^{2}xa^{n}}
$$
respectively.
The words in (\ref{j2})-(\ref{j8}) of legth $n+3$ which are reducible are as follows:
\begin{enumerate}
\item $x^{2}a^{j+2}x^{n-j-2}a \in x^{2}aP(j+1, n-j-4)x^{2}a$
\item $x(a^{j+2}x^{n-j-2})xa \in  xa^{2}P(j,n-j-3)x^{2}a$
\item $x(a^{j+2}x^{n-j-2})ax \in  xa^{2}P(j,n-j-3)xax$
\item $xa^{j+1}x^{n-j-1}a^{2} \in  xa^{2}P(j,n-j-3)xa^{2}$
\item $axa^{j+1}x^{n-j-1}a \in axaP(j,n-j-3)x^{2}a $.
\end{enumerate}
Using (\ref{f}), expand $\sigma_{j+2}$ as
$$
a^{j+2}x^{n-j-2} \rightarrow - ( \sum_{i=j+2}^{n-1}r_{i} \underline{a^{2}P(j-1,i-j-2)a} + \underline{a^{2}P(j-1,n-j-2)a} 
$$
$$
+ \sum_{i=j+3}^{n-1}r_{i}\underline{a^{2}P(j,i-j-3)x} + \underline{a^{2}Q(j,n-j-3)x} +  \sum_{i=j+3}^{n-1}r_{i}  ( \underline{axP(j,i-j-3)a} +
$$
$$
  \underline{xaP(j,i-j-3)a} ) + \underline{axP(j,n-j-3)a} + \underline{xaP(j,n-j-3)a} +  
$$
$$
\sum_{i=j+4}^{n-1}r_{i}  ( \underline{axP(j+1,i-j-4)x} +  \underline{xaP(j+1,i-j-4)x} +  \underline{x^{2}P(j+1,i-j-4)a} )
$$
$$
+ \underline{axP(j+1,n-j-4)x} +  \underline{xaP(j+1,n-j-4)x} +  \underline{x^{2}P(j+1,n-j-4)a}  + 
$$
$$
\sum_{i=j+5}^{n-1}r_{i}\underline{x^{2}P(j+2,i-j-5)x} + \underline{x^{2}P(j+2,n-j-5)x}  ) + r_{j+2}\underline{a^{n}} 
$$
$$
= - ( \sum_{i=j+2}^{n}r_{i} \underline{a^{2}P(j-1,i-j-2)a} + \sum_{i=j+3}^{n-1}r_{i}\underline{a^{2}P(j,i-j-3)x} + \underline{a^{2}Q(j,n-j-3)x} +  
$$
$$
 \sum_{i=j+3}^{n}r_{i}  ( \underline{axP(j,i-j-3)a} + \underline{xaP(j,i-j-3)a} ) +  \sum_{i=j+4}^{n}r_{i}  ( \underline{axP(j+1,i-j-4)x} +  
$$
$$
\underline{xaP(j+1,i-j-4)x} +  \underline{x^{2}P(j+1,i-j-4)a} )+ \sum_{i=j+5}^{n}r_{i}\underline{x^{2}P(j+2,i-j-5)x}   ) + r_{j+2}\underline{a^{n}} .
$$
Thus, premultiplying this by $x$ and postmultiplying by $xa$ and $ax$, and using Lemmas 2.6 and 2.7 to separate reducible and irreducible words, yield
\begin{equation}
\label{j9}
x(a^{j+2}x^{n-j-2})xa \rightarrow - ( \sum_{i=j+2}^{n}r_{i} \underline{xa^{2}P(j-1,i-j-2)axa} + \sum_{i=j+3}^{n-1}r_{i}\underline{xa^{2}P(j,i-j-3)x^{2}a} 
\end{equation}
$$
+ \underline{xa^{2}Q(j,n-j-3)x^{2}a} + \sum_{i=j+3}^{n}r_{i}  ( \underline{xaxP(j,i-j-3)axa} + \underline{x^{2}aP(j,i-j-3)axa} ) 
$$
$$
+    \sum_{i=j+4}^{n}r_{i}  ( \underline{xaxP(j+1,i-j-4)x^{2}a} +  x^{2}aP(j+1,i-j-4)x^{2}a +  
$$
$$
\underline{x^{3}P(j+1,i-j-4)axa} ) + \sum_{i=j+5}^{n}r_{i}\underline{x^{3}P(j+2,i-j-5)x^{2}a}   ) + r_{j+2}\underline{x^{2}a^{n+1}} 
$$
and
\begin{equation}
\label{j10}
x(a^{j+2}x^{n-j-2})ax \rightarrow - ( \sum_{i=j+2}^{n}r_{i} \underline{xa^{2}P(j-1,i-j-2)a^{2}x} + \sum_{i=j+3}^{n-1}r_{i}\underline{xa^{2}P(j,i-j-3)xax} 
\end{equation}
$$
 + \underline{xa^{2}Q(j,n-j-3)xax} +   \sum_{i=j+3}^{n}r_{i}  ( \underline{xaxP(j,i-j-3)a^{2}x} + \underline{x^{2}aP(j,i-j-3)a^{2}x} )   
$$
$$
+ \sum_{i=j+4}^{n}r_{i}  ( \underline{xaxP(j+1,i-j-4)xax} +   \underline{x^{2}aP(j+1,i-j-4)xax} +  
$$
$$
\underline{x^{3}P(j+1,i-j-4)a^{2}x} )+  \sum_{i=j+5}^{n}r_{i}\underline{x^{3}P(j+2,i-j-5)xax}   ) + r_{j+2}\underline{xaxa^{n}} 
$$
respectively.
Similarly, expand $\sigma_{j+1}$ using (\ref{f}) to get
$$
a^{j+1}x^{n-j-1} \rightarrow -  ( \sum_{i=j+1}^{n-1}r_{i} \underline{a^{2}P(j-2,i-j-1)a} + \underline{a^{2}P(j-2,n-j-1)a} + 
$$
$$
\sum_{i=j+2}^{n-1}r_{i}\underline{a^{2}P(j-1,i-j-2)x} + \underline{a^{2}Q(j-1,n-j-2)x} +   
$$
$$
\sum_{i=j+2}^{n-1}r_{i}  ( \underline{axP(j-1,i-j-2)a} + \underline{xaP(j-1,i-j-2)a} ) +
$$
$$
\underline{axP(j-1,n-j-2)a} + \underline{xaP(j-1,n-j-2)a} +
$$
$$
   \sum_{i=j+3}^{n-1}r_{i}  ( \underline{axP(j,i-j-3)x} +  \underline{xaP(j,i-j-3)x} +  \underline{x^{2}P(j,i-j-3)a} )+
$$
$$
\underline{axP(j,n-j-3)x} +  \underline{xaP(j,n-j-3)x} +  \underline{x^{2}P(j,n-j-3)a} +
$$
$$
 \sum_{i=j+4}^{n-1}r_{i}\underline{x^{2}P(j+1,i-j-4)x} + \underline{x^{2}P(j+1,n-j-4)x}  ) + r_{j+1}\underline{a^{n}} .
$$
$$
= -  ( \sum_{i=j+1}^{n}r_{i} \underline{a^{2}P(j-2,i-j-1)a} + \sum_{i=j+2}^{n-1}r_{i}\underline{a^{2}P(j-1,i-j-2)x} + 
$$
$$
\underline{a^{2}Q(j-1,n-j-2)x} +   \sum_{i=j+2}^{n}r_{i}  ( \underline{axP(j-1,i-j-2)a} + \underline{xaP(j-1,i-j-2)a} ) +
$$
$$
   \sum_{i=j+3}^{n}r_{i}  ( \underline{axP(j,i-j-3)x} +  \underline{xaP(j,i-j-3)x} +  \underline{x^{2}P(j,i-j-3)a} )+
$$
$$
 \sum_{i=j+4}^{n}r_{i}\underline{x^{2}P(j+1,i-j-4)x}   ) + r_{j+1}\underline{a^{n}} .
$$
Thus, premultiplying this by $x$ and postmultiplying by $a^{2}$ and using Lemmas 2.6 and 2.7 to separate reducible and irreducible words, yields
\begin{equation}
\label{j11}
xa^{j+1}x^{n-j-1}a^{2} \rightarrow -  ( \sum_{i=j+1}^{n}r_{i} \underline{xa^{2}P(j-2,i-j-1)a^{3}} + \sum_{i=j+2}^{n-1}r_{i}\underline{xa^{2}P(j-1,i-j-2)xa^{2}} + 
\end{equation}
$$
\underline{xa^{2}Q(j-1,n-j-2)xa^{2}} +   \sum_{i=j+2}^{n}r_{i}  ( \underline{xaxP(j-1,i-j-2)a^{3}} + \underline{x^{2}aP(j-1,i-j-2)a^{3}} ) 
$$
$$
+   \sum_{i=j+3}^{n}r_{i}  ( \underline{xaxP(j,i-j-3)xa^{2}} +  \underline{x^{2}aP(j,i-j-3)xa^{2}} +  \underline{x^{3}P(j,i-j-3)a^{3}} )+
$$
$$
 \sum_{i=j+4}^{n}r_{i}\underline{x^{3}P(j+1,i-j-4)xa^{2}}   ) + r_{j+1}\underline{xa^{n+2}} .
$$
Similarly, expand $\sigma_{j+1}$ using (\ref{g}) to get
$$
a^{j+1}x^{n-j-1} \rightarrow - ( \sum_{i=j+1}^{n-1}r_{i} \underline{aP(j-2,i-j-1)a^{2}} +\underline{aP(j-2,n-j-1)a^{2}} + 
$$
$$
\sum_{i=j+2}^{n-1}r_{i}(\underline{xP(j-1,i-j-2)a^{2}} + \underline{aP(j-1,i-j-2)ax} +  \underline{aP(j-1,i-j-2)xa} )  +
$$
$$
  \underline{xP(j-1,n-j-2)a^{2}} + \underline{aP(j-1,n-j-2)ax} +  \underline{aP(j-1,n-j-2)xa} + 
$$
$$
\sum_{i=j+3}^{n-1}r_{i} ( \underline{xP(j,i-j-3)ax}  +  \underline{xP(j,i-j-3)xa} ) + \underline{xP(j,n-j-3)ax}  +
$$
$$
  +  \underline{xP(j,n-j-3)xa} + \sum_{i=j+3}^{n-1}r_{i} \underline{aP(j,i-j-3)x^{2}} + \underline{aQ(j,n-j-3)x^{2}} + 
$$
$$
 \sum_{i=j+4}^{n-1}r_{i}  \underline{xP(j+1,i-j-4)x^{2}} + \underline{xP(j+1,n-j-4)x^{2}} ) + r_{j+1}\underline{a^{n}} 
$$
$$
=- ( \sum_{i=j+1}^{n}r_{i} \underline{aP(j-2,i-j-1)a^{2}} + \sum_{i=j+2}^{n}r_{i}(\underline{xP(j-1,i-j-2)a^{2}} +
$$
$$
 \underline{aP(j-1,i-j-2)ax} +  \underline{aP(j-1,i-j-2)xa} )  + \sum_{i=j+3}^{n}r_{i} ( \underline{xP(j,i-j-3)ax}  +
$$
$$
  \underline{xP(j,i-j-3)xa} ) +  \sum_{i=j+3}^{n-1}r_{i} \underline{aP(j,i-j-3)x^{2}} + \underline{aQ(j,n-j-3)x^{2}} + 
$$
$$
 \sum_{i=j+4}^{n}r_{i}  \underline{xP(j+1,i-j-4)x^{2}}  ) + r_{j+1}\underline{a^{n}}.
$$
Premultiplying this by $ax$ and postmultiplying by $a$ and using Lemmas 2.6 and 2.7 to separate reducible and irreducible words, yields
\begin{equation}
\label{j12}
axa^{j+1}x^{n-j-1}a \rightarrow - ( \sum_{i=j+1}^{n}r_{i} \underline{axaP(j-2,i-j-1)a^{3}} + \sum_{i=j+2}^{n}r_{i}(\underline{ax^{2}P(j-1,i-j-2)a^{3}} 
\end{equation}
$$
+ \underline{axaP(j-1,i-j-2)axa} +  \underline{axaP(j-1,i-j-2)xa^{2}} )  + \sum_{i=j+3}^{n}r_{i} ( \underline{ax^{2}P(j,i-j-3)axa} 
$$
$$
 + \underline{ax^{2}P(j,i-j-3)xa^{2}} ) +  \sum_{i=j+3}^{n-1}r_{i} \underline{axaP(j,i-j-3)x^{2}a} + \underline{axaQ(j,n-j-3)x^{2}a} 
$$
$$
 + \sum_{i=j+4}^{n}r_{i}  \underline{ax^{2}P(j+1,i-j-4)x^{2}a}  ) + r_{j+1}\underline{axa^{n+1}} 
$$
The reducible word $ - x^{2}a^{j+2}x^{n-j-2}a$ in (\ref{j9}) has an opposite sign with $x^{2}a^{j+2}x^{n-j-2}a$, the term (i) in the list above so they cancel out. Thus, the reduction process ends here and when we substitute (\ref{j2})-(\ref{j12}) into (\ref{j1}), we get
\begin{equation}
\label{j13}
a^{3}\omega_{j} \rightarrow (  \sum_{i=j+1}^{n}r_{i} \underline{a^{2}xP(j-2,i-j-1)a^{3}} +  \sum_{i=j+2}^{n}r_{i} ( \underline{a^{2}xP(j,i-j-2)(a^{2}x + axa +xa^{2})}   + 
\end{equation}
$$ 
 \underline{axaP(j,i-j-2)a^{2}x} ) +  \sum_{i=j+3}^{n}r_{i} ( \underline{a^{2}xP(j,i-j-3) (ax^{2} + xax + x^{2}a)}  +  \underline{axaP(j,i-j-3)(ax^{2}+xax)} 
$$
$$
+  \underline{xa^{2}P(j,i-j-3) ax^{2}} + \underline{ax^{2}P(j,i-j-3)a^{2}x} ) +  \sum_{i=j+4}^{n}r_{i}(\underline{ax^{2}P(j+1, i-j-4) (ax^{2} + xax)}    
$$
$$
+ \underline{(xax + x^{2}a)P(j+1, i-j-4) ax^{2}})  +     \sum_{i=j+5}^{n}r_{i}  \underline{x^{3}P(j+2,i-j-5) ax^{2}}  )
$$
$$
  -  (   r_{j+1} \underline{a^{2}xa^{n}} + r_{j+2} \underline{ax^{2}a^{n}} +  \sum_{i=j+3}^{n-1}r_{i}  a^{3}P(j,i-j)x^{3} + \underline{a^{3}Q(j,n-j)x^{3}}  ).
$$

Similarly, use (\ref{e}) to expand $\sigma_{j+3}$ as
$$
\omega_{j+3} \rightarrow - ( \sum_{i=j+3}^{n-1}r_{i}  \underline{a^{3}P(j,i-j-3)} + \underline{a^{3}Q(j,n-j-3)} + \sum_{i=j+4}^{n-1}r_{i} \underline{(a^{2}x + axa + xa^{2})P(j+1,i-j-4)} 
$$
$$
+ \underline{(a^{2}x + axa + xa^{2})P(j+1,n-j-4)}  +  \sum_{i=j+5}^{n}r_{i} \underline{(ax^{2} + xax + x^{2}a)P(j+2,i-j-5)}  
$$
$$
+ \sum_{i=j+6}^{n-1}r_{i} \underline{x^{3}P(j+3, i-j-6) )} + \underline{x^{3}P(j+3, n-j-6) )} ) + r_{j+3}\underline{a^{n}}
$$
$$
= - ( \sum_{i=j+3}^{n-1}r_{i}  \underline{a^{3}P(j,i-j-3)} + \underline{a^{3}Q(j,n-j-3)} + \sum_{i=j+4}^{n}r_{i} \underline{(a^{2}x + axa + xa^{2})P(j+1,i-j-4)} 
$$
$$
+  \sum_{i=j+5}^{n}r_{i} \underline{(ax^{2} + xax + x^{2}a)P(j+2,i-j-5)} + \sum_{i=j+6}^{n}r_{i} \underline{x^{3}P(j+3, i-j-6) )} ) + r_{j+3}\underline{a^{n}}.
$$
Post multiplying this by $x^{3}$ and using Lemmas 2.6 and 2.7 to separate reducible and irreducible words, yields
\begin{equation}
\label{j14}
\omega_{j+3}x^{3} \rightarrow - (  \sum_{i=j+3}^{n-1}r_{i}  a^{3}P(j,i-j-3)x^{3} + \underline{a^{3}Q(j,n-j-3)x^{3}} + 
\end{equation}
$$
\sum_{i=j+4}^{n}r_{i} (a^{2}x + axa + xa^{2})P(j+1,i-j-4)x^{3} +
$$
$$
  \sum_{i=j+5}^{n}r_{i} (ax^{2} + xax + x^{2}a)P(j+2,i-j-5)x^{3} + 
$$
$$
\sum_{i=j+6}^{n}r_{i} x^{3}P(j+3, i-j-6) )x^{3} ) + r_{j+3}\underline{a^{n+3}}.
$$
The reducible words in (\ref{j14}) of length $n+3$ are:
\begin{enumerate}
\item $x^{2}a(a^{j+2}x^{n-j-2}) \in  x^{2}aP(j+2,n-j-5)x^{3}$
\item $xax(a^{j+2}x^{n-j-2}) \in  xaxP(j+2,n-j-5)x^{3}$
\item $ax^{2}(a^{j+2}x^{n-j-2}) \in  ax^{2}P(j+2,n-j-5)x^{3}$
\item $xa^{2}(a^{j+1}x^{n-j-1}) \in xa^{2}P(j+1,n-j-4)x^{3}$
\item $axa(a^{j+1}x^{n-j-1}) \in axaP(j+1,n-j-4)x^{3}$
\item $a^{2}x(a^{j+1}x^{n-j-1}) \in a^{2}xP(j+1,n-j-4)x^{3}$
\item $x^{3}a^{j+3}x^{n-j-3} \in x^{3}P(j+3, n-j-6)x^{3}$.
\end{enumerate}
Using (\ref{d}), expand $\sigma_{j+2}$ as
$$
a^{j+2}x^{n-j-2} \rightarrow - ( \sum_{i=j+2}^{n-1}r_{i} \underline{P(j-1,i-j-2)a^{3}} + \underline{P(j-1,n-j-2)a^{3}} +  
$$
$$
\sum_{i=j+3}^{n-1}r_{i}  \underline{P(j,i-j-3)(a^{2}x + axa + xa^{2})} + \underline{P(j,n-j-3)(a^{2}x + axa + xa^{2})} +    
$$
$$
\sum_{i=j+4}^{n-1}r_{i} \underline{P(j+1,i-j-4)(ax^{2} + xax + x^{2}a)} + \underline{P(j+1,n-j-4)(ax^{2} + xax + x^{2}a)} +  
$$
$$
\sum_{i=j+5}^{n-1}r_{i} \underline{P(j+2,i-j-5)x^{3}}  +  \underline{Q(j+2,n-j-5)x^{3}} ) +   r_{j+2}\underline{a^{n}}
$$
$$
= - ( \sum_{i=j+2}^{n}r_{i} \underline{P(j-1,i-j-2)a^{3}} + \sum_{i=j+3}^{n}r_{i}  \underline{P(j,i-j-3)(a^{2}x + axa + xa^{2})} +    
$$
$$
\sum_{i=j+4}^{n}r_{i} \underline{P(j+1,i-j-4)(ax^{2} + xax + x^{2}a)} +  \sum_{i=j+5}^{n-1}r_{i} \underline{P(j+2,i-j-5)x^{3}}  
$$
$$
 +  \underline{Q(j+2,n-j-5)x^{3}} ) +   r_{j+2}\underline{a^{n}}.
$$
Thus, premultiplying this by $x^{2}a$, $xax$ and $ax^{2}$, and using Lemmas 2.6 and 2.7 to separate reducible and irreducible words, yield
\begin{equation}
\label{j15}
-x^{2}a(a^{j+2}x^{n-j-2}) \rightarrow ( \sum_{i=j+2}^{n}r_{i} \underline{x^{2}aP(j-1,i-j-2)a^{3}} + \sum_{i=j+3}^{n}r_{i}  \underline{x^{2}aP(j,i-j-3)(a^{2}x + axa + xa^{2})}   
\end{equation}
$$
+ \sum_{i=j+4}^{n}r_{i} x^{2}aP(j+1,i-j-4)(ax^{2} + xax + x^{2}a) +  \sum_{i=j+5}^{n-1}r_{i} \underline{x^{2}aP(j+2,i-j-5)x^{3}}   
$$
$$
+  \underline{x^{2}aQ(j+2,n-j-5)x^{3}} ) -   r_{j+2}\underline{x^{2}a^{n+1}},
$$

\begin{equation}
\label{j16}
-xax(a^{j+2}x^{n-j-2}) \rightarrow ( \sum_{i=j+2}^{n}r_{i} \underline{xaxP(j-1,i-j-2)a^{3}} + \sum_{i=j+3}^{n}r_{i}  \underline{xaxP(j,i-j-3)(a^{2}x + axa + xa^{2})}  
\end{equation}
$$
+ \sum_{i=j+4}^{n}r_{i} \underline{xaxP(j+1,i-j-4)(ax^{2} + xax + x^{2}a)} +  \sum_{i=j+5}^{n-1}r_{i} \underline{xaxP(j+2,i-j-5)x^{3}}  
$$
$$
 +  \underline{xaxQ(j+2,n-j-5)x^{3}} ) -   r_{j+2}\underline{xaxa^{n}}
$$
and
\begin{equation}
\label{j17}
-ax^{2}(a^{j+2}x^{n-j-2}) \rightarrow ( \sum_{i=j+2}^{n}r_{i} \underline{ax^{2}P(j-1,i-j-2)a^{3}} + \sum_{i=j+3}^{n}r_{i}  \underline{ax^{2}P(j,i-j-3)(a^{2}x + axa + xa^{2})}    
\end{equation}
$$
+ \sum_{i=j+4}^{n}r_{i} \underline{ax^{2}P(j+1,i-j-4)(ax^{2} + xax + x^{2}a)} +  \sum_{i=j+5}^{n-1}r_{i} \underline{ax^{2}P(j+2,i-j-5)x^{3}}  
$$
$$
+  \underline{ax^{2}Q(j+2,n-j-5)x^{3}} ) -   r_{j+2}\underline{ax^{2}a^{n}}
$$
respectively.
Again, using (\ref{d}), expand $\sigma_{j+1}$ as
$$
a^{j+1}x^{n-j-1} \rightarrow - ( \sum_{i=j+1}^{n-1}r_{i} \underline{P(j-2,i-j-1)a^{3}} + \underline{P(j-2,n-j-1)a^{3}} + 
$$
$$
\sum_{i=j+2}^{n-1}r_{i} \underline{P(j-1,i-j-2)(a^{2}x + axa + xa^{2})} + \underline{P(j-1,n-j-2)(a^{2}x + axa + xa^{2})} + 
$$
$$
\sum_{i=j+3}^{n-1}r_{i} \underline{P(j,i-j-3)(ax^{2} + xax + x^{2}a )}  + \underline{P(j,n-j-3)(ax^{2} + xax + x^{2}a )}  + 
$$
$$
 \sum_{i=j+4}^{n-1}r_{i}\underline{P(j+1,i-j-4) x^{3}} + \underline{Q(j+1,n-j-4) x^{3}} ) + r_{j+1}\underline{a^{n}} .
$$
$$
= - ( \sum_{i=j+1}^{n}r_{i} \underline{P(j-2,i-j-1)a^{3}} + \sum_{i=j+2}^{n}r_{i} \underline{P(j-1,i-j-2)(a^{2}x + axa + xa^{2})} + 
$$
$$
\sum_{i=j+3}^{n}r_{i} \underline{P(j,i-j-3)(ax^{2} + xax + x^{2}a )}  + \sum_{i=j+4}^{n-1}r_{i}\underline{P(j+1,i-j-4) x^{3}}  
$$
$$
+ \underline{Q(j+1,n-j-4) x^{3}} )+ r_{j+1}\underline{a^{n}} .
$$
Hence, premultiplying this by $xa^{2}$, $axa$ and $a^{2}x$, and using Lemmas 2.6 and 2.7 to separate reducible and irreducible words, yield
\begin{equation}
\label{j18}
- xa^{2}(a^{j+1}x^{n-j-1}) \rightarrow ( \sum_{i=j+1}^{n}r_{i} \underline{xa^{2}P(j-2,i-j-1)a^{3}} + \sum_{i=j+2}^{n}r_{i} xa^{2}P(j-1,i-j-2)(a^{2}x + axa + xa^{2})  
\end{equation}
$$
+ \sum_{i=j+3}^{n}r_{i} xa^{2}P(j,i-j-3)(ax^{2} + xax + x^{2}a )  + \sum_{i=j+4}^{n-1}r_{i}\underline{xa^{2}P(j+1,i-j-4) x^{3}} 
$$
$$
+ \underline{xa^{2}Q(j+1,n-j-4) x^{3}} )  -  r_{j+1}\underline{xa^{n+2}} ,
$$

\begin{equation}
\label{j19}
- axa(a^{j+1}x^{n-j-1}) \rightarrow ( \sum_{i=j+1}^{n}r_{i} \underline{ axaP(j-2,i-j-1)a^{3}} + \sum_{i=j+2}^{n}r_{i} \underline{ axaP(j-1,i-j-2)(a^{2}x + axa + xa^{2})}  
\end{equation}
$$
+ \sum_{i=j+3}^{n}r_{i} axaP(j,i-j-3)(ax^{2} + xax + x^{2}a )  + \sum_{i=j+4}^{n-1}r_{i}\underline{ axaP(j+1,i-j-4) x^{3}} 
$$
$$
 + \underline{ axaQ(j+1,n-j-4) x^{3}} )  -  r_{j+1}\underline{axa^{n+1}}
$$
and
\begin{equation}
\label{j20}
- a^{2}x(a^{j+1}x^{n-j-1}) \rightarrow ( \sum_{i=j+1}^{n}r_{i} \underline{a^{2}xP(j-2,i-j-1)a^{3}} + \sum_{i=j+2}^{n}r_{i} \underline{a^{2}xP(j-1,i-j-2)(a^{2}x + axa + xa^{2})}  
\end{equation}
$$
+ \sum_{i=j+3}^{n}r_{i} \underline{a^{2}xP(j,i-j-3)(ax^{2} + xax + x^{2}a )}  + \sum_{i=j+4}^{n-1}r_{i}\underline{a^{2}xP(j+1,i-j-4) x^{3}} 
$$
$$
 + \underline{a^{2}xQ(j+1,n-j-4) x^{3}} )  -  r_{j+1}\underline{a^{2}xa^{n}} 
$$
respectively.
Also, use (\ref{d}) to expand $\sigma_{j+3}$ as
$$
a^{j+3}x^{n-j-3} \rightarrow - ( \sum_{i=j+3}^{n-1}r_{i} \underline{P(j,i-j-3)a^{3}} + \underline{P(j,n-j-3)a^{3}} + 
$$
$$
\sum_{i=j+4}^{n-1}r_{i}\underline{P(j+1,i-j-4) ( a^{2}x + axa + xa^{2})} +  \underline{P(j+1,n-j-4) ( a^{2}x + axa + xa^{2})} +  
$$
$$
 \sum_{i=j+5}^{n-1}r_{i} \underline{P(j+2,i-j-5) (ax^{2} + xax + x^{2}a)} + \underline{P(j+2,n-j-5) (ax^{2} + xax + x^{2}a)} +  
$$
$$
\sum_{i=j+6}^{n-1}r_{i}  \underline{P(j+3,i-j-6) x^{3}}  +  \underline{Q(j+3,n-j-6) x^{3}} ) + r_{j+3}\underline{a^{n}}
$$
$$
= - ( \sum_{i=j+3}^{n}r_{i} \underline{P(j,i-j-3)a^{3}} + \sum_{i=j+4}^{n}r_{i}\underline{P(j+1,i-j-4) ( a^{2}x + axa + xa^{2})} +  
$$
$$
 \sum_{i=j+5}^{n}r_{i} \underline{P(j+2,i-j-5) (ax^{2} + xax + x^{2}a)} +  \sum_{i=j+6}^{n-1}r_{i}  \underline{P(j+3,i-j-6) x^{3}}  
$$
$$
+  \underline{Q(j+3,n-j-6) x^{3}} ) + r_{j+3}\underline{a^{n}}
$$
Premultiplying this by $x^{3}$ and using Lemmas 2.6 and 2.7 to separate reducible and irreducible words, yields
\begin{equation}
\label{j21}
- x^{3}a^{j+3}x^{n-j-3} \rightarrow ( \sum_{i=j+3}^{n}r_{i} \underline{x^{3}P(j,i-j-3)a^{3}} + \sum_{i=j+4}^{n}r_{i}\underline{x^{3}P(j+1,i-j-4) ( a^{2}x + axa + xa^{2})}   
\end{equation}
$$
+ \sum_{i=j+5}^{n}r_{i} \underline{x^{3}P(j+2,i-j-5) (ax^{2} + xax + x^{2}a)} +  \sum_{i=j+6}^{n-1}r_{i}  \underline{x^{3}P(j+3,i-j-6) x^{3}}  
$$
$$
+  \underline{x^{3}Q(j+3,n-j-6) x^{3}} ) - r_{j+3}\underline{x^{3}a^{n}}.
$$
The words in (\ref{j15})-(\ref{j21}) of length $n+3$ which are reducible are as follows:
\begin{enumerate}
\item $x^{2}a^{j+2}x^{n-j-2}a \in x^{2}aP(j+1, n-j-4)x^{2}a$
\item $x(a^{j+2}x^{n-j-2})xa \in  xa^{2}P(j,n-j-3)x^{2}a$
\item $x(a^{j+2}x^{n-j-2})ax \in  xa^{2}P(j,n-j-3)xax$
\item $xa^{j+1}x^{n-j-1}a^{2} \in  xa^{2}P(j,n-j-3)xa^{2}$
\item $axa^{j+1}x^{n-j-1}a \in axaP(j,n-j-3)x^{2}a $.
\end{enumerate}
Recall from (\ref{j9}),  (\ref{j10}), (\ref{j11}) and (\ref{j12}) that 
$$
x(a^{j+2}x^{n-j-2})xa \rightarrow - ( \sum_{i=j+2}^{n}r_{i} \underline{xa^{2}P(j-1,i-j-2)axa} + \sum_{i=j+3}^{n-1}r_{i}\underline{xa^{2}P(j,i-j-3)x^{2}a} +
$$
$$
 \underline{xa^{2}Q(j,n-j-3)x^{2}a} + \sum_{i=j+3}^{n}r_{i}  ( \underline{xaxP(j,i-j-3)axa} + \underline{x^{2}aP(j,i-j-3)axa} ) +
$$
$$
    \sum_{i=j+4}^{n}r_{i}  ( \underline{xaxP(j+1,i-j-4)x^{2}a} + x^{2}aP(j+1,i-j-4)x^{2}a +  \underline{x^{3}P(j+1,i-j-4)axa} )
$$
$$
+ \sum_{i=j+5}^{n}r_{i}\underline{x^{3}P(j+2,i-j-5)x^{2}a}   ) + r_{j+2}\underline{x^{2}a^{n+1}} ,
$$

$$
x(a^{j+2}x^{n-j-2})ax \rightarrow - ( \sum_{i=j+2}^{n}r_{i} \underline{xa^{2}P(j-1,i-j-2)a^{2}x} + \sum_{i=j+3}^{n-1}r_{i}\underline{xa^{2}P(j,i-j-3)xax} + 
$$
$$
 \underline{xa^{2}Q(j,n-j-3)xax} +   \sum_{i=j+3}^{n}r_{i}  ( \underline{xaxP(j,i-j-3)a^{2}x} + \underline{x^{2}aP(j,i-j-3)a^{2}x} ) +  
$$
$$
\sum_{i=j+4}^{n}r_{i}  ( \underline{xaxP(j+1,i-j-4)xax} +   \underline{x^{2}aP(j+1,i-j-4)xax} +  \underline{x^{3}P(j+1,i-j-4)a^{2}x} )+ 
$$
$$
\sum_{i=j+5}^{n}r_{i}\underline{x^{3}P(j+2,i-j-5)xax}   ) + r_{j+2}\underline{xaxa^{n}} ,
$$

$$
xa^{j+1}x^{n-j-1}a^{2} \rightarrow -  ( \sum_{i=j+1}^{n}r_{i} \underline{xa^{2}P(j-2,i-j-1)a^{3}} + \sum_{i=j+2}^{n-1}r_{i}\underline{xa^{2}P(j-1,i-j-2)xa^{2}} + 
$$
$$
\underline{xa^{2}Q(j-1,n-j-2)xa^{2}} +   \sum_{i=j+2}^{n}r_{i}  ( \underline{xaxP(j-1,i-j-2)a^{3}} + \underline{x^{2}aP(j-1,i-j-2)a^{3}} ) +
$$
$$
   \sum_{i=j+3}^{n}r_{i}  ( \underline{xaxP(j,i-j-3)xa^{2}} +  \underline{x^{2}aP(j,i-j-3)xa^{2}} +  \underline{x^{3}P(j,i-j-3)a^{3}} )+
$$
$$
 \sum_{i=j+4}^{n}r_{i}\underline{x^{3}P(j+1,i-j-4)xa^{2}}   ) + r_{j+1}\underline{xa^{n+2}} 
$$
and
$$
axa^{j+1}x^{n-j-1}a \rightarrow - ( \sum_{i=j+1}^{n}r_{i} \underline{axaP(j-2,i-j-1)a^{3}} + \sum_{i=j+2}^{n}r_{i}(\underline{ax^{2}P(j-1,i-j-2)a^{3}} +
$$
$$
 \underline{axaP(j-1,i-j-2)axa} +  \underline{axaP(j-1,i-j-2)xa^{2}} )  + \sum_{i=j+3}^{n}r_{i} ( \underline{ax^{2}P(j,i-j-3)axa}  +
$$
$$
  \underline{ax^{2}P(j,i-j-3)xa^{2}} ) +  \sum_{i=j+3}^{n-1}r_{i} \underline{axaP(j,i-j-3)x^{2}a} + \underline{axaQ(j,n-j-3)x^{2}a} + 
$$
$$
 \sum_{i=j+4}^{n}r_{i}  \underline{ax^{2}P(j+1,i-j-4)x^{2}a}  ) + r_{j+1}\underline{axa^{n+1}} 
$$
The reducible word $-x^{2}a^{j+2}x^{n-j-2}a$ in (\ref{j9}) appears with a sign opposite to that of $x^{2}a^{j+2}x^{n-j-2}a$, the term (i) in the list above which makes them cancel out. Hence, the reduction process ends here and when we substitute (\ref{j9})-(\ref{j11}) and (\ref{j15})-(\ref{j21}) into (\ref{j14}), we get
\begin{equation}
\label{j26}
\omega_{j+3}x^{3} \rightarrow ( \sum_{i=j+1}^{n}r_{i} \underline{a^{2}xP(j-2,i-j-1)a^{3}} +  \sum_{i=j+2}^{n}r_{i} ( \underline{a^{2}xP(j,i-j-3)(a^{2}x + axa +xa^{2})}   + 
\end{equation}
$$ 
 \underline{axaP(j,i-j-3)a^{2}x} ) +  \sum_{i=j+3}^{n}r_{i} ( \underline{a^{2}xP(j,i-j-3) (ax^{2} + xax + x^{2}a)}  +  \underline{axaP(j,i-j-3)(ax^{2}+xax)} 
$$
$$
+  \underline{xa^{2}P(j,i-j-3) ax^{2}} + \underline{ax^{2}P(j,i-j-3)a^{2}x} ) +  \sum_{i=j+4}^{n}r_{i}(\underline{ax^{2}P(j+1, i-j-4) (ax^{2} + xax)}    
$$
$$
+ \underline{(xax + x^{2}a)P(j+1, i-j-4) ax^{2}})  +     \sum_{i=j+5}^{n}r_{i}  \underline{x^{3}P(j+2,i-j-5) ax^{2}}  )
$$
$$
  -  (   r_{j+1} \underline{a^{2}xa^{n}} + r_{j+2} \underline{ax^{2}a^{n}} +  \sum_{i=j+3}^{n-1}r_{i}  a^{3}P(j,i-j)x^{3} + \underline{a^{3}Q(j,n-j)x^{3}}  ).
$$

Comparing (\ref{j14}) and (\ref{j26}), we conclude that the overlap ambiguity $\{ \omega_{j}, \omega_{j+3} \}$ is resolvable for $3 \leq j < n-4$

\noindent (d) Let $j=n-4$ and consider the overlap ambiguity $\{ \omega_{n-4}, \omega_{n-1} \}$. Using (\ref{d}), expand $\sigma_{n-4}$ as
$$
\omega_{n-4} \rightarrow - ( \sum_{i=n-4}^{n-1}r_{i} \underline{P(n-7,i-n+4)a^{3}} + \underline{P(n-7,4)a^{3}} + 
$$
$$
 \sum_{i=n-3}^{n-1}r_{i}\underline{P(n-6,i-n+3)(a^{2}x + axa + xa^{2})}   +  \underline{P(n-6,3)(a^{2}x + axa + xa^{2})}   +  
$$
$$
\sum_{i=n-2}^{n-1}r_{i}  \underline{P(n-5,i-n+2)(ax^{2} + xax + x^{2}a)}  +  \underline{P(n-5,2)(ax^{2} + xax + x^{2}a)}  + 
$$
$$
r_{n-1}\underline{P(n-4,0)x^{3}} +  \underline{Q(n-4,1)x^{3}}   ) + r_{n-4}\underline{a^{n}}
$$
$$
= - ( \sum_{i=n-4}^{n}r_{i} \underline{P(n-7,i-n+4)a^{3}} + \sum_{i=n-3}^{n}r_{i}\underline{P(n-6,i-n+3)(a^{2}x + axa + xa^{2})}   +  
$$
$$
\sum_{i=n-2}^{n}r_{i}  \underline{P(n-5,i-n+2)(ax^{2} + xax + x^{2}a)}  +  r_{n-1}\underline{P(n-4,0)x^{3}} +  \underline{Q(n-4,1)x^{3}}   ) + r_{n-4}\underline{a^{n}}
$$
Premultiply this by $a^{3}$ and use Lemmas 2.6 and 2.7 to separate reducible and irreducible words, yields
\begin{equation}
\label{n1}
a^{3}\omega_{n-4} \rightarrow - ( \sum_{i=n-4}^{n}r_{i} a^{3}P(n-7,i-n+4)a^{3} + \sum_{i=n-3}^{n}r_{i}a^{3}P(n-6,i-n+3)(a^{2}x + axa + xa^{2})   
\end{equation}
$$
+ \sum_{i=n-2}^{n}r_{i}  a^{3}P(n-5,i-n+2)(ax^{2} + xax + x^{2}a)  +  r_{n-1}a^{3}P(n-4,0)x^{3} 
$$
$$
+  \underline{a^{3}Q(n-4,1)x^{3}}   ) + r_{n-4}\underline{a^{n+3}}
$$
The reducible words above of length $n+3$ are as follows:
\begin{enumerate}
\item $a^{n-4}x^{4}a^{3} \in a^{3}P(n-7,4)a^{3}$
\item $(a^{n-2}x^{2})x^{2}a \in a^{3}P(n-5,2)x^{2}a $
\item $(a^{n-2}x^{2})xax \in a^{3}P(n-5,2)xax $
\item $(a^{n-2}x^{2})ax^{2} \in a^{3}P(n-5,2)ax^{2} $
\item $(a^{n-3}x^{3})xa^{2} \in a^{3}P(n-6,3)xa^{2} $
\item $(a^{n-3}x^{3})axa \in a^{3}P(n-6,3)axa $
\item $(a^{n-3}x^{3})a^{2}x \in a^{3}P(n-6,3)a^{2}x $.
\end{enumerate}
Using (\ref{e}) expand $\sigma_{n-4}$ as
$$
a^{n-4}x^{4} \rightarrow - ( \sum_{i=n-4}^{n-1}r_{i} \underline{a^{3}P(n-7,i-n+4)} + \underline{a^{3}Q(n-7,4)} + 
$$
$$
\sum_{i=n-3}^{n-1}r_{i}\underline{(a^{2}x + axa + x^{2}a)P(n-6,i-n+3)}    + \underline{(a^{2}x + axa + x^{2}a)P(n-6,3)}    +
$$
$$
+ \sum_{i=n-2}^{n-1}r_{i}  \underline{(ax^{2} + xax + x^{2}a)P(n-5,i-n+2)} + \underline{(ax^{2} + xax + x^{2}a)P(n-5,2)} +
$$
$$
 \sum_{i=n-1}^{n}r_{i}\underline{x^{3}P(n-4,i-n+1)} ) + r_{n-4}\underline{a^{n}}
$$
$$
= - ( \sum_{i=n-4}^{n-1}r_{i} \underline{a^{3}P(n-7,i-n+4)} + \underline{a^{3}Q(n-7,4)} + \sum_{i=n-3}^{n}r_{i}\underline{(a^{2}x + axa + x^{2}a)P(n-6,i-n+3)}    
$$
$$
+ \sum_{i=n-2}^{n}r_{i}  \underline{(ax^{2} + xax + x^{2}a)P(n-5,i-n+2)} + \sum_{i=n-1}^{n}r_{i}\underline{x^{3}P(n-4,i-n+1)} ) + r_{n-4}\underline{a^{n}}.
$$
Post multiplying this by $a^{3}$ and using Lemmas 2.6 and 2.7 to separate reducible and irreducible words, yields
\begin{equation}
\label{n2}
-a^{n-4}x^{4}a^{3} \rightarrow ( \sum_{i=n-4}^{n-1}r_{i} \underline{a^{3}P(n-7,i-n+4)a^{3}} + \underline{a^{3}Q(n-7,4)a^{3}} + 
\end{equation}
$$
\sum_{i=n-3}^{n}r_{i}\underline{(a^{2}x + axa + x^{2}a)P(n-6,i-n+3)a^{3}}     + \sum_{i=n-2}^{n}r_{i}  \underline{(ax^{2} + xax + x^{2}a)P(n-5,i-n+2)a^{3}}
$$
$$
 + \sum_{i=n-1}^{n}r_{i}\underline{x^{3}P(n-4,i-n+1)a^{3}} ) + r_{n-4}\underline{a^{n+3}}.
$$
Use (\ref{e}) to expand $\sigma_{n-2}$ as
$$
a^{n-2}x^{2} \rightarrow - ( \sum_{i=n-2}^{n-1}r_{i}   \underline{a^{3}P(n-5,i-n+2)} + \underline{a^{3}Q(n-5,2)} + \sum_{i=n-1}^{n}r_{i}\underline{(a^{2}x + axa + xa^{2})P(n-4,i-n+1)} 
$$
$$
+  \underline{(ax^{2} + xax + x^{2}a )P(n-3,0)} ) + r_{n-2}\underline{a^{n}} .
$$
Thus, post multiplying this by $x^{2}a$, $xax$ and $ax^{2}$, and using Lemmas 2.6 and 2.7 to separate reducible and irreducible words, yield
\begin{equation}
\label{n3}
- (a^{n-2}x^{2})x^{2}a \rightarrow ( \sum_{i=n-2}^{n-1}r_{i}   a^{3}P(n-5,i-n+2)x^{2}a + \underline{a^{3}Q(n-5,2)x^{2}a} +
\end{equation}
$$
 \sum_{i=n-1}^{n}r_{i} (a^{2}x + axa + xa^{2})P(n-4,i-n+1)x^{2}a  +  \underline{(ax^{2} + xax + x^{2}a )P(n-3,0)x^{2}a} )  -  r_{n-2}\underline{x^{2}a^{n+1}} ,
$$

\begin{equation}
\label{n4}
- (a^{n-2}x^{2})xax \rightarrow ( \sum_{i=n-2}^{n-1}r_{i}   \underline{a^{3}P(n-5,i-n+2)xax} + \underline{a^{3}Q(n-5,2)xax} + 
\end{equation}
$$
\sum_{i=n-1}^{n}r_{i}(a^{2}x + axa + xa^{2})P(n-4,i-n+1)xax   +  \underline{(ax^{2} + xax + x^{2}a )P(n-3,0)xax} ) -  r_{n-2}\underline{xaxa^{n}}
$$
and
\begin{equation}
\label{n5}
- (a^{n-2}x^{2})ax^{2} \rightarrow ( \sum_{i=n-2}^{n-1}r_{i}   \underline{a^{3}P(n-5,i-n+2)ax^{2}} + \underline{a^{3}Q(n-5,2)ax^{2}} + 
\end{equation}
$$
\sum_{i=n-1}^{n}r_{i}\underline{(a^{2}x + axa + xa^{2})P(n-4,i-n+1)ax^{2}} +  (ax^{2} + xax + x^{2}a )P(n-3,0)ax^{2} ) - r_{n-2}\underline{ax^{2}a^{n}} 
$$
respectively.
Also, using (\ref{e}), expand $\sigma_{n-3}$ as
$$
a^{n-3}x^{3}  \rightarrow - ( \sum_{i=n-3}^{n-1}r_{i} \underline{a^{3}P(n-6,i-n+3)} + \underline{a^{3}Q(n-6,3)}+  \underline{x^{3}P(n-3,0)} + 
$$
$$
\sum_{i=n-2}^{n-1}r_{i}\underline{(a^{2}x + axa + xa^{2})P(n-5,i-n+2)} + \underline{(a^{2}x + axa + xa^{2})P(n-5,2)} +  
$$
$$
\sum_{i=n-1}^{n}r_{i} \underline{(ax^{2} + xax + x^{2}a) P(n-4,i-n+1)}) + r_{n-3}\underline{a^{n}}.
$$
$$
= - ( \sum_{i=n-3}^{n-1}r_{i} \underline{a^{3}P(n-6,i-n+3)} + \underline{a^{3}Q(n-6,3)}+  \underline{x^{3}P(n-3,0)} + 
$$
$$
\sum_{i=n-2}^{n}r_{i}\underline{(a^{2}x + axa + xa^{2})P(n-5,i-n+2)} +  \sum_{i=n-1}^{n}r_{i} \underline{(ax^{2} + xax + x^{2}a) P(n-4,i-n+1)})
$$
$$
 + r_{n-3}\underline{a^{n}}.
$$
Hence, post multiplying this by $xa^{2}$, $axa$ and $a^{2}x$, and using Lemmas 2.6 and 2.7 to separate reducible and irreducible words, yield
\begin{equation}
\label{n6}
- (a^{n-3}x^{3})xa^{2}  \rightarrow ( \sum_{i=n-3}^{n-1}r_{i} \underline{a^{3}P(n-6,i-n+3)xa^{2}} + \underline{a^{3}Q(n-6,3)xa^{2}}+  \underline{x^{3}P(n-3,0)xa^{2}} +  
\end{equation}
$$
\sum_{i=n-2}^{n}r_{i}(a^{2}x + axa + xa^{2})P(n-5,i-n+2)xa^{2} +  \sum_{i=n-1}^{n}r_{i} \underline{(ax^{2} + xax + x^{2}a) P(n-4,i-n+1)xa^{2}})
$$
$$
 -  r_{n-3}\underline{xa^{n+2}},
$$

\begin{equation}
\label{n7}
- (a^{n-3}x^{3})axa  \rightarrow ( \sum_{i=n-3}^{n-1}r_{i} \underline{a^{3}P(n-6,i-n+3))axa} + \underline{a^{3}Q(n-6,3))axa}+   
\end{equation}
$$
\underline{x^{3}P(n-3,0))axa} + \sum_{i=n-2}^{n}r_{i}\underline{(a^{2}x + axa + xa^{2})P(n-5,i-n+2))axa} +  
$$
$$
\sum_{i=n-1}^{n}r_{i} \underline{(ax^{2} + xax + x^{2}a) P(n-4,i-n+1))axa})  -  r_{n-3}\underline{axa^{n+1}}
$$
and
\begin{equation}
\label{n8}
- (a^{n-3}x^{3})a^{2}x  \rightarrow  ( \sum_{i=n-3}^{n-1}r_{i} \underline{a^{3}P(n-6,i-n+3)a^{2}x} + \underline{a^{3}Q(n-6,3)a^{2}x}+  
\end{equation}
$$
x^{3}P(n-3,0)a^{2}x +  \sum_{i=n-2}^{n}r_{i}\underline{(a^{2}x + axa + xa^{2})P(n-5,i-n+2)a^{2}x} +  
$$
$$
\sum_{i=n-1}^{n}r_{i} \underline{(ax^{2} + xax + x^{2}a) P(n-4,i-n+1)a^{2}x}) -  r_{n-3}\underline{a^{2}xa^{n}}
$$
respectively.
The following words of length $n+3$ from (\ref{n2})-(\ref{n8}) are reducible:
\begin{enumerate}
\item $x^{2}a^{n-2}x^{2}a$ on right hand side of the relation for $- (a^{n-2}x^{2})x^{2}a$.
\item $x(a^{n-2}x^{2})xa \in xa^{2}P(n-4,1)x^{2}a $
\item $axa^{n-3}x^{3}a \in axaP(n-4,1)x^{2}a$
\item $xa^{n-2}x^{2}ax \in  xa^{2}P(n-4,1)xax $
\item $xa^{n-3}x^{3}a^{2} \in xa^{2}P(n-5,2)xa^{2}$
\item $x^{3}a^{n-1}x $  on right hand side of the relation for $- (a^{n-3}x^{3})a^{2}x$
\item $x^{2}a(a^{n-2}x^{2})$  on right hand side of the relation for $- (a^{n-2}x^{2})ax^{2}$
\item $xax(a^{n-2}x^{2})$  on right hand side of the relation for $- (a^{n-2}x^{2})ax^{2}$
\item $ax^{2}(a^{n-2}x^{2})$  on right hand side of the relation for $- (a^{n-2}x^{2})ax^{2}$.
\end{enumerate}
Using (\ref{f}), expand $\sigma_{n-2}$ as
$$
a^{n-2}x^{2} \rightarrow - ( \sum_{i=n-2}^{n-1}r_{i} \underline{a^{2}P(n-5,i-n+2)a} + \underline{a^{2}P(n-5,2)a} + 
$$
$$
\underline{x^{2}P(n-3,0)a} + \underline{axP(n-3,0)x} + \underline{xaP(n-3,0)x} + \sum_{i=n-1}^{n}r_{i} ( \underline{axP(n-4,i-n+1)a} 
$$
$$
+ \underline{xaP(n-4,i-n+1)a} )  +   r_{n-1} \underline{a^{2}P(n-4,0)x} +   \underline{a^{2}Q(n-4,1)x} ) + r_{n-2}\underline{a^{n}}
$$
$$
= - ( \sum_{i=n-2}^{n}r_{i} \underline{a^{2}P(n-5,i-n+2)a} + \underline{x^{2}P(n-3,0)a} + \underline{axP(n-3,0)x} + \underline{xaP(n-3,0)x} + 
$$
$$
\sum_{i=n-1}^{n}r_{i} ( \underline{axP(n-4,i-n+1)a} + \underline{xaP(n-4,i-n+1)a} )  +  
$$
$$
r_{n-1} \underline{a^{2}P(n-4,0)x} +   \underline{a^{2}Q(n-4,1)x} ) + r_{n-2}\underline{a^{n}}.
$$
Thus, premultiplying this by $x$ and postmultiplying this by $xa$ and $ax$, and using Lemmas 2.6 and 2.7 to separate reducible and irreducible words, yield
\begin{equation}
\label{n9}
x(a^{n-2}x^{2})xa \rightarrow - ( \sum_{i=n-2}^{n}r_{i} \underline{xa^{2}P(n-5,i-n+2)axa} + \underline{x^{3}P(n-3,0)axa} + 
\end{equation}
$$
\underline{xaxP(n-3,0)x^{2}a} +  x^{2}aP(n-3,0)x^{2}a + \sum_{i=n-1}^{n}r_{i} ( \underline{xaxP(n-4,i-n+1)axa} +
$$
$$
 \underline{x^{2}aP(n-4,i-n+1)axa} )     + r_{n-1} \underline{xa^{2}P(n-4,0)x^{2}a} +   \underline{xa^{2}Q(n-4,1)x^{2}a} ) + r_{n-2}\underline{x^{2}a^{n+1}}
$$
and
\begin{equation}
\label{n10}
x(a^{n-2}x^{2})ax \rightarrow - (\sum_{i=n-2}^{n}r_{i} \underline{xa^{2}P(n-5,i-n+2)a^{2}x} + x^{3}P(n-3,0)a^{2}x +  \underline{x^{2}aP(n-3,0)xax} + 
\end{equation}
$$
\underline{xaxP(n-3,0)xax} + \sum_{i=n-1}^{n}r_{i} ( \underline{xaxP(n-4,i-n+1)a^{2}x} + \underline{x^{2}aP(n-4,i-n+1)a^{2}x} )    
$$
$$
+ r_{n-1} \underline{xa^{2}P(n-4,0)xax} +   \underline{xa^{2}Q(n-4,1)xax} ) + r_{n-2}\underline{xaxa^{n}}
$$
respectively.
Similarly, using (\ref{g}), expand $\sigma_{n-3}$ as
$$
a^{n-3}x^{3} \rightarrow - ( \sum_{i=n-3}^{n-1}r_{i} \underline{aP(n-6,i-n+3)a^{2}} + \underline{aP(n-6,3)a^{2}} +   
$$
$$
\sum_{i=n-2}^{n-1}r_{i} ( \underline{aP(n-5,i-n+2)ax} + \underline{aP(n-5,i-n+2)xa} + \underline{xP(n-5,i-n+2)a^{2}} )
$$
$$
+ \underline{aP(n-5,2)ax} + \underline{aP(n-5,2)xa} + \underline{xP(n-5,2)a^{2}} + \sum_{i=n-1}^{n}r_{i} ( \underline{xP(n-4,i-n+1)ax} 
$$
$$
+ \underline{xP(n-4,i-n+1)xa} )  +  r_{n-1}\underline{aP(n-4,0)x^{2}} +\underline{aQ(n-4,1)x^{2}} + \underline{xP(n-3,0)x^{2}} ) + r_{n-3}\underline{a^{n}}
$$
$$
= - ( \sum_{i=n-3}^{n}r_{i} \underline{aP(n-6,i-n+3)a^{2}} +  \sum_{i=n-2}^{n}r_{i} ( \underline{aP(n-5,i-n+2)ax} + \underline{aP(n-5,i-n+2)xa} 
$$
$$
+ \underline{xP(n-5,i-n+2)a^{2}} ) + \sum_{i=n-1}^{n}r_{i} ( \underline{xP(n-4,i-n+1)ax} + \underline{xP(n-4,i-n+1)xa} )  
$$
$$
+  r_{n-1}\underline{aP(n-4,0)x^{2}} +\underline{aQ(n-4,1)x^{2}} + \underline{xP(n-3,0)x^{2}} ) + r_{n-3}\underline{a^{n}}.
$$
Pre and post multiplying this by $ax$ and $a$ respectively, and using Lemmas 2.6 and 2.7 to separate reducible and irreducible words, yields
\begin{equation}
\label{11}
axa^{n-3}x^{3}a \rightarrow - ( \sum_{i=n-3}^{n}r_{i} \underline{axaP(n-6,i-n+3)a^{3}} +  \sum_{i=n-2}^{n}r_{i} ( \underline{axaP(n-5,i-n+2)axa}  
\end{equation}
$$
+ \underline{axaP(n-5,i-n+2)xa^{2}}  + \underline{ax^{2}P(n-5,i-n+2)a^{3}} ) + \sum_{i=n-1}^{n}r_{i} ( \underline{ax^{2}P(n-4,i-n+1)axa} 
$$
$$
+ \underline{ax^{2}P(n-4,i-n+1)xa^{2}} )   +  r_{n-1}\underline{axaP(n-4,0)x^{2}a} +\underline{axaQ(n-4,1)x^{2}a} 
$$
$$
+ \underline{ax^{2}P(n-3,0)x^{2}a} ) + r_{n-3}\underline{axa^{n+1}}.
$$
Again, using (\ref{f}), expand $\sigma_{n-3}$ as
$$
a^{n-3}x^{3} \rightarrow - ( \sum_{i=n-3}^{n-1}r_{i}  \underline{a^{2}P(n-6,i-n+3)a} + \underline{a^{2}P(n-6,3)a} + 
$$
$$
\sum_{i=n-2}^{n-1}r_{i}  ( \underline{axP(n-5,i-n+2)a} + \underline{xaP(n-5,i-n+2)a})  + \underline{axP(n-5,2)a} + \underline{xaP(n-5,2)a}
$$
$$
+ \sum_{i=n-2}^{n-1}r_{i} \underline{a^{2}P(n-5,i-n+2)x}  + \underline{a^{2}Q(n-5,2)x} +\sum_{i=n-1}^{n}r_{i} ( \underline{x^{2}P(n-4,i-n+1)a} +  
$$
$$
 \underline{axP(n-4,i-n+1)x} + \underline{xaP(n-4,i-n+1)x}) +  \underline{x^{2}P(n-3,0)x} ) +  r_{n-3}\underline{a^{n}}
$$
$$
= - ( \sum_{i=n-3}^{n}r_{i}  \underline{a^{2}P(n-6,i-n+3)a} + \sum_{i=n-2}^{n}r_{i}  ( \underline{axP(n-5,i-n+2)a} + \underline{xaP(n-5,i-n+2)a})  
$$
$$
+ \sum_{i=n-1}^{n}r_{i} ( \underline{x^{2}P(n-4,i-n+1)a} +   \underline{axP(n-4,i-n+1)x} + \underline{xaP(n-4,i-n+1)x}) + 
$$
$$
\sum_{i=n-2}^{n-1}r_{i} \underline{a^{2}P(n-5,i-n+2)x}  + \underline{a^{2}Q(n-5,2)x} + \underline{x^{2}P(n-3,0)x} ) +  r_{n-3}\underline{a^{n}}.
$$
Pre and post multiplying this by $x$ and $a^{2}$ respectively and using Lemmas 2.6 and 2.7 to separate reducible and irreducible words, yields
\begin{equation}
\label{n12}
xa^{n-3}x^{3}a^{2} \rightarrow - ( \sum_{i=n-3}^{n}r_{i}  \underline{xa^{2}P(n-6,i-n+3)a^{3}} + \sum_{i=n-2}^{n}r_{i}  ( \underline{xaxP(n-5,i-n+2)a^{3}} + 
\end{equation}
$$
\underline{x^{2}aP(n-5,i-n+2)a^{3}})   + \sum_{i=n-2}^{n-1}r_{i} \underline{xa^{2}P(n-5,i-n+2)xa^{2}}  + \underline{xa^{2}Q(n-5,2)xa^{2}} + 
$$
$$
\sum_{i=n-1}^{n}r_{i} ( \underline{x^{3}P(n-4,i-n+1)a^{3}} +   \underline{xaxP(n-4,i-n+1)xa^{2}} + \underline{x^{2}aP(n-4,i-n+1)xa^{2}}) 
$$
$$
 + \underline{x^{3}P(n-3,0)xa^{2}} ) +  r_{n-3}\underline{xa^{n+2}}
$$
Also, using (\ref{d}), expand $\sigma_{n-2}$ as
$$
a^{n-2}x^{2} \rightarrow - ( \sum_{i=n-2}^{n-1}r_{i} \underline{P(n-5,i-n+2)a^{3}} + \underline{P(n-5,2)a^{3}} + 
$$
$$
\sum_{i=n-1}^{n}r_{i}\underline{P(n-4,i-n+1)(a^{2}x + axa + xa^{2})}   
$$
$$
 + \underline{P(n-3,0)(xax + x^{2}a)}  ) +  r_{n-3}\underline{a^{n}} .
$$
$$
= - ( \sum_{i=n-2}^{n}r_{i} \underline{P(n-5,i-n+2)a^{3}} + \sum_{i=n-1}^{n}r_{i}\underline{P(n-4,i-n+1)(a^{2}x + axa + xa^{2})}   
$$
$$
 + \underline{P(n-3,0)(xax + x^{2}a)}  ) +  r_{n-3}\underline{a^{n}} .
$$
Thus, premultiplying this by $x^{2}a$, $xax$ and $ax^{2}$, and using Lemmas 2.6 and 2.7 to separate reducible and irreducible words, yield
\begin{equation}
\label{n13}
x^{2}a(a^{n-2}x^{2}) \rightarrow - ( \sum_{i=n-2}^{n}r_{i} \underline{x^{2}aP(n-5,i-n+2)a^{3}} + \sum_{i=n-1}^{n}r_{i}\underline{x^{2}aP(n-4,i-n+1)(a^{2}x + axa + xa^{2})}  
\end{equation}
$$
 + x^{2}aP(n-3,0)(xax + x^{2}a)  ) +  r_{n-3}\underline{x^{2}a^{n+1}} ,
$$

\begin{equation}
\label{n14}
xax(a^{n-2}x^{2}) \rightarrow - ( \sum_{i=n-2}^{n}r_{i} \underline{xaxP(n-5,i-n+2)a^{3}} + \sum_{i=n-1}^{n}r_{i}\underline{xaxP(n-4,i-n+1)(a^{2}x + axa + xa^{2})}   
\end{equation}
$$
+ \underline{xaxP(n-3,0)(xax + x^{2}a)}  ) +  r_{n-3}\underline{xaxa^{n}} 
$$
and
\begin{equation}
\label{n15}
ax^{2}(a^{n-2}x^{2}) \rightarrow - ( \sum_{i=n-2}^{n}r_{i} \underline{ax^{2}P(n-5,i-n+2)a^{3}} + \sum_{i=n-1}^{n}r_{i}\underline{ax^{2}P(n-4,i-n+1)(a^{2}x + axa + xa^{2})}
\end{equation}
$$
 + \underline{ax^{2}P(n-3,0)(xax + x^{2}a)}  ) +  r_{n-3}\underline{ax^{2}a^{n}} 
$$
respectively.
The word in (\ref{n9})-(\ref{n15}) of length $n+3$ which are reducible are as follows:
\begin{enumerate}
\item $-x^{2}a^{n-2}x^{2}a \in x(a^{n-2}x^{2})xa$
\item $-x^{3}a^{n-1}x \in x(a^{n-2}x^{2})ax$
\item $-x^{2}a^{n-2}x^{2}a \in x^{2}a(a^{n-2}x^{2})$.
\end{enumerate}
But terms (i) and (ii) in the preceding list have opposite signs with the terms (i) and (iv) in the list of nine reducible words above so they cancel out. Thus, we only have to reduce the term (iii). Using (\ref{g}), expand $\sigma_{n-2}$ as
$$
a^{n-2}x^{2} \rightarrow - ( \sum_{i=n-2}^{n-1}r_{i} \underline{aP(n-5,i-n+2)a^{2}} + \underline{aP(n-5,2)a^{2}} +  
$$
$$
\sum_{i=n-1}^{n}r_{i}  ( \underline{aP(n-4,i-n+1)ax} + \underline{aP(n-4,i-n+1)xa} +  \underline{xP(n-4,i-n+1)a^{2}}) 
$$
$$
+  \underline{xP(n-3,0)ax} + \underline{xP(n-3,0)xa} ) + r_{n-2}\underline{a^{n}}
$$
$$
= - ( \sum_{i=n-2}^{n}r_{i} \underline{aP(n-5,i-n+2)a^{2}} + \sum_{i=n-1}^{n}r_{i}  ( \underline{aP(n-4,i-n+1)ax} + 
$$
$$
 \underline{aP(n-4,i-n+1)xa} +  \underline{xP(n-4,i-n+1)a^{2}}) + \underline{xP(n-3,0)ax} + \underline{xP(n-3,0)xa} ) + r_{n-2}\underline{a^{n}}.
$$
Pre and post multiplying this by $x^{2}$ and $a$ respectively, and using Lemmas 2.6 and 2.7 to separate reducible and irreducible words, yields
\begin{equation}
\label{n16}
-x^{2}a^{n-2}x^{2}a \rightarrow ( \sum_{i=n-2}^{n}r_{i} \underline{x^{2}aP(n-5,i-n+2)a^{3}} + \sum_{i=n-1}^{n}r_{i}  ( \underline{x^{2}aP(n-4,i-n+1)axa} + 
\end{equation}
$$
 \underline{x^{2}aP(n-4,i-n+1)xa^{2}} +  \underline{x^{3}P(n-4,i-n+1)a^{3}}) + \underline{x^{3}P(n-3,0)axa} 
$$
$$
+ \underline{x^{3}P(n-3,0)xa^{2}} ) -  r_{n-2}\underline{x^{2}a^{n+1}}.
$$
Hence, the reduction process ends here and when we substitute (\ref{n2})-(\ref{n16}) into (\ref{n1}), we get
\begin{equation}
\label{n17}
a^{3}\omega_{n-4} \rightarrow ( \sum_{i=n-3}^{n}r_{i} \underline{a^{2}xP(n-6,i-n+3)a^{3}} + \sum_{i=n-2}^{n}r_{i} ( \underline{a^{2}xP(n-5,i-n+2)(a^{2}x + axa + xa^{2} )}   
\end{equation}
$$
+  \underline{axaP(n-5,i-n+2)a^{2}x}  )  + \sum_{i=n-1}^{n}r_{i} ( \underline{a^{2}xP(n-4,i-n+1)(ax^{2} + xax + x^{2}a)} + 
$$
$$
 \underline{axaP(n-4,i-n+1)(ax^{2} + xax)} +  \underline{xa^{2}P(n-4,i-n+1)ax^{2}} + \underline{x^{3}P(n-4,i-n+1)a^{3}} ) +  
$$
$$
 \underline{x^{3}P(n-3,0)(axa + xa^{2}} + r_{n-2}\underline{xaxa^{n}}  ) - ( \sum_{i=n-2}^{n}r_{i} \underline{xax(P(n-5,i-n+2)a^{3}}  +
$$
$$
\sum_{i=n-1}^{n}r_{i} ( \underline{ax^{2}P(n-4,i-n+1)(axa + xa^{2})} + \underline{xaxP(n-4,i-n+1)(a^{2}x+ axa + xa^{2})}  +   
$$
$$
\underline{x^{2}aP(n-4,i-n+1)a^{2}x} ) +   r_{n-1}a^{3}P(n-4,0)x^{3}   +    \underline{a^{3}Q(n-4,1)x^{3}} +  \underline{xaxP(n-3,0)( xax + x^{2}a )}
$$
$$
+  \underline{x^{2}aP(n-3,0)xax}    + \underline{ax^{2}(P(n-3,0)x^{2}a + P(n-5,2)a^{3}) }  + r_{n-3}\underline{a^{2}xa^{n}}  ).
$$

Similarly, usind (\ref{d}), expand $\sigma_{n-1}$ as
$$
\omega_{n-1} \rightarrow - ( r_{n-1}\underline{a^{3}P(n-4,0)} + \underline{a^{3}Q(n-4,1)} +  \underline{( a^{2}x + axa + xa^{2} )P(n-3,0)} ) + r_{n-1} \underline{a^{n}}.
$$
Post multiplying this by $x^{3}$  and using Lemmas 2.6 and 2.7 to separate reducible and irreducible words, yields
\begin{equation}
\label{n18}
\omega_{n-1}x^{3} \rightarrow - (  r_{n-1} a^{3}P(n-4,0)x^{3} + \underline{a^{3}Q(n-4,1)x^{3}} +  ( a^{2}x + axa + xa^{2} )P(n-3,0)x^{3} ) + r_{n-1} \underline{a^{n}x^{3}} .
\end{equation}
The following words in (\ref{n18}) of length $n+3$ are reducible:
\begin{enumerate}
\item $xa^{n-1}x^{3}$
\item $axa^{n-2}x^{3}$
\item $a^{2}xa^{n-3}x^{3}$.
\end{enumerate}
Using (\ref{d}), expand $\sigma_{n-3}$ as
$$
 a^{n-3}x^{3} \rightarrow - ( \sum_{i=n-3}^{n-1} r_{i} \underline{P( n-6,i-n+3 )a^{3}} + \underline{P( n-6,3 )a^{3}} +
$$
$$
 \sum_{i=n-2}^{n-1}r_{i} \underline{P(n-5,i-n+2)(a^{2}x + axa + xa^{2} )}  + \underline{P(n-5,2)(a^{2}x + axa + xa^{2} )} +
$$
$$
\sum_{i=n-1}^{n} r_{i} \underline{P(n-4,i-n+1)(ax^{2} + xax + x^{2}a)}  ) +  r_{n-3} \underline{a^{n}} .
$$
$$
 = - ( \sum_{i=n-3}^{n} r_{i} \underline{P( n-6,i-n+3 )a^{3}} +  \sum_{i=n-2}^{n}r_{i} \underline{P(n-5,i-n+2)(a^{2}x + axa + xa^{2} )}  + 
$$
$$
\sum_{i=n-1}^{n} r_{i} \underline{P(n-4,i-n+1)(ax^{2} + xax + x^{2}a)}  ) +  r_{n-3} \underline{a^{n}} .
$$
Thus, premultiplying this by $xa^{2}$, $axa$ and $a^{2}x$, and using Lemmas 2.6 and 2.7 to separate reducible and irreducible words, yield
\begin{equation}
\label{n19}
-xa^{2}(a^{n-3}x^{3}) \rightarrow ( \sum_{i=n-3}^{n} r_{i} \underline{xa^{2}P( n-6,i-n+3 )a^{3}} +  \sum_{i=n-2}^{n}r_{i} \underline{xa^{2}P(n-5,i-n+2)(a^{2}x + axa + xa^{2} )}  
\end{equation}
$$
+ \sum_{i=n-1}^{n} r_{i} xa^{2}P(n-4,i-n+1)(ax^{2} + xax + x^{2}a)  ) -  r_{n-3} \underline{xa^{n+2}},
$$

\begin{equation}
\label{n20}
-axa(a^{n-3}x^{3}) \rightarrow ( \sum_{i=n-3}^{n} r_{i} \underline{axaP( n-6,i-n+3 )a^{3}} +  \sum_{i=n-2}^{n}r_{i} \underline{axaP(n-5,i-n+2)(a^{2}x + axa + xa^{2} )}   
\end{equation}
$$
+  \sum_{i=n-1}^{n} r_{i} \underline{axaP(n-4,i-n+1)(ax^{2} + xax + x^{2}a)}  ) -  r_{n-3} \underline{axa^{n+1}}
$$
and
\begin{equation}
\label{n21}
-a^{2}x(a^{n-3}x^{3}) \rightarrow ( \sum_{i=n-3}^{n} r_{i} \underline{a^{2}xP( n-6,i-n+3 )a^{3}} +  \sum_{i=n-2}^{n}r_{i} \underline{a^{2}xP(n-5,i-n+2)(a^{2}x + axa + xa^{2} )}  
\end{equation}
$$
+ \sum_{i=n-1}^{n} r_{i} \underline{a^{2}xP(n-4,i-n+1)(ax^{2} + xax + x^{2}a)}  ) -  r_{n-3} \underline{a^{2}xa^{n}} 
$$
respectively.
The reducible words in (\ref{n19})-(\ref{n21}) of length $n+3$ are as follows:
\begin{enumerate}
\item $xa^{n-2}x^{2}xa   \in xa^{2}P(n-4,1)x^{2}a $
\item $xa^{n-2}x^{2}ax \in  xa^{2}P(n-4,1)xax $
\item $ xa^{n-3}x^{3}a^{2} \in xa^{2}P(n-4,1)ax^{2} $
\item $axa^{n-3}x^{3}a \in axaP(n-4,1)x^{2}a$.
\end{enumerate}
Recall frrom (\ref{n9})-(\ref{n12}) that
$$
x(a^{n-2}x^{2})xa \rightarrow - ( \sum_{i=n-2}^{n}r_{i} \underline{xa^{2}P(n-5,i-n+2)axa} + \underline{x^{3}P(n-3,0)axa} + \underline{xaxP(n-3,0)x^{2}a} +  
$$
$$
x^{2}aP(n-3,0)x^{2}a + \sum_{i=n-1}^{n}r_{i} ( \underline{xaxP(n-4,i-n+1)axa} + \underline{x^{2}aP(n-4,i-n+1)axa} )    
$$
$$
+ r_{n-1} \underline{xa^{2}P(n-4,0)x^{2}a} +   \underline{xa^{2}Q(n-4,1)x^{2}a} ) + r_{n-2}\underline{x^{2}a^{n+1}}.
$$

$$
x(a^{n-2}x^{2})ax \rightarrow - (\sum_{i=n-2}^{n}r_{i} \underline{xa^{2}P(n-5,i-n+2)a^{2}x} + x^{3}P(n-3,0)a^{2}x +  \underline{x^{2}aP(n-3,0)xax} + 
$$
$$
\underline{xaxP(n-3,0)xax} + \sum_{i=n-1}^{n}r_{i} ( \underline{xaxP(n-4,i-n+1)a^{2}x} + \underline{x^{2}aP(n-4,i-n+1)a^{2}x} )    
$$
$$
+ r_{n-1} \underline{xa^{2}P(n-4,0)xax} +   \underline{xa^{2}Q(n-4,1)xax} ) + r_{n-2}\underline{xaxa^{n}}.
$$

$$
axa^{n-3}x^{3}a \rightarrow - ( \sum_{i=n-3}^{n}r_{i} \underline{axaP(n-6,i-n+3)a^{3}} +  \sum_{i=n-2}^{n}r_{i} ( \underline{axaP(n-5,i-n+2)axa} + 
$$
$$
\underline{axaP(n-5,i-n+2)xa^{2}}  + \underline{ax^{2}P(n-5,i-n+2)a^{3}} ) + \sum_{i=n-1}^{n}r_{i} ( \underline{ax^{2}P(n-4,i-n+1)axa} + 
$$
$$
\underline{ax^{2}P(n-4,i-n+1)xa^{2}} )   +  r_{n-1}\underline{axaP(n-4,0)x^{2}a} +\underline{axaQ(n-4,1)x^{2}a} + \underline{ax^{2}P(n-3,0)x^{2}a} ) 
$$
$$
+ r_{n-3}\underline{axa^{n+1}}
$$
and
$$
xa^{n-3}x^{3}a^{2} \rightarrow - ( \sum_{i=n-3}^{n}r_{i}  \underline{xa^{2}P(n-6,i-n+3)a^{3}} + \sum_{i=n-2}^{n}r_{i}  ( \underline{xaxP(n-5,i-n+2)a^{3}} + 
$$
$$
\underline{x^{2}aP(n-5,i-n+2)a^{3}}) +  \sum_{i=n-2}^{n-1}r_{i} \underline{xa^{2}P(n-5,i-n+2)xa^{2}}  + \underline{xa^{2}Q(n-5,2)xa^{2}}  + \
$$
$$
\sum_{i=n-1}^{n}r_{i} ( \underline{x^{3}P(n-4,i-n+1)a^{3}} +   \underline{xaxP(n-4,i-n+1)xa^{2}} +  \underline{x^{2}aP(n-4,i-n+1)xa^{2}})
$$
$$
 + \underline{x^{3}P(n-3,0)xa^{2}} )  +  r_{n-3}\underline{xa^{n+2}}.
$$

The reducible words in (\ref{n9})-(\ref{n12}) of length $n+3$ are:
\begin{enumerate}
\item $- x^{2}a^{n-2}x^{2}a \in x(a^{n-2}x^{2})xa$
\item $ -x^{3}a^{n-1}x  \in x(a^{n-2}x^{2})ax$.
\end{enumerate}
Recall from (\ref{n16}) that
$$
-x^{2}a^{n-2}x^{2}a \rightarrow ( \sum_{i=n-2}^{n}r_{i} \underline{x^{2}aP(n-5,i-n+2)a^{3}} + \sum_{i=n-1}^{n}r_{i}  ( \underline{x^{2}aP(n-4,i-n+1)axa} + 
$$
$$
 \underline{x^{2}aP(n-4,i-n+1)xa^{2}} +  \underline{x^{3}P(n-4,i-n+1)a^{3}}) + \underline{x^{3}P(n-3,0)axa} 
$$
$$
+ \underline{x^{3}P(n-3,0)xa^{2}} ) -  r_{n-2}\underline{x^{2}a^{n+1}}.
$$

Also, using (\ref{d}), expand $\sigma_{n-1}$ as
$$
 a^{n-1}x \rightarrow - ( \sum_{i=n-1}^{n}r_{i}  \underline{P(n-4,i-n+1)a^{3}} + \underline{P(n-3,0)(axa + xa^{2})} ) + r_{n-1}\underline{a^{n}}.
$$
Premultiplying this by $x^{3}$ and using Lemmas 2.6 and 2.7 to separate reducible and irreducible words, yields
\begin{equation}
\label{n22}
 -x^{3}a^{n-1}x \rightarrow ( \sum_{i=n-1}^{n}r_{i}  \underline{x^{3}P(n-4,i-n+1)a^{3}} + \underline{x^{3}P(n-3,0)(axa + xa^{2})} ) - r_{n-1}\underline{x^{3}a^{n}}.
\end{equation}
Thus, the reduction process ends here and when we substitute (\ref{n9})-(\ref{n12}), (\ref{n16}), (\ref{n19})-(\ref{n22}) into (\ref{n18}), we get
\begin{equation}
\label{n23}
\omega_{n-1}x^{3}  \rightarrow ( \sum_{i=n-3}^{n}r_{i} \underline{a^{2}xP(n-6,i-n+3)a^{3}} + \sum_{i=n-2}^{n}r_{i} ( \underline{a^{2}xP(n-5,i-n+2)(a^{2}x + axa + xa^{2} )}   
\end{equation}
$$
+  \underline{axaP(n-5,i-n+2)a^{2}x}  )  + \sum_{i=n-1}^{n}r_{i} ( \underline{a^{2}xP(n-4,i-n+1)(ax^{2} + xax + x^{2}a)} + 
$$
$$
 \underline{axaP(n-4,i-n+1)(ax^{2} + xax)} +  \underline{xa^{2}P(n-4,i-n+1)ax^{2}} + \underline{x^{3}P(n-4,i-n+1)a^{3}} ) +  
$$
$$
 \underline{x^{3}P(n-3,0)(axa + xa^{2}} + r_{n-2}\underline{xaxa^{n}}  ) - ( \sum_{i=n-2}^{n}r_{i} \underline{xax(P(n-5,i-n+2)a^{3}}  +
$$
$$
\sum_{i=n-1}^{n}r_{i} ( \underline{ax^{2}P(n-4,i-n+1)(axa + xa^{2})} + \underline{xaxP(n-4,i-n+1)(a^{2}x+ axa + xa^{2})}  +   
$$
$$
\underline{x^{2}aP(n-4,i-n+1)a^{2}x} ) +   r_{n-1}a^{3}P(n-4,0)x^{3}   +    \underline{a^{3}Q(n-4,1)x^{3}} +  \underline{xaxP(n-3,0)( xax + x^{2}a )}
$$
$$
+  \underline{x^{2}aP(n-3,0)xax}    + \underline{ax^{2}(P(n-3,0)x^{2}a + P(n-5,2)a^{3}) }  + r_{n-3}\underline{a^{2}xa^{n}}  ).
$$
Comparing (\ref{n17}) and (\ref{n23}), we conclude that the overlap ambiguity $\{ \omega_{n-4}, \omega_{n-1} \}$ is resolvable.

\end{document}